\newtheorem{Theorem}{Theorem}[section]
\newtheorem{Proposition}[Theorem]{Proposition}
\newtheorem{Lemma}[Theorem]{Lemma}
\newtheorem{Corollary}[Theorem]{Corollary}
\theoremstyle{definition}
\newtheorem{Definition}[Theorem]{Definition}
\newtheorem{Remark}[Theorem]{Remark}
\newcommand{\bTheorem}[1]{
\begin{Theorem} \label{T#1} }
\newcommand{\eT}{\end{Theorem}}
\newcommand{\bProposition}[1]{
\begin{Proposition} \label{P#1}}
\newcommand{\eP}{\end{Proposition}}
\newcommand{\bLemma}[1]{
\begin{Lemma} \label{L#1} }
\newcommand{\eL}{\end{Lemma}}
\newcommand{\bCorollary}[1]{
\begin{Corollary} \label{C#1} }
\newcommand{\eC}{\end{Corollary}}
\newcommand{\bRemark}[1]{
\begin{Remark} \label{R#1} }
\newcommand{\eR}{\end{Remark}}
\newcommand{\bDefinition}[1]{
\begin{Definition} \label{D#1} }
\newcommand{\eD}{\end{Definition}}
\newcommand{\Del}{\Delta_x}
\newcommand{\Ds}{\mathbb{D}_x}
\newcommand{\bfphi}{\boldsymbol{\varphi}}
\newcommand{\bFormula}[1]{
\begin{equation} \label{#1}}
\newcommand{\eF}{\end{equation}}
\newcommand{\Ov}[1]{\overline{#1}}
\newcommand{\vc}[1]{{\bm #1}}
\newcommand{\Divh}{{\rm div}_h}
\newcommand{\Gradh}{\nabla_h}
\newcommand{\aleq}{\stackrel{<}{\sim}}
\DeclareMathOperator{\divv}{div}
\newcommand{\vr}{\varrho}
\newcommand{\vre}{\vr_\ep}
\newcommand{\vte}{\vt_\ep}
\newcommand{\vue}{\vu_\ep}
\newcommand{\tvr}{\tilde \vr}
\newcommand{\tvt}{\tilde \vt}
\newcommand{\vt}{\vartheta}
\newcommand{\vu}{\vc{u}}
\newcommand{\vm}{\vc{m}}
\newcommand{\vn}{\vc{n}}
\newcommand{\Div}{{\rm div}_x}
\newcommand{\Grad}{\nabla_x}
\newcommand{\dx}{\,{\rm d} {x}}
\newcommand{\dt}{\,{\rm d} t }
\newcommand{\intO}[1]{\int_{\Omega} #1 \ \dx}
\newcommand{\intTd}[1]{\int_{\mathbb{T}^d} #1 \, \dx}
\newcommand{\D}{{\rm d}}
\newcommand{\ep}{\varepsilon}
\newcommand{\vtB}{\vt_B}
\newcommand{\br}{ \nonumber \\ }
\def\softd{{\leavevmode\setbox1=\hbox{d}%
          \hbox to 1.05\wd1{d\kern-0.4ex{\char039}\hss}}}
\definecolor{Cgrey}{rgb}{0.85,0.85,0.85}
\definecolor{Cblue}{rgb}{0.50,0.85,0.85}
\definecolor{Cred}{rgb}{1,0,0}
\definecolor{fancy}{rgb}{0.10,0.85,0.10}
\newcommand\Cbox[2]{%
    \newbox\contentbox%
    \newbox\bkgdbox%
    \setbox\contentbox\hbox to \hsize{%
        \vtop{
            \kern\columnsep
            \hbox to \hsize{%
                \kern\columnsep%
                \advance\hsize by -2\columnsep%
                \setlength{\textwidth}{\hsize}%
                \vbox{
                    \parskip=\baselineskip
                    \parindent=0bp
                    #2
                }%
                \kern\columnsep%
            }%
            \kern\columnsep%
        }%
    }%
    \setbox\bkgdbox\vbox{
        \color{#1}
        \hrule width  \wd\contentbox %
               height \ht\contentbox %
               depth  \dp\contentbox
        \color{black}
    }%
    \wd\bkgdbox=0bp%
    \vbox{\hbox to \hsize{\box\bkgdbox\box\contentbox}}%
    \vskip\baselineskip%
}
\date{}
\begin{document}


\title{Penalization method for the Navier--Stokes--Fourier system}

\author{Danica Basari\' c\,\thanks{The work of D.B., E.F. and H.M.~was partially supported by the
		Czech Sciences Foundation (GA\v CR), Grant Agreement
		21--02411S. The Institute of Mathematics of the Academy of Sciences of
		the Czech Republic is supported by RVO:67985840. \hspace*{1em} $^\spadesuit$M.L. has been funded by the Deutsche Forschungsgemeinschaft (DFG, German Research Foundation) - Project number 233630050 - TRR 146 as well as by  TRR 165 Waves to Weather. She is grateful to the Gutenberg Research College and  Mainz Institute of Multiscale Modelling for supporting her research.
The research of Y.Y. was funded by Sino-German (CSC-DAAD) Postdoc Scholarship Program in 2020 - Project
number 57531629.} \and Eduard Feireisl\,$^{*}$
\and M\' aria Luk\' a\v cov\' a -- Medvi\softd ov\' a$^{\spadesuit}$\and Hana Mizerov\' a\,$^{*,\clubsuit}$ \and Yuhuan Yuan$^{\spadesuit}$
}

\date{}

\maketitle

\medskip

\centerline{$^{*}$ Institute of Mathematics of the Academy of Sciences of the Czech Republic}
\centerline{\v Zitn\' a 25, CZ-115 67 Praha 1, Czech Republic}
\centerline{basaric@math.cas.cz, feireisl@math.cas.cz, hana.mizerova@fmph.uniba.sk}

\bigskip

\centerline{$^\spadesuit$ Institute of Mathematics, Johannes Gutenberg-University Mainz}
\centerline{Staudingerweg 9, 55 128 Mainz, Germany}
\centerline{lukacova@uni-mainz.de, yuhuyuan@uni-mainz.de}

\bigskip
\centerline{$^\clubsuit$ Department of Mathematical Analysis and Numerical Mathematics}
\centerline{Faculty of Mathematics, Physics and Informatics of the Comenius University}
\centerline{Mlynsk\' a dolina, 842 48 Bratislava, Slovakia}

\begin{abstract}
	
We apply the method of penalization to the Dirichlet problem for the Navier--Stokes--Fourier system governing the motion of a general viscous compressible fluid
confined to a bounded Lipschitz domain. The physical domain is embedded into a large cube on which the periodic boundary  conditions are imposed. The original boundary conditions are enforced through
a singular friction term in the momentum equation and a heat source/sink term in the internal energy balance. The solutions of the penalized problem are shown to converge to the solution of the limit problem.
Numerical experiments are performed to illustrate the efficiency of the method.

\end{abstract}


{\bf Keywords:} Navier--Stokes--Fourier system, penalization method, Dirichlet problem, finite volume method


\tableofcontents

\section{Introduction}
\label{i}

	Let us consider the \emph{Navier--Stokes--Fourier system} in the entropy formulation,
\begin{align}
	\partial_t \varrho +\Div (\varrho \vu)&=0, \label{i1}\\
	\partial_t (\varrho \vu) + \Div (\varrho \vu \otimes \vu)+ \nabla_x p&= \Div \mathbb{S}, \label{i2}\\
	\partial_t (\varrho s ) + \Div (\varrho s \vu) + \Div \left( \frac{\bm{q}}{\vartheta} \right) &= \frac{1}{\vartheta} \left( \mathbb{S}: \mathbb{D}_x \vu - \frac{\bm{q} \cdot \Grad \vartheta}{\vartheta} \right) \label{i3}.
\end{align}
The unknowns are the standard variables: the density $\vr = \vr(t,x)$, the temperature $\vt = \vt(t,x)$, and the velocity $\vu = \vu(t,x)$, whereas the thermodynamic functions: the pressure
$p = p(\vr, \vt)$, the entropy $s = s(\vr, \vt)$ as well as the viscous stress tensor $\mathbb{S} = \mathbb{S}(\vt, \mathbb{D}_x \vu)$, and the heat flux
$\vc{q} = \vc{q}(\vt, \Grad \vt)$ are determined through suitable constitutive relations.

The fluid is confined to a bounded domain $\Omega \subset R^d$, $d=2,3$, on the boundary of which the Dirichlet boundary conditions
\begin{equation} \label{i4}
	\vu|_{\partial \Omega} = 0,\ \vt|_{\partial \Omega} = \vtB,\ \vtB = \vtB(t,x)
\end{equation}
are imposed. Our goal is to approximate solutions of problem \eqref{i1}--\eqref{i4} via penalization of the spatial domain. Specifically, we suppose
\[
\Omega \subset  \mathbb{T}^d,
\]
where $\mathbb{T}^d$ is sufficiently large ``flat'' torus, and replace the field equations \eqref{i1}--\eqref{i3} by the penalized system
\begin{align}
	\partial_t \varrho +\Div (\varrho \vu)&=0, \label{i5}\\
	\partial_t (\varrho \vu) + \Div (\varrho \vu \otimes \vu)+ \nabla_x p&= \Div \mathbb{S} - \frac{1}{\ep} \mathds{1}_{\mathbb{T}^d \setminus \Omega} \vu, \label{i6}\\
	\partial_t (\varrho s ) + \Div (\varrho s \vu) + \Div \left( \frac{\bm{q}}{\vartheta} \right) &= \frac{1}{\vartheta} \left( \mathbb{S}: \mathbb{D}_x \vu - \frac{\bm{q} \cdot \Grad \vartheta}{\vartheta} - \frac{1}{\ep} \mathds{1}_{\mathbb{T}^d \setminus \Omega} |\vt - \vtB|^k (\vt - \vtB) \right) \label{i7}
\end{align}
on the set $(0,T) \times \mathbb{T}^d$, where $\vtB$ is a smooth extension of the boundary temperature on $\mathbb{T}^d \setminus \Ov{\Omega}$. Obviously, solving the problem on the flat torus
$\mathbb{T}^d$ is equivalent to imposing the space periodic boundary conditions. The solution of the original problem is then recovered by letting $\ep \to 0$ in \eqref{i6}, \eqref{i7}.

The penalization method is a popular simulation tool when the boundary of $\Omega$ has a complicated structure and its approximation by polygons may be problematic. The difficulty with domain approximation and construction of a suitable mesh is transformed
to the forcing terms that are much easier to handle.
This idea has been used quite often in the literature. Domain penalization is realized in the immersed boundary method \cite{Peskin:1972,Peskin:2002} and the (Lagrange-multiplier based) fictitious domain method \cite{Glowinski-Pan-Periaux:1994,Glowinski-Pan-Periaux:1994a,Hyman:1952}. Both approaches have been originally developed in the context of
 incompressible Navier--Stokes equations.
In the context of  fluid-structure interaction problem a  penalization method is applied on a moving domain
in  \cite{Bruneau:2018}. Penalization of boundary conditions in a spectral method approximating one- and multidimensional compressible
Navier--Stokes--Fourier equations was discussed in \cite{Hesthaven1, Hesthaven2}.
 Related numerical analysis
 for one-dimensional heat equation with a singular forcing term was presented in \cite{Beyer-LeVeque:1992}.
 For elliptic boundary problems the error estimates between the exact solution and (numerical) solutions of $L^2$- or $H^1$ penalization problems were presented in \cite{Maury:2009, Saito-Zhou:2015,Zhang:2006,Zhou-Saito:2014}.

Even at the purely theoretical level, penalization can be useful for problems with low regularity (Lipschitz) of the boundary, where suitable
approximation by regularization is hampered by the absence of smooth approximate solutions. Our goal in this paper is twofold:

\begin{itemize}
	\item Using the framework of \emph{weak solutions}, developed in \cite{FeNov6A} for the penalized problem and, more recently, in \cite{ChauFei} for the original Dirichlet problem, we show that
	weak solutions of \eqref{i5}--\eqref{i7} converge to a weak solution of \eqref{i1}--\eqref{i4} as $\ep \to 0$.
	
	\item We perform numerical experiments illustrating the abstract results.
	
	\end{itemize}

As far as we are aware, our paper is the first one to provide analytical results as well as finite volume simulations on complex domains for the penalization method applied to the multidimensional Navier--Stokes--Fourier system.

The rest of the paper is organized in the following way: In Section~\ref{M} we define the concept of weak solution to the Dirichlet problem for the Navier--Stokes--Fourier system and formulate the main theoretical result on the strong convergence of the penalized solutions. Sections~\ref{U} and \ref{C} are devoted to the derivation of the uniform bounds using the ballistic energy inequality and to the convergence analysis of the penalized solutions, respectively. Section~\ref{numerics} presents a series of numerical simulations illustrating robustness and efficiency of the proposed penalization strategy when solving the Dirichlet problem for the Navier--Stokes--Fourier system in complex domains.

\section{Constitutive equations, weak solutions, main theoretical result}
\label{M}

Before stating our main analytical result, let us introduce the basic hypotheses imposed on the physical domain and constitutive equations.

\subsection{Physical domain}

We suppose that $\Omega \subset R^d$, $d=2,3$ is a bounded domain with Lipschitz boundary. In addition, we suppose that the boundary datum $\vtB$ can be extended on $[0,T] \times \mathbb{T}^d$ in the following way:
{
\begin{align}
&	\vtB \in W^{1, \infty}([0,T] \times \mathbb{T}^d), \ \vtB \in  W^{2, \infty}([0,T] \times \Omega) \cap W^{2, \infty}([0,T] \times (\mathbb{T}^d \setminus \Omega)), \br
&	\inf \vtB > 0,\ \Del \vtB(t, \cdot) = 0 \ \ \mbox{ a.a. in} \ [0,T] \times \Omega.
	\label{M1}
	\end{align}}
Note that such an extension \emph{always} exists as long as $\Omega$ as well as the boundary datum are smooth of class at least $C^{2 + \nu}$. For less regular domains, condition \eqref{M1} must be imposed as
a hypothesis.

\subsection{Constitutive equations}\label{ss:ce}

The equations of state interrelating the thermodynamic functions $p$, $s$, to the internal energy $e$ are motivated by the existence theory developed in \cite{ChauFei}. Specifically, we suppose:
\begin{align}
	p(\varrho, \vartheta) =p_{\textup{m}}(\varrho,\vartheta)+ p_{\textup{rad}}(\vartheta), \quad &\mbox{with} \quad p_{\textup{m}}(\varrho, \vartheta)= \vartheta^{\frac{5}{2}} P\left( \frac{\varrho}{\vartheta^{\frac{3}{2}}} \right), \ p_{\textup{rad}}(\vartheta)=  \frac{a}{3} \vartheta^4, \label{M2}\\
	e(\varrho, \vartheta) =e_{\textup{m}}(\varrho,\vartheta)+ e_{\textup{rad}}(\varrho, \vartheta), \quad &\mbox{with} \quad e_{\textup{m}}(\varrho, \vartheta)= \frac{3}{2}\frac{\vartheta^{\frac{5}{2}}}{\varrho} P\left( \frac{\varrho}{\vartheta^{\frac{3}{2}}} \right), \ e_{\textup{rad}}(\varrho, \vartheta)=   \frac{a}{\varrho} \vartheta^4, \label{M3}\\
	s(\varrho, \vartheta) =s_{\textup{m}}(\varrho,\vartheta)+ s_{\textup{rad}}(\varrho, \vartheta), \quad &\mbox{with} \quad s_{\textup{m}}(\varrho, \vartheta)= \mathcal{S} \left( \frac{\varrho}{\vartheta^{\frac{3}{2}}} \right), \ s_{\textup{rad}}(\varrho, \vartheta)=   \frac{4a}{3} \frac{\vartheta^3}{\varrho}, \label{M4}
\end{align}
where $a>0$, $P \in C^1[0,\infty)$ satisfies
\begin{equation} \label{M5}
	P(0)=0, \quad P'(Z)>0 \mbox{ for } Z \geq 0, \quad 0< \frac{\frac{5}{3} P(Z)-P'(Z)Z}{Z} \leq c \mbox{ for } Z \geq 0
\end{equation}
and
\begin{equation} \label{M6}
	\mathcal{S}'(Z) = - \frac{3}{2} \frac{\frac{5}{3} P(Z)-P'(Z)Z}{Z^2}.
\end{equation}
It follows from \eqref{M5} and \eqref{M6} that the functions
\[
Z \mapsto \frac{P(Z)}{Z^{\frac{5}{3}}} \ \mbox{and}\ Z \mapsto \mathcal{S}(Z)
\]
are decreasing, and we assume
\begin{align}
	\lim_{Z \rightarrow \infty} \frac{P(Z)}{Z^{\frac{5}{3}}} &= p_{\infty}>0, \label{M7} \\
		\lim_{Z \rightarrow \infty} \mathcal{S}(Z) &= 0.
	\label{M8}
\end{align}

We refer to \cite[Chapter 2]{FeNov6A} for the physical background of the hypotheses \eqref{M2}--\eqref{M8}. In particular, \eqref{M7}, \eqref{M8}
describe the behaviour of the fluid in the degenerate area, where \eqref{M8} is in agreement with the Third law of thermodynamics.

\subsection{Transport terms}

We suppose the fluid is Newtonian (linearly viscous), with the viscous stress
\[
	\mathbb{S}(\vt, \mathbb{D}_x \vu) = \mu(\vt) \left( \Grad \vu + \Grad^t \vu - \frac{2}{d} \Div \vu \,\mathbb{I} \right) + \eta(\vt) \, \Div \vu \,\mathbb{I}.
\]
Here $\mathbb{D}_x \vu=(\Grad\vu+\Grad^t\vu)/2$ stands for the symmetric velocity gradient.

Similarly, the heat flux is given by Fourier's law
\[
\vc{q}(\vt, \Grad \vt) = - \kappa(\vt) \Grad \vt.
\]
As for the transport coefficients $\mu$, $\eta$ and $\kappa$, we suppose they are continuously differentiable functions of temperature $\vartheta$ satisfying
\begin{align}
	0< \underline{\mu} (1+\vartheta) & \leq \mu(\vartheta) \leq \overline{\mu} (1+\vartheta), \quad |\mu'(\vartheta) | \leq c \mbox{ for all } \vartheta \geq 0, \label{M9}\\
	0 & \leq \eta(\vartheta) \leq \overline{\eta} (1+\vartheta), \label{M10} \\
	0< \underline{\kappa} (1+\vartheta^{\beta}) & \leq \kappa(\vartheta) \leq \overline{\kappa} (1+\vartheta^{\beta}), \quad \beta>6. \label{M11}
\end{align}

\subsection{Weak solutions}

The concept of \emph{weak solution} of the penalized system \eqref{i5}--\eqref{i7} was introduced in \cite{FeNov6A}.

	\begin{Definition} [{\bf Weak solution of the penalized problem}] \label{MD1}
	We say that the trio of functions $[\vr, \vt, \vu ]$ is a \emph{weak solution} of the penalized Navier--Stokes--Fourier system \eqref{i5}--\eqref{i7} with the initial data
	\[
	\vr(0, \cdot) = \vr_0,\ (\vr \vu) (0, \cdot) = \vm_0,\ \vr s(\vr, \vt) (0, \cdot) = S_0
	\]	
if the following holds.
	\begin{itemize}
		\item[(i)] \textit{\textit{Weak formulation of the continuity equation}}: The integral identity
		\begin{equation} \label{M12}
			- \int_{\mathbb{T}^d} \varrho_0 \varphi(0,\cdot) \ \textup{d}x  = \int_{0}^{T} \int_{\mathbb{T}^d} [\varrho \partial_t \varphi + \varrho \vu \cdot \nabla_x \varphi] \ \textup{d}x \textup{d}t,
		\end{equation}
		holds for any $\varphi \in C^1_c([0,T) \times \mathbb{T}^d)$.
		\item[(ii)] \textit{Weak formulation of the renormalized continuity equation}: For any function
		\[
			b \in C^1[0, \infty),\ b' \in C_c[0, \infty)
		\]
				the integral identity
		\begin{equation} \label{M13}
			- \int_{\mathbb{T}^d} b(\vr_0) \varphi(0,\cdot) \dx = \int_{0}^{T} \int_{\mathbb{T}^d} [b(\varrho) \partial_t\varphi+ b(\varrho)\vu \cdot \nabla_x\varphi + \varphi \ \Big( b(\varrho) -
			b'(\vr) \vr \Big)\Div \vu ] \ \textup{d}x \textup{d}t
		\end{equation}
		holds for any $\varphi \in C_c^1([0,T) \times \mathbb{T}^d)$.
		\item[(iii)] \textit{\textit{Weak formulation of the momentum equation}}: The integral identity
		\begin{equation} \label{M14}
			\begin{aligned}
				- \int_{\mathbb{T}^d} \vm_0 \cdot \bfphi (0,\cdot) \dx &= \int_{0}^{T} \int_{\mathbb{T}^d} [\varrho \vu \cdot \partial_t \bfphi + (\varrho \vu \otimes \vu) : \nabla_x \bfphi+ p(\varrho, \vartheta)\Div \bfphi] \ \textup{d}x \textup{d}t \\
				&- \int_{0}^{T} \int_{\mathbb{T}^d} \mathbb{S}(\vt, \mathbb{D}_x \vu): \mathbb{D}_x \bfphi \ \textup{d}x \textup{d}t
				-\frac{1}{\varepsilon} \int_{0}^{T} \int_{\mathbb{T}^d \setminus \Omega} \vu \cdot \bfphi \ \textup{d}x \textup{d}t
			\end{aligned}
		\end{equation}
		holds for any $\bfphi \in C^1_c([0,T) \times \mathbb{T}^d; R^d)$.
		
		\item[(iv)] \textit{\textit{Weak formulation of the entropy inequality}}: The integral inequality
		\begin{equation} \label{M15}
			\begin{aligned}
				-\int_{\mathbb{T}^d} S_0 \ \varphi(0,\cdot) \ \textup{d}x &\geq \int_{0}^{T} \int_{\mathbb{T}^d} \left[\varrho s(\varrho, \vartheta) \big( \partial_t\varphi +  \vu \cdot \nabla_x \varphi \big)+ \frac{\bm{q}(\vartheta, \nabla_x \vartheta)}{\vartheta} \cdot \nabla_x \varphi \right] \textup{d}x \textup{d}t \\
				&+ \int_{0}^{T} \int_{\mathbb{T}^d} \frac{\varphi}{\vartheta} \left( \mathbb{S}(\vt, \mathbb{D}_x \vu): \mathbb{D}_x \vu -\frac{\bm{q}(\vartheta, \nabla_x \vartheta) \cdot \nabla_x \vartheta}{\vartheta}\right)  \ \textup{d}x \textup{d}t \\
				&-\frac{1}{\varepsilon} \int_{0}^{T} \int_{\mathbb{T}^d \setminus \Omega} \frac{\varphi}{\vartheta} \ |\vartheta-\vartheta_B|^k (\vartheta-\vartheta_B)  \ \textup{d}x \textup{d}t
			\end{aligned}
		\end{equation}
		holds for any $\varphi \in C^1_c([0,T) \times \mathbb{T}^d)$, $\varphi \geq 0$.
		\item[(v)] \textit{\textit{Total energy balance}}:
		The integral inequality
		\begin{equation} \label{M16}
			\begin{aligned}
				&\psi (\tau)\int_{\mathbb{T}^d}\left( \frac{1}{2} \varrho |\vu|^2 +\varrho 	e(\varrho,\vartheta)  \right)(\tau,\cdot) \  \textup{d}x  \\
				&\quad- \int_{0}^{\tau} \partial_t \psi \int_{\mathbb{T}^d}\left[ \frac{1}{2} \varrho |\vu|^2 +\varrho e(\varrho,\vartheta) \right] \textup{d}x \textup{d}t \\
				&\quad + \frac{1}{\varepsilon} \int_{0}^{\tau} \psi  \int_{\mathbb{T}^d \setminus \Omega} |\vu|^2 \ \textup{d}x \textup{d}t + \frac{1}{\varepsilon} \int_{0}^{\tau} \psi \int_{\mathbb{T}^d \setminus \Omega} |\vartheta -\vartheta_B|^k (\vartheta -\vartheta_B) \ \textup{d}x \textup{d}t\\
				& \quad   \leq \psi (0)\int_{\mathbb{T}^d}\left( \frac{1}{2} \frac{|\vm_0|^2}{\vr_0} + \vr_0 	e(\vr_0,S_0)  \right) \  \textup{d}x
			\end{aligned}
		\end{equation}
		holds for a.e. $\tau \in (0,T)$ and any $\psi \in C^1[0,T]$, $\psi \geq 0$.
	\end{itemize}
\end{Definition}

A suitable concept of a weak solution for the system \eqref{i1}--\eqref{i3} endowed with the Dirichlet boundary conditions \eqref{i4} has been developed only recently in \cite{ChauFei}.

	\begin{Definition} [{\bf Weak solution of the Dirichlet problem}] \label{MD2}
	We say that the trio of functions $[\vr, \vt, \vu ]$ is a \emph{weak solution} of the Navier--Stokes--Fourier system \eqref{i1}--\eqref{i3}, with the Dirichlet boundary
	conditions \eqref{i4} and the initial data
	\[
	\vr(0, \cdot) = \vr_0,\ (\vr \vu) (0, \cdot) = \vm_0,\ \vr s(\vr, \vt) (0, \cdot) = S_0
	\]	
	if the following holds.
	\begin{itemize}
		\item[(i)] \textit{\textit{Weak formulation of the continuity equation}}: The integral identity
		\begin{equation} \label{M17}
			- \intO{ \varrho_0 \varphi(0,\cdot) }  = \int_{0}^{T} \intO{ [\varrho \partial_t \varphi + \varrho \vu \cdot \nabla_x \varphi] } \textup{d}t,
		\end{equation}
		holds for any $\varphi \in C^1_c([0,T) \times \Ov{\Omega})$.
		\item[(ii)] \textit{Weak formulation of the renormalized continuity equation}: For any function
		\[
		b \in C^1[0, \infty),\ b' \in C_c[0, \infty),
		\]
		the integral identity
		\begin{equation} \label{M18}
			- \intO{ b(\vr_0) \varphi(0,\cdot) } = \int_{0}^{T} \intO{ [b(\varrho) \partial_t\varphi+ b(\varrho)\vu \cdot \nabla_x\varphi + \varphi \ \Big( b(\varrho) -
			b'(\vr) \vr \Big)\Div \vu ] } \textup{d}t
		\end{equation}
		holds for any $\varphi \in C_c^1([0,T) \times \Ov{\Omega})$.
		\item[(iii)] \textit{\textit{Weak formulation of the momentum equation}}: The integral identity
		\begin{equation} \label{M19}
			\begin{aligned}
				- \intO{ \vm_0 \cdot \bfphi (0,\cdot) } &= \int_{0}^{T} \intO{ [\varrho \vu \cdot \partial_t \bfphi + (\varrho \vu \otimes \vu) : \nabla_x \bfphi+ p(\varrho, \vartheta)\Div \bfphi] } \textup{d}t \\
				&- \int_{0}^{T} \intO{ \mathbb{S}(\vt, \mathbb{D}_x \vu): \mathbb{D}_x \bfphi } \textup{d}t
				\end{aligned}
		\end{equation}
		holds for any $\bfphi \in C^1_c([0,T) \times \Omega; R^d)$.
		\item[(iv)] \textit{\textit{Weak formulation of the entropy inequality}}: The integral inequality
		\begin{equation} \label{M20}
			\begin{aligned}
				-\intO{ S_0 \ \varphi(0,\cdot) } &\geq \int_{0}^{T} \intO{ \left[\varrho s(\varrho, \vartheta) \big( \partial_t\varphi +  \vu \cdot \nabla_x \varphi \big)+ \frac{\bm{q}(\vartheta, \nabla_x \vartheta)}{\vartheta} \cdot \nabla_x \varphi \right] } \textup{d}t \\
				&+ \int_{0}^{T} \intO{ \frac{\varphi}{\vartheta} \left( \mathbb{S}(\vt, \mathbb{D}_x \vu): \mathbb{D}_x \vu -\frac{\bm{q}(\vartheta, \nabla_x \vartheta) \cdot \nabla_x \vartheta}{\vartheta}\right)  } \textup{d}t
			\end{aligned}
		\end{equation}
		holds for any $\varphi \in C^1_c([0,T) \times \Omega)$, $\varphi \geq 0$.
		\item[(v)] \textit{\textit{Ballistic energy balance}}:
		For any
		\begin{equation} \label{M21}
		\tvt \in C^1([0,T] \times \Ov{\Omega}),\ \inf \tvt > 0,\ \tvt|_{\partial \Omega} = \vtB
		\end{equation}
	the integral inequality
		\begin{equation} \label{M22}
			\begin{aligned}
				&\intO{ \left( \frac{1}{2} \varrho |\vu|^2 +\varrho 	e(\varrho,\vartheta) - \tvt \vr s(\vr, \vt) \right)(\tau,\cdot) }  \\
				&\quad + \int_{0}^{\tau} \intO{ \frac{\tvt}{\vartheta} \left( \mathbb{S}(\vt, \mathbb{D}_x \vu): \mathbb{D}_x \vu -\frac{\bm{q}(\vartheta, \nabla_x \vartheta) \cdot \nabla_x \vartheta}{\vartheta}\right)  } \textup{d}t\\
				& \quad   \leq \intO{ \left( \frac{1}{2} \frac{|\vm_0|^2}{\vr_0} + \vr_0 	e(\vr_0,S_0) - \tvt(0, \cdot) S_0  \right) } \\
				&\quad- \int_0^\tau \intO{ \left[ \vr s(\vr, \vt) \Big( \partial_t \tvt + \vu \cdot
					\Grad \tvt \Big) + \frac{\vc{q} (\vt, \Grad \vt) \cdot \Grad \tvt }{\vt} \right] } \dt
			\end{aligned}
		\end{equation}
		holds for a.e. $\tau \in (0,T)$.
	\end{itemize}
\end{Definition}

Apparently, the main difference between the two concepts of weak solutions is the total, ballistic energy balance \eqref{M16}, \eqref{M22}, respectively. In addition, the pointwise inequality
\eqref{M22} is weaker than its counterpart \eqref{M16} stated in the differential form. Still \eqref{M22} is sufficient for showing the weak--strong uniqueness property, see \cite{ChauFei}.

\subsection{Main result}

Having collected the necessary material, we are ready to state the main theoretical result of the paper.

\begin{Theorem}[\bf Convergence of the penalization method] \label{MT1}
	
	Let $\Omega \subset R^d$, $d=2,3$, be a bounded Lipschitz domain. Suppose that the boundary function $\vtB$ admits the extension \eqref{M1}. Let the thermodynamic functions $p$, $e$, $s$ as well as
	the transport coefficients $\mu$, $\eta$, $\kappa$ satisfy the hypotheses \eqref{M2}--\eqref{M11}. Consider a family of measurable initial data
	\[
	\vr_{0,\ep} > 0,\ \vm_{0, \ep},\ S_{0,\ep}
	\]
	defined on $\mathbb{T}^d$
	and satisfying
	\begin{align}
		\vr_{0,\ep} &\to \vr_0 \ \mbox{in}\ L^1(\mathbb{T}^d), \br
		\vm_{0, \ep} &\to \vm_0 \ \mbox{weakly in} \ L^q(\mathbb{T}^d; R^d) \ \mbox{for some}\ q > 1, \br
		S_{0, \ep} &\to S_0 \ \mbox{weakly in} \
		 L^q(\mathbb{T}^d) \ \mbox{for some}\ q > 1,\ \br
		\intTd{ \left( \frac{1}{2} \frac{|\vm_{0,\ep}|^2}{\vr_{0,\ep}} + \vr_{0, \ep} e(\vr_{0,\ep}, S_{0, \ep}) \right) } &\to
		\intTd{ \left( \frac{1}{2} \frac{|\vm_{0}|^2}{\vr_{0}} + \vr_{0}e(\vr_0, S_{0}) \right) }
		\label{M23}
		\end{align}
	as $\ep \to 0$, where
	\begin{itemize}
	
	\item
		\begin{equation} \label{M24bis}
			\vr_0 \geq 0,\ \vm_0(x) = 0,\ S_0 (x) = \vr_0(x) s (\vr_0(x), \vtB(x))  \ \mbox{for any}\ x \in \mathbb{T}^d \setminus \Ov{\Omega}
		\end{equation}
	if $\vtB = \vtB(x)$ is independent of $t$;
	
	\item
	\begin{equation} \label{M24}
			\vr_0 (x) = 0,\ \vm_0(x) = 0,\ S_0 (x) = \vr_0(x) s (\vr_0(x), \vtB(0, x)) = \frac{4a}{3} \vtB^3(0, x) \ \mbox{for any}\ x \in \mathbb{T}^d \setminus \Ov{\Omega}
		\end{equation}
	if $\vtB = \vtB(t,x)$.

\end{itemize}	
	Let $(\vre, \vte, \vue)_{\ep > 0}$ be the corresponding family of weak solutions to the penalized problem specified in Definition \ref{MD1}, with the parameter $k >  \beta - 1$.
	
	Then, up to a suitable subsequence,
	\begin{align}
		\vre &\to \vr \ \mbox{in}\ C_{\rm weak}([0,T]; L^{\frac{5}{3}}(\Omega)) \ \mbox{and (strongly) in}\ L^1((0,T) \times \Omega), \br
		\vue &\to \vu \ \mbox{weakly in}\ L^2(0,T; W^{1,2}(\Omega; R^d)),\ \br
		\vte &\to \vt \ \mbox{weakly in}\  L^2(0,T; W^{1,2}(\Omega)) \ \mbox{and strongly in}\ L^2((0,T) \times \Omega),
		\nonumber
		\end{align}
where $(\vr, \vt, \vu)$ is a weak solution of the Navier--Stokes--Fourier system \eqref{i1}--\eqref{i3}, with the boundary conditions \eqref{i4} and the initial conditions
$(\vr_0, \vm_0, S_0)$ in the sense of Definition \ref{MD2}.

	\end{Theorem}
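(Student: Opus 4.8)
\medskip
\noindent
\textbf{Proof strategy.} The plan is the one suggested by the organization of the paper: first derive bounds on $(\vre,\vte,\vue)$ that are uniform in $\ep$ from the total energy balance \eqref{M16} and the entropy inequality \eqref{M15}; then extract weakly and strongly convergent subsequences and identify the limits of the nonlinear quantities; finally pass to the limit in the weak formulation of Definition~\ref{MD1} so as to recover Definition~\ref{MD2}. The decisive structural fact is that the two singular terms in \eqref{i6}, \eqref{i7} are harmless exactly where they must be used: they vanish identically against test functions supported in $(0,T)\times\Omega$, and in the energy/entropy balances they assemble into nonnegative quantities. Working on $\mathbb{T}^d$ rather than on a Lipschitz domain is actually a convenience for the density compactness step, since $\mathbb{T}^d$ has no boundary.

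\emph{Uniform bounds.} With $\psi\equiv 1$ in \eqref{M16} and the convergence of initial energies in \eqref{M23}, one controls $\vre|\vue|^2$ and $\vre e(\vre,\vte)$ in $L^\infty(0,T;L^1(\mathbb{T}^d))$, and in particular
\[
\frac{1}{\ep}\int_0^T\intOc{|\vue|^2}\le c,\qquad \frac{1}{\ep}\int_0^T\intOc{|\vte-\vtB|^k(\vte-\vtB)}\le c.
\]
The radiative part of $e$ yields $\vte\in L^\infty(0,T;L^4(\mathbb{T}^d))$. Combining \eqref{M16} with \eqref{M15} tested by the function $\tvt$ built in the last paragraph gives a ballistic energy inequality on $\mathbb{T}^d$, hence a uniform bound on the entropy production; Korn's inequality then bounds $\vue$ in $L^2(0,T;W^{1,2}(\mathbb{T}^d;R^d))$ and $\log\vte$, $\vte^{\beta/2}$ in $L^2(0,T;W^{1,2}(\mathbb{T}^d))$, so $\vte$ is bounded in $L^2(0,T;W^{1,2}(\mathbb{T}^d))$. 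Bogovskii-type test functions in \eqref{M14}, used on subdomains compactly contained in $\Omega$ and in $\mathbb{T}^d\setminus\Ov\Omega$, give the usual extra integrability of $p(\vre,\vte)$ and of $\vre$. The hypothesis $k>\beta-1$ guarantees that the second bound above forces $\vte-\vtB\to 0$ on $(0,T)\times(\mathbb{T}^d\setminus\Omega)$ in a norm strong enough to control the heat flux and $\vre s$ there, and it makes the penalization temperature term in \eqref{M15} bounded in $L^1((0,T)\times(\mathbb{T}^d\setminus\Omega))$.

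\emph{Compactness, boundary conditions and the limit in the field equations.} Along a subsequence, $\vre\to\vr$ in $C_{\rm weak}([0,T];L^{5/3}(\mathbb{T}^d))$ and $\vue\to\vu$, $\vte\to\vt$ weakly in $L^2(0,T;W^{1,2}(\mathbb{T}^d))$. The momentum $\vre\vue$ has a time derivative controlled locally in $\Omega$ (the singular force, though not uniformly bounded, is supported in the complement and hence does not enter when one tests with functions compactly supported in $\Omega$), which yields $\sqrt{\vre}\,\vue\to\sqrt{\vr}\,\vu$ in $L^2_{\rm loc}((0,T)\times\Omega)$; the entropy equation provides time compactness of $\vre s(\vre,\vte)$ and, via the monotonicity of $s$ in $\vt$, the strong convergence $\vte\to\vt$ in $L^2((0,T)\times\Omega)$ (in fact on $\mathbb{T}^d$). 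Strong convergence of the density follows from the standard compressible-fluid argument---effective viscous flux identity, the renormalized equation \eqref{M13}, propagation of the oscillation defect measure which vanishes at $t=0$---carried out on the torus. From the penalization bounds, $\vu=0$ and $\vt=\vtB$ a.e. on $(0,T)\times(\mathbb{T}^d\setminus\Omega)$; since $\vu\in L^2(0,T;W^{1,2}(\mathbb{T}^d;R^d))$ and $\vt\in L^2(0,T;W^{1,2}(\mathbb{T}^d))$, taking traces on $\partial\Omega$ gives exactly the boundary conditions \eqref{i4}. To pass to the limit in the equations one restricts the test functions in \eqref{M12}--\eqref{M13} to $[0,T)\times\Ov\Omega$ and those in \eqref{M14}--\eqref{M15} to functions supported in $(0,T)\times\Omega$; in the latter case $\mathds{1}_{\mathbb{T}^d\setminus\Omega}\bfphi\equiv 0$, $\mathds{1}_{\mathbb{T}^d\setminus\Omega}\varphi\equiv 0$, the singular terms disappear, and one obtains \eqref{M17}--\eqref{M20} using the convergences above and weak lower semicontinuity (convexity) of the entropy production.

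\emph{The ballistic energy balance and the main difficulty.} Given $\tvt$ as in \eqref{M21}, extend it to $[0,T]\times\mathbb{T}^d$ with $\tvt=\vtB$ on $[0,T]\times(\mathbb{T}^d\setminus\Omega)$ (possible since $\tvt|_{\partial\Omega}=\vtB$, $\inf\tvt>0$). Choosing $\psi\to\mathds{1}_{[0,\tau]}$ in \eqref{M16} and $\varphi\to\mathds{1}_{[0,\tau]}\tvt$ in \eqref{M15} and subtracting, the two penalization temperature contributions combine into
\[
\frac{1}{\ep}\int_0^\tau\intOc{\Big(1-\frac{\vtB}{\vte}\Big)|\vte-\vtB|^k(\vte-\vtB)}=\frac{1}{\ep}\int_0^\tau\intOc{\frac{|\vte-\vtB|^{k+2}}{\vte}}\ge 0,
\]
which, together with the nonnegative velocity term $\ep^{-1}\int_0^\tau\intOc{|\vue|^2}$, is discarded, leaving a ballistic inequality on $\mathbb{T}^d$. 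Letting $\ep\to0$ (lower semicontinuity on the left, \eqref{M23} on the right) and splitting $\mathbb{T}^d=\Omega\cup(\mathbb{T}^d\setminus\Omega)$: on the complement $\vu=0$, $\vt=\tvt=\vtB$, and $\vr$ is time-independent there---equal to $\vr_0$ under \eqref{M24bis}, or identically $0$ under \eqref{M24}. A short computation using the explicit equations of state \eqref{M2}--\eqref{M4} then shows that the contributions over $\mathbb{T}^d\setminus\Omega$ on the two sides cancel identically; this cancellation is precisely what the data conditions \eqref{M24bis}, \eqref{M24} are tailored to produce, and what remains is the inequality \eqref{M22} on $\Omega$. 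I expect this cancellation---together with the lower-semicontinuity arguments for the ballistic energy and the entropy production under only weak convergence of $\mathbb{D}_x\vue$ and $\Grad\vte$---to be the main obstacle; the secondary technical effort is keeping all a priori estimates uniform on the whole of $\mathbb{T}^d$ and running the effective-viscous-flux and oscillation-defect machinery there.
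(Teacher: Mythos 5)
Your overall architecture (ballistic combination of \eqref{M16} and \eqref{M15}, compactly supported test functions killing the penalty terms, strong density/temperature convergence in $\Omega$, recovery of the boundary conditions from the $\ep$-scaled penalization bounds, limit in the ballistic inequality with cancellation of the exterior contributions) matches the paper. However, there is a genuine gap in the uniform-bound step. First, testing \eqref{M16} with $\psi\equiv 1$ does \emph{not} by itself yield the bounds you display, because the temperature penalty term $\frac1\ep\int|\vte-\vtB|^k(\vte-\vtB)$ is sign-indefinite; only after the ballistic combination does it become $\frac1\ep\int\frac{|\vte-\vtB|^{k+2}}{\vte}\ge 0$ (you do note this later, so the order of your argument needs fixing rather than the idea). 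More seriously, after the combination you jump from the ballistic inequality to the Korn and $\vte^{\beta/2}$, $\log\vte$ bounds without addressing the right-hand side term $\int \vc{q}(\vte,\Grad\vte)\cdot\Grad\tvt/\vte \sim \int \vte^{\beta-1}|\Grad\vte|\,|\Grad\vtB|$. Since $\beta>6$, this term is \emph{not} dominated by the dissipation plus the energy (which only controls $\vte^4$), and a Cauchy--Schwarz absorption fails because the resulting $\int\vte^{\beta}$ carries a large constant. Closing the Gronwall argument is exactly where the special extension \eqref{M1} is needed: one takes $\tvt=\vtB$, uses $\Delta_x\vtB=0$ in $\Omega$ to integrate by parts via $K$ with $K'(\vt)=\kappa(\vt)/\vt$, controls the exterior volume term by the penalty dissipation (this is where $k>\beta-1$ first enters, not only in the limit passage as you claim), and controls the interface term $\int_{\partial\Omega}K(\vte)[\Grad\vtB\cdot\vc{n}\pm]$ by a trace interpolation inequality (Lemma \ref{LU1}) applied to $\vte^{\beta/2}$ on $\mathbb{T}^d\setminus\Omega$. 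None of this machinery appears in your proposal, and without it the a priori estimates \eqref{U13} are not established.

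A secondary gap concerns the time-independent case \eqref{M24bis}. You assert that on $\mathbb{T}^d\setminus\Omega$ the limit density equals $\vr_0$ and that the exterior contributions ``cancel identically.'' But precisely in this case the strong convergence \eqref{C4} of $\vre$ outside $\Omega$ is unavailable, so the limits of the nonlinear exterior terms $\vre e(\vre,\vte)-\vtB\vre s(\vre,\vte)$ cannot be identified; the paper instead obtains a one-sided bound via Fatou, the factorization of the Young measure (using $\vte\to\vtB$ strongly), convexity of $\tvr\mapsto\tvr e(\tvr,\vtB)-\vtB\tvr s(\tvr,\vtB)$ and Jensen's inequality, after which the exterior terms cancel against the initial-data terms. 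Your generic appeal to lower semicontinuity does not substitute for this argument as stated; you should also track explicitly the nonvanishing radiative contributions $a\vtB^4$, $\frac{4a}{3}\vtB^3$ on the exterior in the time-dependent case, which cancel only through the telescoping of $-\int\vr s\,\partial_t\vtB$ with the initial and terminal energy terms, and verify equi-integrability of $\kappa(\vte)\Grad\vte/\vte$ (using $k+1>\beta$ outside $\Omega$ and interpolation inside) before passing to the limit in the heat-flux term.
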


\begin{Remark} \label{MR1}
	
	The existence theory \cite[Chapter 3]{FeNov6A} can be adapted in a straightforward manner to provide a family of weak solutions $(\vre, \vte, \vue)_{\ep > 0}$ to the penalized problem
	\emph{assumed} in the hypotheses of Theorem \ref{MT1}. Accordingly, Theorem \ref{MT1} represents an alternative proof of existence of a weak solution for the limit system, cf. \cite{ChauFei}.

\end{Remark}

\begin{Remark} \label{MR2}
	
	As shown in \cite[Section 3.2, formula (3.39)]{FeNov6A},
	\begin{equation} \label{M25}
	0 \leq s_m(\vr, \vt) \aleq 1 + |\log(\vr)| + [\log(\vt)]^+.
	\end{equation}
	Consequently, we may set
	\[
	\vr_0 s(\vr_0,  \vtB(0,\cdot)) = \vr_0 s_m(\vr_0,  \vtB(0,\cdot)) + \frac{4a}{3}  \vtB^3(0,\cdot) = \frac{4a}{3} \vtB^3(0,\cdot) \ \mbox{whenever}\ \vr_0 = 0
	\]
	in agreement with \eqref{M24}
	
	\end{Remark}

\begin{Remark} \label{MR3}

The hypothesis requiring the limit initial density to be zero outside $\Omega$ can be dropped provided $\vtB$ is independent of time, see Section~\ref{nonzero}.

\end{Remark}

The forthcoming two sections are devoted to the proof of Theorem \ref{MT1}.

\section{Uniform bounds}
\label{U}

Our goal is to establish uniform bounds for the sequence of solutions of the penalized problem independent of $\ep \to 0$.

\subsection{Ballistic energy inequality}

The crucial point is rewriting the ballistic energy inequality in terms of the solutions of the penalized problem. To this end, consider
\[
\varphi (t,x) = \psi(t) \tvt(t,x), \ \psi \in C^1_c[0,T),\ \psi \geq 0,
\]
where
\begin{equation} \label{U1}
\tvt \in C^1([0,T] \times \mathbb{T}^d), \ \inf \tvt > 0,\ \tvt|_{\mathbb{T}^d \setminus \Omega} = \vtB
\end{equation}
as a test function in the penalized entropy inequality \eqref{M15}. We get
\[
	\begin{aligned}
		- \psi(0) \int_{\mathbb{T}^d} S_{0,\ep} \ \tvt(0,\cdot) \ \textup{d}x &\geq \int_{0}^{T} \psi \int_{\mathbb{T}^d} \left[\vre s(\vre, \vte) \big( \partial_t\tvt +  \vue \cdot  \Grad \tvt \big)+ \frac{\bm{q}(\vte, \Grad \vte)}{\vte} \cdot \nabla_x \tvt \right] \textup{d}x \textup{d}t \\
		&+ \int_{0}^{T} \psi \int_{\mathbb{T}^d} \frac{\tvt}{\vte} \left( \mathbb{S}(\vte, \mathbb{D}_x \vue): \mathbb{D}_x \vue -\frac{\bm{q}(\vte, \nabla_x \vte) \cdot \Grad \vte}{\vte}\right)  \ \textup{d}x \textup{d}t \\
		&-\frac{1}{\varepsilon} \int_{0}^{T} \psi \int_{\mathbb{T}^d \setminus \Omega} \frac{\vtB}{\vte} \ |\vte -\vartheta_B|^k (\vte -\vartheta_B)  \ \textup{d}x \textup{d}t \\
		&+ \int_0^T \partial_t \psi \int_{\mathbb{T}^d} \vre s(\vre, \vte) \tvt \dx \dt;
	\end{aligned}
\]
whence, after a straightforward manipulation,
\begin{equation} \label{U2}
\begin{aligned}
	- \int_{\mathbb{T}^d} S_{0,\ep} \ \tvt(0,\cdot) \ \textup{d}x &\geq \int_{0}^{\tau} \int_{\mathbb{T}^d} \left[\vre s(\vre, \vte) \big( \partial_t\tvt +  \vue \cdot  \Grad \tvt \big)+ \frac{\bm{q}(\vte, \Grad \vte)}{\vte} \cdot \nabla_x \tvt \right] \textup{d}x \textup{d}t \\
	&+ \int_{0}^{\tau} \int_{\mathbb{T}^d} \frac{\tvt}{\vte} \left( \mathbb{S}(\vte, \mathbb{D}_x \vue): \mathbb{D}_x \vue -\frac{\bm{q}(\vte, \nabla_x \vte) \cdot \Grad \vte}{\vte}\right)  \ \textup{d}x \textup{d}t \\
	&-\frac{1}{\varepsilon} \int_{0}^{\tau}  \int_{\mathbb{T}^d \setminus \Omega} \frac{\vtB}{\vte} \ |\vte -\vartheta_B|^k (\vte -\vartheta_B)  \ \textup{d}x \textup{d}t \\
	&- \int_{\mathbb{T}^d} \vre s(\vre, \vte) \tvt (\tau, \cdot) \dx \  \  \ \mbox{for a.a.}\ \tau \in (0,T).
\end{aligned}
\end{equation}

Finally, we subtract \eqref{U2} from the penalized energy balance \eqref{M16} obtaining
	\begin{equation} \label{U3}
	\begin{aligned}
		&\intTd{ \left( \frac{1}{2} \vre |\vue|^2 +\vre 	e(\vre,\vte) - \tvt \vre s(\vre, \vte) \right)(\tau,\cdot) }  \\
		&\quad + \int_{0}^{\tau} \intTd{ \frac{\tvt}{\vte} \left( \mathbb{S}(\vte, \mathbb{D}_x \vue): \mathbb{D}_x \vue -\frac{\bm{q}(\vte, \nabla_x \vte) \cdot \nabla_x \vte}{\vte}\right)  } \textup{d}t\\
		&\quad + \frac{1}{\varepsilon} \int_{0}^{\tau}  \int_{\mathbb{T}^d \setminus \Omega} \frac{1}{\vte} \ |\vte -\vartheta_B|^{k + 2}   \ \textup{d}x \textup{d}t + \frac{1}{\varepsilon} \int_{0}^{\tau}  \int_{\mathbb{T}^d \setminus \Omega} |\vue |^2   \ \textup{d}x \textup{d}t
		\\
		& \quad   \leq \intTd{ \left( \frac{1}{2} \frac{|\vm_{0,\ep}|^2}{\vr_{0,\ep}} + \vr_{0,\ep} 	e(\vr_{0,\ep} ,S_{0,\ep}) - \tvt(0, \cdot) S_{0,\ep}  \right) } \\
		&\quad- \int_0^\tau \intTd{ \left[ \vre s(\vre, \vte) \Big( \partial_t \tvt + \vue \cdot
			\Grad \tvt \Big) + \frac{\vc{q} (\vte, \Grad \vte) \cdot \Grad \tvt }{\vte} \right] } \dt
	\end{aligned}
\end{equation}
for a.e. $\tau \in (0,T)$.

\subsection{Mass conservation}

It follows from the equation of continuity \eqref{M12} that the total mass of the fluid is a constant of motion. Specifically, in accordance with hypothesis \eqref{M24},
\begin{equation} \label{U4}
	M_{0,\ep} = \intTd{ \vre (\tau, \cdot) } = \intTd{ \vr_{0,\ep} } \to \intO{ \vr_0 } = M_0 \ \mbox{as}\ \ep \to 0.
	\end{equation}

\subsection{Energy estimates}

In accordance with hypothesis \eqref{M1}, we may consider $\tvt = \vtB$, where $\vtB$ is the extension of the boundary temperature specified in \eqref{M1}, as a ``test'' function in the ballistic energy
inequality \eqref{U3}.
{
Strictly speaking, \eqref{U3} was originally derived for $C^1$ functions, however, extension of its validity to Lipschitz test functions is straightforward.}
We obtain
	\begin{equation} \label{U5}
	\begin{aligned}
		&\intTd{ \left( \frac{1}{2} \vre |\vue|^2 +\vre 	e(\vre,\vte) - \vtB \vre s(\vre, \vte) \right)(\tau,\cdot) }  \\
		&\quad + \int_{0}^{\tau} \intTd{ \frac{\vtB}{\vte} \left( \mathbb{S}(\vte, \mathbb{D}_x \vue): \mathbb{D}_x \vue -\frac{\bm{q}(\vte, \nabla_x \vte) \cdot \nabla_x \vte}{\vte}\right)  } \textup{d}t\\
		&\quad + \frac{1}{\varepsilon} \int_{0}^{\tau}  \int_{\mathbb{T}^d \setminus \Omega} \frac{1}{\vte} \ |\vte -\vartheta_B|^{k + 2}   \ \textup{d}x \textup{d}t + \frac{1}{\varepsilon} \int_{0}^{\tau}  \int_{\mathbb{T}^d \setminus \Omega} |\vue |^2   \ \textup{d}x \textup{d}t
		\\
		& \quad   \leq \intTd{ \left( \frac{1}{2} \frac{|\vm_{0,\ep}|^2}{\vr_{0,\ep}} + \vr_{0,\ep} 	e(\vr_{0,\ep} ,S_{0,\ep}) - \vtB(0, \cdot) S_{0,\ep}  \right) } \\
		&\quad- \int_0^\tau \intTd{ \left[ \vre s(\vre, \vte) \Big( \partial_t \vtB + \vue \cdot
			\Grad \vtB \Big) + \frac{\vc{q} (\vte, \Grad \vte) \cdot \Grad \vtB }{\vte} \right] } \dt
	\end{aligned}
\end{equation}
for a.e. $\tau \in (0,T)$.

It follows from the hypotheses \eqref{M9}, \eqref{M10} and the Korn--Poincar\' e inequality (see e.g. \cite[Proposition 2.1]{FeNov6A}) that
\begin{equation} \label{U6}
	\| \vue \|^2_{W^{1,2}(\mathbb{T}^d; R^d)} \aleq
\intTd{ \frac{\vtB}{\vte} \mathbb{S}(\vte, \mathbb{D}_x \vue): \mathbb{D}_x \vue } + \frac{1}{\ep} \int_{\mathbb{T}^d \setminus \Omega} |\vue |^2   \ \textup{d}x.
	\end{equation}
Similarly, in accordance with hypothesis \eqref{M11},
\begin{align}
\| \vte^{\frac{\beta}{2}} \|_{W^{1,2}(\mathbb{T}^d)}^2 &+
\| \log( \vte) \|_{W^{1,2}(\mathbb{T}^d)}^2 \br &\leq c(\vtB) \left[ 1  -  \intTd{\vtB \frac{\bm{q}(\vte, \nabla_x \vte) \cdot \nabla_x \vte}{\vte^2} } +
		\frac{1}{\ep} \int_{\mathbb{T}^d \setminus \Omega} \frac{1}{\vte} \ |\vte -\vartheta_B|^{k + 2} \right].
		\label{U7}
	\end{align}

In view of hypothesis \eqref{M23}, the energy of the initial data is uniformly bounded, and we may use \eqref{U6}, \eqref{U7} to reduce the inequality \eqref{U5} to
	\begin{equation} \label{U8}
	\begin{aligned}
		&\intTd{ \left( \frac{1}{2} \vre |\vue|^2 +\vre 	e(\vre,\vte) - \vtB \vre s(\vre, \vte) \right)(\tau,\cdot) }  \\
		&\quad + \int_0^\tau \left( 	\| \vue \|^2_{W^{1,2}(\mathbb{T}^d; R^d)} + \| \vte^{\frac{\beta}{2}} \|_{W^{1,2}(\mathbb{T}^d)}^2 +
		\| \log( \vte) \|_{W^{1,2}(\mathbb{T}^d)}^2  \right) \dt \\
		&\quad + \frac{1}{\varepsilon} \int_{0}^{\tau}  \int_{\mathbb{T}^d \setminus \Omega} \frac{1}{\vte} \ |\vte -\vartheta_B|^{k + 2}   \ \textup{d}x \textup{d}t + \frac{1}{\varepsilon} \int_{0}^{\tau}  \int_{\mathbb{T}^d \setminus \Omega} |\vue |^2   \ \textup{d}x \textup{d}t
		\\
		& \quad   \aleq \left[ 1 +
		\left|  \int_0^\tau \intTd{ \left[ \vre s(\vre, \vte) \Big( \partial_t \vtB + \vue \cdot
			\Grad \vtB \Big) + \frac{\vc{q} (\vte, \Grad \vte) \cdot \Grad \vtB }{\vte} \right] } \dt \right| \right]
	\end{aligned}
\end{equation}
for a.e. $\tau \in (0,T)$.

Next, recalling $\Del \vtB = 0$ in $\Omega$ we may integrate
{
\begin{align}
- \intTd{ \frac{ \vc{q}(\vte, \Grad \vte) \cdot \Grad \vtB }{\vte} } &= \intO{ \frac{ \kappa (\vte) \Grad \vte \cdot \Grad \vtB }{\vte} } +
\int_{\mathbb{T}^d \setminus \Omega} \frac{ \kappa (\vte) \Grad \vte \cdot \Grad \vtB }{\vte} \dx \br &=
\int_{\partial \Omega} K(\vte) [\Grad \vtB \cdot \vc{n} \pm ] \ \D \sigma_x -
\int_{\mathbb{T}^d \setminus \Omega} { K (\vte) \Del \vtB } \dx,
\nonumber
\end{align}
where we introduced a function $K$, $K'(\vt) = \frac{\kappa (\vt)}{\vt}$, and where $[\Grad \vtB \cdot \vc{n} \pm ]$ denotes a possible jump
of the normal derivative of $\vtB$ across $\partial \Omega$.
As
\[
|K(\vte)| \aleq \left(1 + |\log(\vte)| + \vte^\beta \right)
\]
the volume integral
\[
\int_{\mathbb{T}^d \setminus \Omega} { K (\vte) \Del \vtB } \dx
\]
is controlled by the left--hand side of \eqref{U8} as soon as $k > \beta - 1$. To control the surface integral, we need the following standard result.

\begin{Lemma} \label{LU1}
Let $Q \subset R^d$ be a bounded Lipschitz domain and $\delta > 0$ arbitrary.
Then there exists $c(\delta)$ such that
\[
\| v \|^2_{L^2(\partial Q)} \leq \delta \| \Grad v \|^2_{L^2(Q; R^d)} + c(\delta) \| v \|^2_{L^2(Q)}
\]
for any $v \in W^{1,2}(Q)$.
\end{Lemma}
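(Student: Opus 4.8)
The plan is to reduce the trace estimate to one application of the Gauss--Green formula combined with a weighted Young inequality, using the fact that on a bounded Lipschitz domain one can construct a smooth vector field that is uniformly transversal to the boundary. First I would fix a vector field $\mathbf{b} \in C^\infty(\overline{Q};R^d)$ with $\mathbf{b}\cdot\mathbf{n} \ge 1$ for $\sigma$-a.e.\ point of $\partial Q$, where $\mathbf{n}$ is the outer unit normal. For a Lipschitz domain this is classical: cover $\partial Q$ by finitely many charts in which $\partial Q$ is the graph $x_d = \gamma(x')$ of a Lipschitz function with $Q$ lying locally below the graph; in such a chart the outward normal is $(-\nabla\gamma,1)/\sqrt{1+|\nabla\gamma|^2}$, so the constant field $e_d$ satisfies $e_d\cdot\mathbf{n}\ge (1+\|\nabla\gamma\|_\infty^2)^{-1/2}>0$ at a.e.\ boundary point. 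Patching these fields together with a partition of unity subordinate to the charts, rescaling so that the transversal component is $\ge 1$, and letting the field vanish away from $\partial Q$ produces the desired $\mathbf{b}$.

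Next, for $v \in C^1(\overline{Q})$ I would write
\[
\int_{\partial Q} v^2 \,\mathrm{d}\sigma \le \int_{\partial Q} v^2\,(\mathbf{b}\cdot\mathbf{n})\,\mathrm{d}\sigma
= \int_Q \mathrm{div}(v^2\mathbf{b})\,\mathrm{d}x
= \int_Q \big( v^2\,\mathrm{div}\,\mathbf{b} + 2 v\,\mathbf{b}\cdot\nabla v \big)\,\mathrm{d}x,
\]
the Gauss--Green formula being valid on Lipschitz domains. Bounding $\mathrm{div}\,\mathbf{b}$ and $\mathbf{b}$ by their sup-norms and applying $2ab \le \delta a^2 + \delta^{-1} b^2$ to the cross term $2\|\mathbf{b}\|_\infty \|\nabla v\|_{L^2(Q)}\|v\|_{L^2(Q)}$ yields
\[
\|v\|_{L^2(\partial Q)}^2 \le \delta \|\nabla v\|_{L^2(Q;R^d)}^2 + \Big( \|\mathrm{div}\,\mathbf{b}\|_\infty + \delta^{-1}\|\mathbf{b}\|_\infty^2 \Big)\|v\|_{L^2(Q)}^2 .
\]
Finally I would pass from $C^1(\overline{Q})$ to general $v\in W^{1,2}(Q)$ by density of smooth functions in $W^{1,2}$ of a Lipschitz domain and continuity of both sides in the $W^{1,2}(Q)$ norm (the trace operator being bounded $W^{1,2}(Q)\to L^2(\partial Q)$). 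Setting $c(\delta):=\|\mathrm{div}\,\mathbf{b}\|_\infty + \delta^{-1}\|\mathbf{b}\|_\infty^2$ gives the claim.

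The only genuinely non-routine point is the construction of the transversal field $\mathbf{b}$ for a merely Lipschitz boundary, where $\mathbf{n}$ exists only $\sigma$-a.e.; the remaining steps are a one-line computation. An alternative that avoids this construction is an Ehrling-type compactness argument: negating the asserted inequality produces a sequence $v_n$, normalized so that $\|v_n\|_{L^2(\partial Q)}=1$, which is bounded in $W^{1,2}(Q)$ and satisfies $\|v_n\|_{L^2(Q)}\to 0$; by the Rellich theorem and compactness of the trace embedding $W^{1,2}(Q)\hookrightarrow L^2(\partial Q)$ for Lipschitz $Q$, a subsequence converges strongly in $L^2(\partial Q)$ to a limit that must vanish (being the $L^2(Q)$-limit $0$), contradicting $\|v_n\|_{L^2(\partial Q)}=1$. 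This route is shorter but does not exhibit an explicit $c(\delta)$.
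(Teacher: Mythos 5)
Your main argument is correct, but it takes a genuinely different route from the paper. The paper proves the lemma by contradiction in a few lines: negating the inequality and normalizing on $\partial Q$ produces a sequence $w_n$ bounded in $W^{1,2}(Q)$ with $\|w_n\|_{L^2(Q)}\to 0$ and $\|w_n\|_{L^2(\partial Q)}=1$, and compactness of the trace operator forces $w_n|_{\partial Q}\to 0$ in $L^2(\partial Q)$, a contradiction --- this is exactly the Ehrling-type alternative you sketch at the end, so you have in effect reproduced the paper's proof as your backup. Your primary proof instead constructs a smooth vector field $\mathbf{b}$ on $\overline{Q}$ transversal to the (Lipschitz) boundary, applies Gauss--Green to $v^2\mathbf{b}$, and closes with Young's inequality; all steps are sound: the local graph description gives $e_d\cdot\mathbf{n}\ge (1+\|\nabla\gamma\|_\infty^2)^{-1/2}$ $\sigma$-a.e., the partition-of-unity patching and rescaling are standard, the divergence theorem holds on Lipschitz domains for such integrands, and the density of smooth functions in $W^{1,2}(Q)$ together with boundedness of the trace justifies the final limit passage. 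The trade-off is clear: your constructive proof yields an explicit constant $c(\delta)=\|\mathrm{div}\,\mathbf{b}\|_\infty+\delta^{-1}\|\mathbf{b}\|_\infty^2$ and avoids any compactness theorem, at the price of the (classical but not one-line) construction of the transversal field; the paper's argument is shorter and needs only compactness of the trace operator, but is non-quantitative --- which is harmless here, since only the existence of some $c(\delta)$ is used to absorb the surface integral in the estimate \eqref{U8}.
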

	
\begin{proof}	
Assuming the contrary, we get $\delta_0 > 0$ and sequences $(v_n )_{n=1}^\infty \subset W^{1,2}(Q)$, $C_n \to \infty$ such that
	\[
\| v_n \|^2_{L^2(\partial Q)} \geq \delta_0 \| \Grad v_n \|^2_{L^2(Q; R^d)} + C_n \| v_n \|^2_{L^2(Q)}.
\]	
Introducing $w_n = v_n / \| v_n \|_{L^2(\partial Q)}$, we get
\[
\delta_0 \| \Grad w_n \|^2_{L^2(Q; R^d)} + C_n \| w_n \|^2_{L^2(Q)} \leq 1,\ \| w_n \|^2_{L^2(\partial Q)} = 1.
\]
Consequently, by compactness of the trace operator,
\[
w_n \to 0 \ \mbox{weakly in}\ W^{1,2}(Q) \ \mbox{and} \ w_n|_{\partial Q} \to 0 \ \mbox{(strongly) in}\ L^2(\partial Q),
\]
which is a contradiction.
\end{proof}

Thus applying Lemma \ref{LU1} to
\[
v = \vte^{\frac{\beta}{2}},\ Q = \mathbb{T}^d \setminus \Omega
\]
we may infer that the surface integral
\[
\int_{\partial \Omega} K(\vte) [\Grad \vtB \cdot \vc{n} \pm ] \ \D \sigma_x
\]
is controlled by the left--hand side of \eqref{U8}.
}


We conclude by rewriting
\eqref{U8} in the form
	\begin{equation} \label{U9}
	\begin{aligned}
		&\intTd{ \left( \frac{1}{2} \vre |\vue|^2 +\vre 	e(\vre,\vte) - \vtB \vre s(\vre, \vte) \right)(\tau,\cdot) }  \\
		&\quad + \int_0^\tau \left( 	\| \vue \|^2_{W^{1,2}(\mathbb{T}^d; R^d)} + \| \vte^{\frac{\beta}{2}} \|_{W^{1,2}(\mathbb{T}^d)}^2 +
		\| \log( \vte) \|_{W^{1,2}(\mathbb{T}^d)}^2  \right) \dt \\
		&\quad + \frac{1}{\varepsilon} \int_{0}^{\tau}  \int_{\mathbb{T}^d \setminus \Omega} \frac{1}{\vte} \ |\vte -\vartheta_B|^{k + 2}   \ \textup{d}x \textup{d}t + \frac{1}{\varepsilon} \int_{0}^{\tau}  \int_{\mathbb{T}^d \setminus \Omega} |\vue |^2   \ \textup{d}x \textup{d}t
		\\
		& \quad   \aleq \left[ 1 +
		\int_0^\tau \intTd{ \vre s(\vre, \vte) \Big| \partial_t \vtB + \vue \cdot
			\Grad \vtB \Big|  } \dt  \right]
	\end{aligned}
\end{equation}
for a.e. $\tau \in (0,T)$.

Finally, in accordance with hypothesis \eqref{M4} and by virtue of \eqref{M25},
\begin{equation} \label{U10}
\vre s(\vre, \vte) |\vue|  \ = \  \vre \mathcal{S} \left( \frac{\vre}{\vte^{\frac{3}{2}}} \right) |\vue| + \frac{4a}{3} \vte^3 |\vue| \leq
\vre \mathcal{S} \left( \frac{\vre}{\vte^{\frac{3}{2}}} \right) |\vue| + \delta |\vue|^2 + c(\delta) \vte^6
	\end{equation}
for any $\delta > 0$. Moreover, in accordance with the hypotheses \eqref{M6}, \eqref{M8},
\begin{align}
0 \leq \vre \mathcal{S} \left( \frac{\vre}{\vte^{\frac{3}{2}}} \right) &\aleq \left( 1 + \vte^{\frac{3}{2}} \left( 1 + [\log(\vte)]^+ \right) \right) \ \ \ \mbox{if}\ \frac{\vre}{\vte^{\frac{3}{2}}} \leq 1, \br
0 \leq \vre \mathcal{S} \left( \frac{\vre}{\vte^{\frac{3}{2}}} \right) &\aleq \vre \ \ \ \mbox{if}\ \frac{\vre}{\vte^{\frac{3}{2}}} > 1 .
 \nonumber
\end{align}
Thus, going back to \eqref{U9}, we conclude
\begin{equation} \label{U11}
\intTd{ \vre s(\vre, \vte) |\vue| } \aleq \delta \| \vue \|^2_{L^2(\mathbb{T}^d; R^d)} + c(\delta) \| \vte \|_{L^6(\mathbb{T}^d)}^6 +  \intTd{ \vre |\vue|^2 } + M_{0,\ep}
	\end{equation}
for any $\delta > 0$.

The bound \eqref{U11} together with \eqref{U9} yield the desired conclusion
	\begin{equation} \label{U12}
	\begin{aligned}
		&\intTd{ \left( \frac{1}{2} \vre |\vue|^2 +\vre 	e(\vre,\vte) - \vtB \vre s(\vre, \vte) \right)(\tau,\cdot) }  \\
		&\quad + \int_0^\tau \left( 	\| \vue \|^2_{W^{1,2}(\mathbb{T}^d; R^d)} + \| \vte^{\frac{\beta}{2}} \|_{W^{1,2}(\mathbb{T}^d)}^2 +
		\| \log( \vte) \|_{W^{1,2}(\mathbb{T}^d)}^2  \right) \dt \\
		&\quad + \frac{1}{\varepsilon} \int_{0}^{\tau}  \int_{\mathbb{T}^d \setminus \Omega} \frac{1}{\vte} \ |\vte -\vartheta_B|^{k + 2}   \ \textup{d}x \textup{d}t + \frac{1}{\varepsilon} \int_{0}^{\tau}  \int_{\mathbb{T}^d \setminus \Omega} |\vue |^2   \ \textup{d}x \textup{d}t
		\\
		& \quad   \aleq \left[ 1 +
		\int_0^\tau \intTd{\left( \frac{1}{2} \vre |\vue|^2 +\vre 	e(\vre,\vte) \right) } \dt  \right]
	\end{aligned}
\end{equation}
for a.e. $\tau \in (0,T)$. As the entropy is dominated by the energy, we may use Gronwall's argument to conclude
\begin{align}
	{\rm ess} \sup_{\tau \in (0,T)}  \intTd{\left( \frac{1}{2} \vre |\vue|^2 +\vre 	e(\vre,\vte) \right) }  &\aleq 1, \br
\int_0^T \left( 	\| \vue \|^2_{W^{1,2}(\mathbb{T}^d; R^d)} + \| \vte^{\frac{\beta}{2}} \|_{W^{1,2}(\mathbb{T}^d)}^2 +
\| \log( \vte) \|_{W^{1,2}(\mathbb{T}^d)}^2  \right) \dt &\aleq 1,\ \br	
\int_{0}^{T}  \int_{\mathbb{T}^d \setminus \Omega} \frac{1}{\vte} \ |\vte -\vartheta_B|^{k + 2}   \ \textup{d}x \textup{d}t +  \int_{0}^{T}  \int_{\mathbb{T}^d \setminus \Omega} |\vue |^2   \ \textup{d}x \textup{d}t &\aleq \ep
	\label{U13}
	\end{align}
uniformly for $\ep > 0$.

\section{Convergence}
\label{C}

The ultimate step in the proof of Theorem \ref{MT1} is letting $\ep \to 0$ in the sequence of approximate solutions $(\vre, \vte, \vue)_{\ep > 0}$. We claim that this reduces essentially to performing the limit
in the ballistic energy inequality \eqref{U3}. Indeed the weak formulation of the equation of continuity is the same for the penalized and the limit system, while the momentum and the entropy
balance \eqref{M19}, \eqref{M20} do not see the penalization terms in \eqref{M14}, \eqref{M15}, respectively, as the relevant test functions are compactly supported. Consequently, the limit
in the equation of continuity \eqref{M12}, the momentum equation \eqref{M14} as well as the entropy inequality \eqref{M15} can be performed using the known compactness arguments
as in \cite[Chapter 3]{FeNov6A}. Thus our task reduces to:
\begin{itemize}
	\item verifying the renormalized equation of continuity for the limit $(\vr, \vu)$;
	\item performing the limit in the ballistic energy inequality \eqref{U3}.
\end{itemize}

In the following text, we focus on the general case of a time dependent $\vtB = \vtB(t,x)$ as specified in hypothesis \eqref{M24bis}. The necessary modifications to handle hypothesis \eqref{M24} are discussed in Section \ref{nonzero}.

\subsection{Renormalized equation of continuity}

Basically each step in the following arguments requires passing to a suitable subsequence in the family of approximate solutions we will not relabel for simplicity. First, as a consequence of the hypotheses
\eqref{M3}, \eqref{M7},  we have
\[
\vr^{\frac{5}{3}} + \vt^4 \aleq \vr e(\vr,\vt).
\]
Consequently, in view of the uniform bounds \eqref{U13}
\begin{align}
\vre &\to \vr \ \mbox{weakly-(*) in}\ L^\infty(0,T; L^{\frac{5}{3}}(\mathbb{T}^d)),\br
\vte &\to \vt \ \mbox{weakly-(*) in}\ L^\infty(0,T; L^{4}(\mathbb{T}^d)), \br
\vue &\to \vu \ \mbox{weakly in} \ L^2(0,T; W^{1,2}(\mathbb{T}^d; R^d)), \br
\vre \vue &\to \Ov{\vr \vu} \ \mbox{weakly-(*) in}\ L^\infty(0,T; L^{\frac{5}{4}}(\mathbb{T}^d; R^d)).
\label{C1}	
	\end{align}
In addition, as $(\vre, \vue)$ satisfy the equation of continuity \eqref{M12}, we get
\begin{align}
	\vre &\to \vr \ \mbox{in}\ C_{\rm weak}([0,T]; L^{\frac{5}{3}}(\mathbb{T}^d)), \br
	\Ov{\vr \vu} &= \vr \vu.
\label{C2}
\end{align}
In particular, the equation of continuity \eqref{M12} is satisfied in $(0,T) \times \mathbb{T}^d$ by the limit $(\vr, \vu)$.

Next, using \eqref{U13} again
we get
\begin{equation} \label{C3}
	\vue \to 0 \ \mbox{in}\ L^2((0,T) \times (\mathbb{T}^d \setminus \Omega); R^d)
	\end{equation}
yielding
\[
\vu \in L^2(0,T; W^{1,2}_0 (\Omega; R^d)).
\]
Moreover, as $\vr$ satisfies the equation of continuity, we get
\[
\partial_t \vr = 0 \ \mbox{in}\ \mathcal{D}'((0,T) \times (\mathbb{T}^d \setminus \Omega) ).
\]
By virtue of hypothesis \eqref{M24},
\[
\vr(0, \cdot) = \vr_0 = 0 \ \mbox{in}\ \mathbb{T}^d \setminus \Omega,
\]
and we may infer that
\begin{align}
\vr &= 0 \ \mbox{in}\ (0,T) \times (\mathbb{T}^d \setminus \Omega), \br
\vre &\to 0 \ \mbox{in}\ L^q((0,T) \times (\mathbb{T}^d \setminus \Omega)) \ \mbox{for any}\ 1 \leq q < \frac{5}{3}.	
	\label{C4}
	\end{align}

We conclude that $(\vr, \vu) = (0,0)$ satisfy the renormalized equation of continuity in $(0,T) \times (\mathbb{T}^d \setminus \Omega)$, while the known arguments yield the same conclusion in
$(0,T) \times \Omega$. By the same token, we may suppose
\begin{equation} \label{C5}
	\vre \to \vr \ \mbox{(strongly) in}\ L^q((0,T) \times \Omega)) \ \mbox{for any}\ 1 \leq q < \frac{5}{3}.
	\end{equation}

\subsection{Ballistic energy}
\label{ballistic}

Our ultimate goal in the proof of Theorem \ref{MT1} is to perform the limit $\ep \to 0$ in the ballistic energy inequality \eqref{U3}. The first issue to discuss is the strong convergence of the temperature.
In view of the uniform bounds \eqref{U13}, we may suppose
\begin{align}
	\vte &\to \vt \ \mbox{weakly in}\ L^2(0,T; W^{1,2}(\mathbb{T}^d)), \br
 \log(\vte) &\to \Ov{\log(\vt)} \ \mbox{weakly in}\ L^2(0,T; W^{1,2}(\mathbb{T}^d)).
 \label{C6}
 \end{align}
In addition, we have strong (a.a. pointwise) convergence in $\Omega$ by the compactness arguments of \cite[Chapter 3]{FeNov6A}, say,
\[
\vte \to \vt \ \mbox{(strongly) in}\ L^2((0,T) \times \Omega),
\]
and, again by \eqref{U13},
\begin{equation} \label{C7}
	\vte \to \vtB \ \mbox{in}\ L^{k+1}((0,T) \times (\mathbb{T}^d \setminus \Omega)).
	\end{equation}
Thus we may infer
\begin{equation} \label{C8}
	\vte \to \vt \ \mbox{in}\ L^2((0,T) \times \mathbb{T}^d),\ (\vt - \vtB) \ \mbox{in}\
	L^2(0,T; W^{1,2}_0(\Omega)),\ \Ov{\log(\vt)} = \log(\vt),
\end{equation}
in particular, $\vt > 0$ a.a. in $(0,T) \times \mathbb{T}^d$.

We are ready to perform the limit in the ballistic energy inequality \eqref{U3}. To begin, in view of the hypotheses \eqref{M23}, \eqref{M24},
\begin{align}
&\intTd{ \left( \frac{1}{2} \frac{|\vm_{0,\ep}|^2}{\vr_{0,\ep}} + \vr_{0, \ep} e(\vr_{0,\ep}, S_{0, \ep}) - \tvt(0, \cdot) S_{0,\ep} \right) } \br
&\quad \to \intO{ \left( \frac{1}{2} \frac{|\vm_{0}|^2}{\vr_{0}} + \vr_{0} e(\vr_{0}, S_{0}) - \tvt(0, \cdot) S_{0} \right) }
- \int_{\mathbb{T}^d \setminus \Omega} \frac{a}{3} \vtB^4(0, \cdot) \dx.
\label{C9}
\end{align}

Similarly, using the strong convergence of $(\vre, \vte)$ established in \eqref{C5}, \eqref{C7} we get
\begin{align}
\liminf_{\ep \to 0}
\int_{\tau - \delta}^{\tau + \delta} &\intTd{ \left( \frac{1}{2} \vre |\vue|^2 + \vr_{\ep} e(\vr_{\ep}, \vte) - \tvt \vre s(\vre, \vte) \right) } \dt \br
&\geq \int_{\tau - \delta}^{\tau + \delta} \intO{ \left( \frac{1}{2} \vr |\vu|^2 + \vr e(\vr, \vt) - \tvt \vr s(\vr, \vt) \right) } \dt
- \int_{\tau - \delta}^{\tau + \delta} \int_{\mathbb{T}^d \setminus \Omega} \frac{a}{3} \vtB^4 \ \dx \dt.
\label{C10}
\end{align}
for any $\delta > 0$.

Next, by the same argument,
\begin{align}
-	\int_0^\tau &\intTd{\vre s(\vre, \vte) \partial_t \tvt } \dt \to - \int_0^\tau \intO{ \vr s (\vr, \vt) \partial_t \tvt } \dt -
\int_0^\tau \int_{\mathbb{T}^d \setminus \Omega} \frac{4a}{3} \vtB^3 \partial_t \vtB \ \dx \dt  \br
&= - \int_0^\tau \intO{ \vr s (\vr, \vt) \partial_t \tvt } \dt -
\int_0^\tau \frac{\D }{\dt} \int_{\mathbb{T}^d \setminus \Omega} \frac{a}{3} \vtB^4  \ \dx \dt \br
&= - \int_0^\tau \intO{ \vr s (\vr, \vt) \partial_t \tvt } \dt - \int_{\mathbb{T}^d \setminus \Omega} \frac{a}{3} \vtB^4 (\tau, \cdot) \dx
+ \int_{\mathbb{T}^d \setminus \Omega} \frac{a}{3} \vtB^4 (0, \cdot) \dx.
 \label{C11}
	\end{align}
In addition, by virtue of \eqref{C3},
\begin{equation} \label{C12}
\int_0^\tau \intTd{ \vre s(\vre, \vte) \vue \cdot \Grad \tvt } \dt \to \int_0^\tau \intO{ \vr s(\vr, \vt) \vu \cdot \Grad \tvt } \dt.	
	\end{equation}

Summing up \eqref{C9}--\eqref{C12} and plugging the result in the ballistic energy inequality \eqref{U3} we obtain
	\begin{align}
		&\int_{\tau - \delta}^{\tau + \delta} \intO{ \left( \frac{1}{2} \vr |\vu|^2 +\vr 	e(\vr,\vt) - \tvt \vr s(\vr, \vt) \right)(s,\cdot) } \ \D s  \br
		&\quad + \liminf_{\ep \to 0} \int_{\tau - \delta}^{\tau + \delta} \int_{0}^{s} \intTd{ \frac{\tvt}{\vte} \left( \mathbb{S}(\vte, \mathbb{D}_x \vue): \mathbb{D}_x \vue -\frac{\bm{q}(\vte, \nabla_x \vte) \cdot \nabla_x \vte}{\vte}\right)  }\ \textup{d}t\ \D s\br
				& \quad   \leq 2 \delta \intO{ \left( \frac{1}{2} \frac{|\vm_{0}|^2}{\vr_{0}} + \vr_{0} 	e(\vr_{0} ,S_{0}) - \tvt(0, \cdot) S_{0}  \right) } \br
		&\quad - \int_{\tau - \delta}^{\tau + \delta} \int_0^s \intO{ \left( \vr s (\vr, \vt) \Big( \partial_t \tvt + \vu \cdot \Grad \tvt \Big) \right) }\ \dt \ \D s \br		
		&\quad- \lim_{\ep \to 0} \int_{\tau - \delta}^{\tau + \delta} \int_0^s \intTd{
		\frac{\vc{q} (\vte, \Grad \vte) \cdot \Grad \tvt }{\vte}  } \ \dt \ \D s
	\label{C13}
	\end{align}
for any $\delta > 0$.

Next,
\begin{align}
\intTd{ \frac{\tvt}{\vte} \mathbb{S} (\vte,   \mathbb{D}_x \vue) : \Ds \vue } &=
\intTd{\tvt \mu(\vte) \frac{1}{2 \vte} \left| \Grad \vue + \Grad^t \vue - \frac{2}{d} \Div \vue \mathbb{I} \right|^2 } \br
& +\intTd{\tvt \eta (\vte) \frac{1}{\vte} \left| \Div \vue \right|^2 }.
\nonumber
\end{align}
Consequently, combining convexity of the function
\[
(\vt, Z) \mapsto \frac{|Z|^2}{\vt}
\]
with the strong convergence of the temperature established in \eqref{C8}, we may infer that
\begin{align}
\liminf_{\ep \to 0} \int_{\tau - \delta}^{\tau + \delta} & \int_{0}^{s} \intTd{ \frac{\tvt}{\vte} \mathbb{S}(\vte, \mathbb{D}_x \vue): \mathbb{D}_x \vue } \ \dt \ \D s
\br &\geq
\int_{\tau - \delta}^{\tau + \delta} \int_{0}^{s} \intO{ \frac{\tvt}{\vt} \mathbb{S}(\vt, \mathbb{D}_x \vu): \mathbb{D}_x \vu } \ \dt \ \D s.
\label{C14}
\end{align}

Similarly, we rewrite
\[
- \intTd{ \frac{\tvt}{\vte^2} \vc{q}(\vte, \Grad \vte ) \cdot \Grad \vte } = \intTd{ \tvt \kappa (\vte) |\Grad \log (\vte)|^2 }.
\]
Consequently, using arguments similar to \eqref{C14}, we get
\begin{align}
- \liminf_{\ep \to 0} \int_{\tau - \delta}^{\tau + \delta} \int_{0}^{s} &\intTd{ \frac{\tvt}{\vte}	\frac{\bm{q}(\vte, \nabla_x \vte) \cdot \nabla_x \vte}{\vte} } \ \dt \ \D s \br
&\geq - \int_{\tau - \delta}^{\tau + \delta} \int_{0}^{s} \intO{ \frac{\tvt}{\vt}	\frac{\bm{q}(\vt, \nabla_x \vt) \cdot \nabla_x \vt}{\vt} } \ \dt \ \D s \br
&- \int_{\tau - \delta}^{\tau + \delta} \int_{0}^{s} \int_{\mathbb{T}^d \setminus \Omega} \frac{\bm{q}(\vtB, \nabla_x \vtB) \cdot \nabla_x \vtB}{\vtB} \ \dx \ \dt \ \D s.
	\label{C15}
	\end{align}

Our ultimate goal is to perform the limit in the last integral in \eqref{C13}, namely
\begin{align} \
& \lim_{\ep \to 0} \int_{\tau - \delta}^{\tau + \delta} \int_0^s \intTd{
	\frac{\vc{q} (\vte, \Grad \vte) \cdot \Grad \tvt }{\vte}  } \, \dt \, \D s \br
=& \int_{\tau - \delta}^{\tau + \delta} \int_0^s \intO{
	\frac{\vc{q} (\vt, \Grad \vt) \cdot \Grad \tvt }{\vt}  } \, \dt \, \D s + \int_{\tau - \delta}^{\tau + \delta} \int_0^s \int_{\mathbb{T}^d \setminus \Omega} {
	\frac{\vc{q} (\vtB, \Grad \vtB) \cdot \Grad \vtB }{\vtB}  } \, \dx\, \dt \, \D s.
\label{C16}	
	\end{align}
In view of the strong convergence of $(\vte)_{\ep > 0}$ and weak convergence of $(\Grad \vte )_{\ep > 0}$, it is enough to observe that the terms
\[
\frac{\kappa (\vte) }{\vte} \Grad \vte,\ \ep > 0
\]
are equi--integrable. Seeing that for small values of $\vte$ we have $\frac{\kappa (\vte) }{\vte} \Grad \vte \approx \Grad \log(\vte)$ controlled by \eqref{U13}, we focus on
\[
\vte^{\beta - 1} \Grad \vte ,\ \vte \geq 1,\,\ \ep > 0.
\]
Writing
\[
\vte^{\beta - 1} \Grad \vte = \vte^{\frac{\beta}{2}} \Grad \vte^{\frac{\beta}{2}}
\]
we observe, by virtue of the estimates \eqref{U13}, that it is enough to control the norm
\[
\left\| \vte^{\frac{\beta}{2} } \right\|_{L^q((0,T) \times \mathbb{T}^d )} \aleq 1 \ \mbox{for some}\ q > 2.
\]
This is definitely possible on the set $(0,T) \times (\mathbb{T}^d \setminus \Omega)$ since $k + 1 > \beta$. As for $\Omega$, we have, by virtue of \eqref{U13}
and the standard Sobolev emebedding,
\[
(\vte^{\frac{\beta}{2}} )_{\ep > 0} \ \mbox{bounded in}\ L^2(0,T; L^6(\Omega)),\ (\vte )_{\ep > 0}
\ \mbox{bounded in}\ L^\infty(0,T; L^4(\Omega));
\]
whence the desired conclusion follows by interpolation.

Plugging \eqref{C14}--\eqref{C16} in \eqref{C13} we get
	\begin{align}
	&\int_{\tau - \delta}^{\tau + \delta} \intO{ \left( \frac{1}{2} \vr |\vu|^2 +\vr 	e(\vr,\vt) - \tvt \vr s(\vr, \vt) \right)(s,\cdot) } \ \D s  \br
	&\quad + \int_{\tau - \delta}^{\tau + \delta} \int_{0}^{s} \intO{ \frac{\tvt}{\vt} \left( \mathbb{S}(\vt, \mathbb{D}_x \vu): \mathbb{D}_x \vu -\frac{\bm{q}(\vt, \nabla_x \vt) \cdot \nabla_x \vte}{\vt}\right)  }\ \textup{d}t\ \D s\br
	& \quad   \leq 2 \delta \intO{ \left( \frac{1}{2} \frac{|\vm_{0}|^2}{\vr_{0}} + \vr_{0} 	e(\vr_{0} ,S_{0}) - \tvt(0, \cdot) S_{0}  \right) } \br
	&\quad - \int_{\tau - \delta}^{\tau + \delta} \int_0^s \intO{ \left( \vr s (\vr, \vt) \Big( \partial_t \tvt + \vu \cdot \Grad \tvt \Big) + \frac{\vc{q} (\vt, \Grad \vt) \cdot \Grad \tvt }{\vt} \right) }\ \dt \ \D s
	\nonumber
\end{align}
for any $\delta > 0$. Finally, we divide the above inequality by $2 \delta$ and let $\delta \to 0$ obtaining
	\begin{align}
	&\intO{ \left( \frac{1}{2} \vr |\vu|^2 +\vr 	e(\vr,\vt) - \tvt \vr s(\vr, \vt) \right)(\tau,\cdot) }  \br
	&\quad + \int_{0}^{\tau} \intO{ \frac{\tvt}{\vt} \left( \mathbb{S}(\vt, \mathbb{D}_x \vu): \mathbb{D}_x \vu -\frac{\bm{q}(\vt, \nabla_x \vt) \cdot \nabla_x \vte}{\vt}\right)  }\ \textup{d}t \br
	& \quad   \leq  \intO{ \left( \frac{1}{2} \frac{|\vm_{0}|^2}{\vr_{0}} + \vr_{0} 	e(\vr_{0} ,S_{0}) - \tvt(0, \cdot) S_{0}  \right) } \br
	&\quad - \int_0^\tau \intO{ \left( \vr s (\vr, \vt) \Big( \partial_t \tvt + \vu \cdot \Grad \tvt \Big) + \frac{\vc{q} (\vt, \Grad \vt) \cdot \Grad \tvt }{\vt} \right) }\ \dt
	\nonumber
\end{align}
for a.a. $\tau \in (0,T)$,
which is nothing other than the ballistic energy inequality \eqref{M22}.

We have proved Theorem \ref{MT1}.

\subsection{Time independent $\vt_B$}
\label{nonzero}

Finally, we consider the time independent boundary temperature as specified in hypothesis \eqref{M24bis}.
Under these circumstances, the main stumbling block is the fact that the strong convergence of the density outside
$\Omega$ stated in \eqref{C4} is no longer valid.

Fortunately, as $\vtB$ is independent of time, the integrals over $\mathbb{T}^d \setminus \Ov{\Omega}$ vanish in \eqref{C11}. Moreover, the convergence in \eqref{C9} reads now
\begin{align}
	&\intTd{ \left( \frac{1}{2} \frac{|\vm_{0,\ep}|^2}{\vr_{0,\ep}} + \vr_{0, \ep} e(\vr_{0,\ep}, S_{0, \ep}) - \tvt(0, \cdot) S_{0,\ep} \right) } \br
	&\quad \to \intO{ \left( \frac{1}{2} \frac{|\vm_{0}|^2}{\vr_{0}} + \vr_{0} e(\vr_{0}, S_{0}) - \tvt(0, \cdot) S_{0} \right) }
	+ \int_{\mathbb{T}^d \setminus \Omega} \Big( \vr_0 e(\vr_0, S_0) - \vtB S_0 \Big) \dx.
	\label{C18}
\end{align}
Next, in view of the strong convergence of the approximate solutions in $\Omega$, we get
 \begin{align}
	\liminf_{\ep \to 0}
	\int_{\tau - \delta}^{\tau + \delta} &\intTd{ \left( \frac{1}{2} \vre |\vue|^2 + \vr_{\ep} e(\vr_{\ep}, \vte) - \tvt \vre s(\vre, \vte) \right) } \dt \br
	&\geq \int_{\tau - \delta}^{\tau + \delta} \intO{ \left( \frac{1}{2} \vr |\vu|^2 + \vr e(\vr, \vt) - \tvt \vr s(\vr, \vt) \right) } \dt \br
	& + \liminf_{ \ep \to 0 } \int_{\tau - \delta}^{\tau + \delta} \int_{\mathbb{T}^d \setminus \Omega} \Big( \vre e(\vre, \vte) - \vtB \vre s(\vre, \vte) \Big) \dx \dt.
	\label{C19}
\end{align}
Finally, we first use Fatou's lemma to get
\[
\liminf_{ \ep \to 0 } \int_{\tau - \delta}^{\tau + \delta}\int_{\mathbb{T}^d \setminus \Omega} \Big( \vre e(\vre, \vte) - \vtB \vre s(\vre, \vte) \Big) \dx
\geq \int_{\tau - \delta}^{\tau + \delta}\int_{\mathbb{T}^d \setminus \Omega} \left< \nu_{t,x};  \tvr e(\tvr, \tvt) - \vtB \tvr s(\tvr, \tvt)  \right> \dx \dt,
\]
where $(\nu_{t,x} )_{t \in (0,T), x \in \mathbb{T}^d \setminus \Omega}$ is a Young measure generated by the sequence $(\vre, \vte)_{\ep > 0}$. In addition, as $\vte \to \vtB$ strongly,
the Young measure can be written as
\[
\nu_{t,x} = \delta_{\vtB(x)} \otimes \omega_{t,x},
\]
where $(\omega_{t,x} )_{t \in (0,T), x \in \mathbb{T}^d \setminus \Omega}$ is the Young measure associated to $(\vre )_{\ep > 0}$. Accordingly,
\[
\int_{\tau - \delta}^{\tau + \delta}\int_{\mathbb{T}^d \setminus \Omega} \left< \nu_{t,x};  \tvr e(\tvr, \tvt) - \vtB \tvr s(\tvr, \tvt)  \right> \dx \dt =
\int_{\tau - \delta}^{\tau + \delta}\int_{\mathbb{T}^d \setminus \Omega} \left< \omega_{t,x};  \tvr e(\tvr, \vtB ) - \vtB \tvr s(\tvr, \vtB)  \right> \dx \dt.
\]
However, the function
\[
\tvr \mapsto  \tvr e(\tvr, \vtB ) - \vtB \tvr s(\tvr, \vtB)
\]
is convex, see \cite[Chapter 2, Section 2.2.3]{FeNov6A}; whence, by Jensen's inequality,
\[
\int_{\tau - \delta}^{\tau + \delta}\int_{\mathbb{T}^d \setminus \Omega} \left< \omega_{t,x};  \tvr e(\tvr, \vtB ) - \vtB \tvr s(\tvr, \vtB)  \right> \dx \dt
\geq 2 \delta \int_{\mathbb{T}^d \setminus \Omega} \Big( \vr_0 e(\vr_0, \vtB ) - \vtB \vr_0 s(\vr_0, \vtB) \Big) \dx.
\]
Consequently, the last integrals in \eqref{C18}, \eqref{C19} cancel out and the desired conclusion follows.

\section{Finite volume simulations}
\label{numerics}

The aim of this section is to illustrate convergence of the penalization method when simulating viscous compressible flows in complex geometries.
To this end we apply the \emph{finite volume method} (FV) that has been recently developed in~\cite{Lukacova-Mizerova-She:2021} and analysed in~\cite{Feireisl-Lukacova-Mizerova-She:2021}. In what follows we describe the FV method and present four numerical experiments for the Navier--Stokes--Fourier system. We will also study the experimental rate of convergence of  the FV scheme with respect to both the discretization and the  penalty parameters.

We consider for simplicity the perfect gas pressure law
$	p  =\varrho \vartheta, $
internal energy and entropy are given by  $e = c_v \vartheta,~ s = \log\left( \dfrac{\vartheta^{c_v}}{\varrho}\right)
= \dfrac{1}{\gamma-1} \log \left( \dfrac{p}{\varrho^\gamma}\right)$, respectively, where $c_v = \dfrac1{\gamma-1}$. In the numerical experiments presented below
$\varrho, \vt$ are bounded from below and above. In this case theoretical results on the convergence of the penalization method can be obtained for the perfect gas pressure law as well.

\subsection{Finite volume method}
Our computational domain $\mathbb{T}^d \subset R^d$ is approximated by regular cuboids: finite volumes K of size $h^d,$  where $h  \in (0,1)$ is a mesh step. 
The piecewise constant numerical solutions $U_h^\varepsilon\equiv(\vr_h^\varepsilon, \vu_h^\varepsilon, \vt_h^\varepsilon)$ satisfy the following semi-discrete first order FV method
\begin{subequations}\label{eq:FVmethod}
	\begin{align}
	D_t \varrho_h^\varepsilon +  \divv_h^{\rm{up}} F(\varrho_h^\varepsilon, \vu_h^\varepsilon) & = 0, \\
	D_t \bm{m}_h^\varepsilon +  \divv_h^{\rm{up}} \bm{F}(\bm{m}_h^\varepsilon, \vu_h^\varepsilon) + \nabla_h p_h^\varepsilon & = 2\divv_h \left( \mu_h^\varepsilon \mathbb{D}_h\vu_h^\varepsilon \right) +
\nabla_h \left( \lambda_h^\varepsilon \divv_h  \vu_h^\varepsilon \right)  \nonumber \\
&\ +\varrho_h^\varepsilon \bm{g}_h^\varepsilon - \frac{1}{\varepsilon}  \mathds{1}_{\mathbb{T}^d \setminus \Omega}  (K)\vu_h^\varepsilon,\\
D_t (\varrho_h^\varepsilon e_h^\varepsilon) +  \divv_h^{\rm{up}} F(\varrho_h^\varepsilon e_h^\varepsilon, \vu_h^\varepsilon) +p_h^\varepsilon \divv_h \vu_h^\varepsilon   & = \divv_h \left(\kappa_h^\varepsilon \nabla_h \vartheta_h^\varepsilon \right) + 2\mu_h^\varepsilon | \mathbb{D}_h\vu_h^\varepsilon |^2 + \lambda_h^\varepsilon | \divv_h  \vu_h^\varepsilon|^2  \nonumber\\
	&\ - \frac{1}{\varepsilon}   \mathds{1}_{\mathbb{T}^d \setminus \Omega} (K) |\vartheta_h^\varepsilon-\vartheta_B|^{k}(\vartheta_h^\varepsilon-\vartheta_B)
	\end{align}
\end{subequations}
with $\bm{m}_h^\varepsilon=\vr_h^\varepsilon \vu_h^\varepsilon,\ e_h^\varepsilon = c_v \vt_h^\varepsilon$,  coefficients $\mu_h^\varepsilon = \mu(\vt_h^\varepsilon),\ \lambda_h^\varepsilon = \lambda(\vt_h^\varepsilon),$ $\lambda(\vt) = \eta(\vt) - 2\mu(\vt)/d,$ $\kappa_h^\varepsilon = \kappa(\vartheta_h^\varepsilon),$ and the characteristic function
\begin{equation*}
\mathds{1}_{\mathbb{T}^d \setminus \Omega}(K) =
\begin{cases}
1, & \mbox{if}~ K_c \in \mathbb{T}^d \setminus \Omega, \\
0, & \mbox{otherwise},
\end{cases}	
\end{equation*}
where $K_c$ is the gravity center of $K$.
Function $\bm{g}_h^\varepsilon$ stands for a piecewise constant  approximation of an external (gravitational) force.


Let us define the discrete operators and the numerical flux function used in the scheme \eqref{eq:FVmethod}. By $\overline{r_h}$ and  $[[ r_h]]$ we denote the average and jump along a cell interface for any $r_h$ piecewise constant function, respectively.
The discrete differential operators
 $\divv_h$ and $\nabla_h $ are adjoint operators defined on each finite volume $K$ in the following way
\begin{align*}
&(\Divh \bm{v}_h)_K = \sum_{\sigma\in \partial K} \frac{|\sigma|}{|K|} \overline{\bm{v}_h} \cdot \vn,
&
&(\Gradh r_h)_K = \sum_{\sigma\in \partial K} \frac{|\sigma|}{|K|} \overline{r_h} \vn,
&
&\mathbb{D}_h r_h= \frac 1 2 \left(\Gradh r_h + (\Gradh r_h)^t\right),
\end{align*}
where  $\bm{v}_h$ is a vector-valued piecewise constant function on $\mathbb{T}^d,$ and  $\vn$  denotes an outer normal vector to  $\partial K.$
For flux approximation we apply the viscosity upwind numerical flux $F_h$ defined by
\begin{align*}
&F_h(r_h,\bm{v}_h) = Up[r_h, \bm{v}_h]  - h^{\alpha} ~ [[ r_h]], \quad 0< \alpha <1, \\
&Up[r_h, \bm{v}_h] = \overline{r_h}\, \overline{\bm{v}_h} \cdot \textbf{n} - \frac12 |\overline{\bm{v}_h} \cdot \textbf{n} | ~ [[ r_h]].
\end{align*}
Moreover, we set
\begin{align*}
\big( \divv_h^{\rm{up}}F(r_h, \bm{v}_h) \big)_K = \sum_{\sigma\in \partial K} \frac{|\sigma|}{|K|} F_h(r_h,\bm{v}_h).
\end{align*}
For a vector-valued piecewise constant function $\bm{w}_h$  on $\mathbb{T}^d$ the discrete divergence $\divv_h^{\rm{up}}\bm{F}(\bm{w}_h, \bm{v}_h)_K$ is  defined componentwisely.
See~\cite{Lukacova-Mizerova-She:2021} for further details.

In our simulations we take the symbol $D_t$ in \eqref{eq:FVmethod} as the forward Euler discretization with the time step $\Delta t = 10^{-6}$.
Initial data $(\vr_{h,0}, \vu_{h,0}, \vt_{h,0})$  are taken as piecewise constant projections of the exact initial data $(\vr_0, \vu_0, \vt_0)$ computed directly from $ (\vr_0, \bm{m}_0, S_0)$, cf.~Theorem~\ref{MT1}. Thus,
$\vu_0 = \frac{\bm{m}_0}{\vr_0},$ $\vr_0>0$ and $\vt_0 = \exp\left[(\gamma - 1) (S_0/\vr_0 + \log \vr_0 )\right].$
We set the transport coefficients and other parameters in \eqref{eq:FVmethod},
\begin{equation*}
\mu = \lambda = \kappa = 0.001,~ \gamma = 1.4, ~  k = 6,~ \alpha = 0.6.
\end{equation*}

The computational domain $\mathbb{T}^2$ is taken as $[-1,1]^2$ applying the periodic boundary condition.
Following the theoretical part, cf.~Theorem~\ref{MT1}  and Section~\ref{nonzero}, we will test two cases for the extension of the density outside the fluid domain $\Omega$:
\begin{itemize}
\item constant density, i.e.
\begin{equation*}
\mathds{1}_{\mathbb{T}^d \setminus \Omega} \varrho_{h,0} = 1,
\end{equation*}
\item ``zero" density, i.e.
\begin{equation*}
\mathds{1}_{\mathbb{T}^d \setminus \Omega} \varrho_{h,0} = 0.01.
\end{equation*}
\end{itemize}
The latter is chosen as an approximation, since $\mathds{1}_{\mathbb{T}^d \setminus \Omega} \varrho_{h,0}  = 0$ is numerically unstable.

In the following, we present numerical solutions obtained by the FV method with the penalty parameter $\varepsilon = 10^{-m}, m = 1,\dots,4$ on different meshes with $10^2,\dots,160^2$ cells.
Correspondingly, we compute two $L^1$-errors with respect to the mesh step $h$ and the penalization parameter $\varepsilon$,
\begin{equation}
E(U_h^{\varepsilon}) = \| U_h^{\varepsilon} - U_{h_{ref}}^{\varepsilon} \|_{{L^1(\mathbb{T}^2)}}, \quad
P(U_{h}^\varepsilon) = \| U_{h}^{\varepsilon} - U_{h}^{\varepsilon_{ref}} \|_{{ L^1(\mathbb{T}^2)}},
\end{equation}
respectively, with $h_{ref} = 2/160$, $\varepsilon_{ref} = 10^{-4}$.
Thus, $E(U_h^{\varepsilon})$  verifies the convergence rate of the FV method for a fixed penalization parameter $\varepsilon$ while $P(U_{h}^\varepsilon)$  is used to study  the convergence rate of the penalization technique for a fixed $h$.

\subsection{Experiment~1: Ring domain - constant density outside $\Omega$}

In this experiment we consider the physical fluid domain to be a ring, i.e. $\Omega\equiv B_{0.7}\setminus B_{0.2},$ where $ B_{r} = \{x ~\big|~ |x| < r\}$.
The initial data are given by
\begin{equation*}
	(\varrho, \vu, \vartheta)(0,x)
	\; = \; \begin{cases}
	(1,0, 0 , 1 ) , & x \in B_{0.2}, \\
	\left(1, \frac{ \sin(4\pi (|x|-0.2)) x_2}{|x|} ,-\frac{ \sin(4\pi (|x|-0.2)) x_1}{|x|}, 0.2 + 4|x| \right) , & x \in  \Omega \equiv {B}_{0.7}\setminus B_{0.2}, \\
	(1,0 , 0,  3) , & x \in \mathbb{T}^2\setminus B_{0.7}.
	\end{cases}
\end{equation*}
The finial time is taken as $T=0.1$.
Figure~\ref{fig:ex1-1} shows the errors with respect to the mesh step $h$, i.e. $E(U_h^{\varepsilon}).$
Figure~\ref{fig:ex1-2} presents the errors with respect to  the penalization parameter $\varepsilon$, i.e. $P(U_h^{\varepsilon})$. This experimental
convergence study indicates that both convergence rates are 1.
Effects of different penalization parameters $\varepsilon = 10^{-1},\dots,10^{-4}$ are illustrated in Figure \ref{fig:ex1} which depicts the  numerical solutions  on the  mesh with $80^2$ cells.


\begin{figure}[htbp]
	\setlength{\abovecaptionskip}{0.cm}
	\setlength{\belowcaptionskip}{-0.cm}
	\centering
	\begin{subfigure}{0.32\textwidth}
		\includegraphics[width=\textwidth]{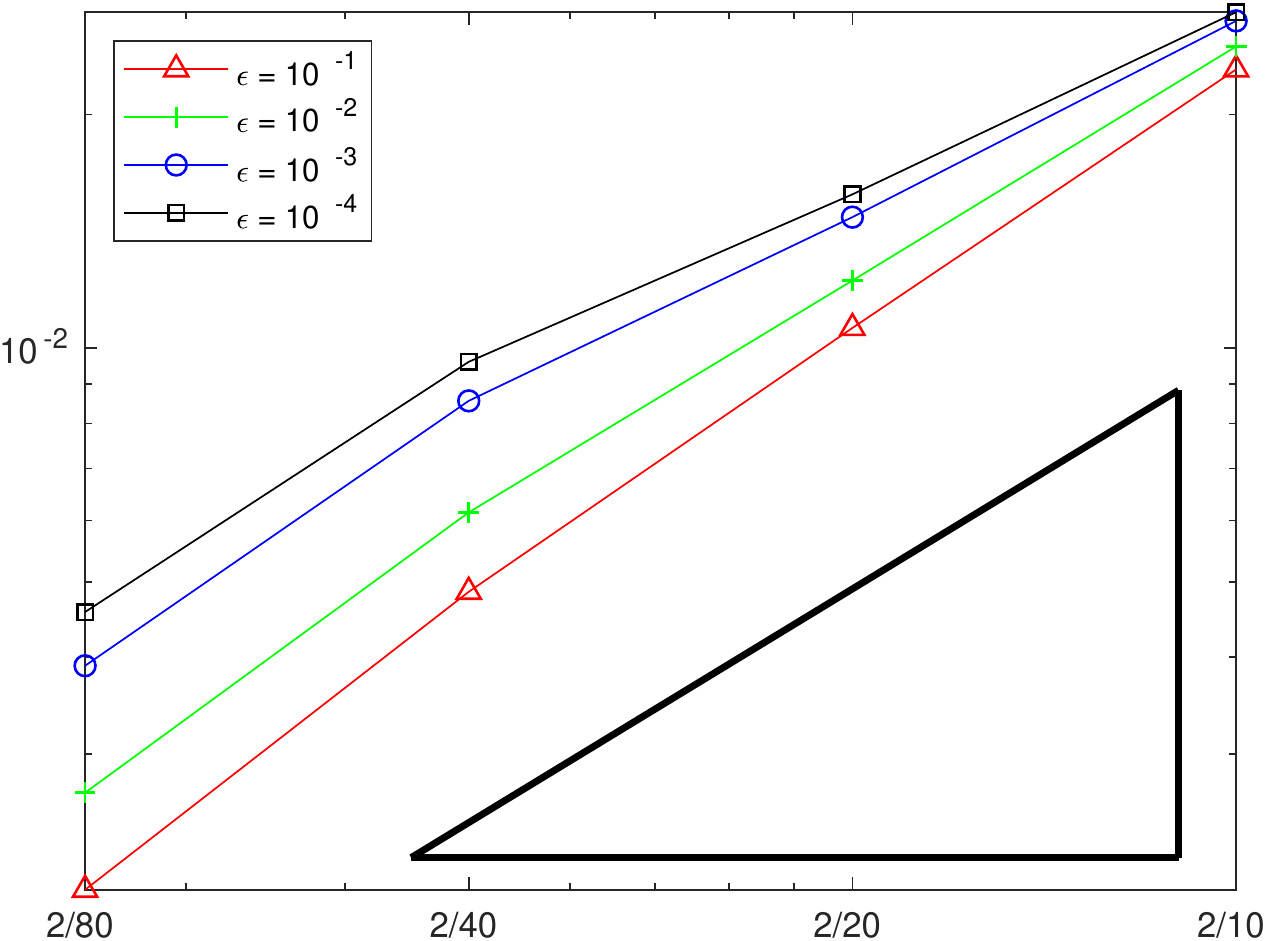}
		\caption{ \bf $\varrho$}
	\end{subfigure}
	\begin{subfigure}{0.32\textwidth}
		\includegraphics[width=\textwidth]{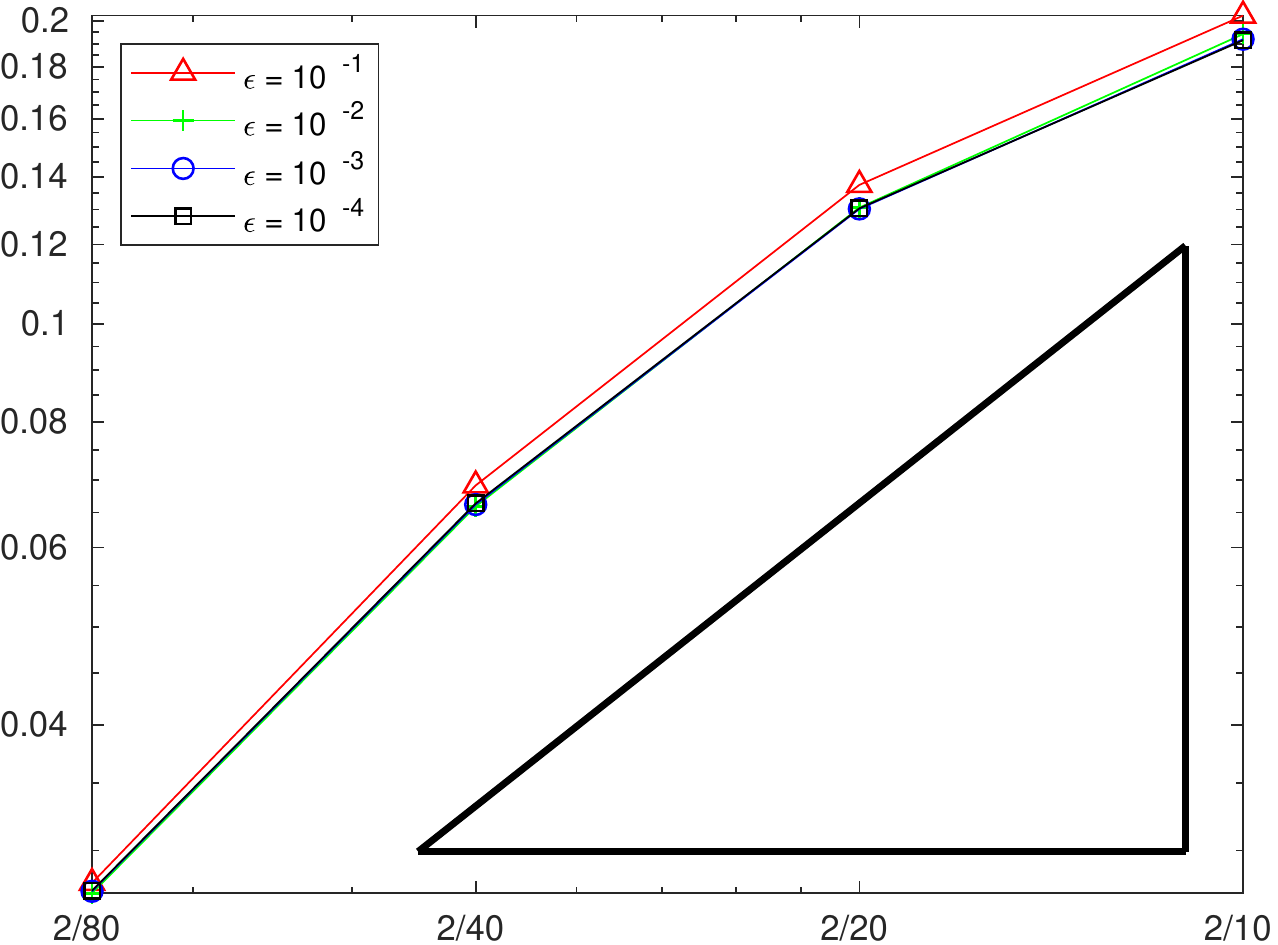}
		\caption{ \bf $\vu$}
	\end{subfigure}
	\begin{subfigure}{0.32\textwidth}
		\includegraphics[width=\textwidth]{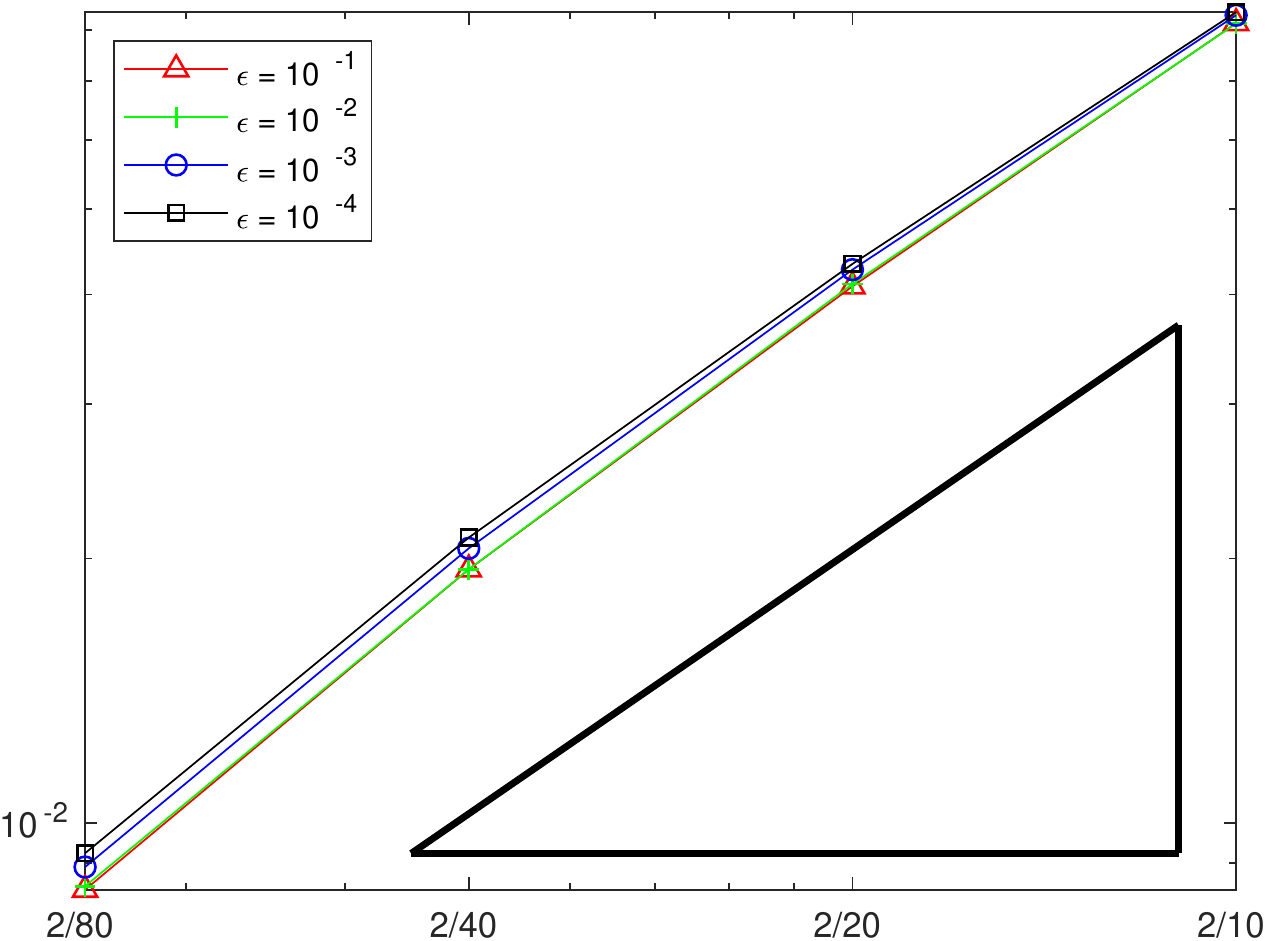}
		\caption{ \bf $\vartheta$}
	\end{subfigure}
	\caption{\small{Experiment~1:  $E(U_h^{\varepsilon})$ errors  with respect to $h.$}}\label{fig:ex1-1}
\end{figure}

\begin{figure}[htbp]
	\setlength{\abovecaptionskip}{0.cm}
	\setlength{\belowcaptionskip}{-0.cm}
	\centering
	\begin{subfigure}{0.32\textwidth}
		\includegraphics[width=\textwidth]{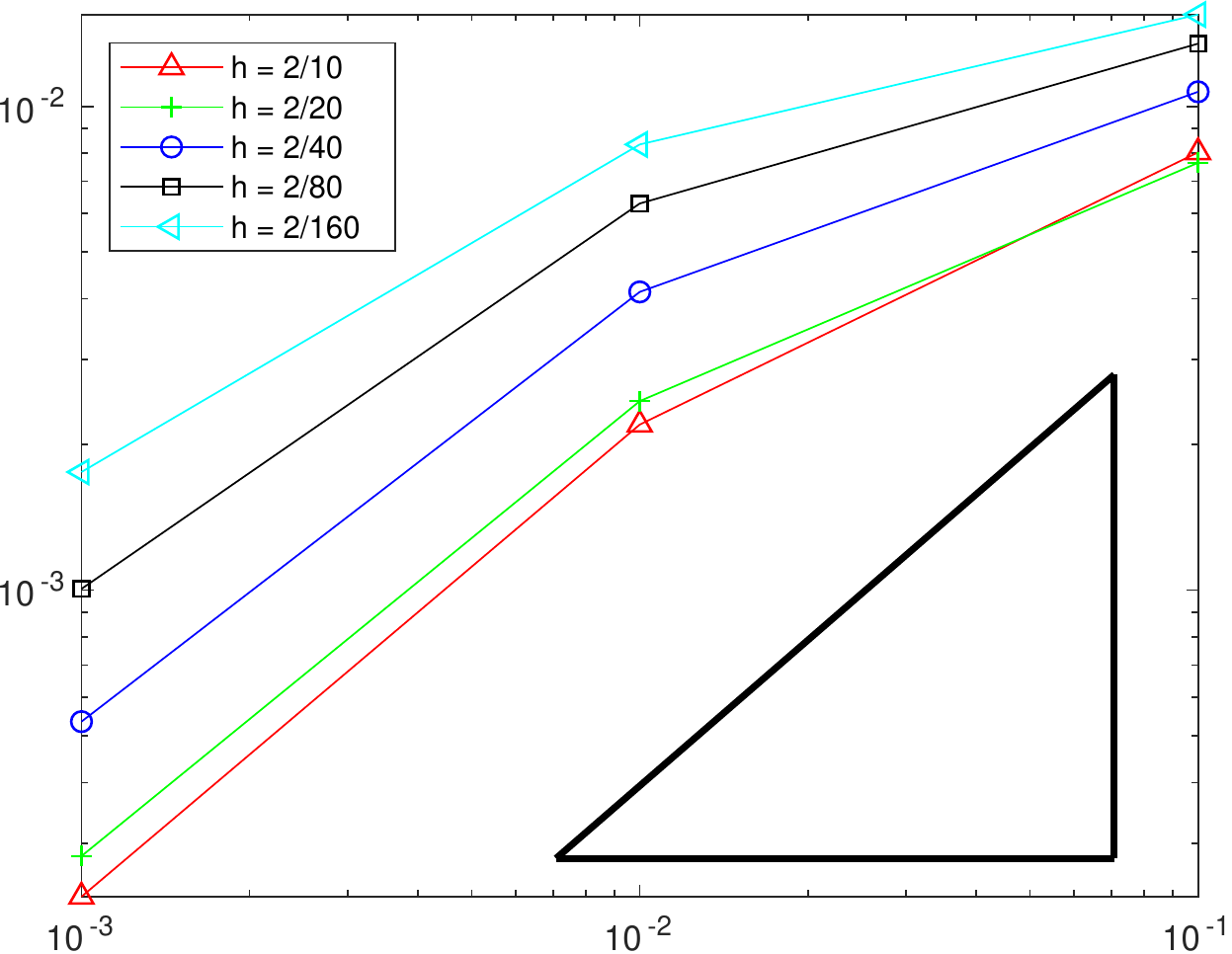}
		\caption{ \bf $\varrho$}
	\end{subfigure}
	\begin{subfigure}{0.32\textwidth}
		\includegraphics[width=\textwidth]{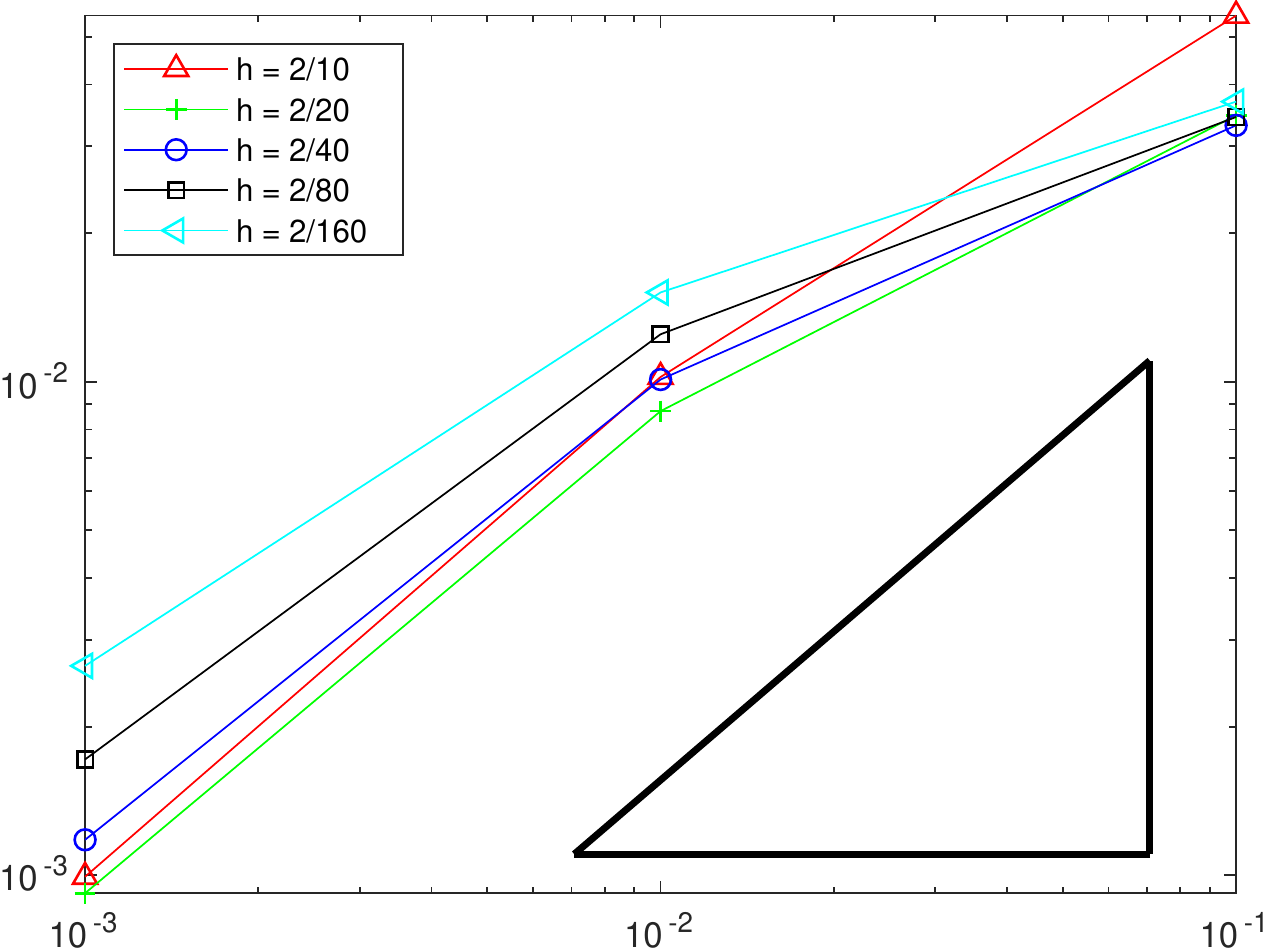}
		\caption{ \bf $\vu$}
	\end{subfigure}
	\begin{subfigure}{0.32\textwidth}
		\includegraphics[width=\textwidth]{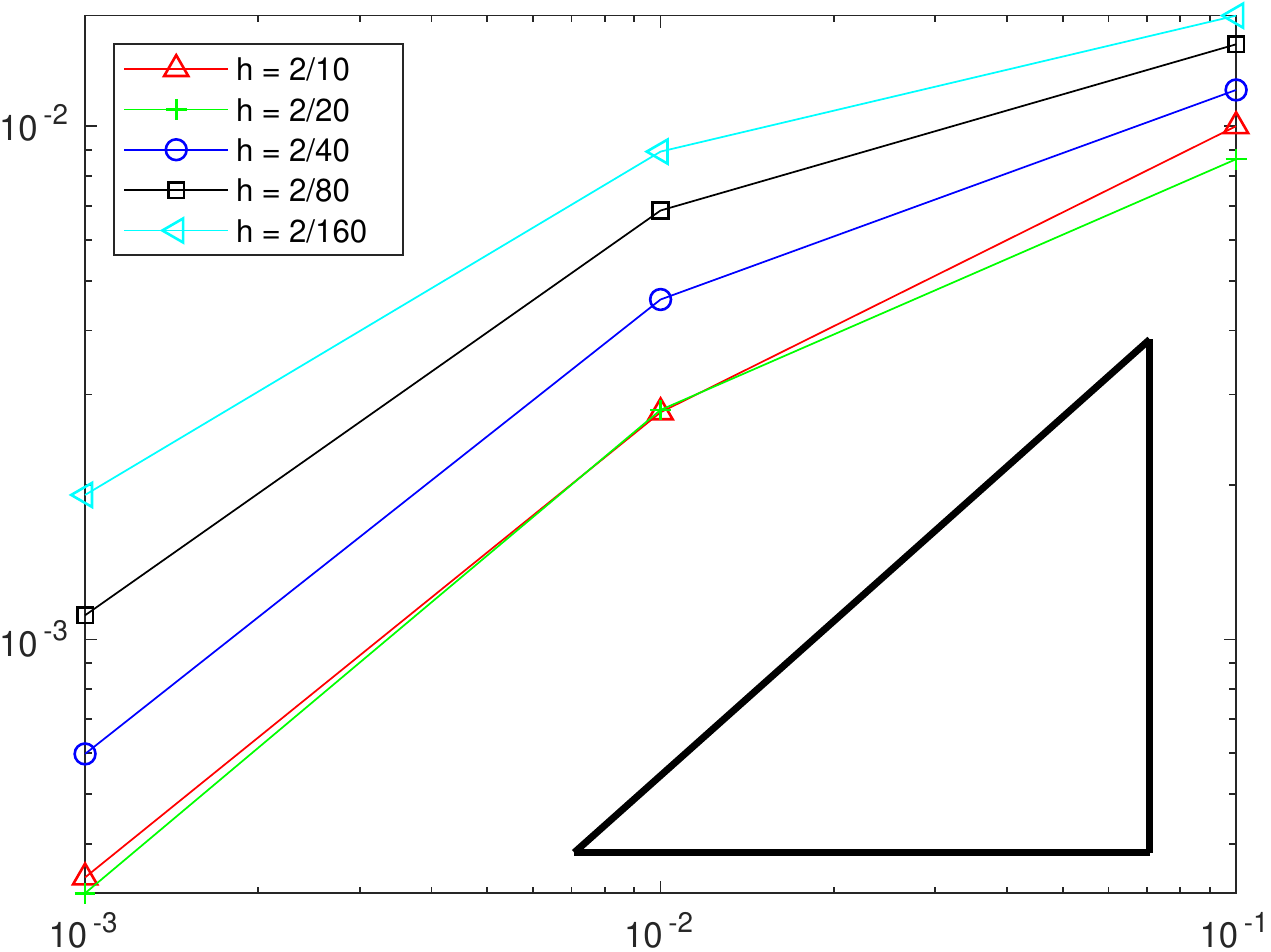}
		\caption{ \bf $\vartheta$}
	\end{subfigure}
	\caption{\small{Experiment~1:  $P(U_h^{\varepsilon})$ errors with respect to $\varepsilon.$}}\label{fig:ex1-2}
\end{figure}

\begin{figure}[htbp]
	\setlength{\abovecaptionskip}{0.cm}
	\setlength{\belowcaptionskip}{-0.cm}
	\centering
	\begin{subfigure}{0.2\textwidth}
		\includegraphics[width=\textwidth]{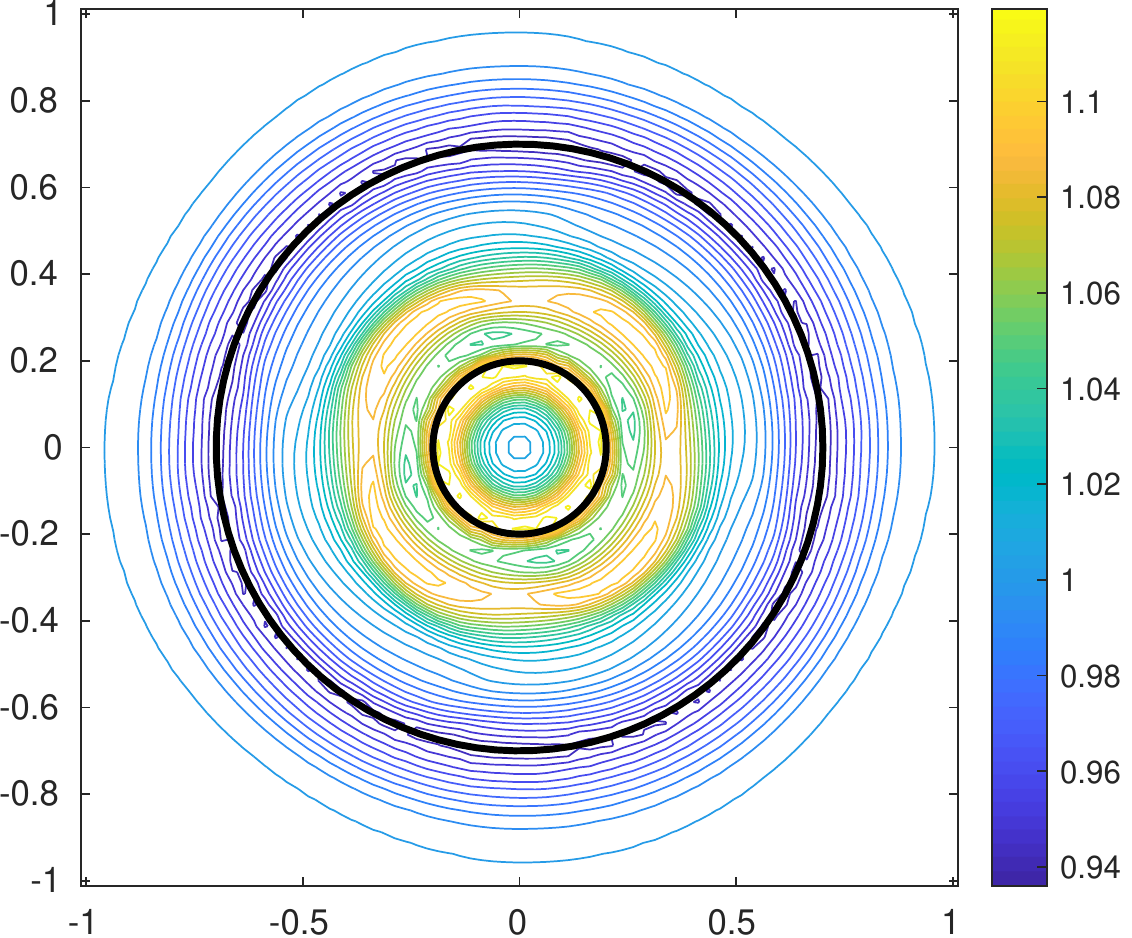}
		\caption{ \bf $\varrho, \varepsilon = 10^{-1}$}
	\end{subfigure}
	\begin{subfigure}{0.2\textwidth}
		\includegraphics[width=\textwidth]{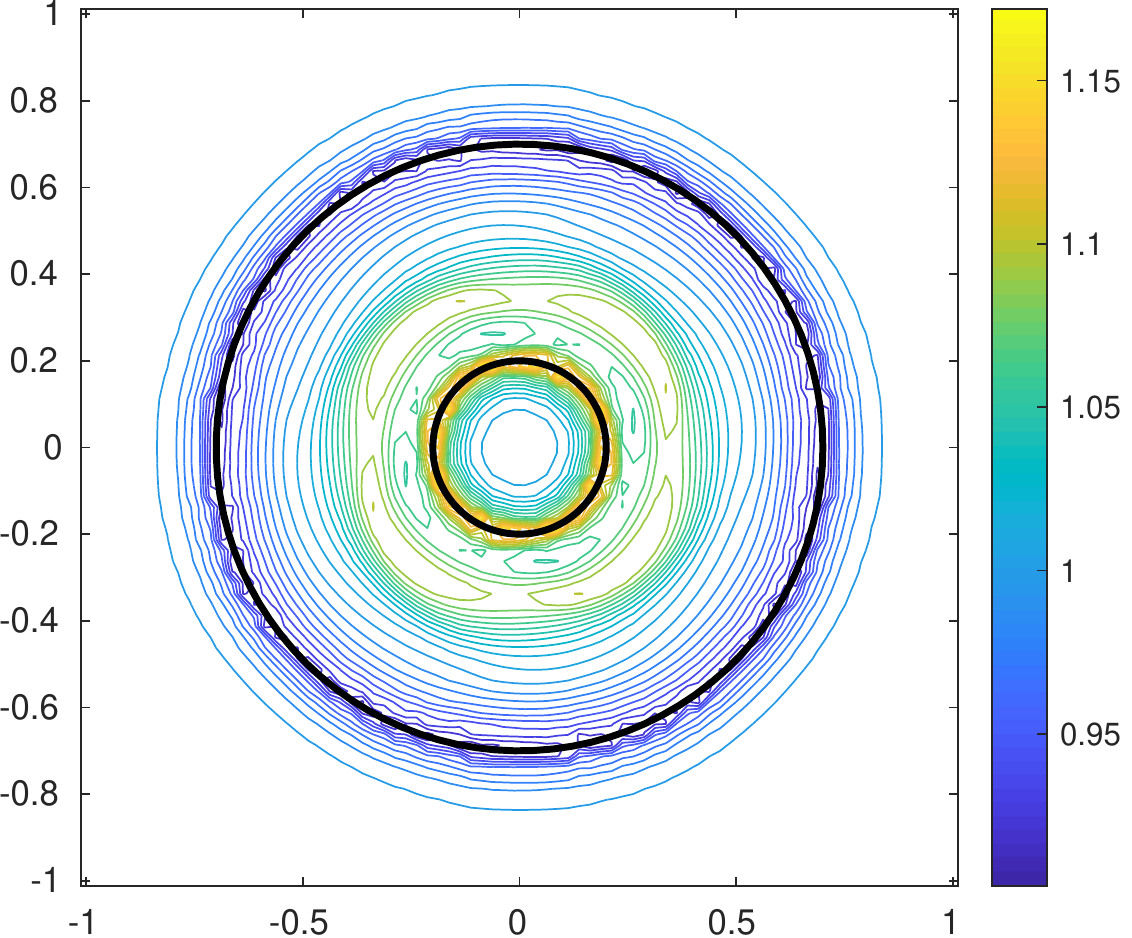}
		\caption{ \bf $\varrho, \varepsilon = 10^{-2}$}
	\end{subfigure}
	\begin{subfigure}{0.2\textwidth}
		\includegraphics[width=\textwidth]{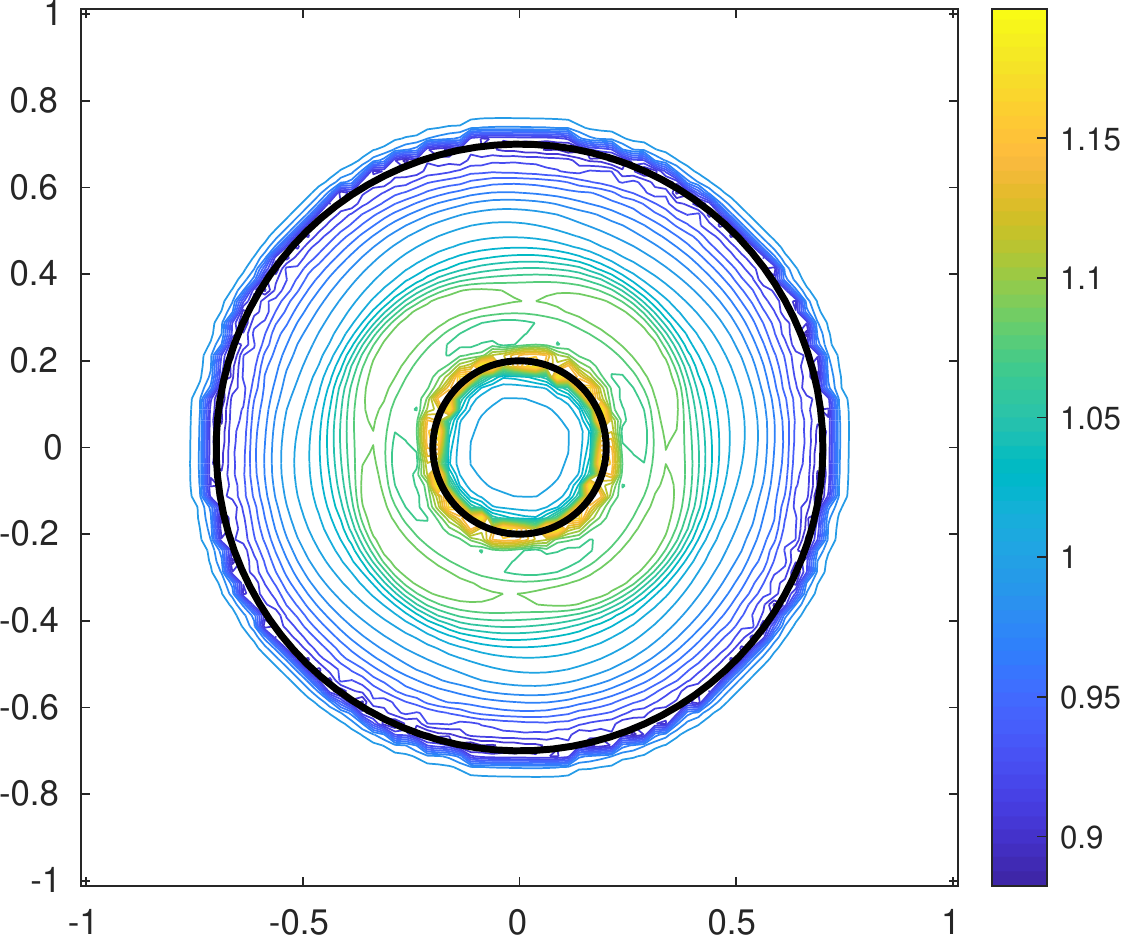}
		\caption{ \bf $\varrho, \varepsilon = 10^{-3}$}
	\end{subfigure}
	\begin{subfigure}{0.2\textwidth}
		\includegraphics[width=\textwidth]{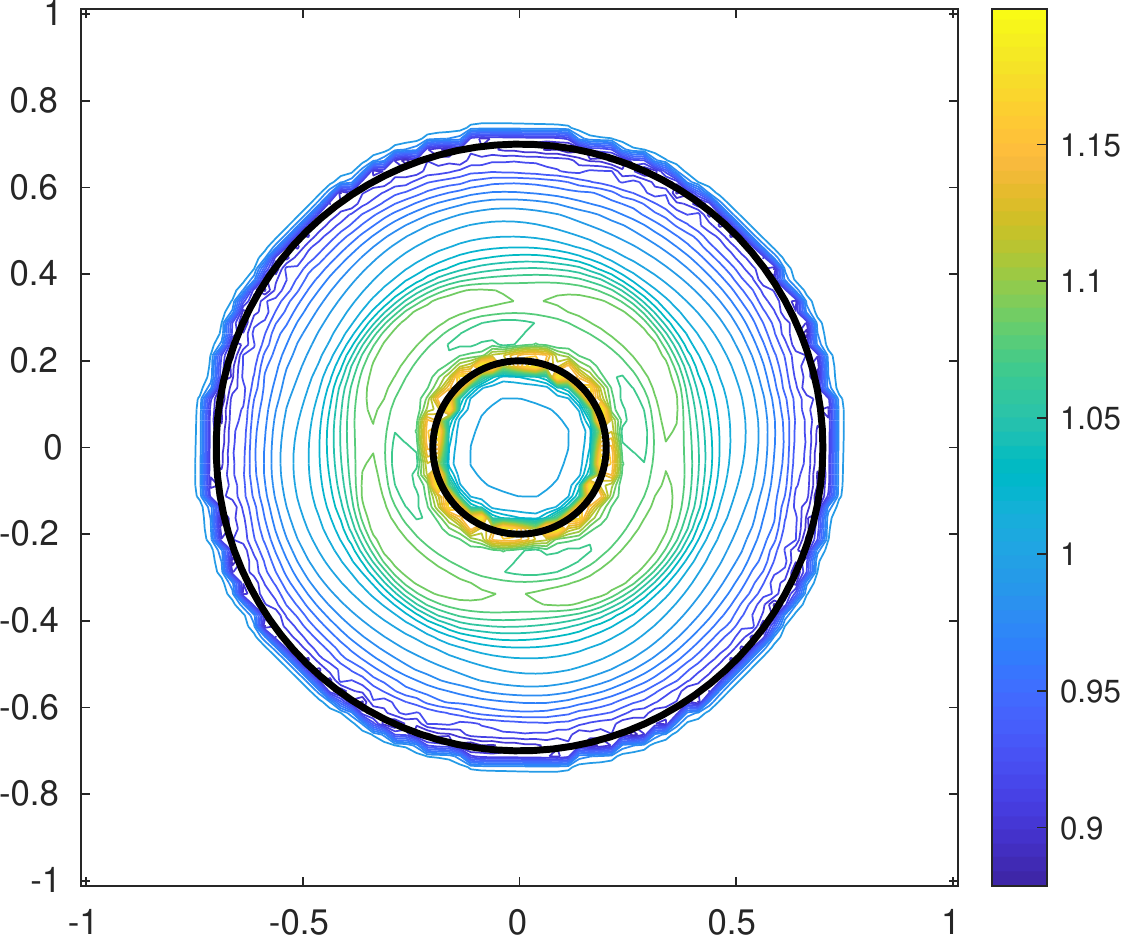}
		\caption{ \bf $\varrho, \varepsilon = 10^{-4}$}
	\end{subfigure}\\
	\begin{subfigure}{0.2\textwidth}
		\includegraphics[width=\textwidth]{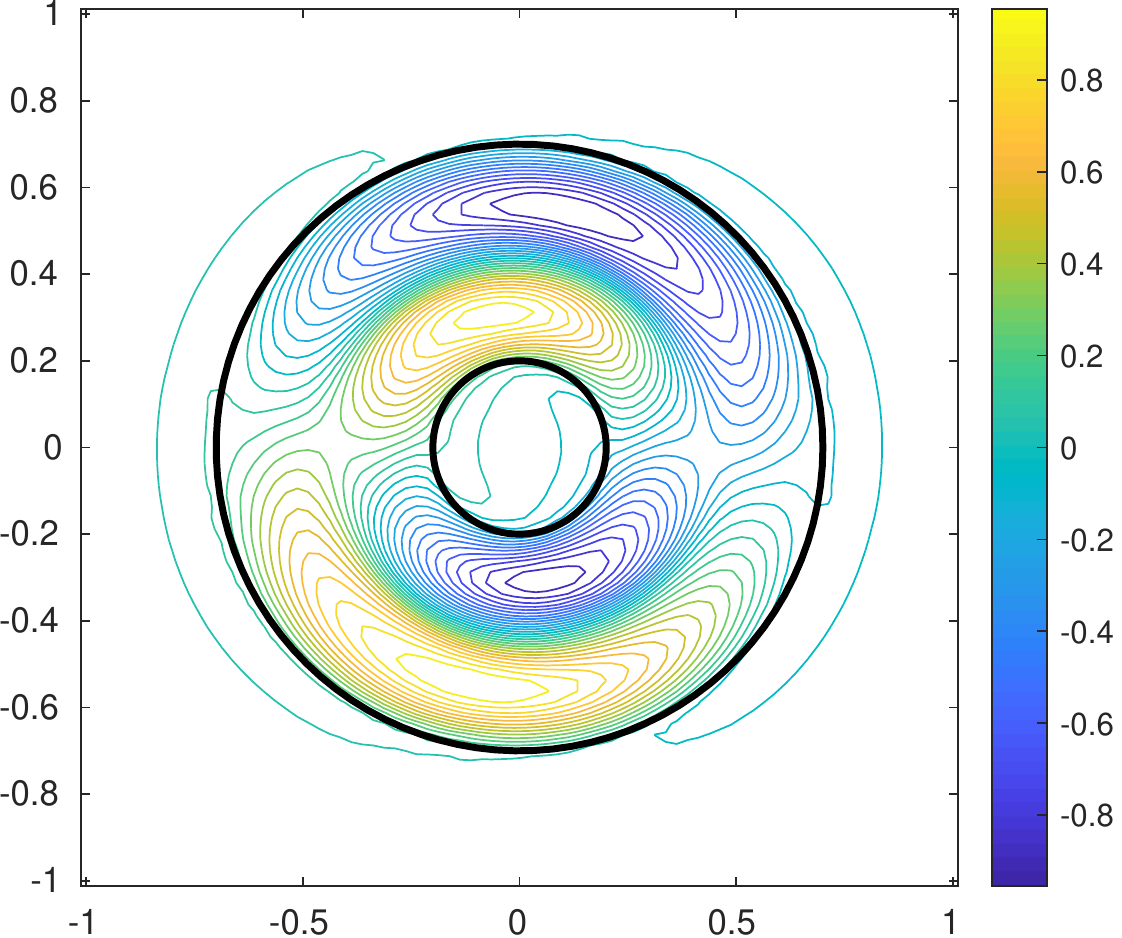}
		\caption{\bf $u_1, \varepsilon = 10^{-1}$}
	\end{subfigure}	
	\begin{subfigure}{0.2\textwidth}
		\includegraphics[width=\textwidth]{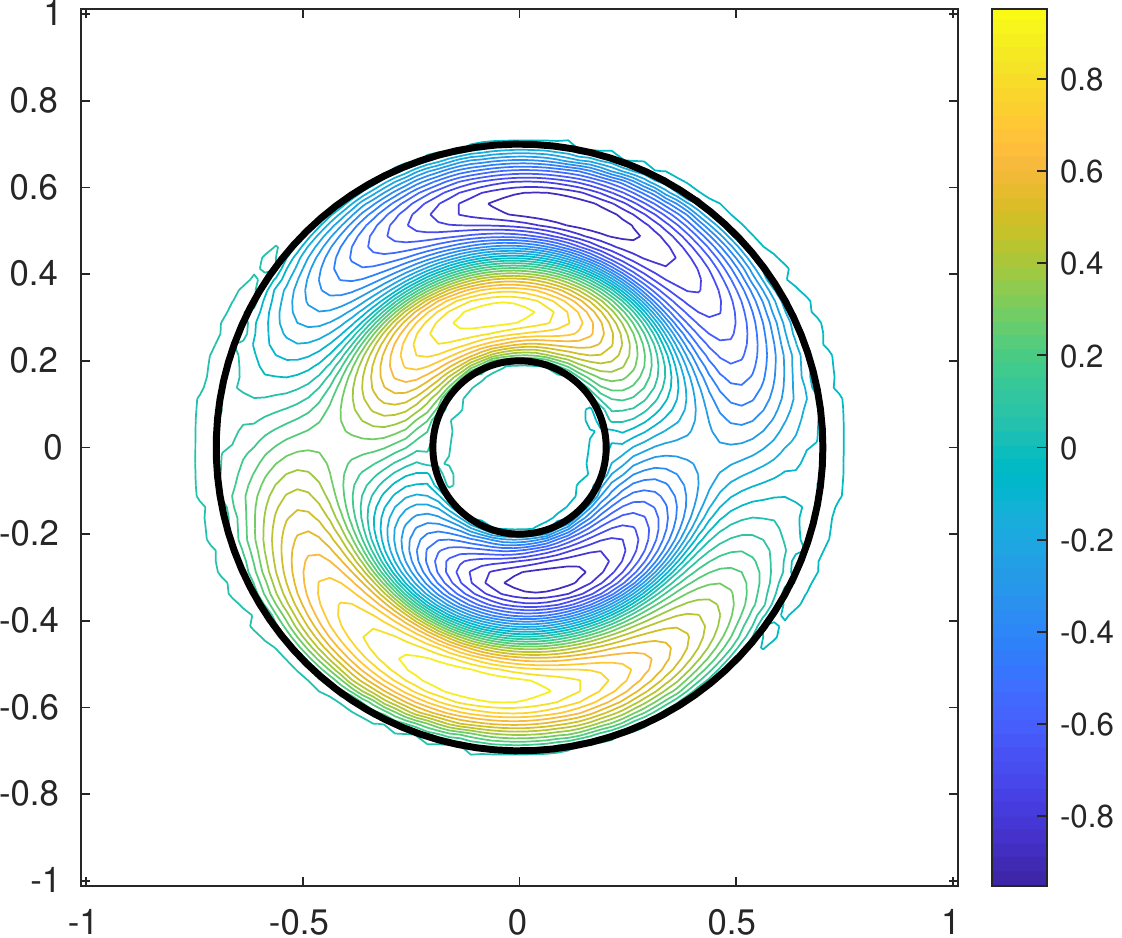}
		\caption{\bf $u_1, \varepsilon = 10^{-2}$}
	\end{subfigure}
	\begin{subfigure}{0.2\textwidth}
		\includegraphics[width=\textwidth]{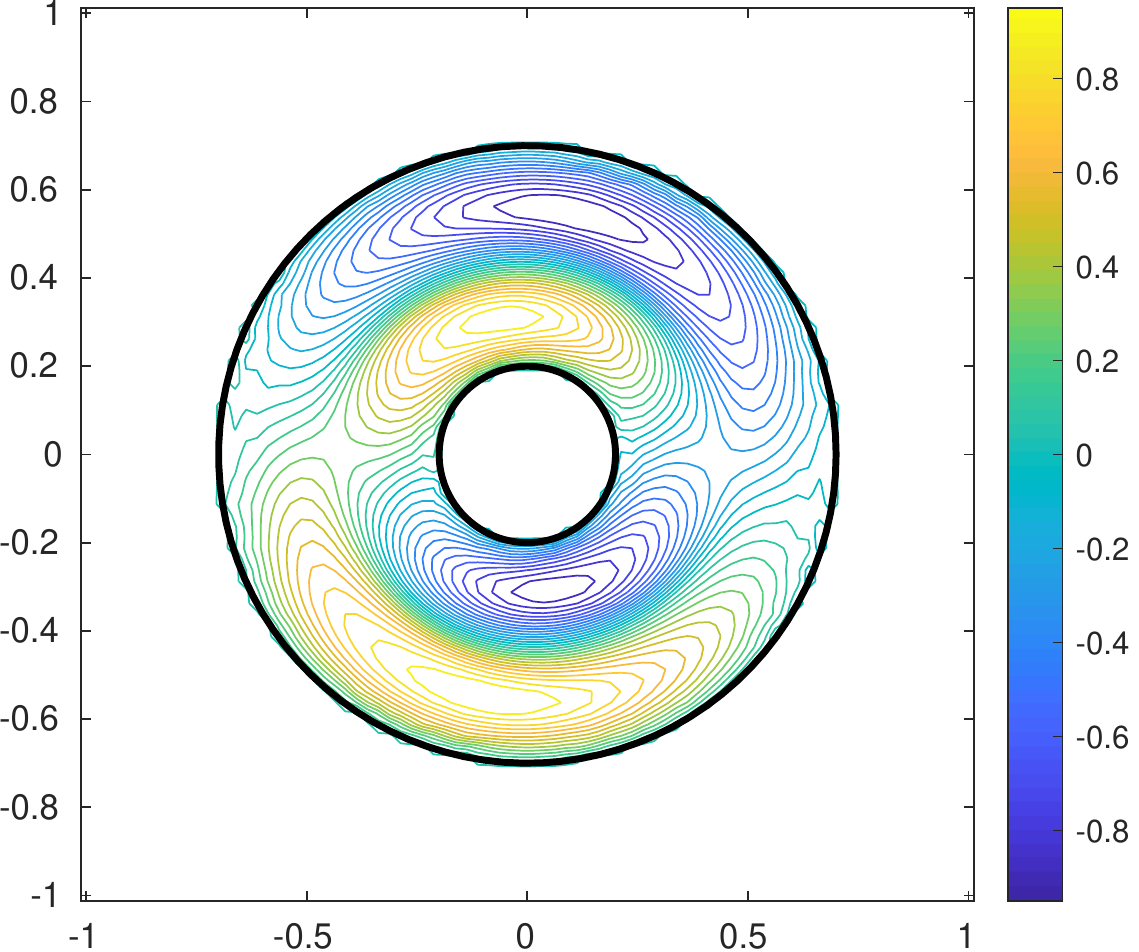}
		\caption{\bf $u_1, \varepsilon = 10^{-3}$}
	\end{subfigure}
	\begin{subfigure}{0.2\textwidth}
		\includegraphics[width=\textwidth]{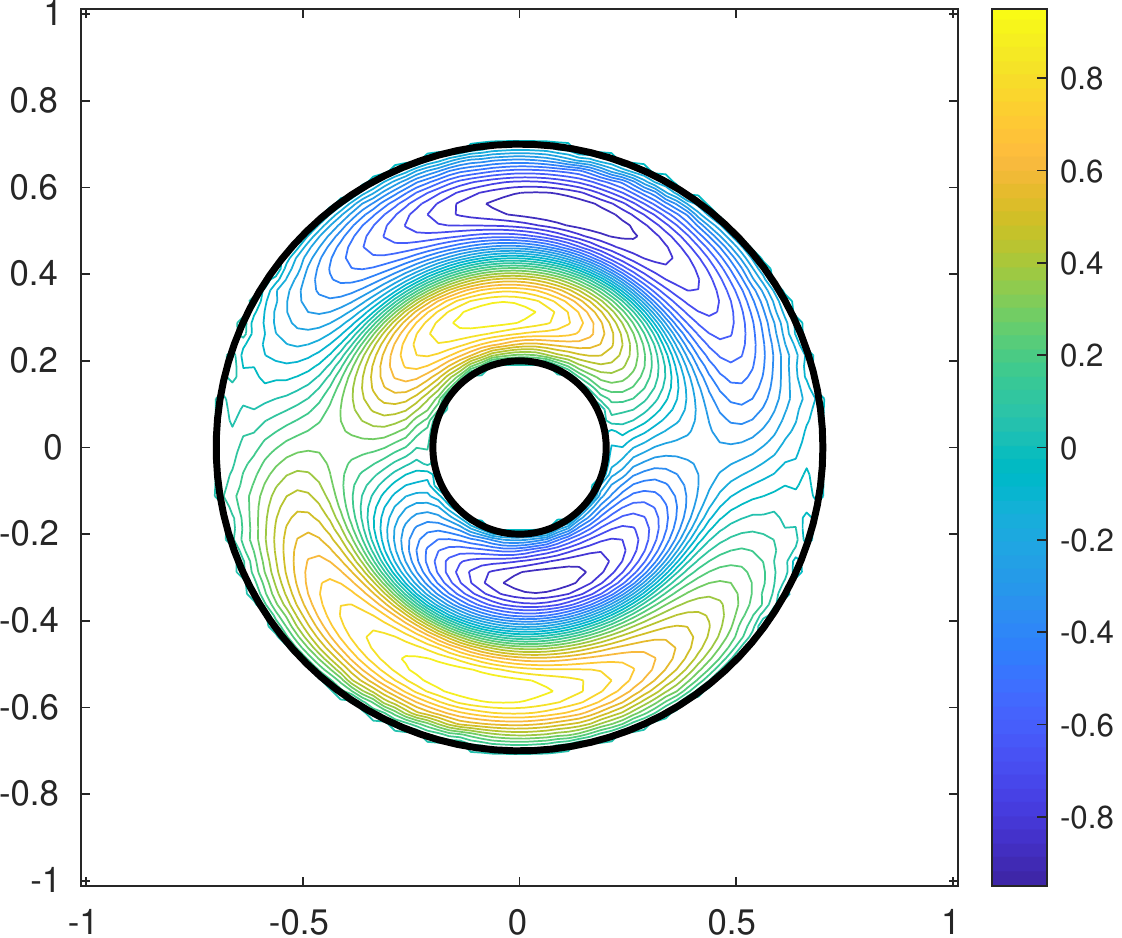}
		\caption{\bf $u_1, \varepsilon = 10^{-4}$}
	\end{subfigure}\\
	\begin{subfigure}{0.2\textwidth}
		\includegraphics[width=\textwidth]{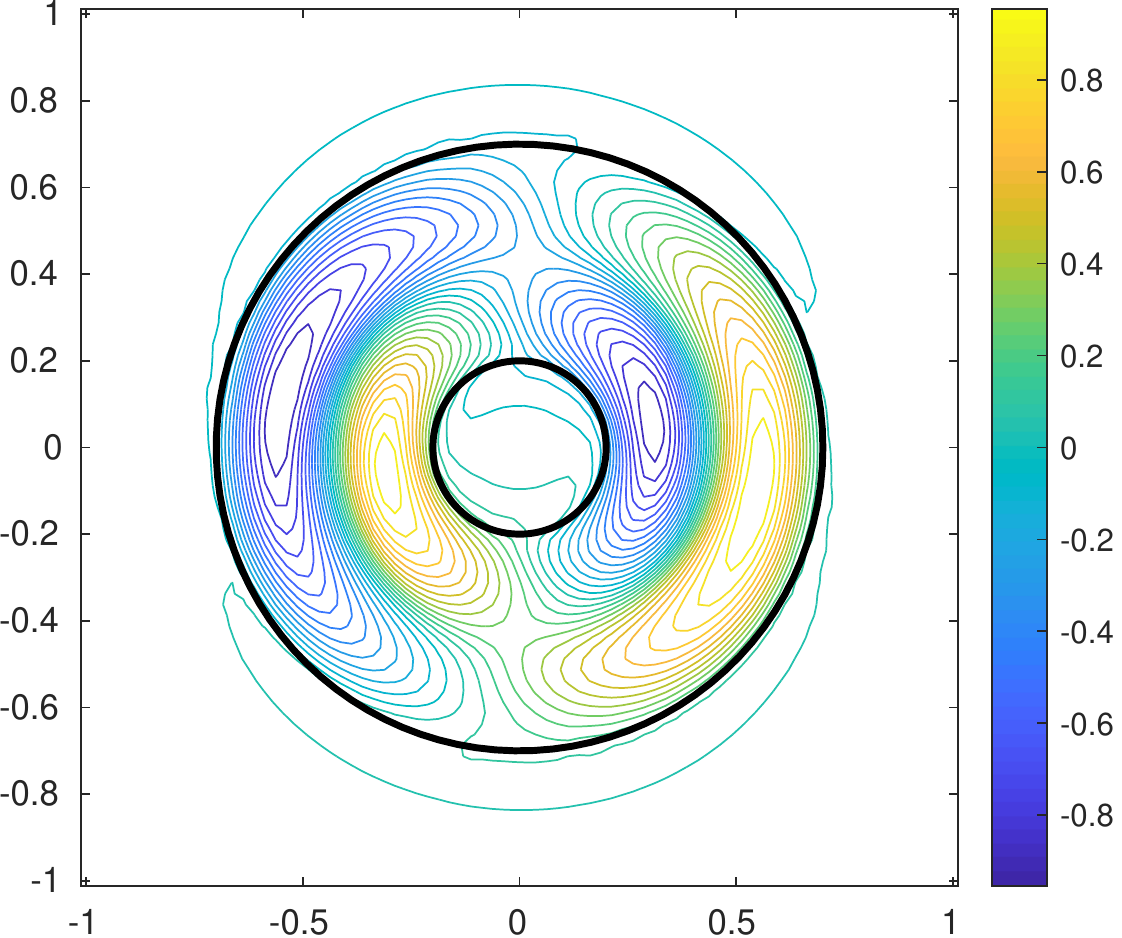}
		\caption{ \bf $u_2, \varepsilon = 10^{-1}$}
	\end{subfigure}	
	\begin{subfigure}{0.2\textwidth}
		\includegraphics[width=\textwidth]{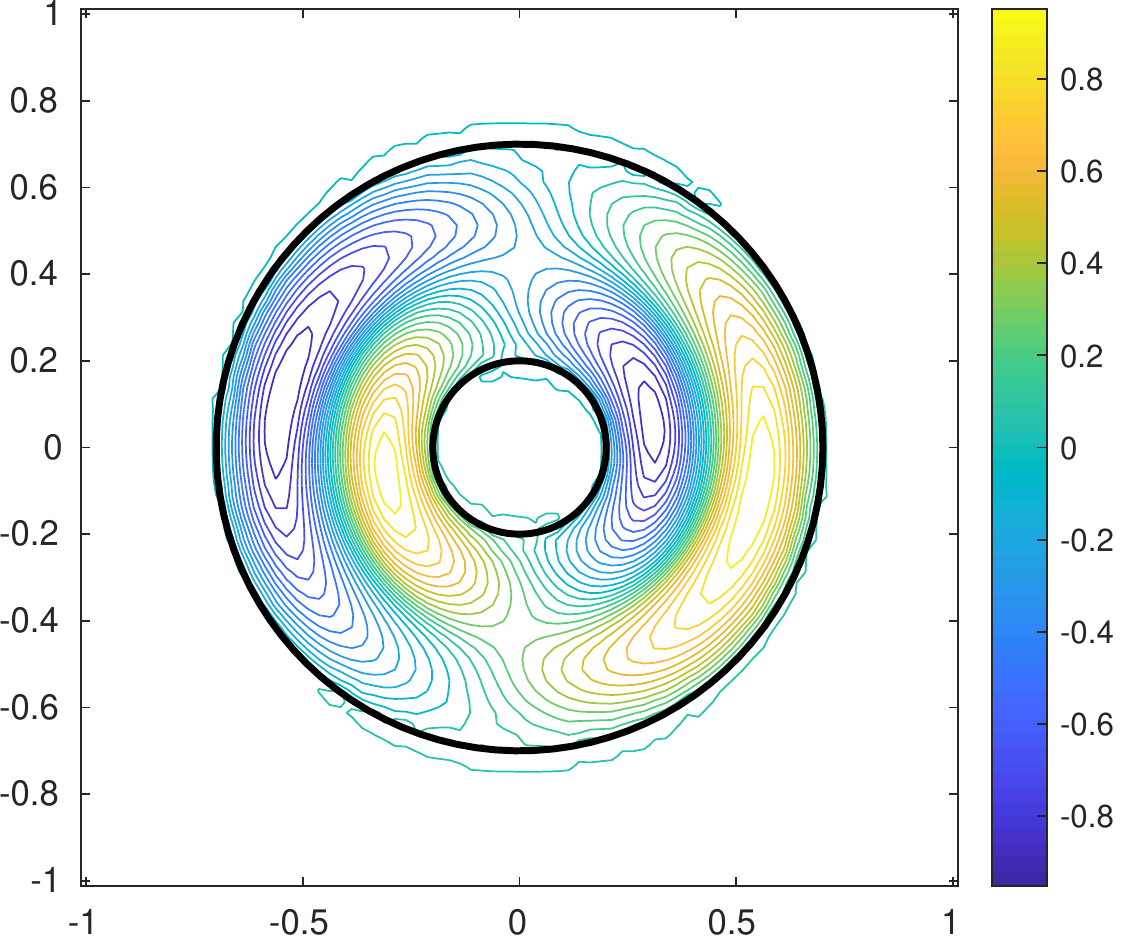}
		\caption{ \bf $u_2, \varepsilon = 10^{-2}$}
	\end{subfigure}
	\begin{subfigure}{0.2\textwidth}
		\includegraphics[width=\textwidth]{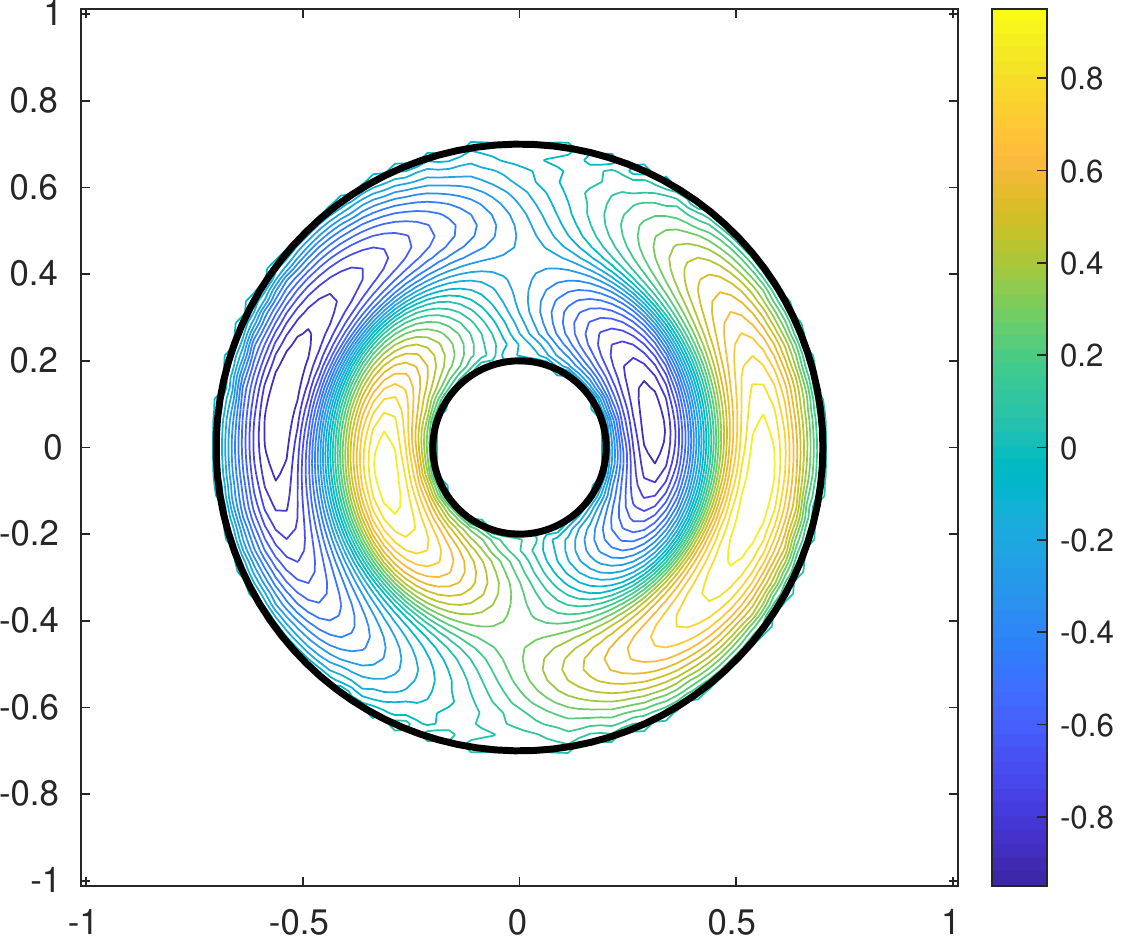}
		\caption{ \bf $u_2, \varepsilon = 10^{-3}$}
	\end{subfigure}
	\begin{subfigure}{0.2\textwidth}
		\includegraphics[width=\textwidth]{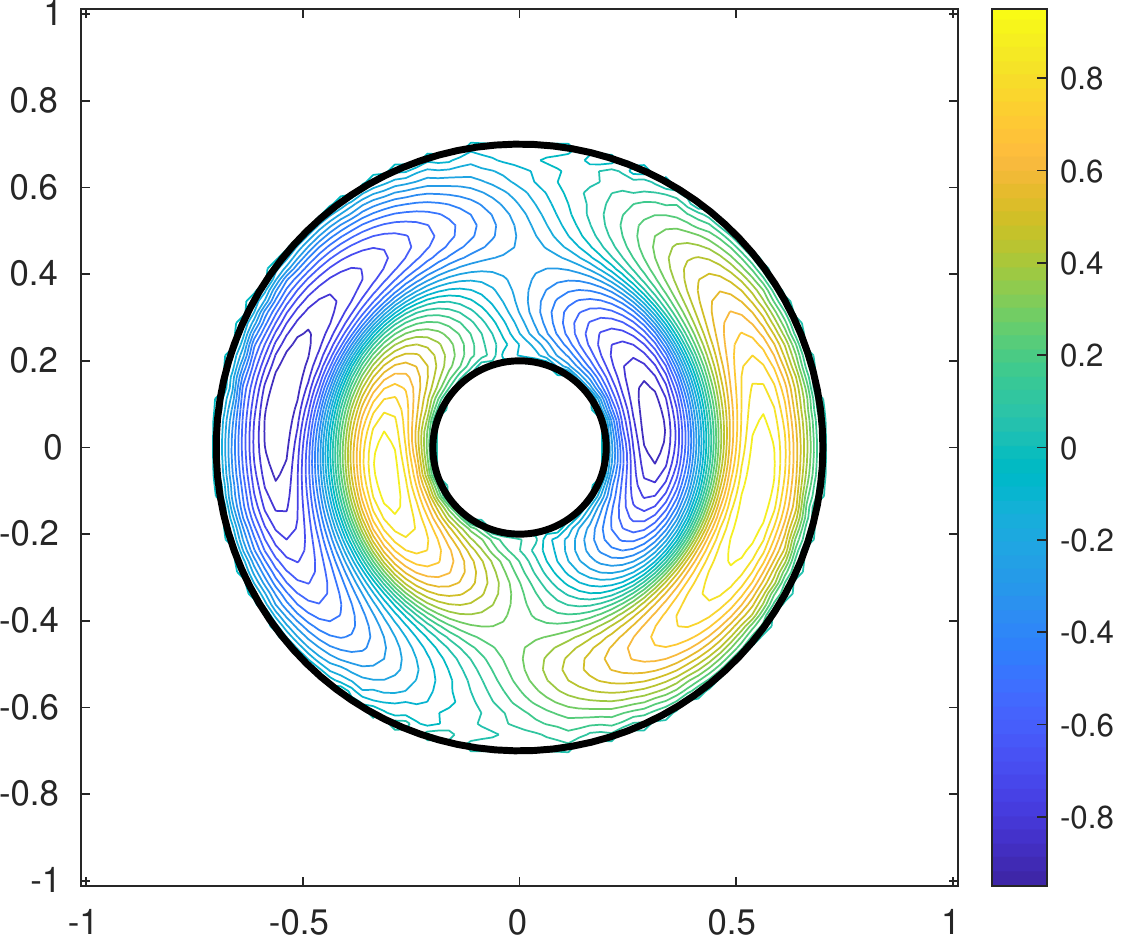}
		\caption{ \bf $u_2, \varepsilon = 10^{-4}$}
	\end{subfigure}\\
	\begin{subfigure}{0.2\textwidth}
		\includegraphics[width=\textwidth]{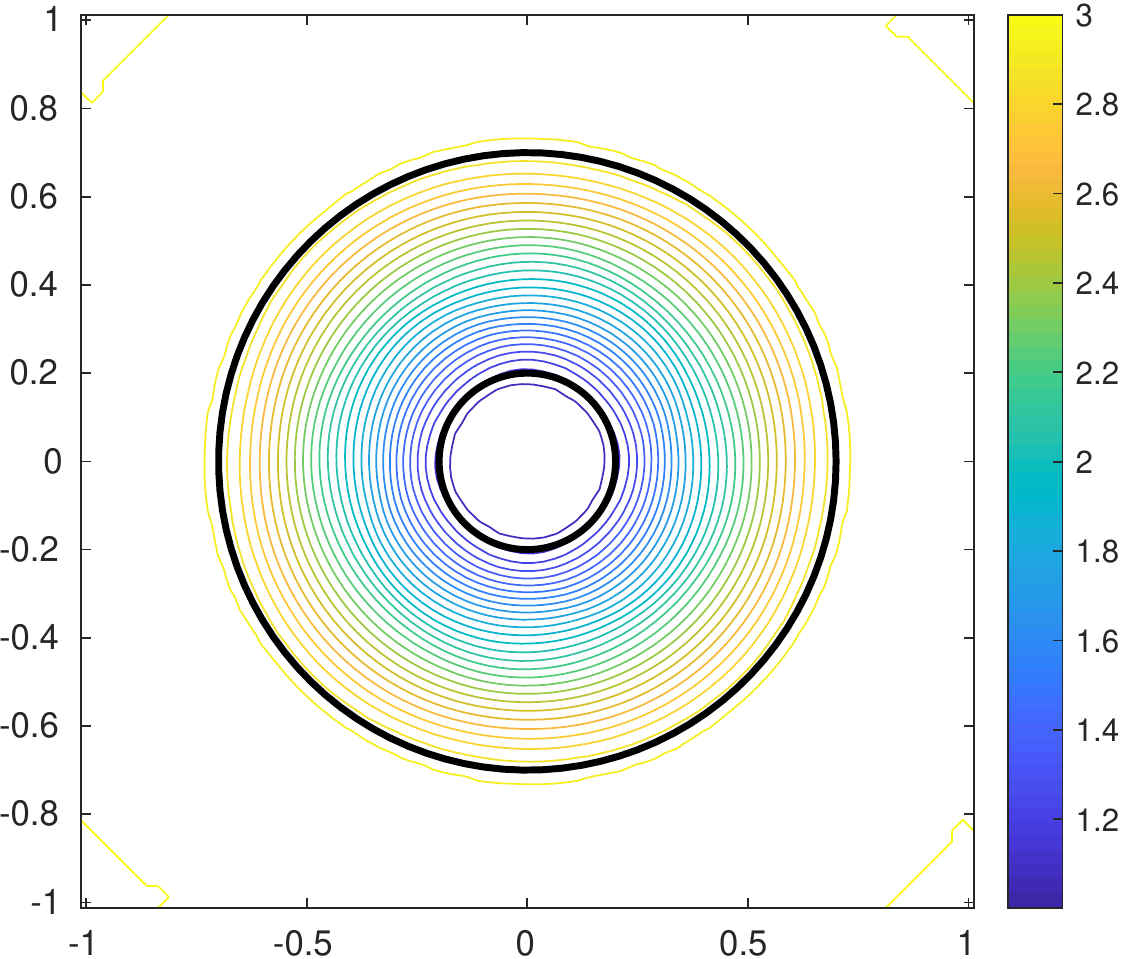}
		\caption{\bf $\vartheta, \varepsilon = 10^{-1}$}
	\end{subfigure}
	\begin{subfigure}{0.2\textwidth}
		\includegraphics[width=\textwidth]{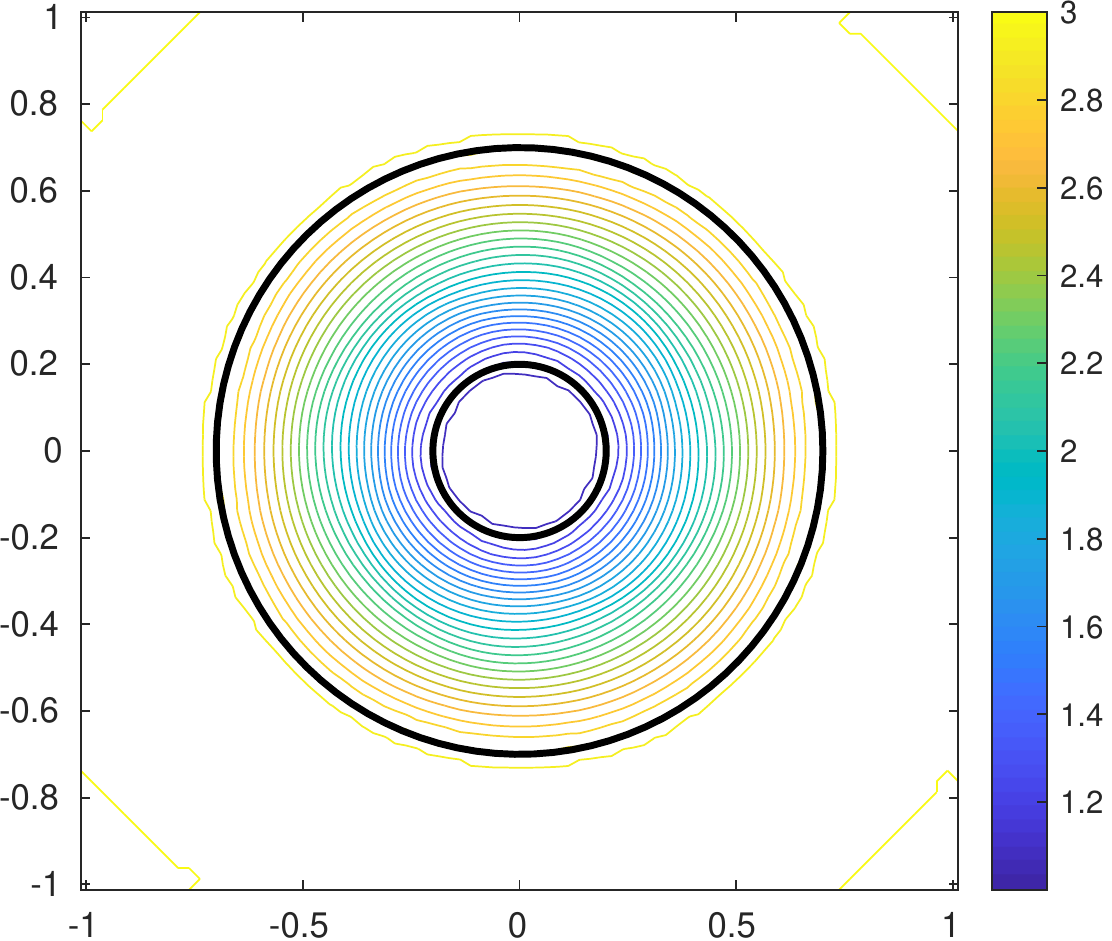}
		\caption{\bf $\vartheta, \varepsilon = 10^{-2}$}
	\end{subfigure}
	\begin{subfigure}{0.2\textwidth}
		\includegraphics[width=\textwidth]{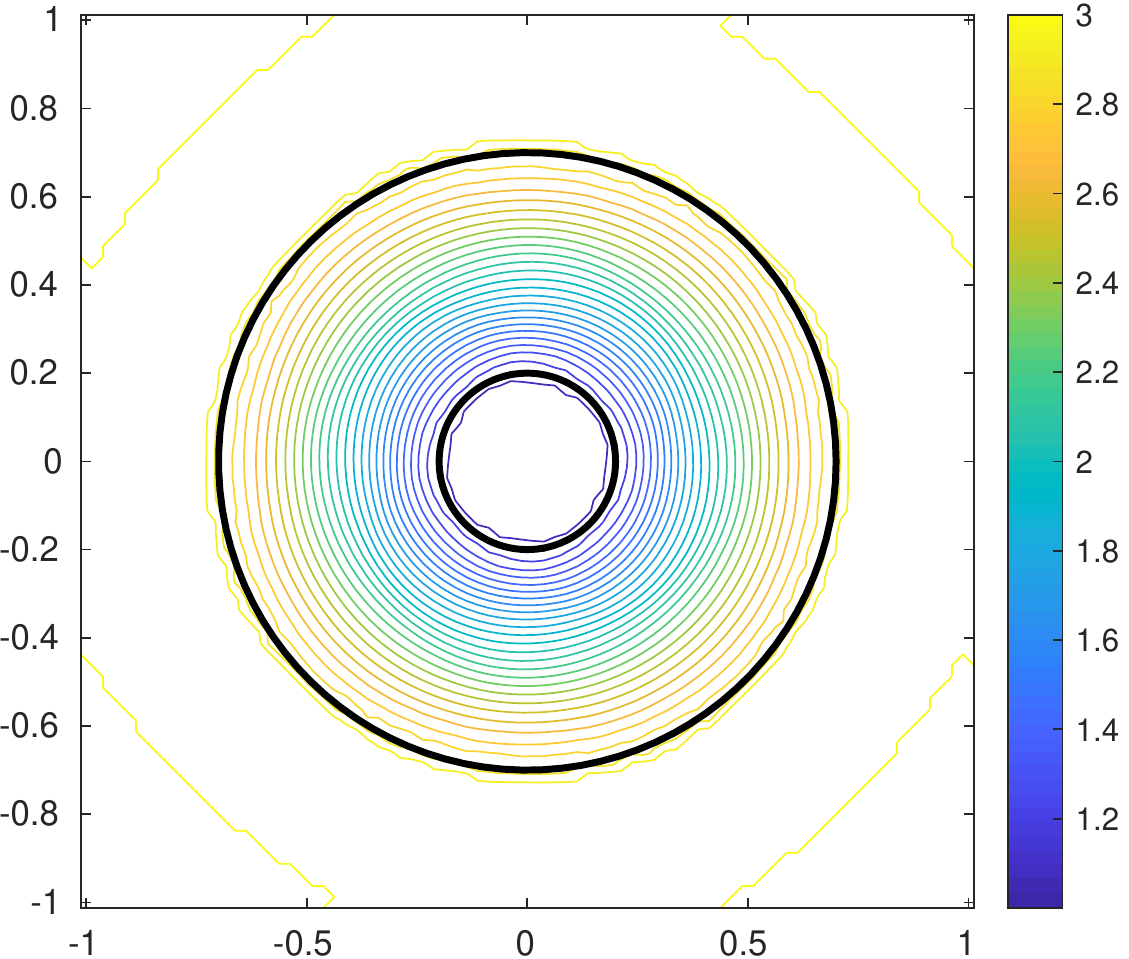}
		\caption{\bf $\vartheta, \varepsilon = 10^{-3}$}
	\end{subfigure}
	\begin{subfigure}{0.2\textwidth}
		\includegraphics[width=\textwidth]{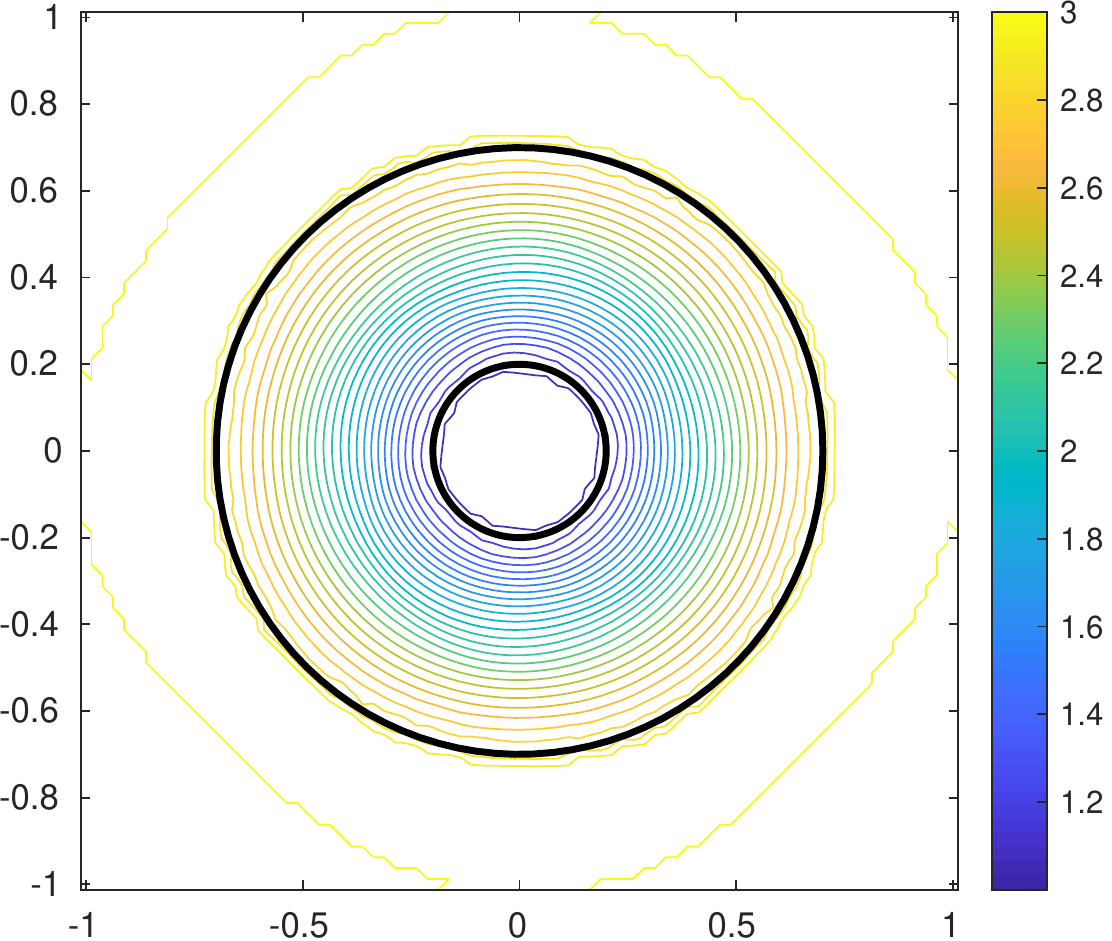}
		\caption{\bf $\vartheta, \varepsilon = 10^{-4}$}
	\end{subfigure}\\
	\begin{subfigure}{0.2\textwidth}
		\includegraphics[width=\textwidth]{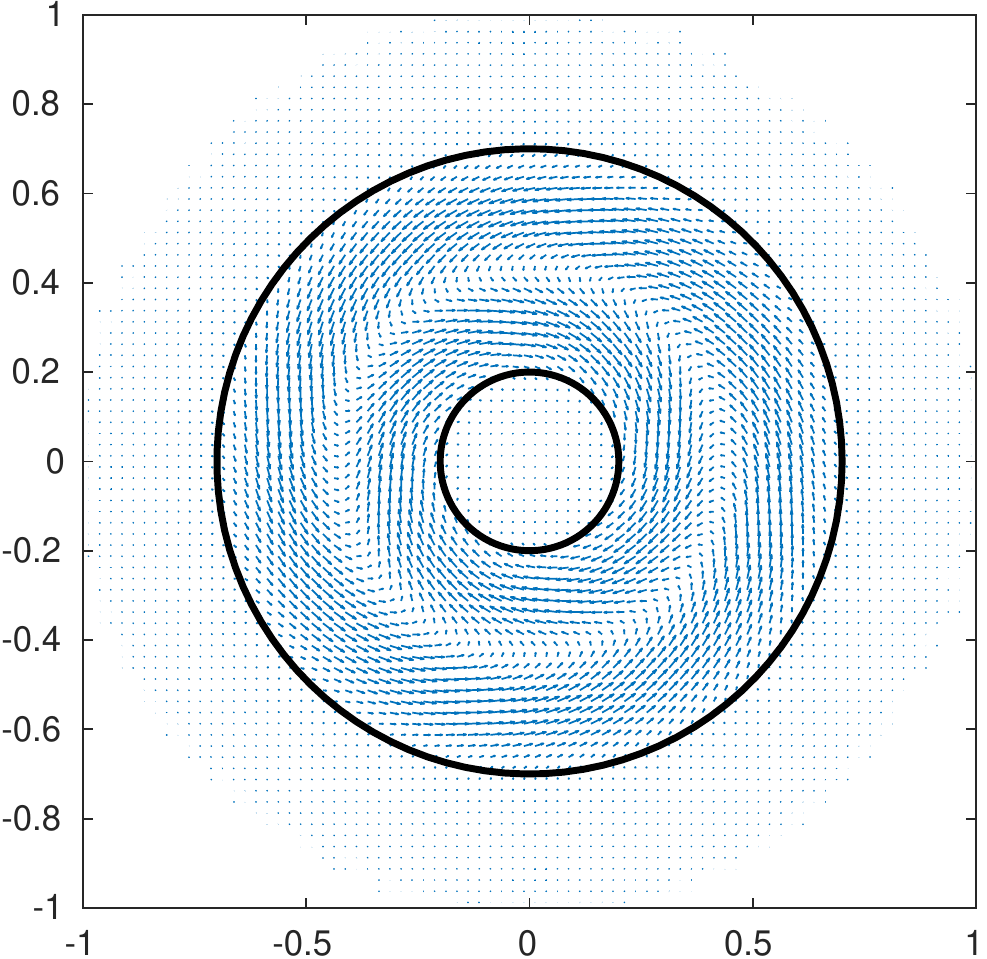}
		\caption{\bf $\vu, \varepsilon = 10^{-1}$}
	\end{subfigure}
	\begin{subfigure}{0.2\textwidth}
		\includegraphics[width=\textwidth]{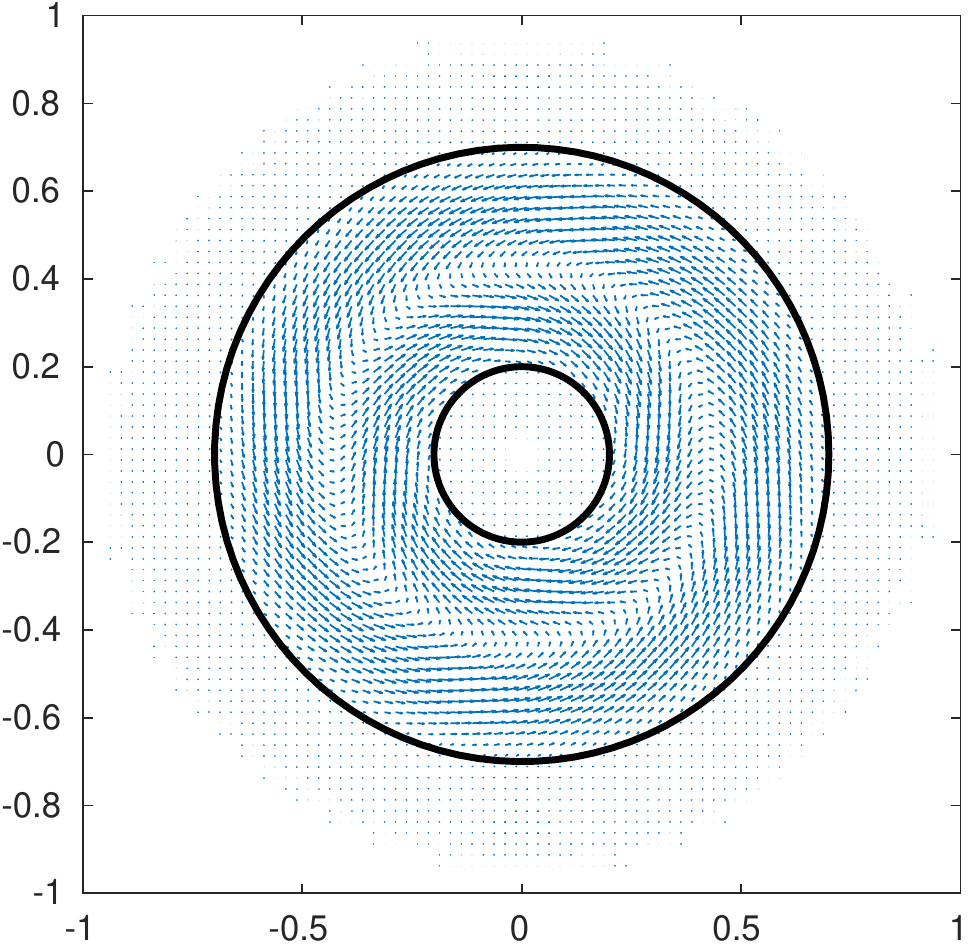}
		\caption{\bf $\vu, \varepsilon = 10^{-2}$}
	\end{subfigure}	
	\begin{subfigure}{0.2\textwidth}
		\includegraphics[width=\textwidth]{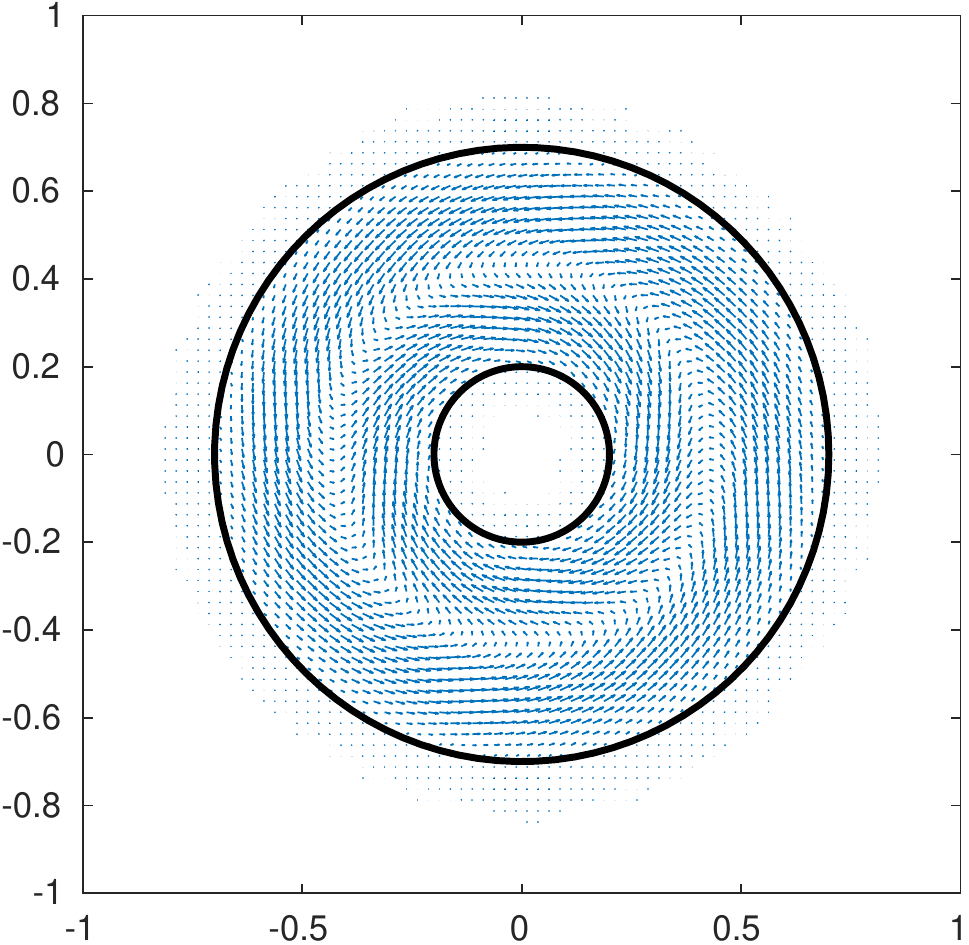}
		\caption{\bf $\vu, \varepsilon = 10^{-3}$}
	\end{subfigure}		
	\begin{subfigure}{0.2\textwidth}
		\includegraphics[width=\textwidth]{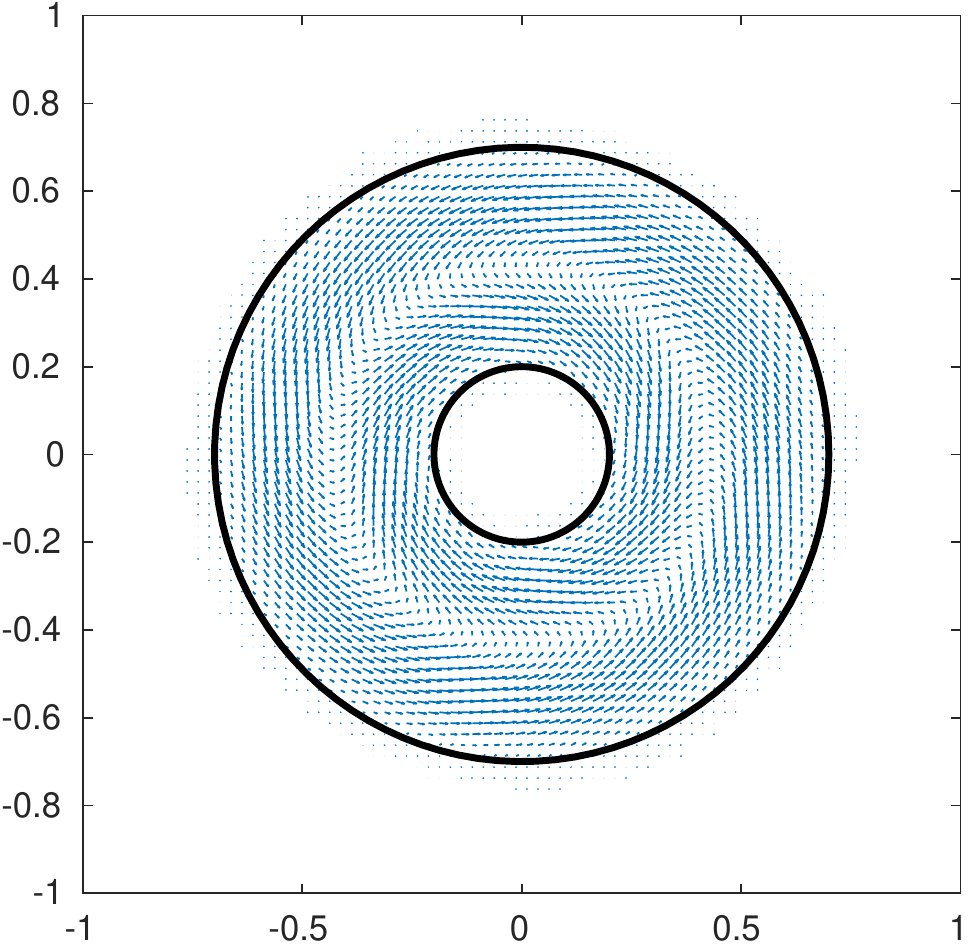}
		\caption{\bf $\vu, \varepsilon = 10^{-4}$}
	\end{subfigure}
	\caption{\small{Experiment~1:  ${U}_h^{\varepsilon}$ with $h = 2/80$ and $\varepsilon = 10^{-1},\dots, 10^{-4}$.}}\label{fig:ex1}
\end{figure}

\subsection{Experiment~2: Ring domain - zero density outside $\Omega$}
In this experiment we use a small initial density outside the fluid domain which approximates the case of zero density outside $\Omega$ studied in the theoretical part.
The rest of the set up  is the same as in Experiment~1, the initial data read
\begin{equation*}
	(\varrho,\vu, \vartheta)(0,x)
	\; = \; \begin{cases}
	(10^{-2},0,0, 1 ) , & x \in B_{0.2}, \\
	\left(1, \frac{ \sin(4\pi (|x|-0.2)) x_2}{|x|} ,-\frac{ \sin(4\pi (|x|-0.2)) x_1}{|x|}, 0.2 + 4|x| \right) , & x \in  \Omega \equiv {B}_{0.7}\setminus B_{0.2}, \\
	(10^{-2},0 ,0,  3) , & x \in \mathbb{T}^2\setminus B_{0.7}.
	\end{cases}
\end{equation*}
Analogously as above,  Figure~\ref{fig:ex2-1} and Figure~\ref{fig:ex2-2} present the errors with respect to  $h$ and $\varepsilon$, respectively.
The convergence rate 1  with respect to both $h$ and $\varepsilon$ is numerically confirmed.
Figure~\ref{fig:ex2} demonstrates the influence of the penalization parameter $\varepsilon = 10^{-1},\dots,10^{-4}$ on the numerical solution computed on the mesh with $80^2$ cells.
Due to small value of the outside density, the fluid tends to flow out of the fluid region $\Omega$ which acts against the penalization and consequently leads to small oscillation near the boundary.

\begin{figure}[htbp]
	\setlength{\abovecaptionskip}{0.cm}
	\setlength{\belowcaptionskip}{-0.cm}
	\centering
	\begin{subfigure}{0.32\textwidth}
		\includegraphics[width=\textwidth]{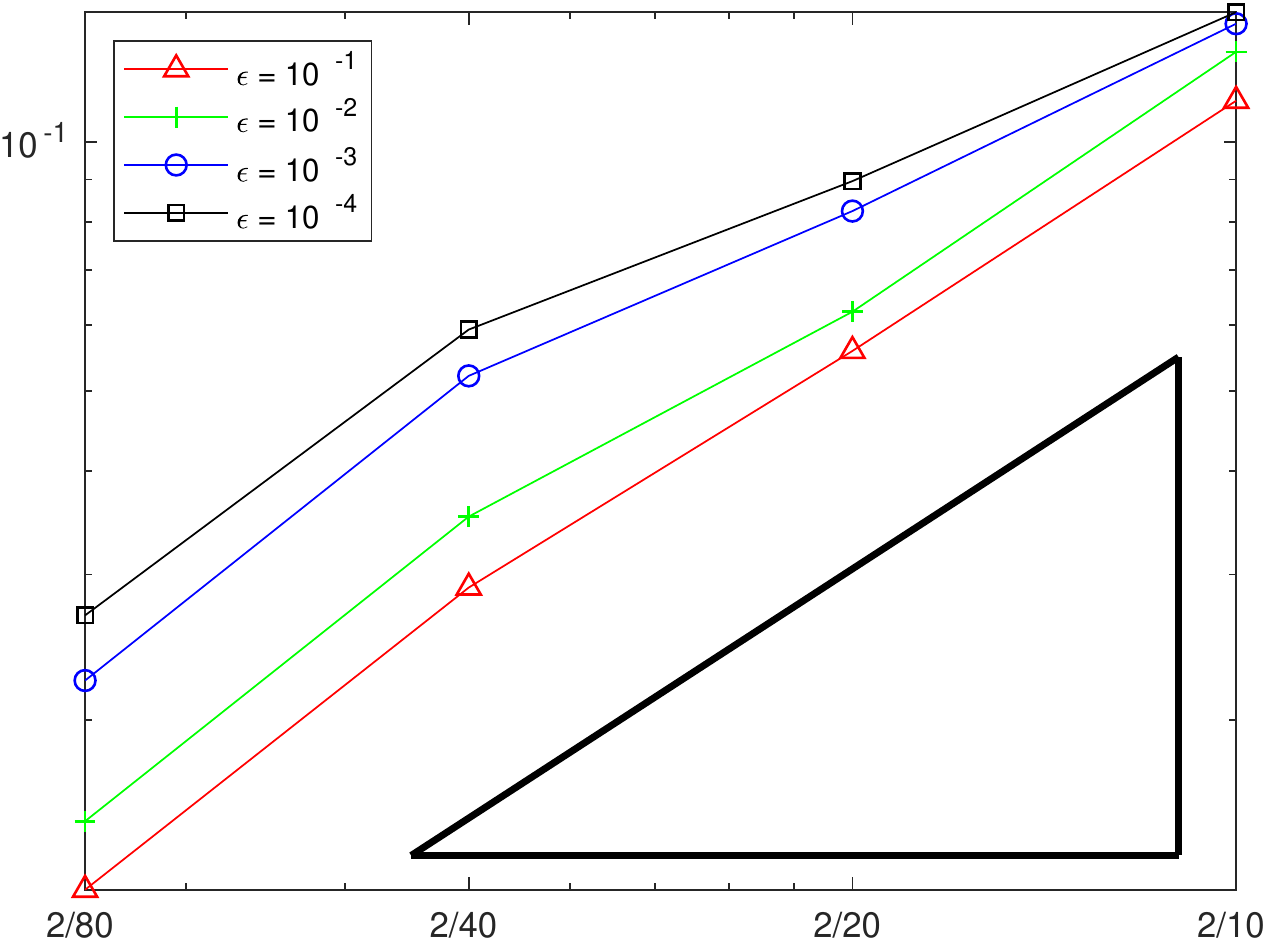}
		\caption{ \bf $\varrho$}
	\end{subfigure}
	\begin{subfigure}{0.32\textwidth}
		\includegraphics[width=\textwidth]{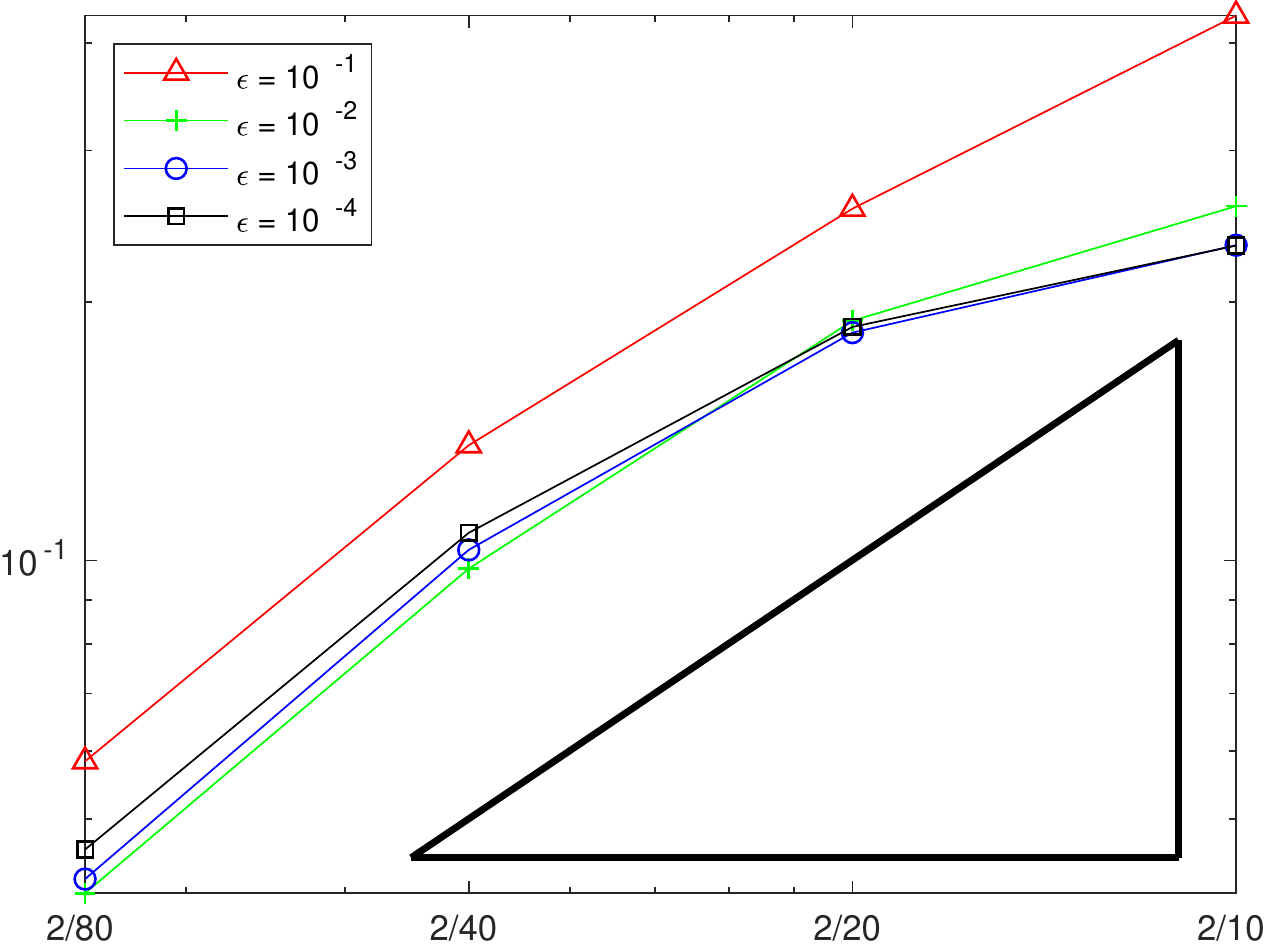}
		\caption{ \bf $\vu$}
	\end{subfigure}
	\begin{subfigure}{0.32\textwidth}
		\includegraphics[width=\textwidth]{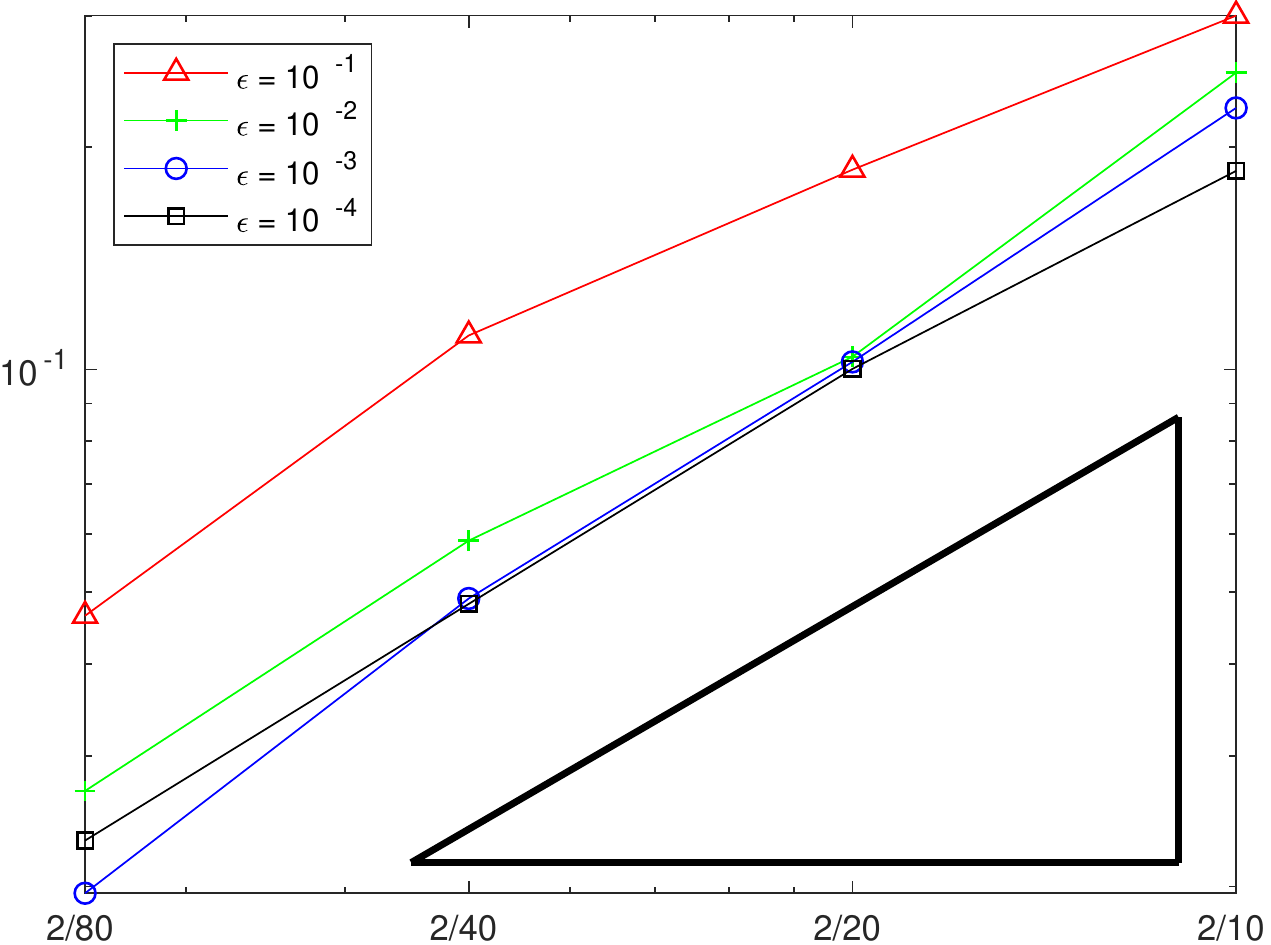}
		\caption{ \bf $\vartheta$}
	\end{subfigure}
	\caption{\small{Experiment~2:  $E(U_h^{\varepsilon})$ errors  with respect to $h.$}}\label{fig:ex2-1}
\end{figure}

\begin{figure}[htbp]
	\setlength{\abovecaptionskip}{0.cm}
	\setlength{\belowcaptionskip}{-0.cm}
	\centering
	\begin{subfigure}{0.32\textwidth}
		\includegraphics[width=\textwidth]{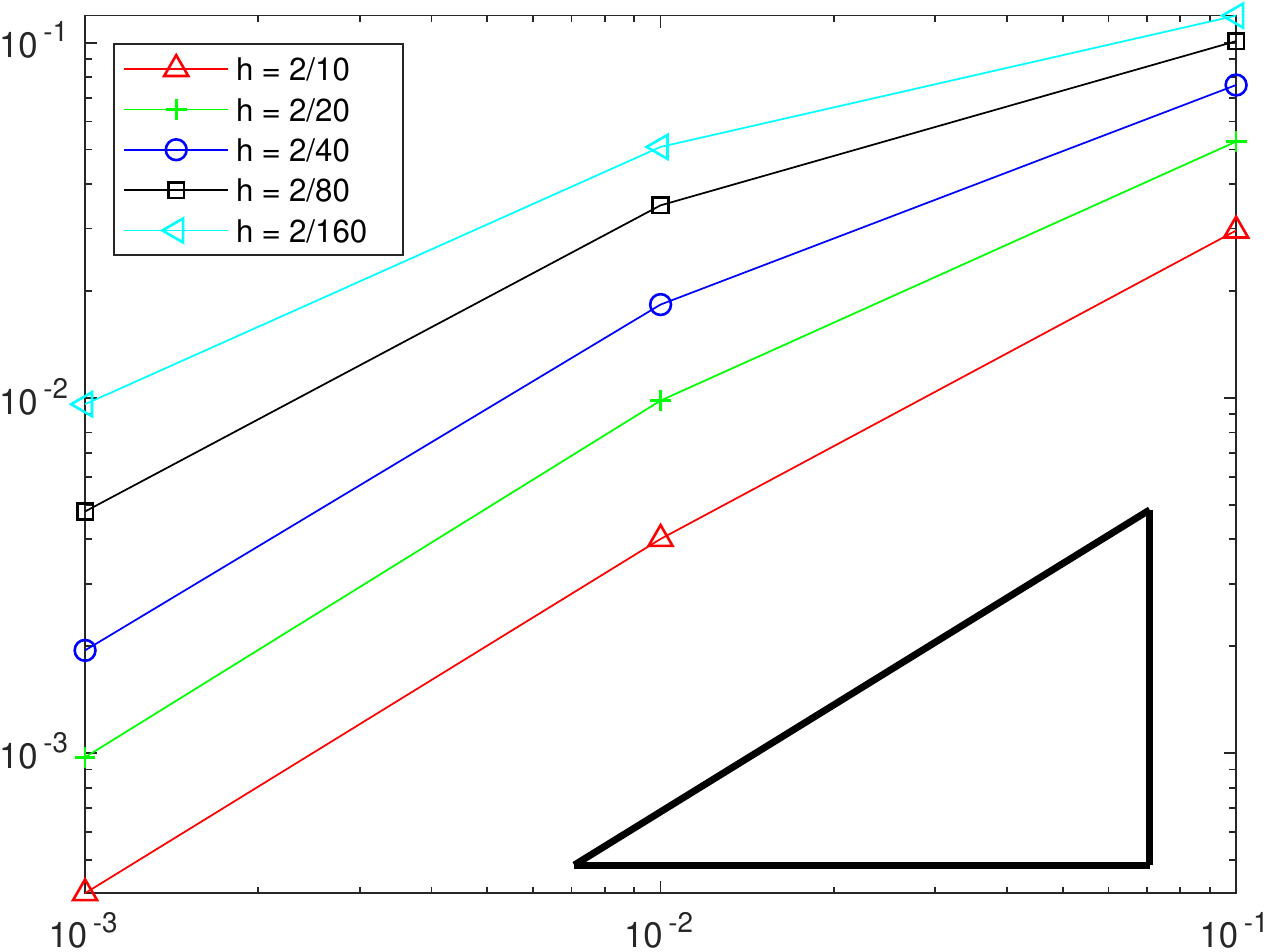}
		\caption{ \bf $\varrho$}
	\end{subfigure}
	\begin{subfigure}{0.32\textwidth}
		\includegraphics[width=\textwidth]{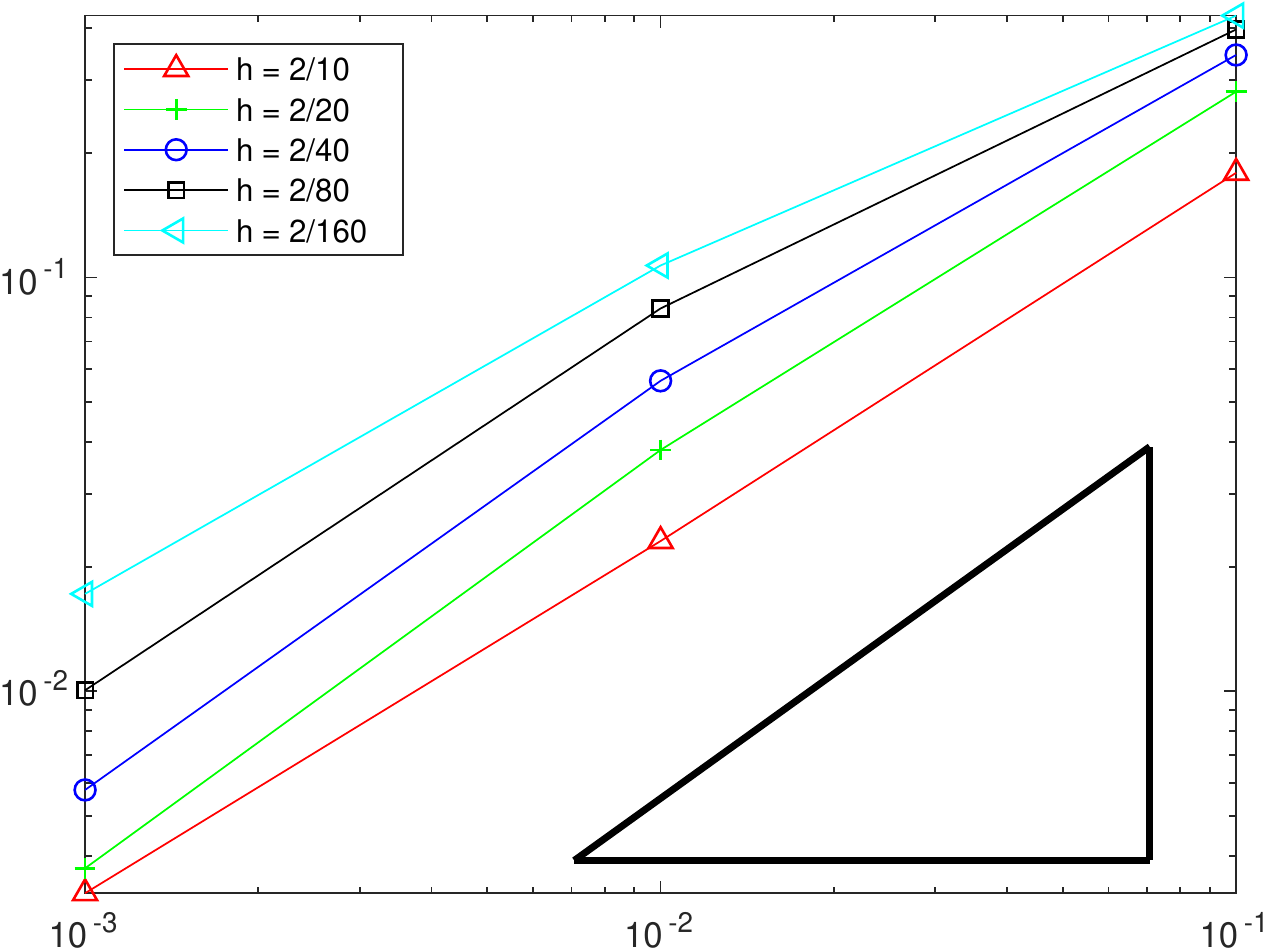}
		\caption{ \bf $\vu$}
	\end{subfigure}
	\begin{subfigure}{0.32\textwidth}
		\includegraphics[width=\textwidth]{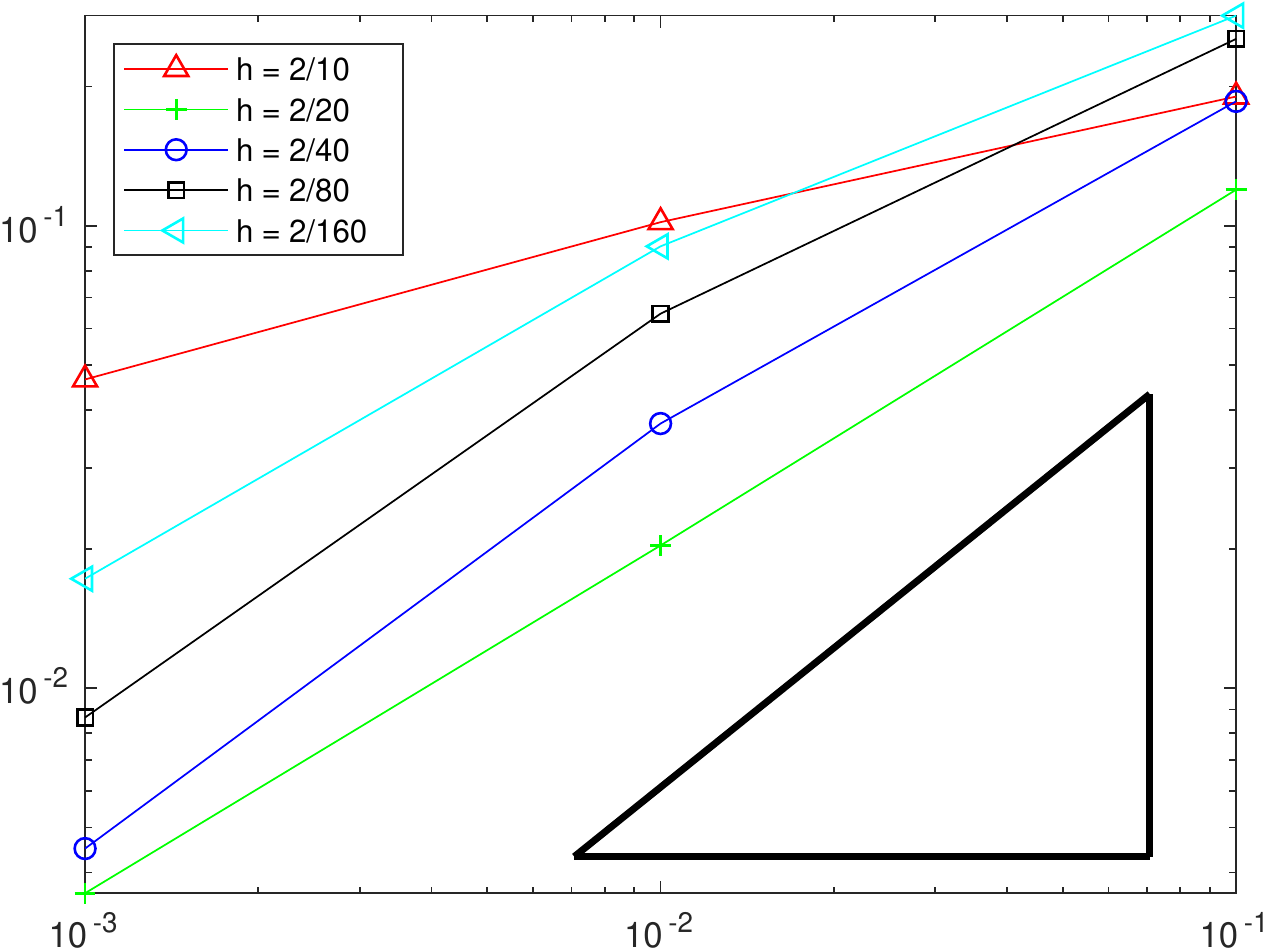}
		\caption{ \bf $\vartheta$}
	\end{subfigure}
	\caption{\small{Experiment~2:  $P(U_h^{\varepsilon})$ errors with respect to $\varepsilon.$}}\label{fig:ex2-2}
\end{figure}

\begin{figure}[htbp]
	\setlength{\abovecaptionskip}{0.cm}
	\setlength{\belowcaptionskip}{-0.cm}
	\centering
	\begin{subfigure}{0.2\textwidth}
		\includegraphics[width=\textwidth]{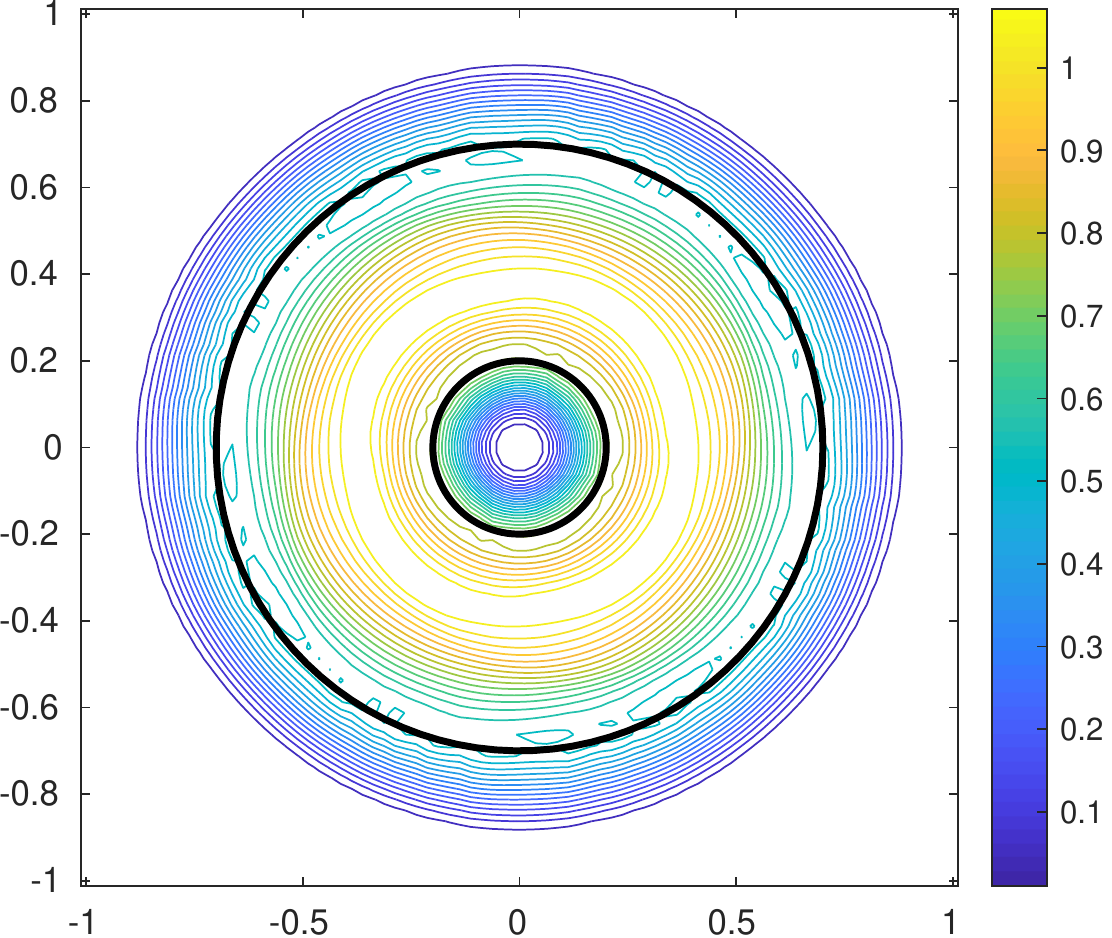}
		\caption{ \bf $\varrho, \varepsilon = 10^{-1}$}
	\end{subfigure}
	\begin{subfigure}{0.2\textwidth}
		\includegraphics[width=\textwidth]{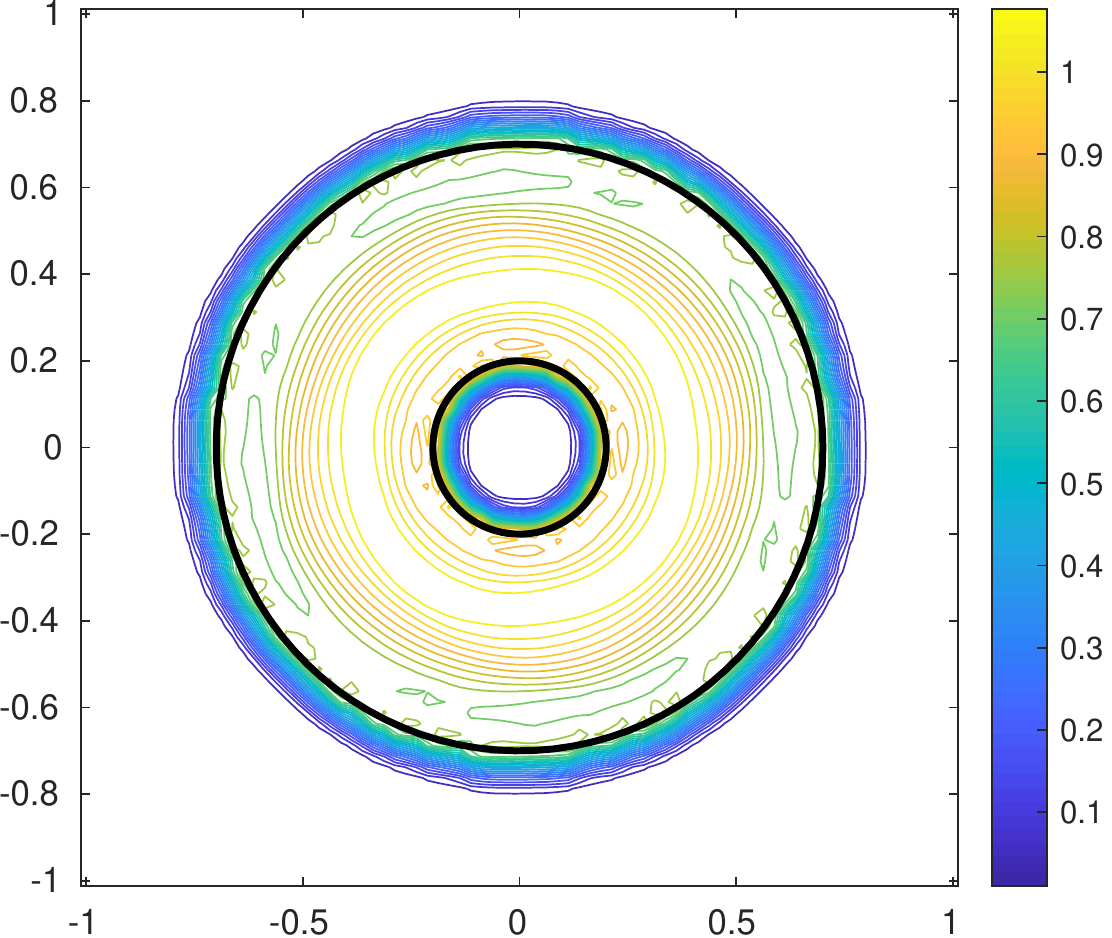}
		\caption{ \bf $\varrho, \varepsilon = 10^{-2}$}
	\end{subfigure}
	\begin{subfigure}{0.2\textwidth}
		\includegraphics[width=\textwidth]{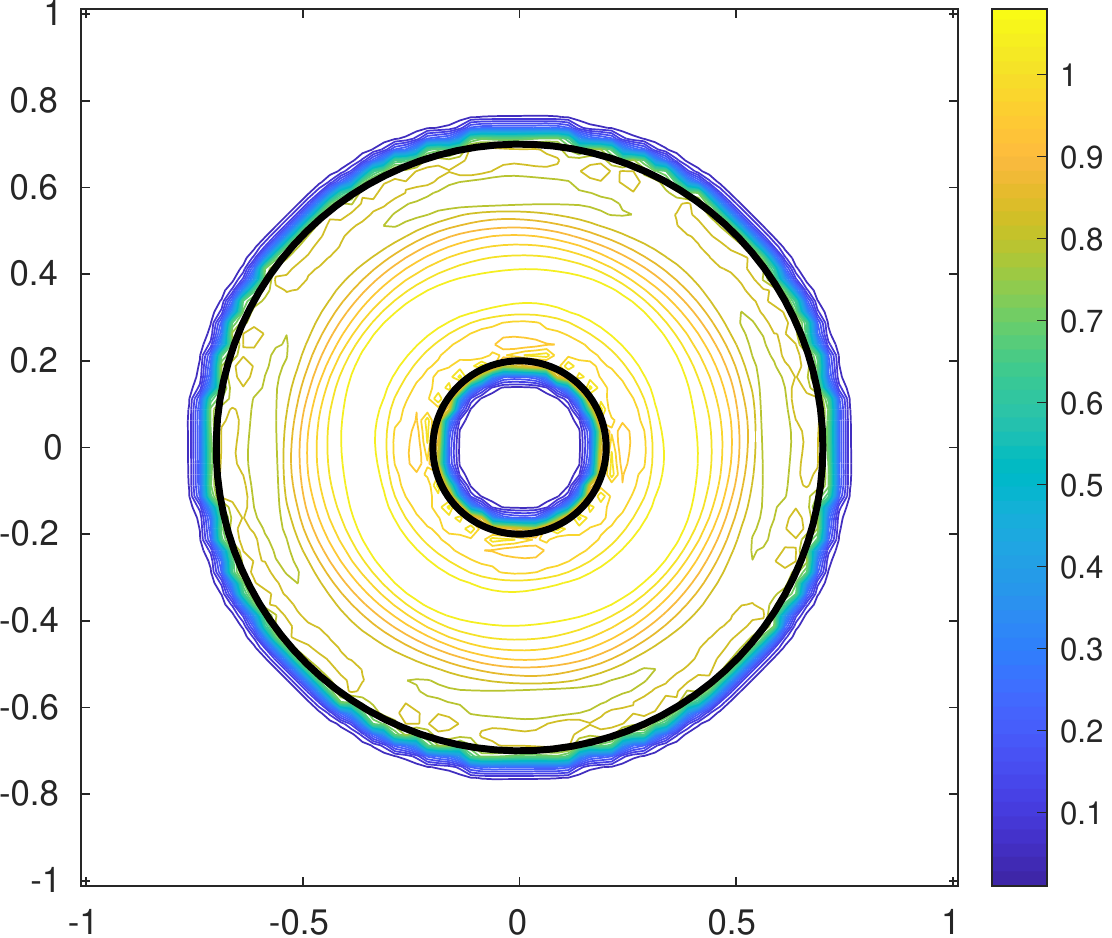}
		\caption{ \bf $\varrho, \varepsilon = 10^{-3}$}
	\end{subfigure}
	\begin{subfigure}{0.2\textwidth}
		\includegraphics[width=\textwidth]{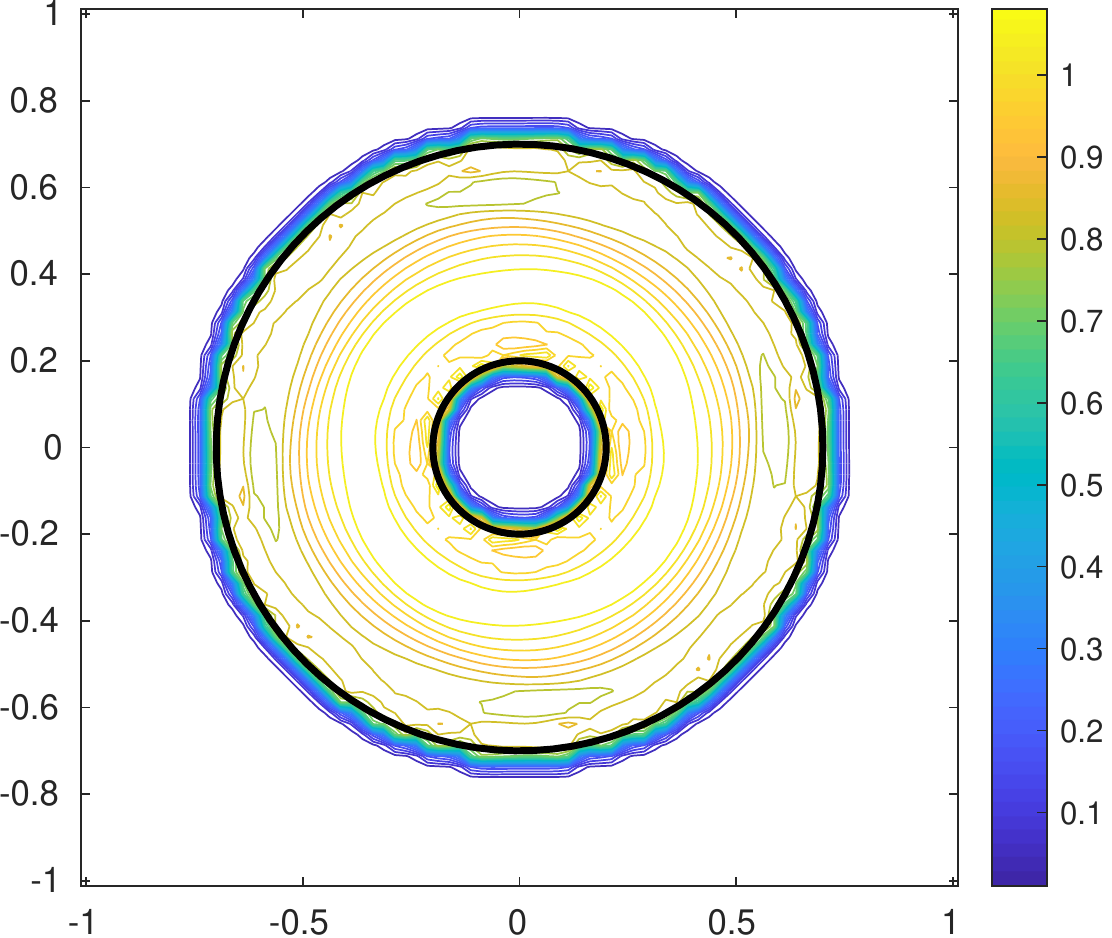}
		\caption{ \bf $\varrho, \varepsilon = 10^{-4}$}
	\end{subfigure}\\
	\begin{subfigure}{0.2\textwidth}
		\includegraphics[width=\textwidth]{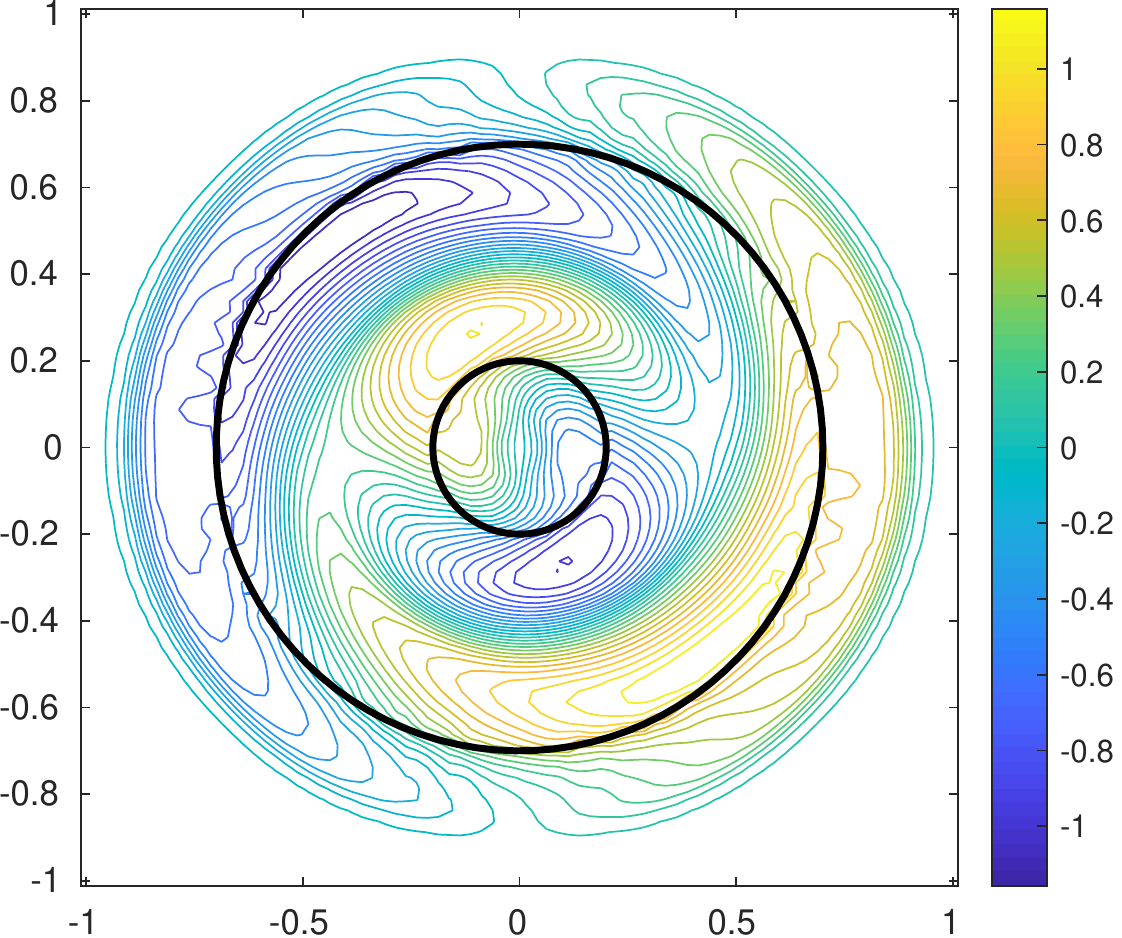}
		\caption{\bf $u_1, \varepsilon = 10^{-1}$}
	\end{subfigure}	
	\begin{subfigure}{0.2\textwidth}
		\includegraphics[width=\textwidth]{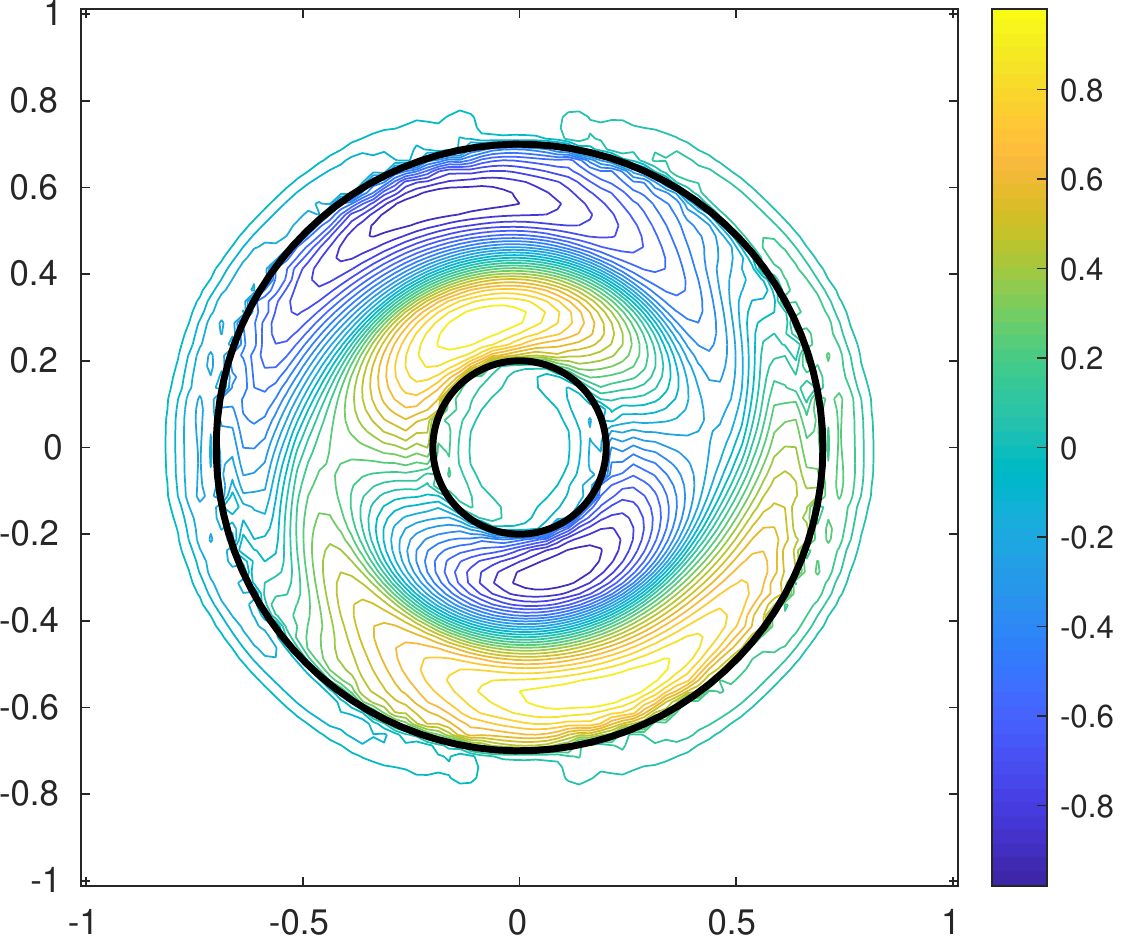}
		\caption{\bf $u_1, \varepsilon = 10^{-2}$}
	\end{subfigure}
	\begin{subfigure}{0.2\textwidth}
		\includegraphics[width=\textwidth]{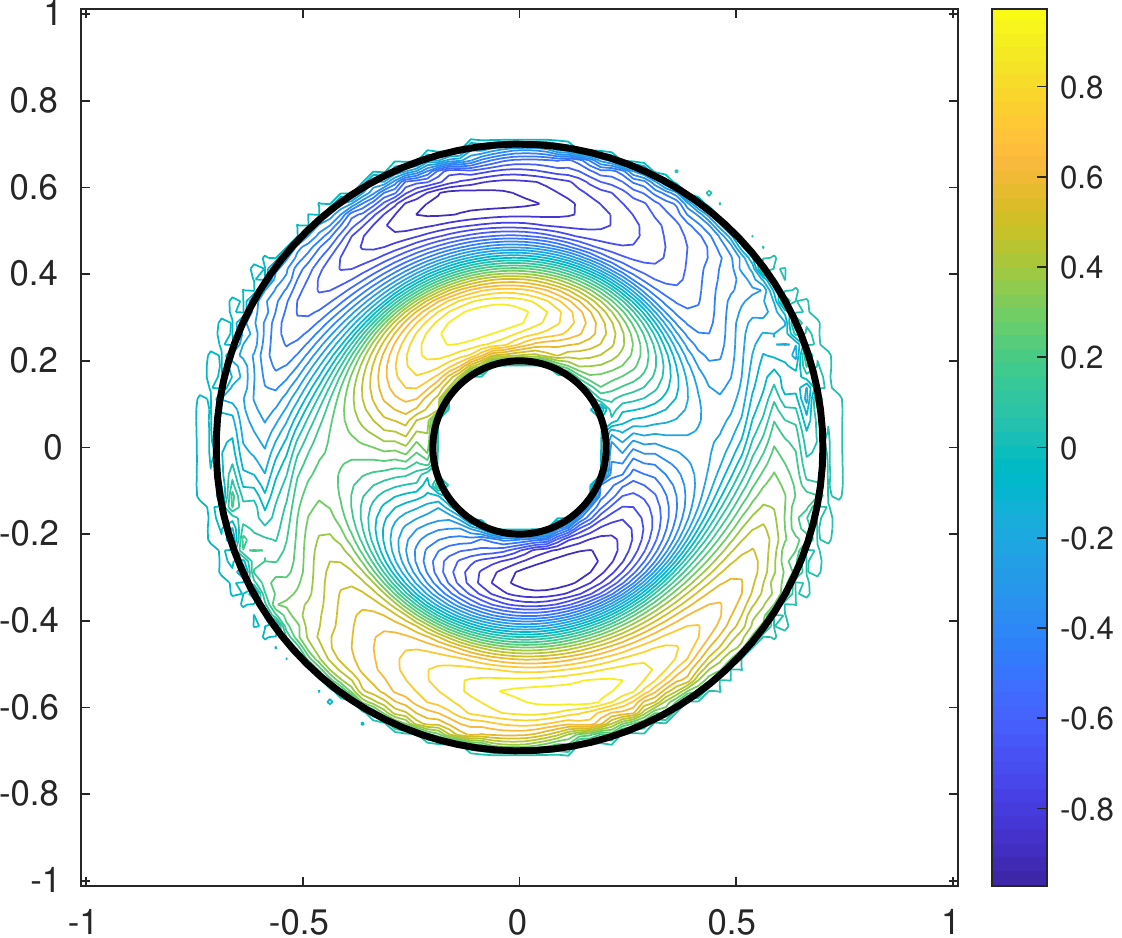}
		\caption{\bf $u_1, \varepsilon = 10^{-3}$}
	\end{subfigure}
	\begin{subfigure}{0.2\textwidth}
		\includegraphics[width=\textwidth]{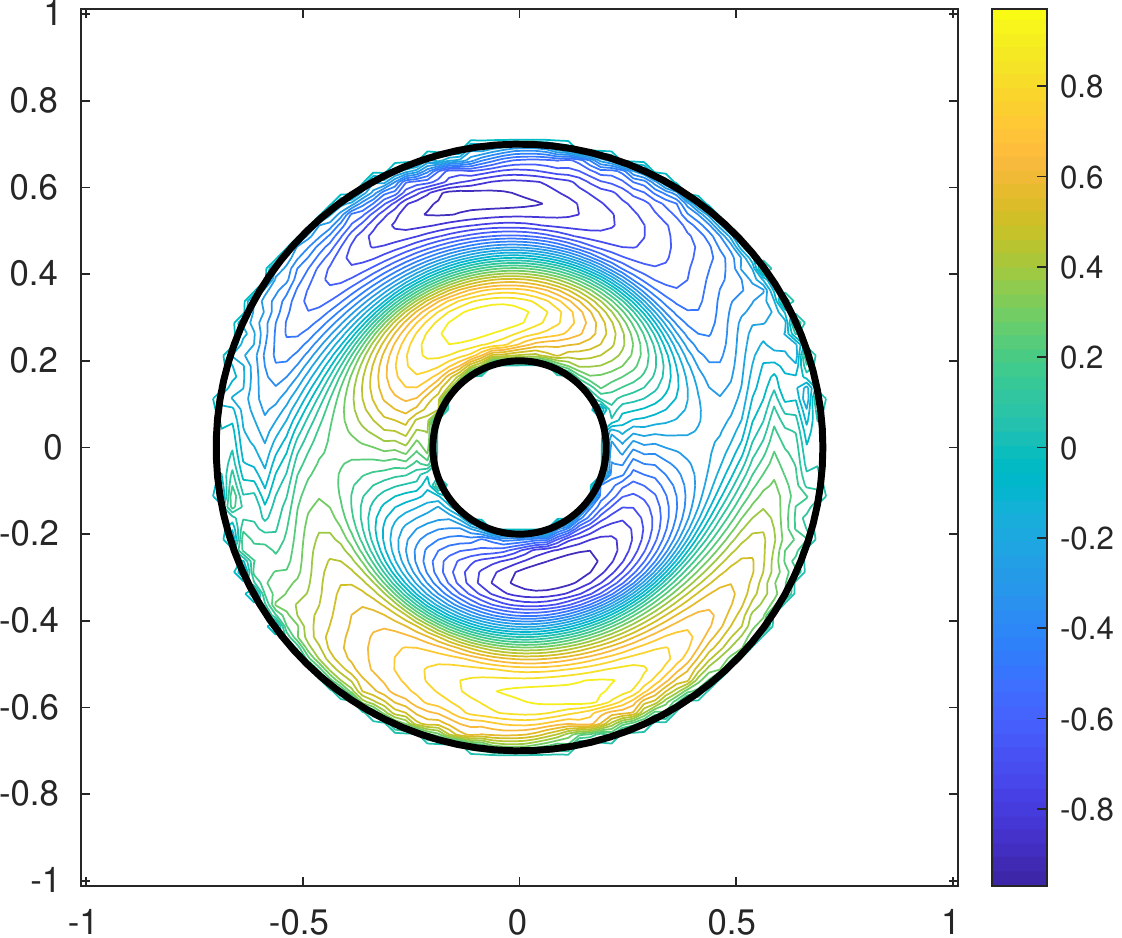}
		\caption{\bf $u_1, \varepsilon = 10^{-4}$}
	\end{subfigure}\\
	\begin{subfigure}{0.2\textwidth}
		\includegraphics[width=\textwidth]{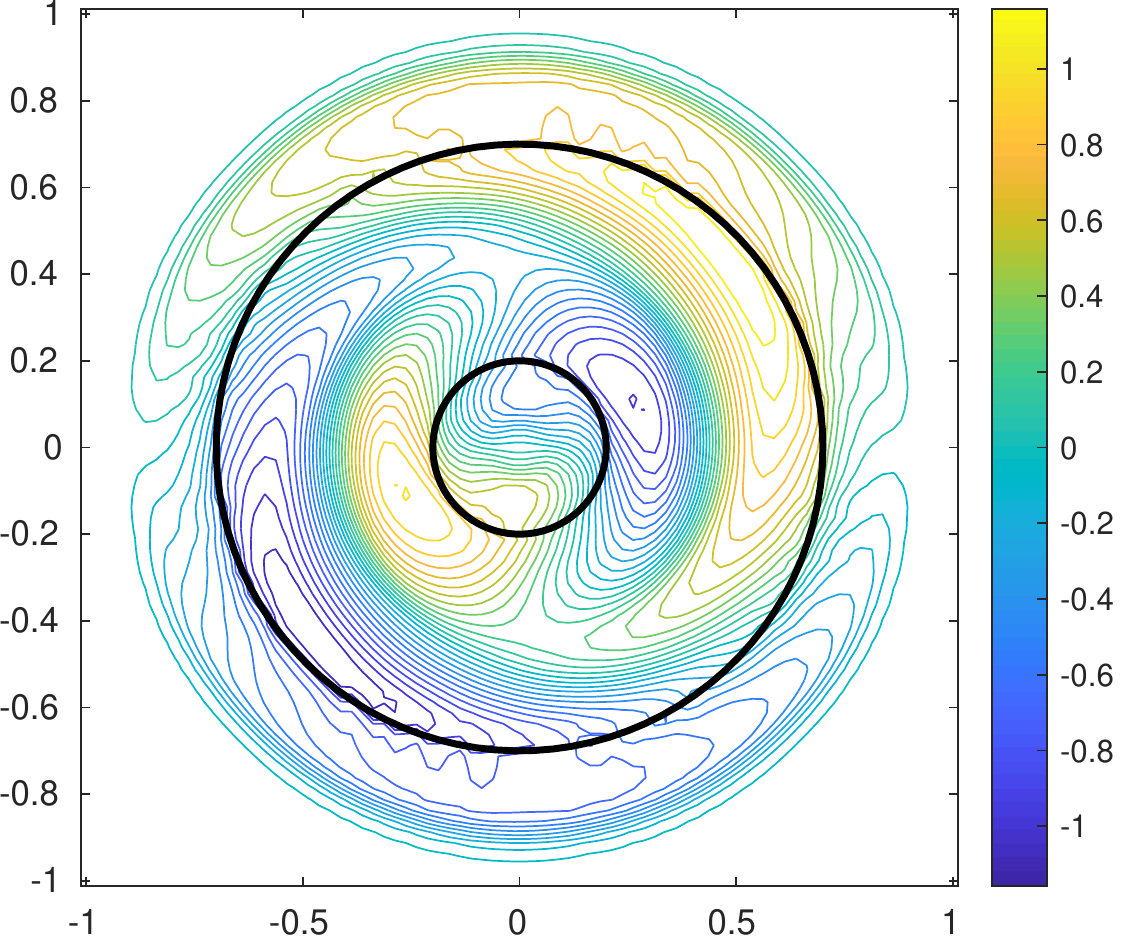}
		\caption{ \bf $u_2, \varepsilon = 10^{-1}$}
	\end{subfigure}	
	\begin{subfigure}{0.2\textwidth}
		\includegraphics[width=\textwidth]{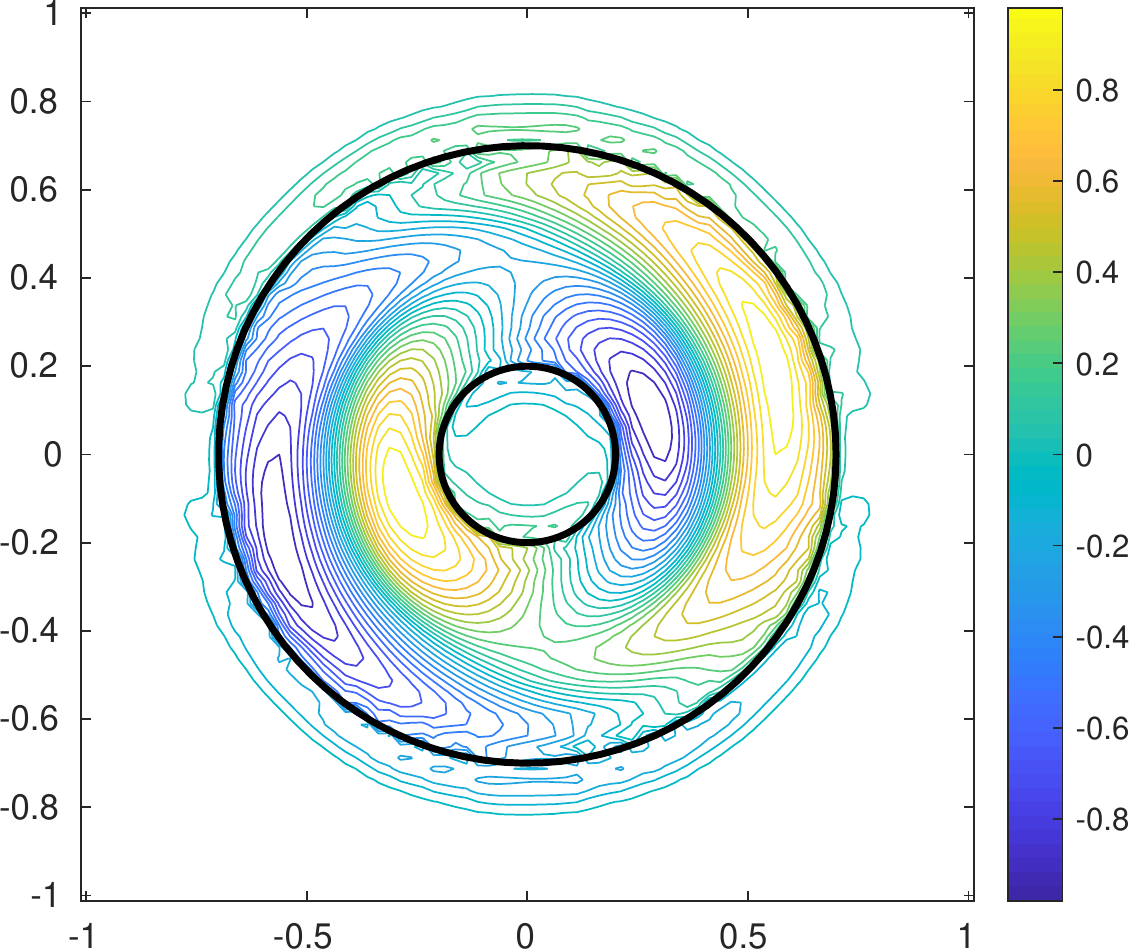}
		\caption{ \bf $u_2, \varepsilon = 10^{-2}$}
	\end{subfigure}
	\begin{subfigure}{0.2\textwidth}
		\includegraphics[width=\textwidth]{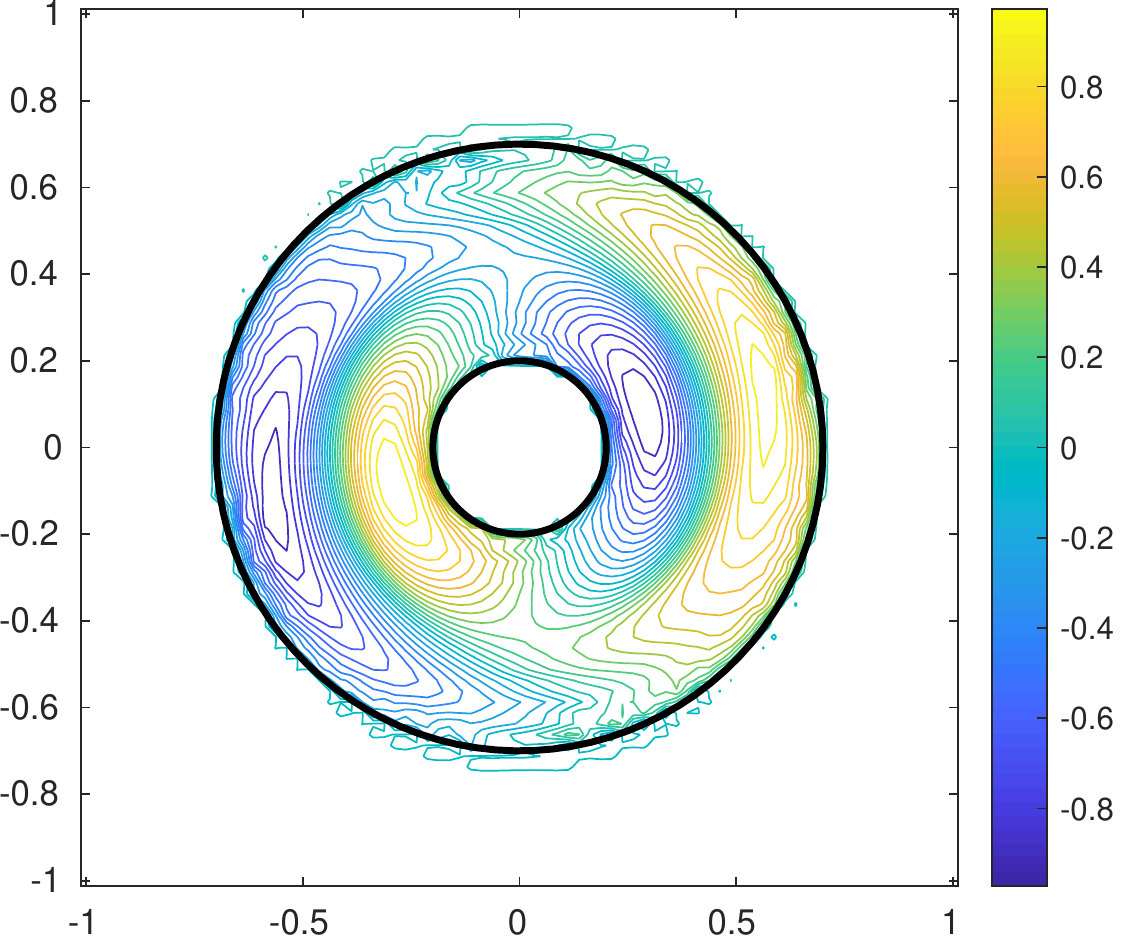}
		\caption{ \bf $u_2, \varepsilon = 10^{-3}$}
	\end{subfigure}
	\begin{subfigure}{0.2\textwidth}
		\includegraphics[width=\textwidth]{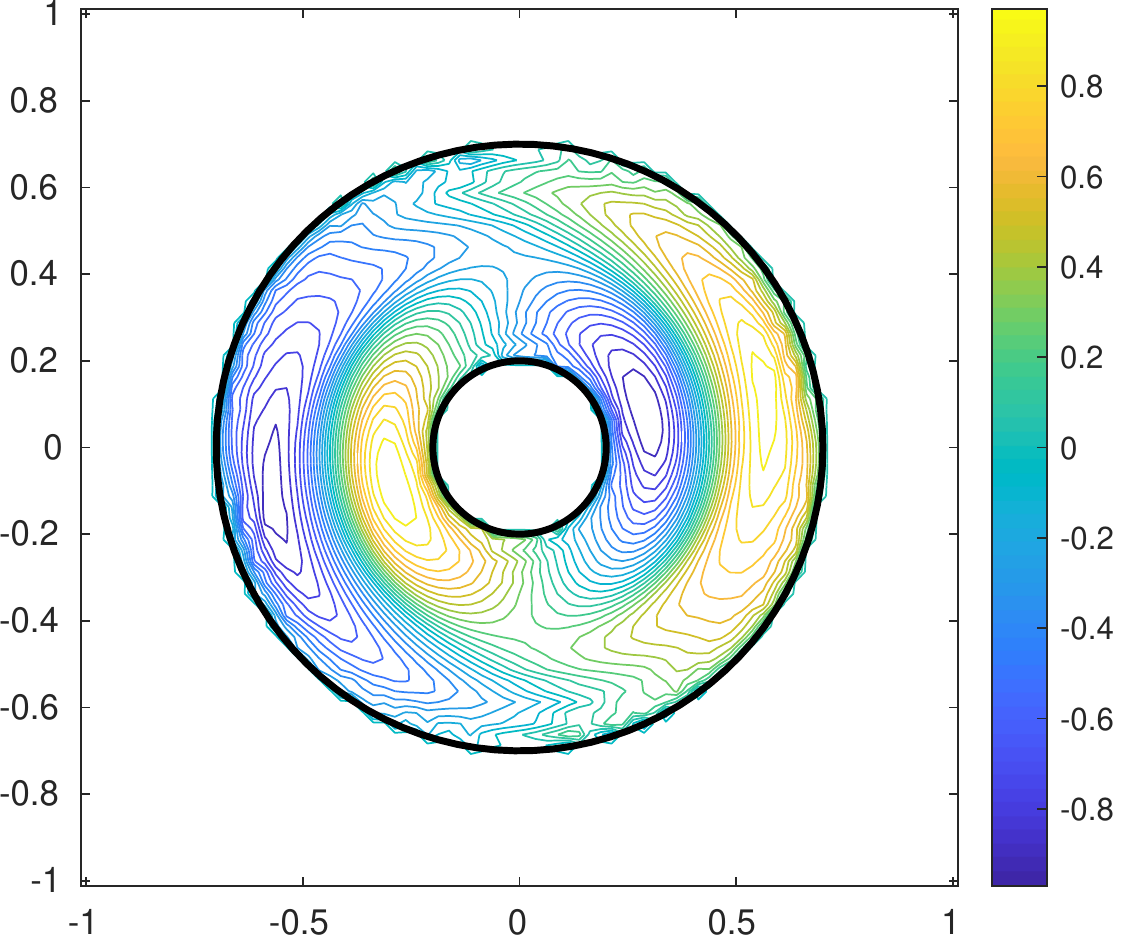}
		\caption{ \bf $u_2, \varepsilon = 10^{-4}$}
	\end{subfigure}\\
	\begin{subfigure}{0.2\textwidth}
		\includegraphics[width=\textwidth]{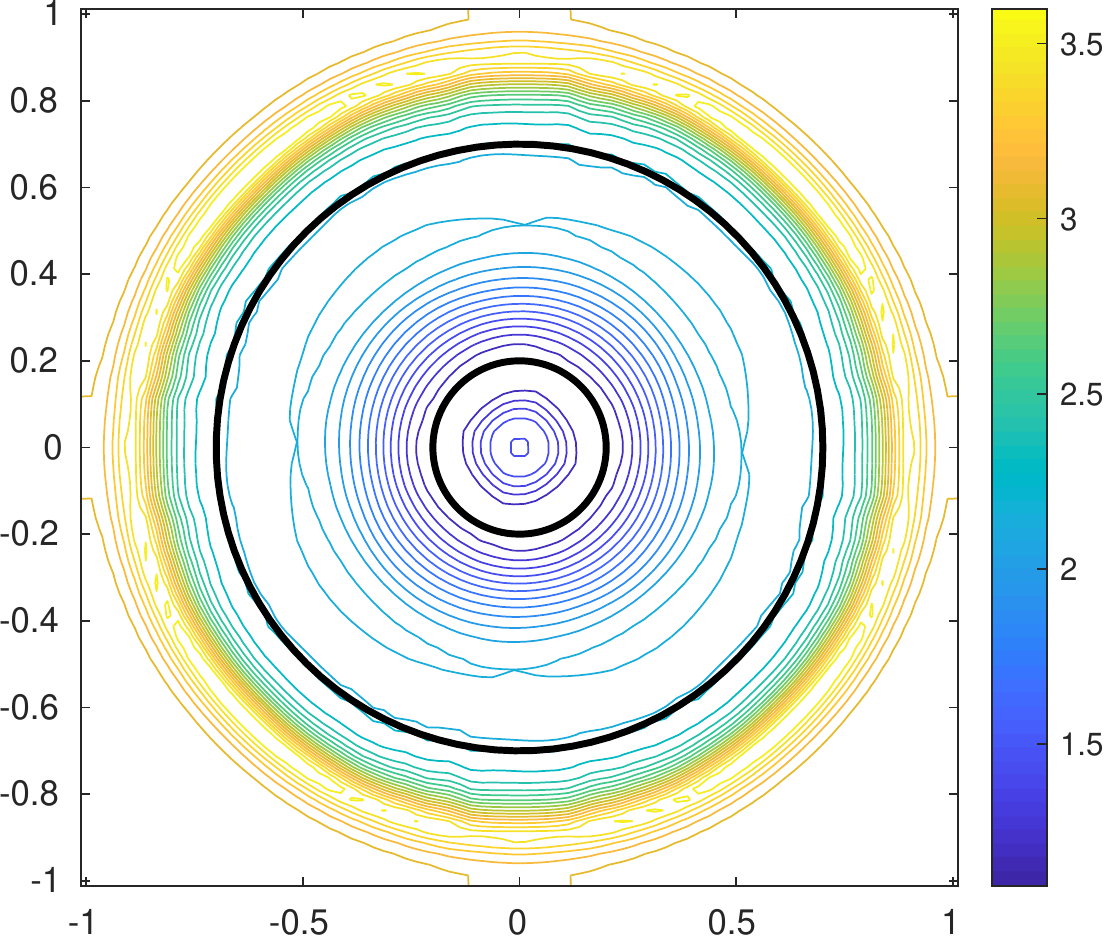}
		\caption{\bf $\vartheta, \varepsilon = 10^{-1}$}
	\end{subfigure}
	\begin{subfigure}{0.2\textwidth}
		\includegraphics[width=\textwidth]{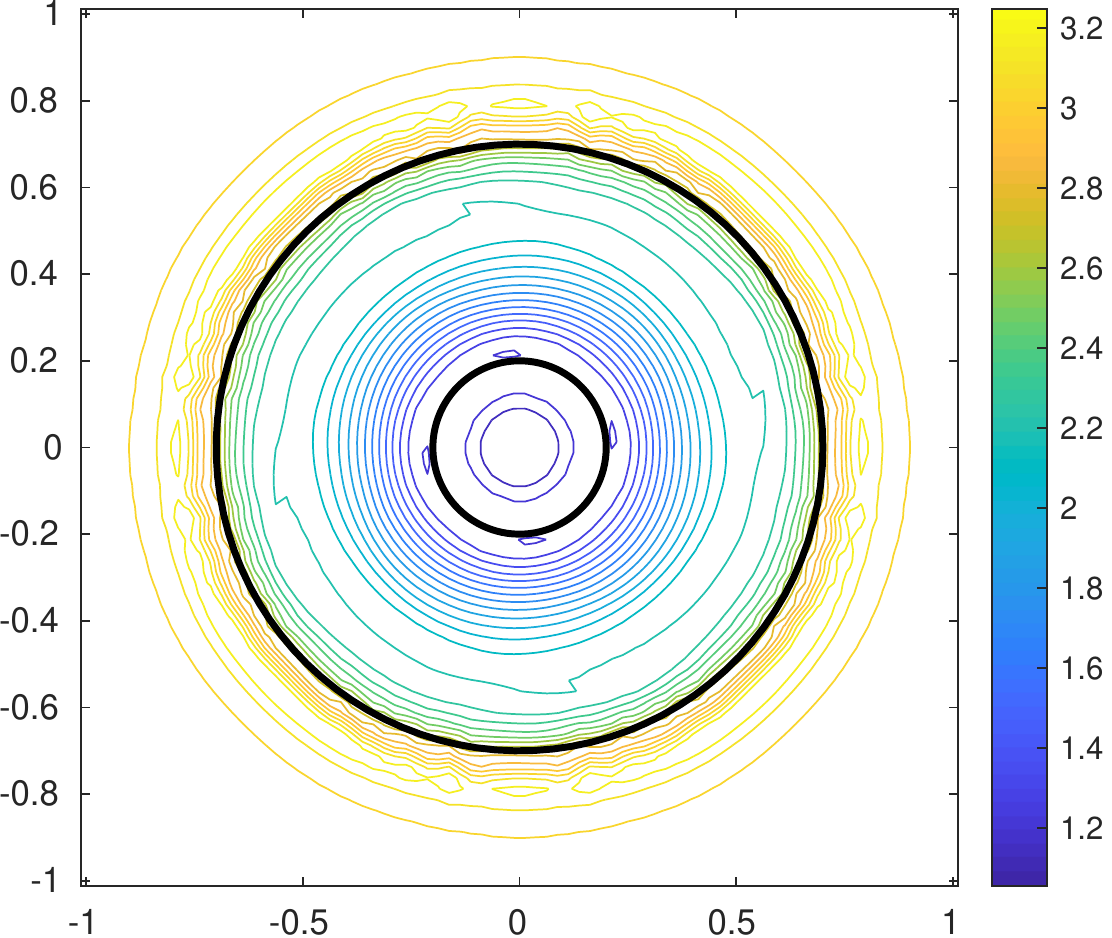}
		\caption{\bf $\vartheta, \varepsilon = 10^{-2}$}
	\end{subfigure}
	\begin{subfigure}{0.2\textwidth}
		\includegraphics[width=\textwidth]{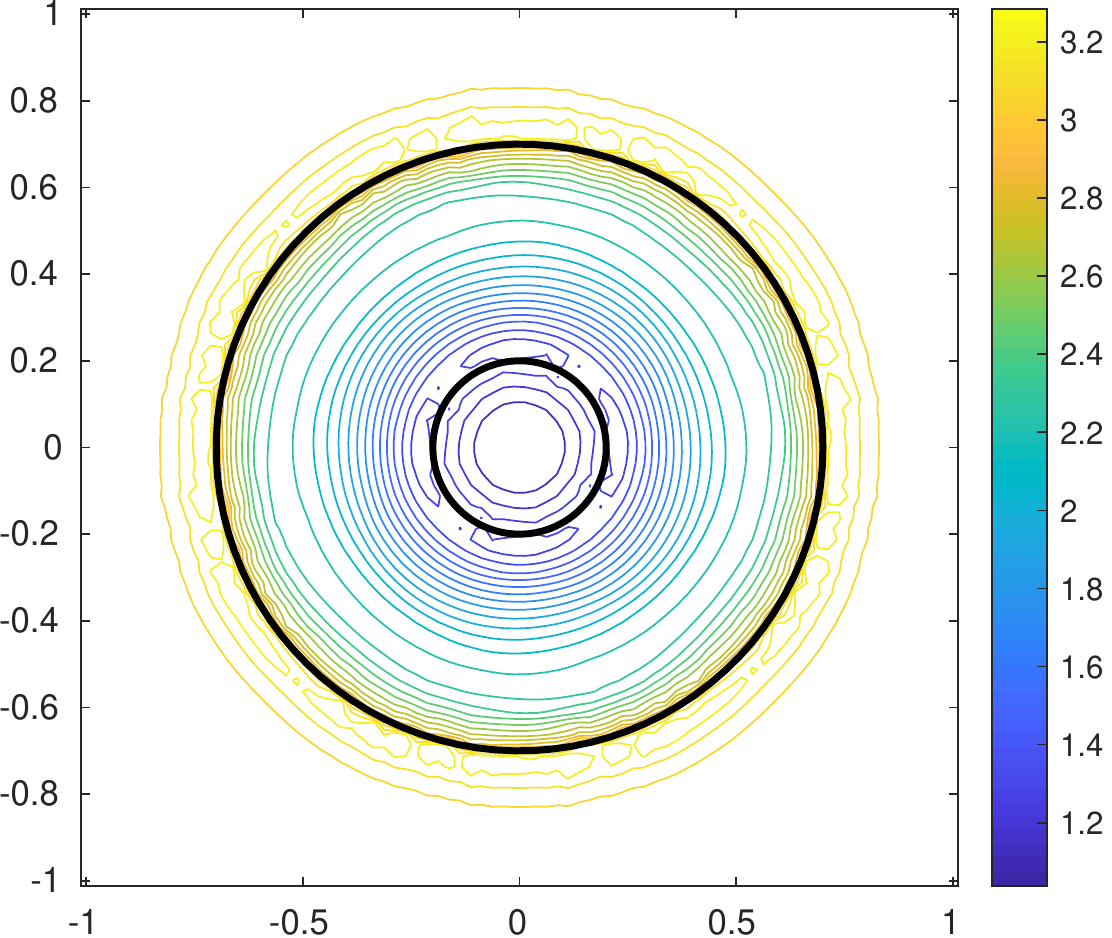}
		\caption{\bf $\vartheta, \varepsilon = 10^{-3}$}
	\end{subfigure}
	\begin{subfigure}{0.2\textwidth}
		\includegraphics[width=\textwidth]{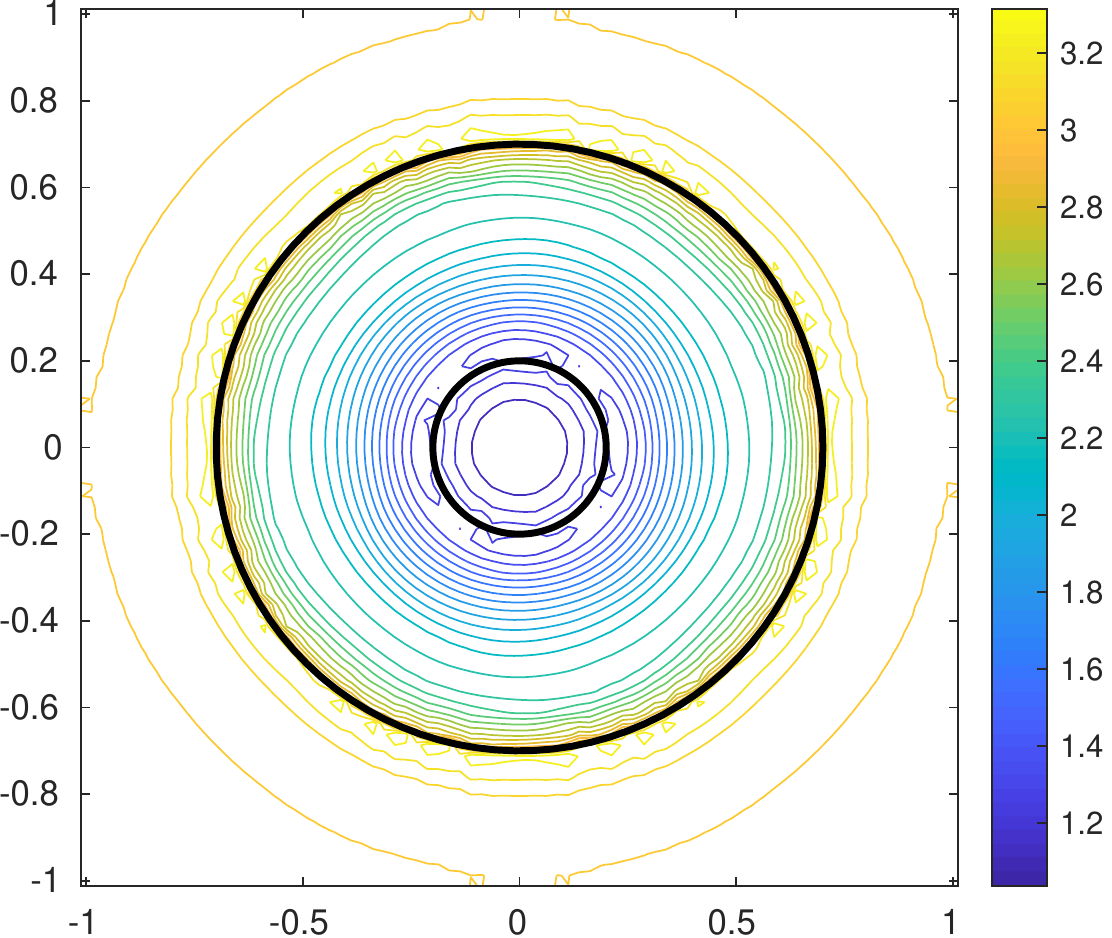}
		\caption{\bf $\vartheta, \varepsilon = 10^{-4}$}
	\end{subfigure}\\
	\begin{subfigure}{0.2\textwidth}
		\includegraphics[width=\textwidth]{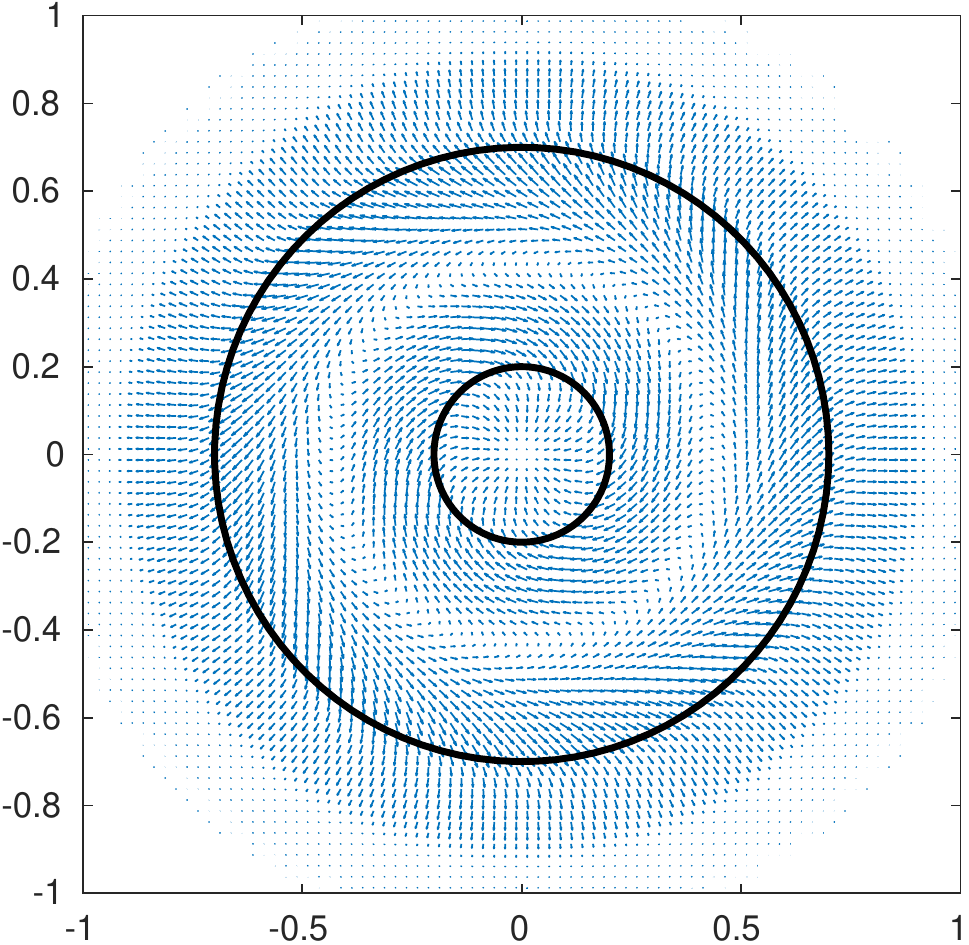}
		\caption{\bf $\vu, \varepsilon = 10^{-1}$}
	\end{subfigure}
	\begin{subfigure}{0.2\textwidth}
		\includegraphics[width=\textwidth]{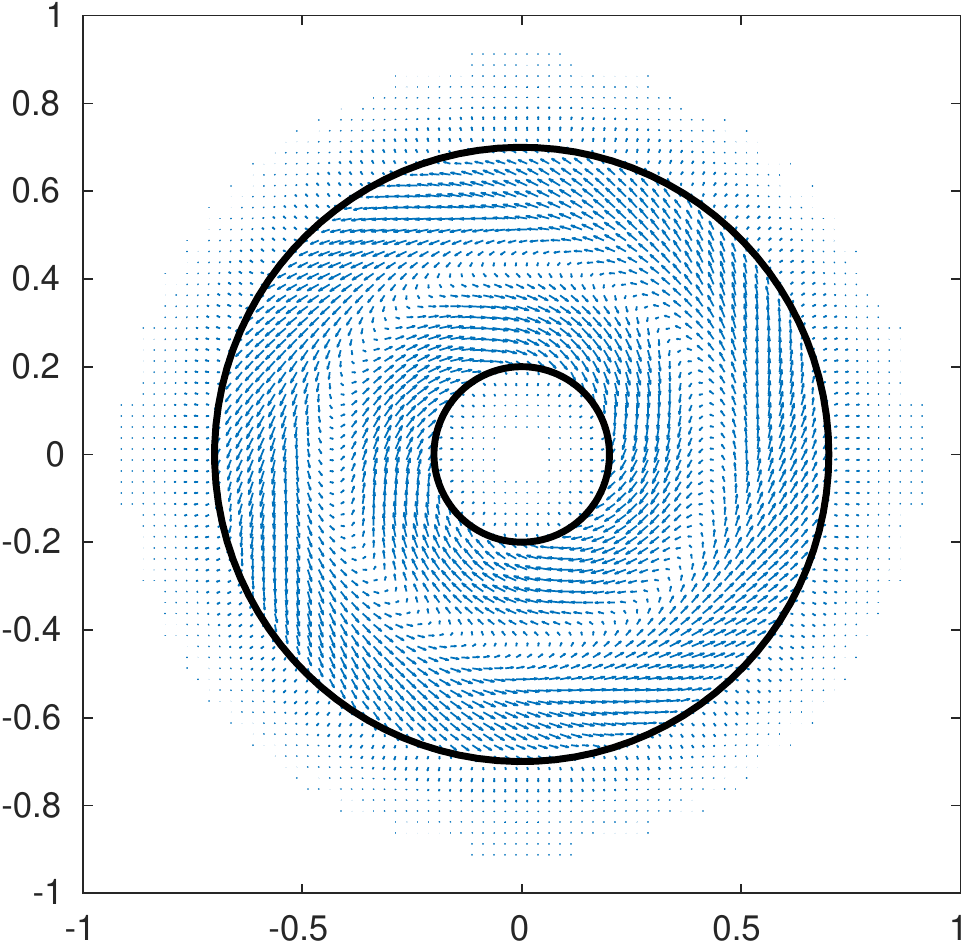}
		\caption{\bf $\vu, \varepsilon = 10^{-2}$}
	\end{subfigure}	
	\begin{subfigure}{0.2\textwidth}
		\includegraphics[width=\textwidth]{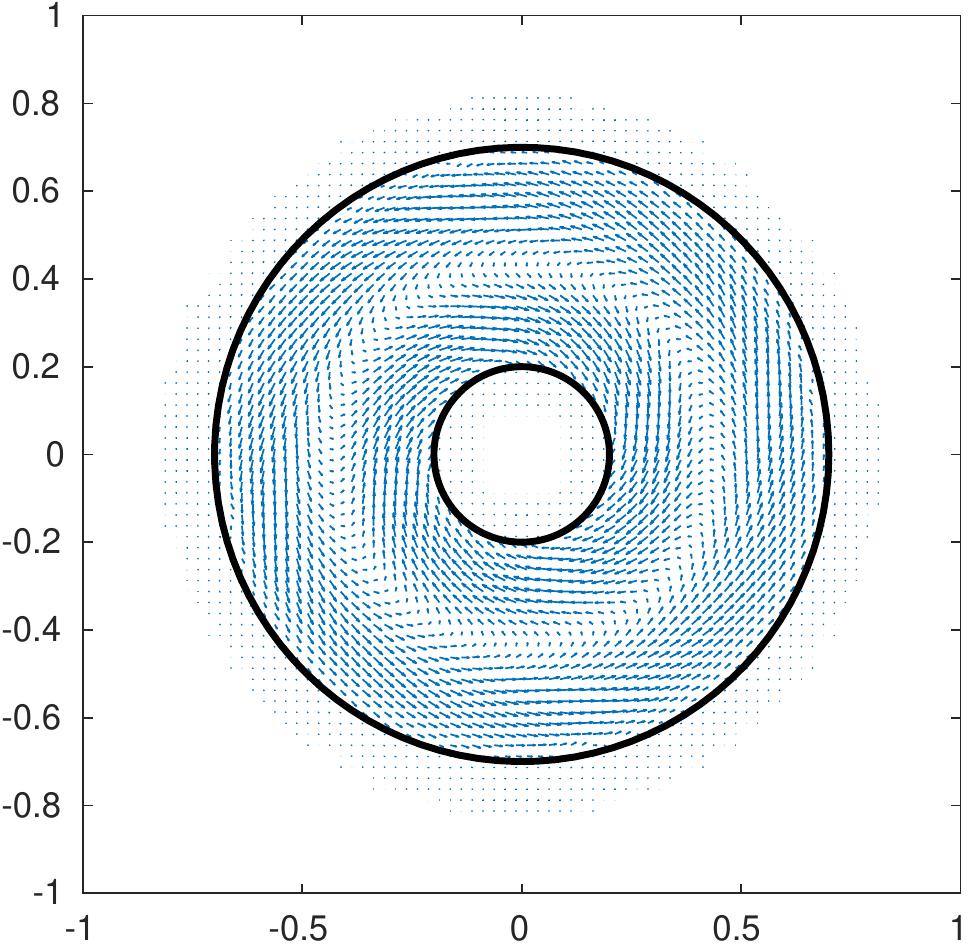}
		\caption{\bf $\vu, \varepsilon = 10^{-3}$}
	\end{subfigure}		
	\begin{subfigure}{0.2\textwidth}
		\includegraphics[width=\textwidth]{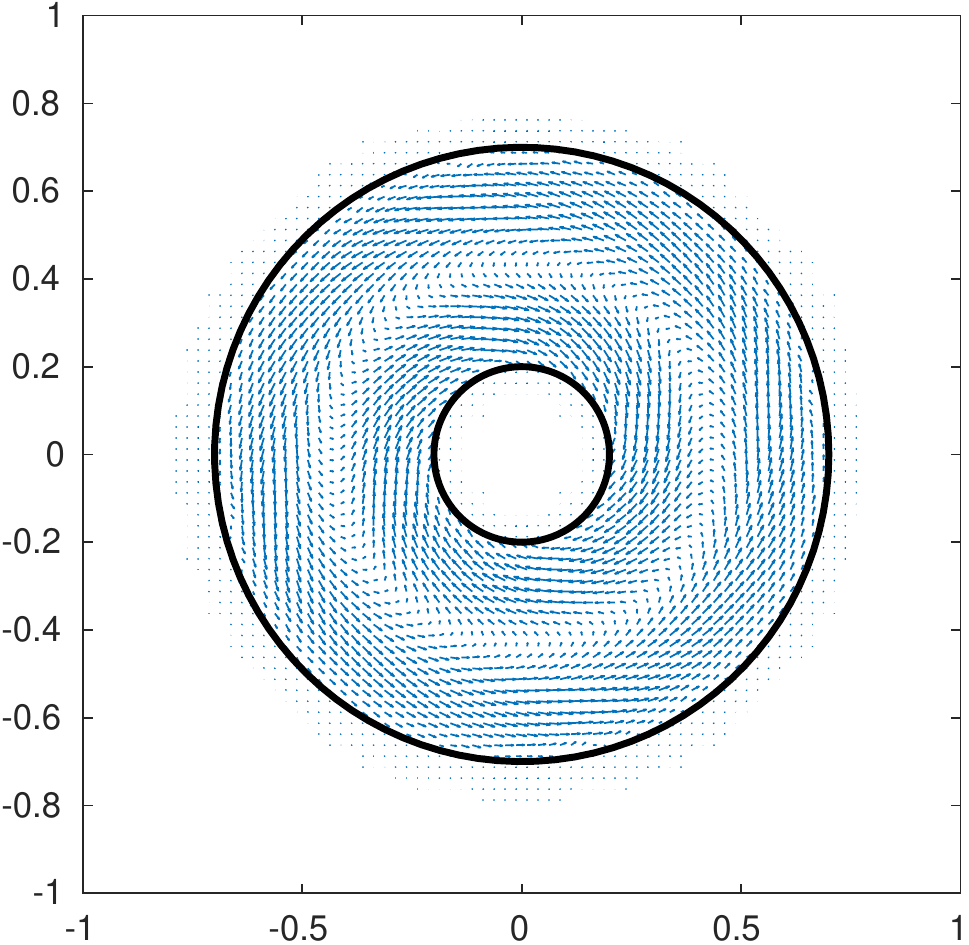}
		\caption{\bf $\vu, \varepsilon = 10^{-4}$}
	\end{subfigure}
	\caption{\small{Experiment~2:  ${U}_h^{\varepsilon}$ with $h = 2/80$ and $\varepsilon = 10^{-1},\dots, 10^{-4}$.}}\label{fig:ex2}
\end{figure}

\subsection{Experiment~3: Complex domain - zero density outside $\Omega$}
We consider the same initial data as in the previous  experiment but choose a more complicated geometry of the fluid domain; $\Omega \equiv \hat{B}_{0.7}\setminus B_{0.2},$ where
\begin{equation*}
\hat{B}_{0.7} := \left\{x ~\bigg|~ |x| = (0.7+\delta) + \delta \cos(8\phi), ~ \tan(\phi) =\frac{x}{y} \right\}.
\end{equation*}
The initial data are given by
\begin{equation*}
	(\varrho,\vu, \vartheta)(0,x)
	\; = \; \begin{cases}
	(10^{-2},0,0, 1 ) , & x \in B_{0.2}, \\
	\left(1, \frac{ \sin(4\pi (|x|-0.2)) x_2}{|x|} ,-\frac{ \sin(4\pi (|x|-0.2)) x_1}{|x|}, 0.2 + 4|x| \right) , & x \in  \Omega \equiv {B}_{0.7}\setminus B_{0.2}, \\
	(1,0 ,0,  3) , & x \in \hat{B}_{0.7}\setminus B_{0.7}, \\	
	(10^{-2},0 ,0,  3) , & x \in \mathbb{T}^2 \setminus \hat{B}_{0.7} .
	\end{cases}
\end{equation*}
In the simulations we set $\delta = 0.05$.
Figure \ref{fig:ex3-1} and Figure \ref{fig:ex3-2} demonstrate that  the experimental convergence rates with respect to $h$ and $\varepsilon$ are of the first order.
Figure \ref{fig:ex3} illustrates the effects of different penalization parameters $\varepsilon = 10^{-1},\dots,10^{-4}$
on the numerical solutions computed on the mesh with $80^2$ cells.

We can observe some oscillations near the inner and outer boundaries, whereas the oscillations at the outer boundary  are larger than in  Experiment~2.
Due to small outside density the fluid flows outside, meanwhile the temperature pushes the fluid to flow to the center. Consequently,
due to the complex geometry of the fluid domain, the oscillations become more visible  and vortex structure arises.
Interestingly, even that the oscillations are present, the penalization method still converges with rate $1$, which is consistent with our theoretical analysis.

\begin{figure}[htbp]
	\setlength{\abovecaptionskip}{0.cm}
	\setlength{\belowcaptionskip}{-0.cm}
	\centering
	\begin{subfigure}{0.32\textwidth}
		\includegraphics[width=\textwidth]{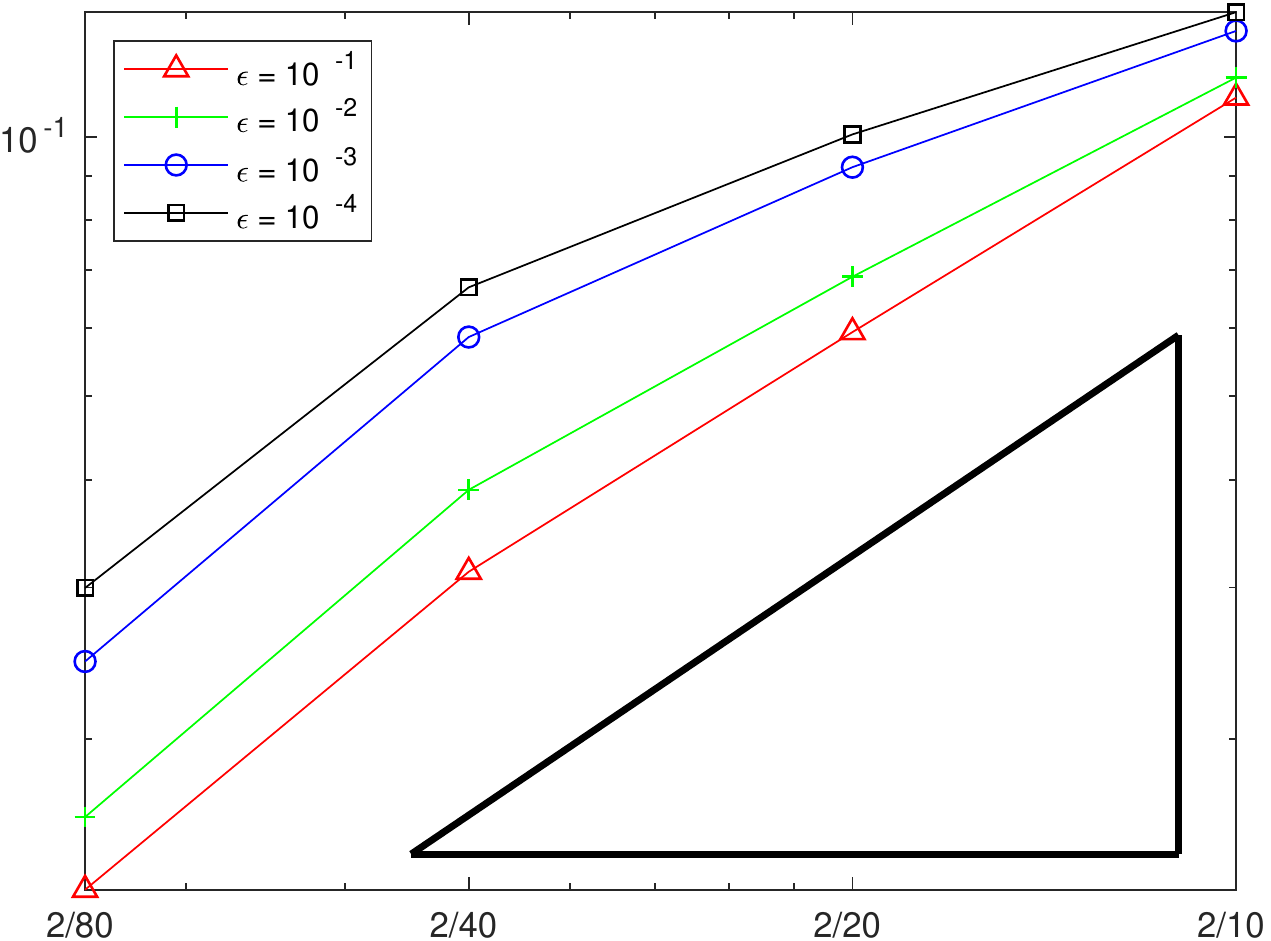}
		\caption{ \bf $\varrho$}
	\end{subfigure}
	\begin{subfigure}{0.32\textwidth}
		\includegraphics[width=\textwidth]{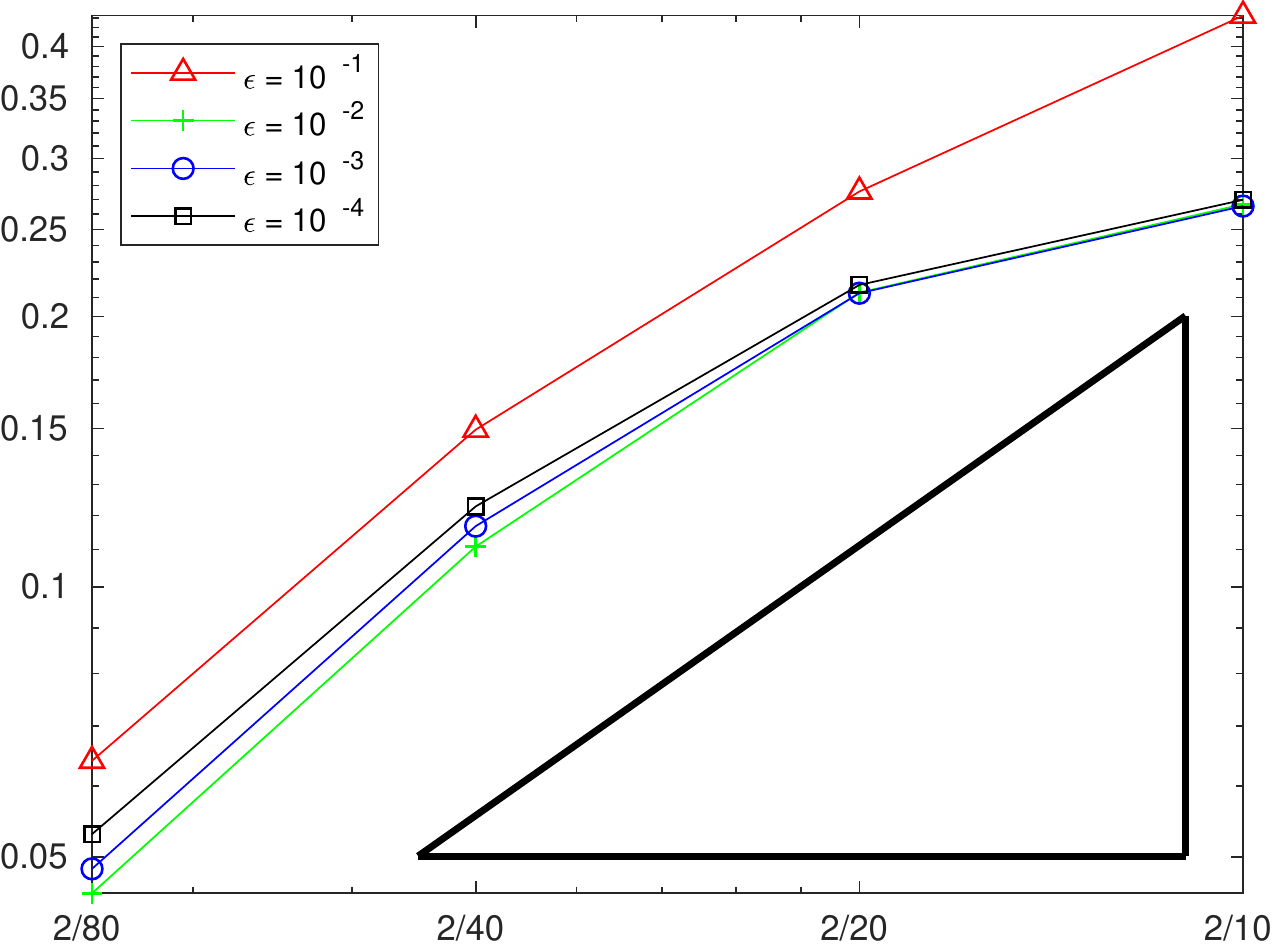}
		\caption{ \bf $\vu$}
	\end{subfigure}
	\begin{subfigure}{0.32\textwidth}
		\includegraphics[width=\textwidth]{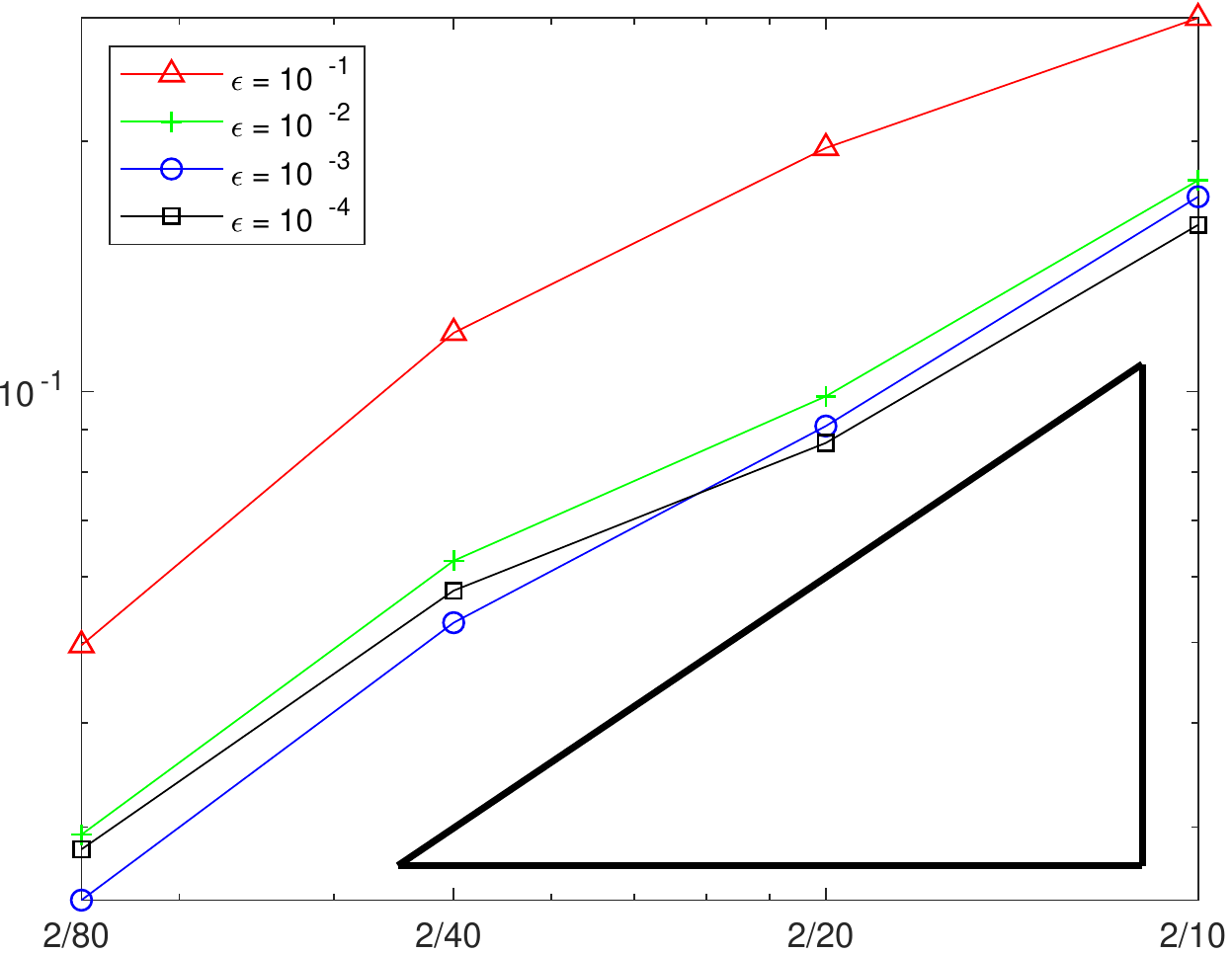}
		\caption{ \bf $\vartheta$}
	\end{subfigure}
	\caption{\small{Experiment~3:  $E(U_h^{\varepsilon})$ errors  with respect to $h$.}}\label{fig:ex3-1}
\end{figure}

\begin{figure}[htbp]
	\setlength{\abovecaptionskip}{0.cm}
	\setlength{\belowcaptionskip}{-0.cm}
	\centering
	\begin{subfigure}{0.32\textwidth}
		\includegraphics[width=\textwidth]{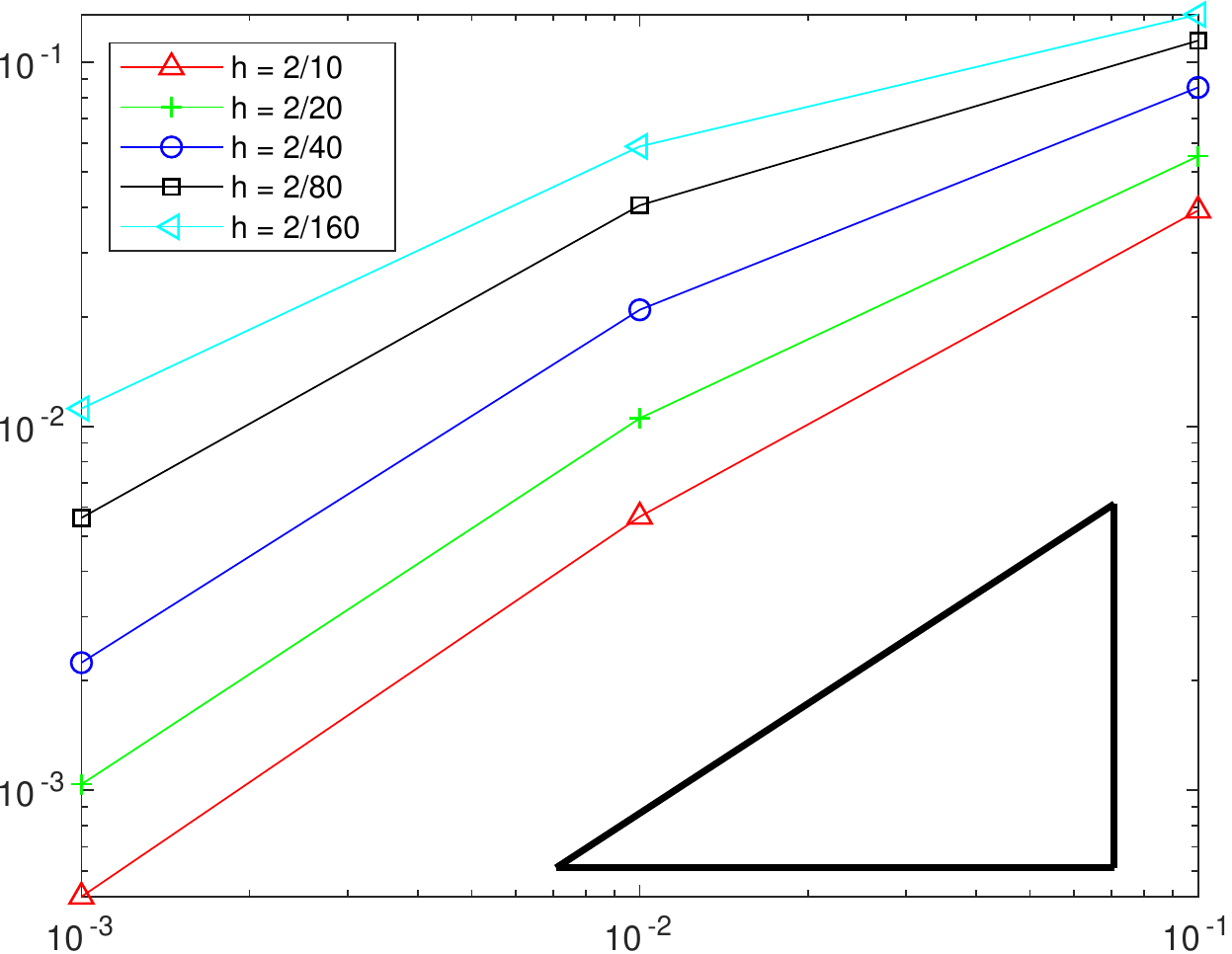}
		\caption{ \bf $\varrho$}
	\end{subfigure}
	\begin{subfigure}{0.32\textwidth}
		\includegraphics[width=\textwidth]{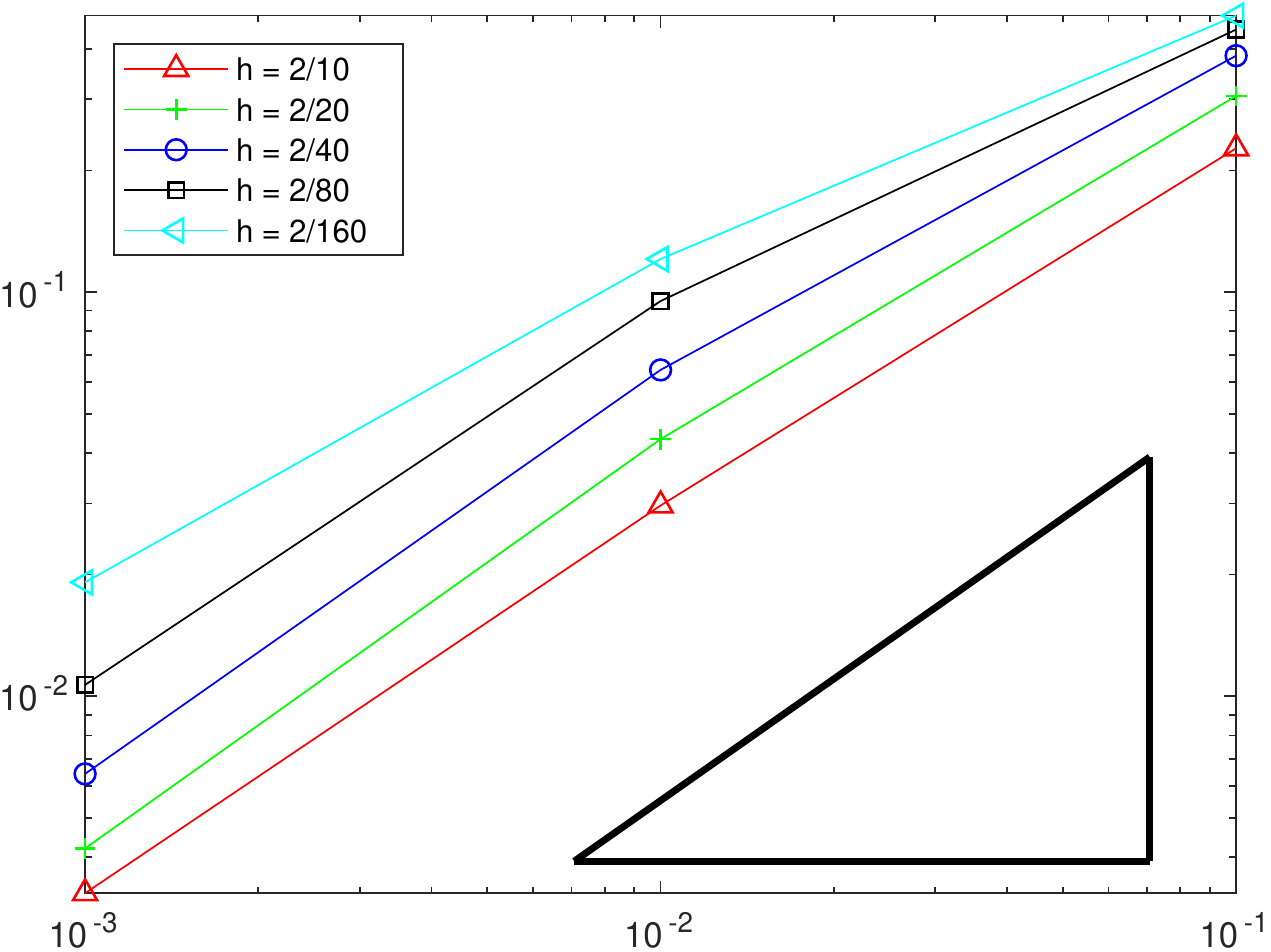}
		\caption{ \bf $\vu$}
	\end{subfigure}
	\begin{subfigure}{0.32\textwidth}
		\includegraphics[width=\textwidth]{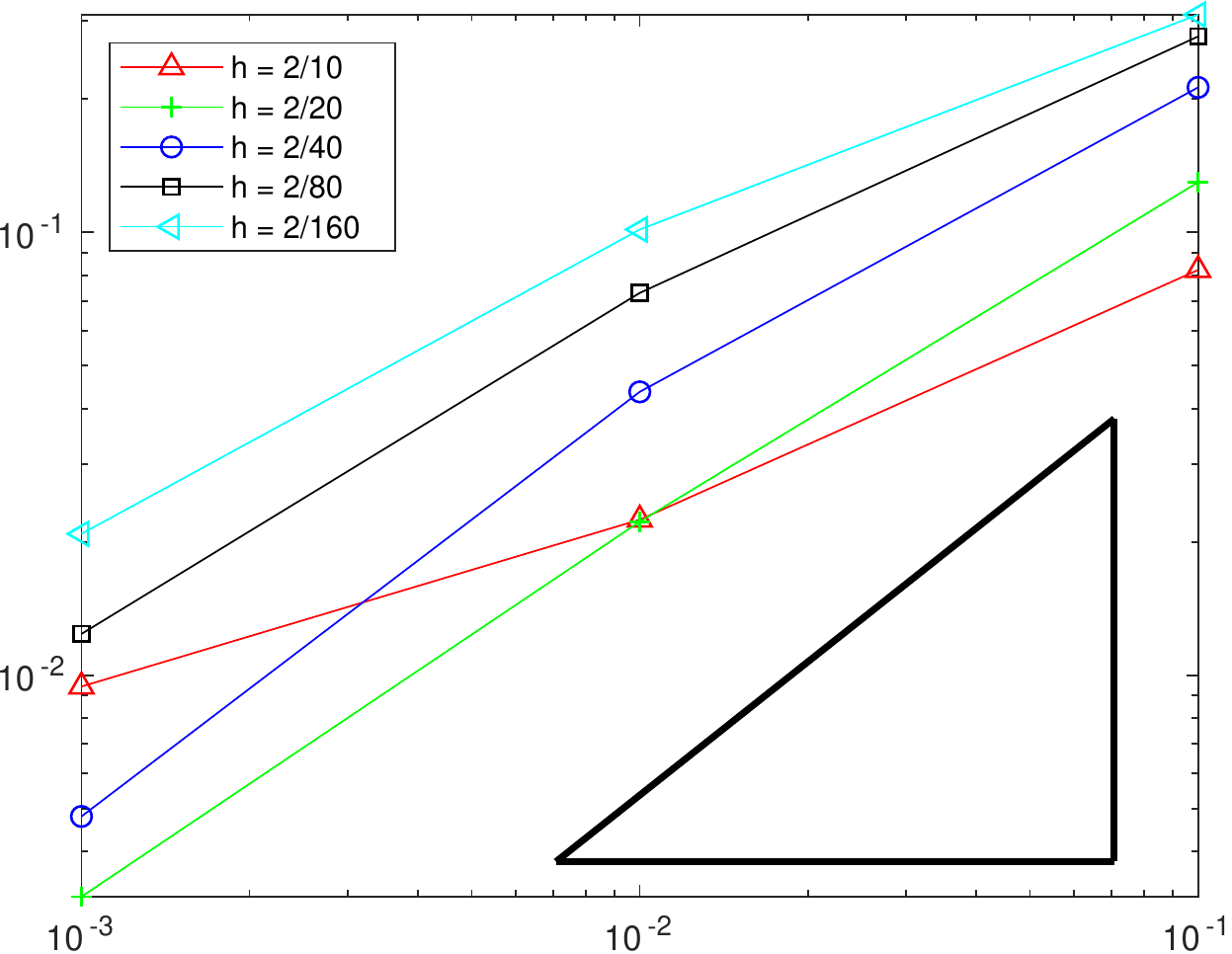}
		\caption{ \bf $\vartheta$}
	\end{subfigure}
	\caption{\small{Experiment~3:  $P(U_h^{\varepsilon})$ errors with respect to $\varepsilon$.}}\label{fig:ex3-2}
\end{figure}

\begin{figure}[htbp]
	\setlength{\abovecaptionskip}{0.cm}
	\setlength{\belowcaptionskip}{-0.cm}
	\centering
	\begin{subfigure}{0.2\textwidth}
		\includegraphics[width=\textwidth]{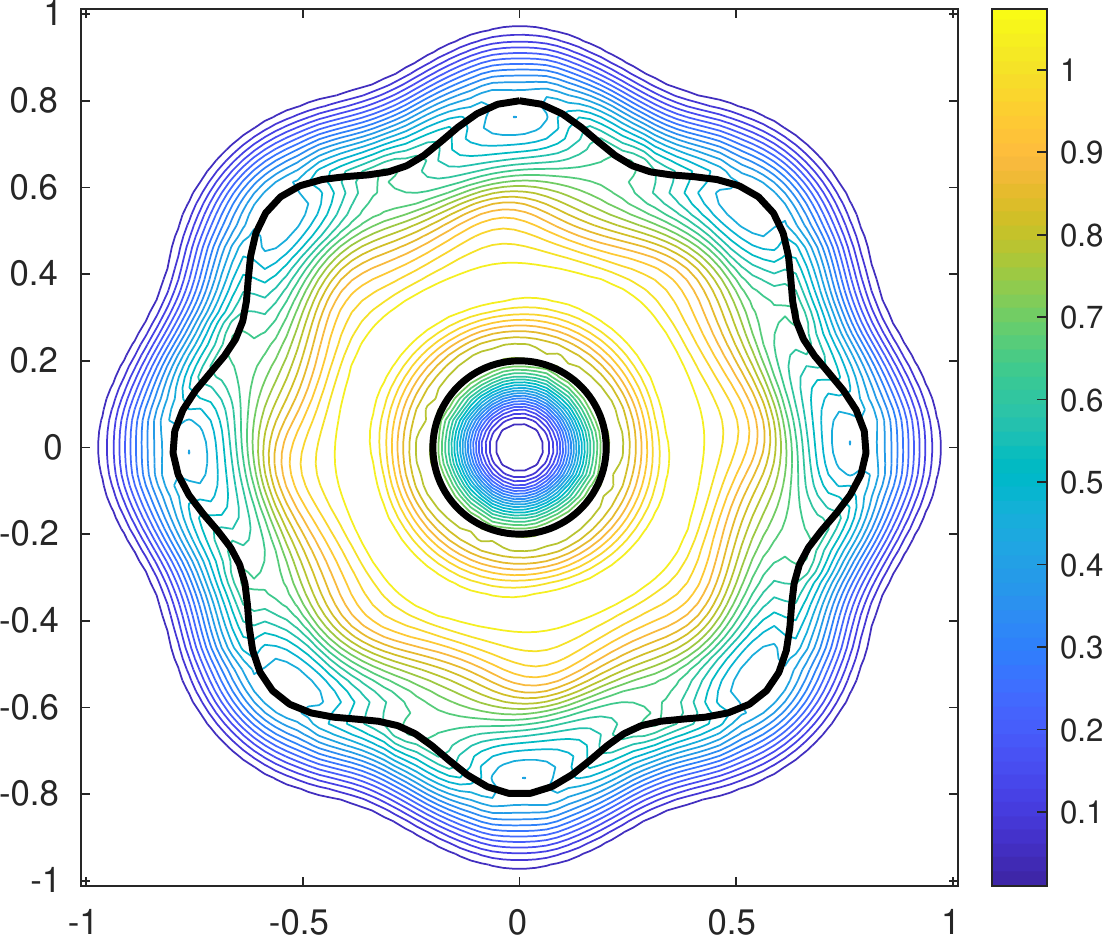}
		\caption{ \bf $\varrho, \varepsilon = 10^{-1}$}
	\end{subfigure}
	\begin{subfigure}{0.2\textwidth}
		\includegraphics[width=\textwidth]{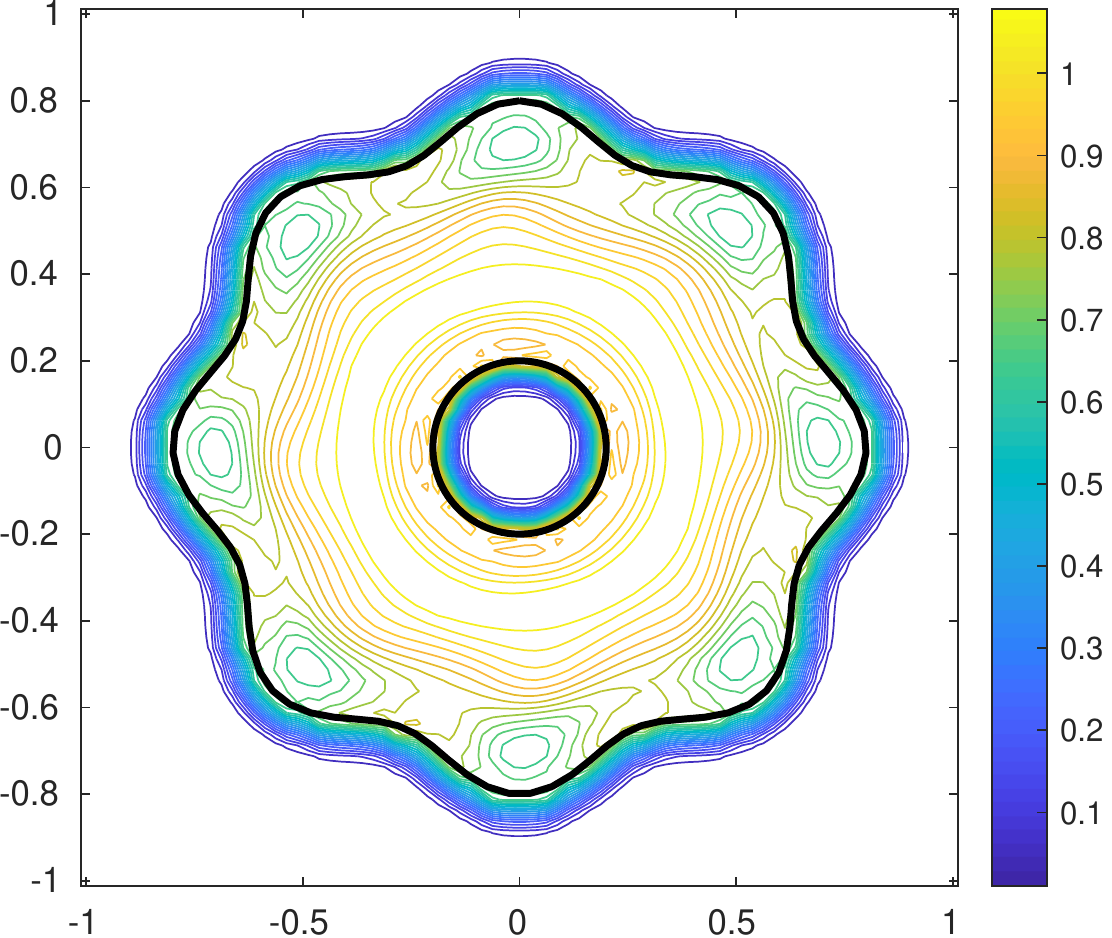}
		\caption{ \bf $\varrho, \varepsilon = 10^{-2}$}
	\end{subfigure}
	\begin{subfigure}{0.2\textwidth}
		\includegraphics[width=\textwidth]{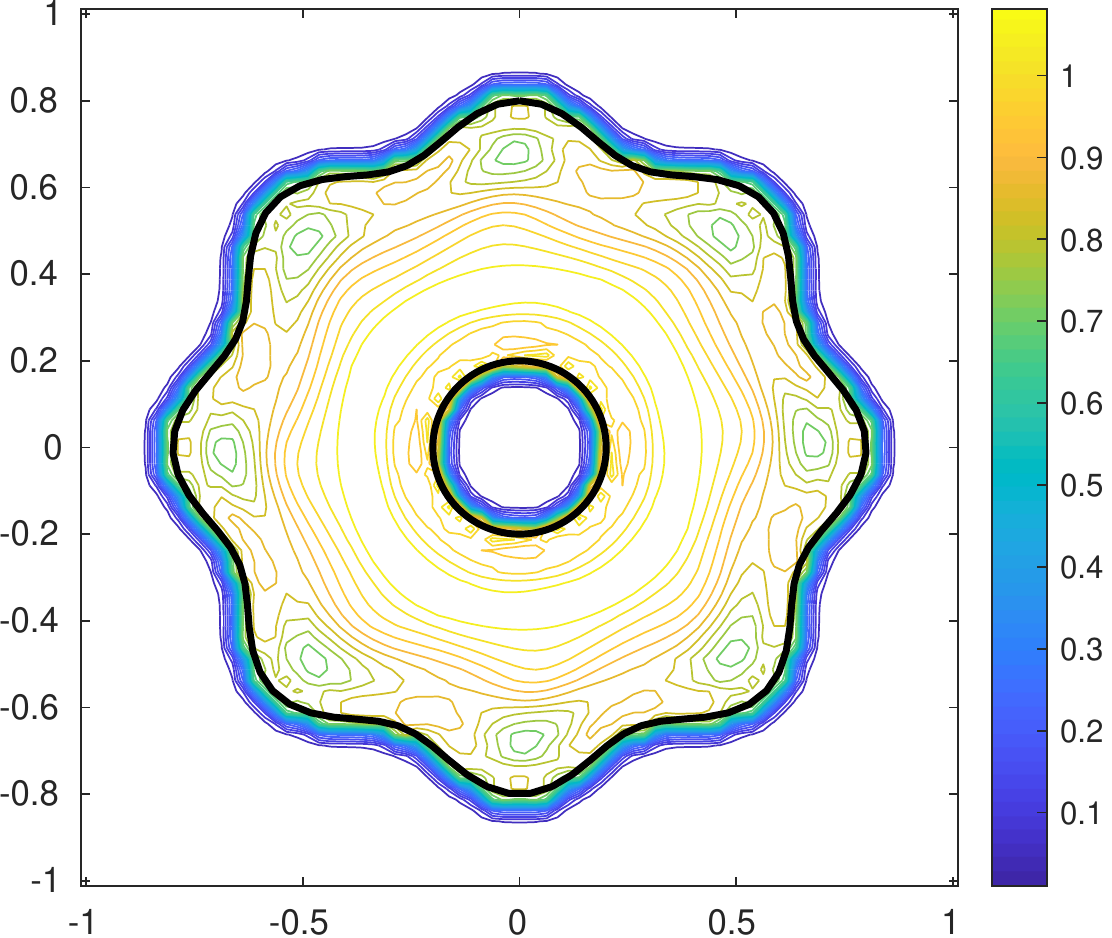}
		\caption{ \bf $\varrho, \varepsilon = 10^{-3}$}
	\end{subfigure}
	\begin{subfigure}{0.2\textwidth}
		\includegraphics[width=\textwidth]{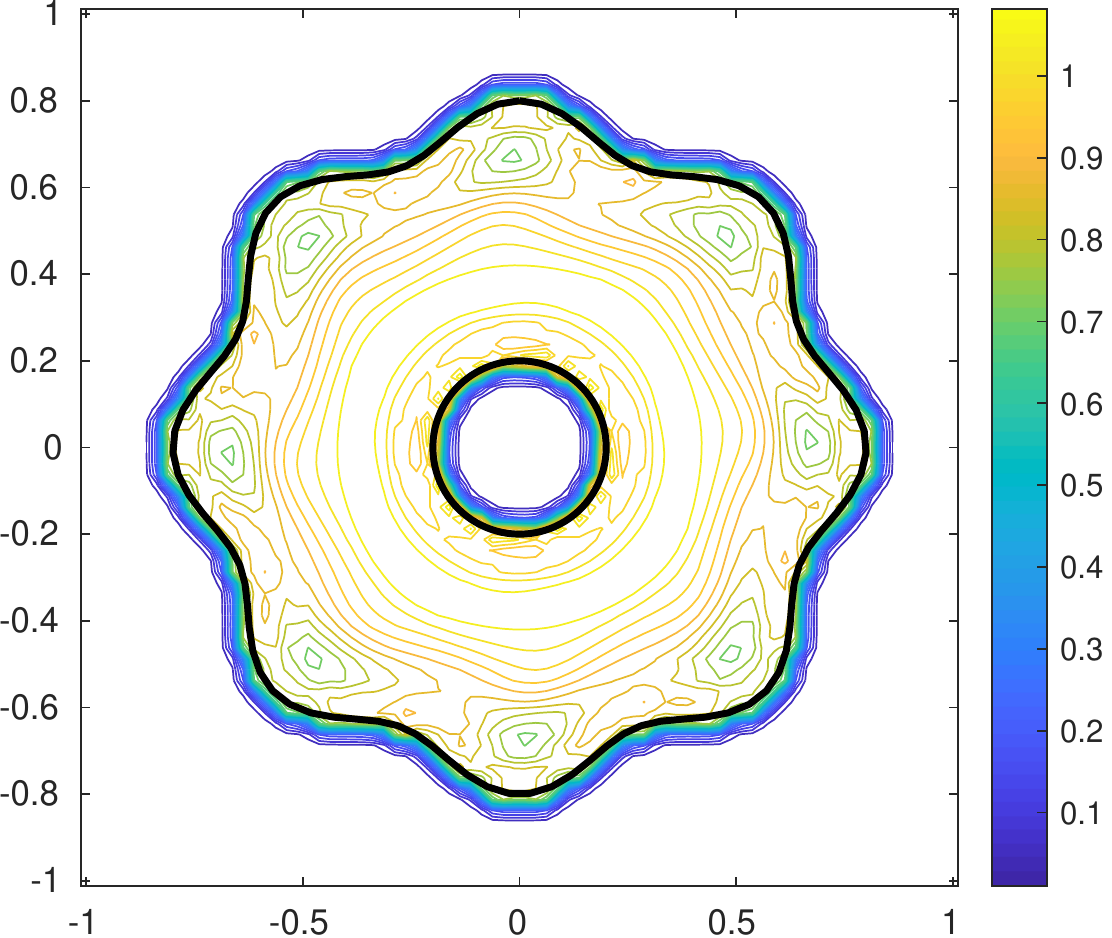}
		\caption{ \bf $\varrho, \varepsilon = 10^{-4}$}
	\end{subfigure}\\
	\begin{subfigure}{0.2\textwidth}
		\includegraphics[width=\textwidth]{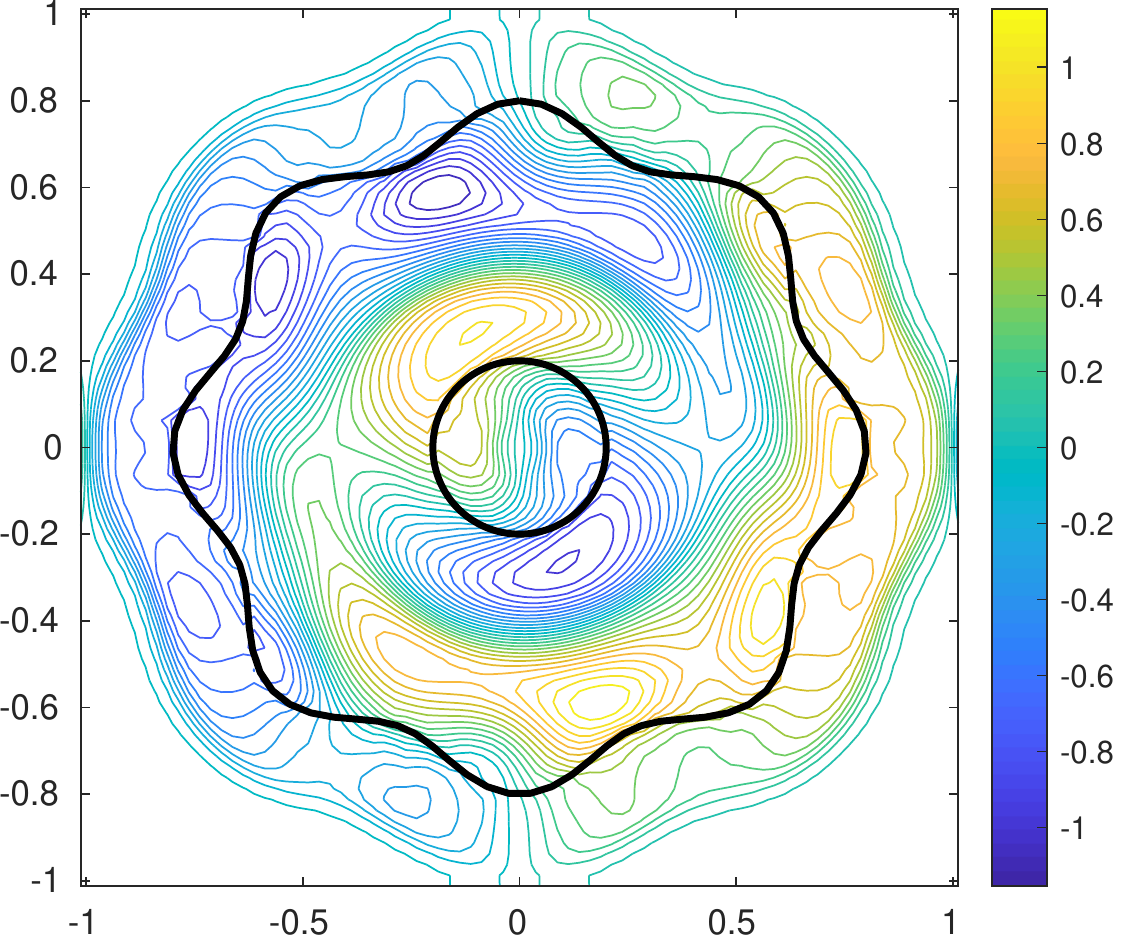}
		\caption{\bf $u_1, \varepsilon = 10^{-1}$}
	\end{subfigure}	
	\begin{subfigure}{0.2\textwidth}
		\includegraphics[width=\textwidth]{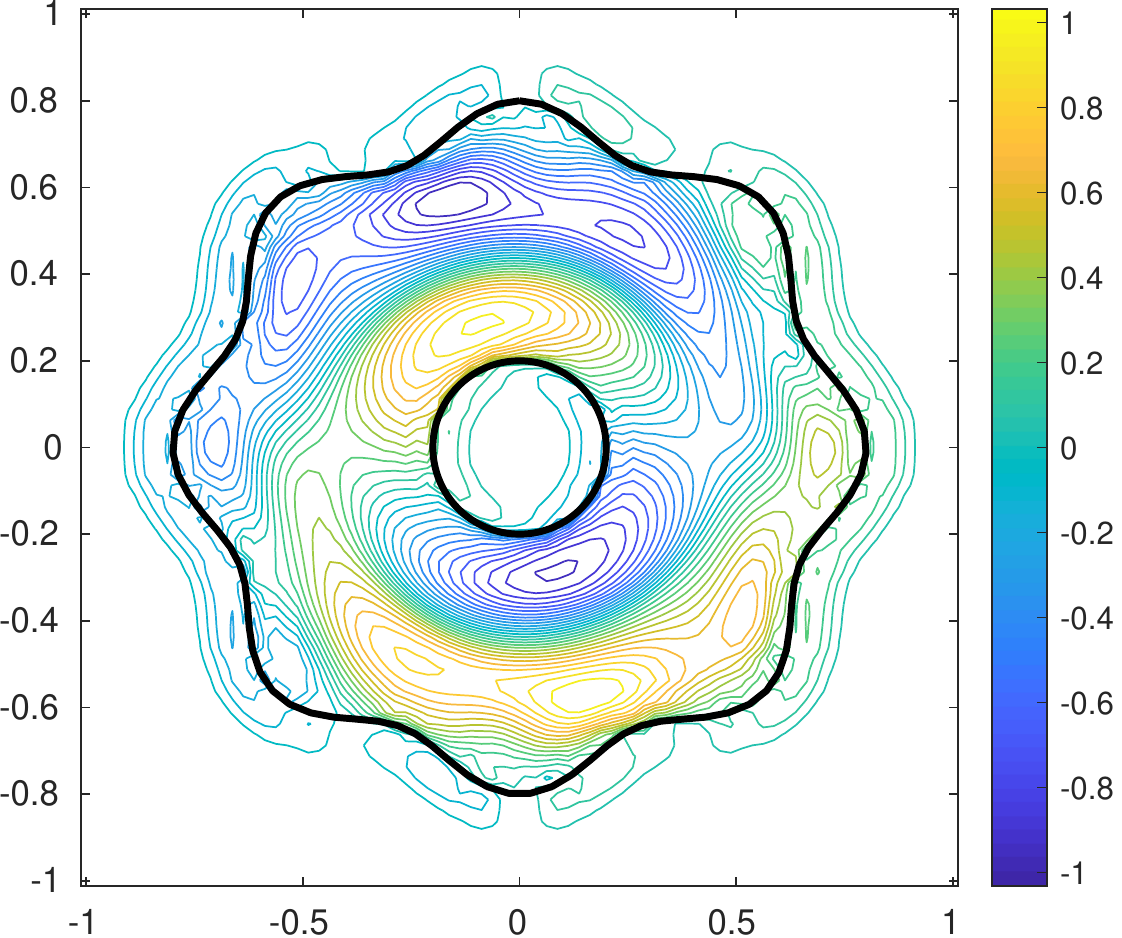}
		\caption{\bf $u_1, \varepsilon = 10^{-2}$}
	\end{subfigure}
	\begin{subfigure}{0.2\textwidth}
		\includegraphics[width=\textwidth]{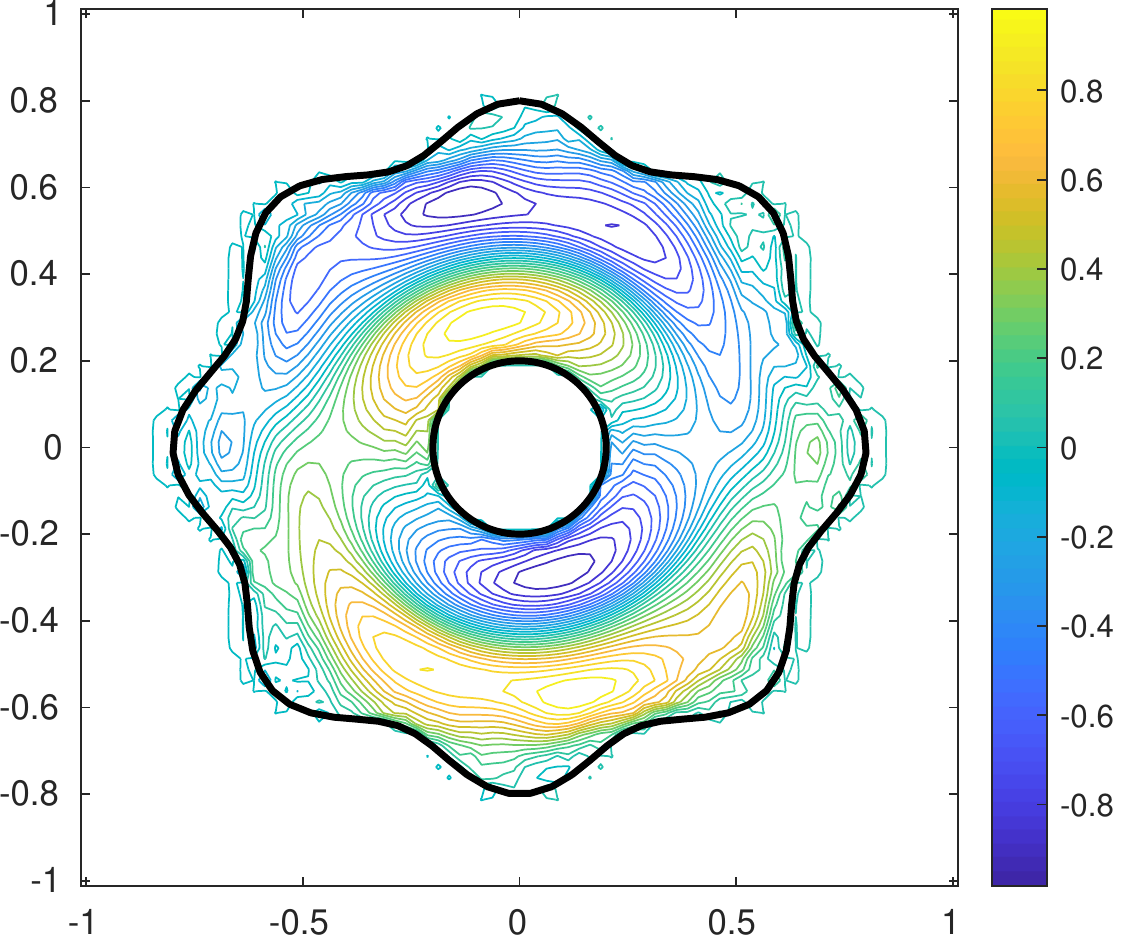}
		\caption{\bf $u_1, \varepsilon = 10^{-3}$}
	\end{subfigure}
	\begin{subfigure}{0.2\textwidth}
		\includegraphics[width=\textwidth]{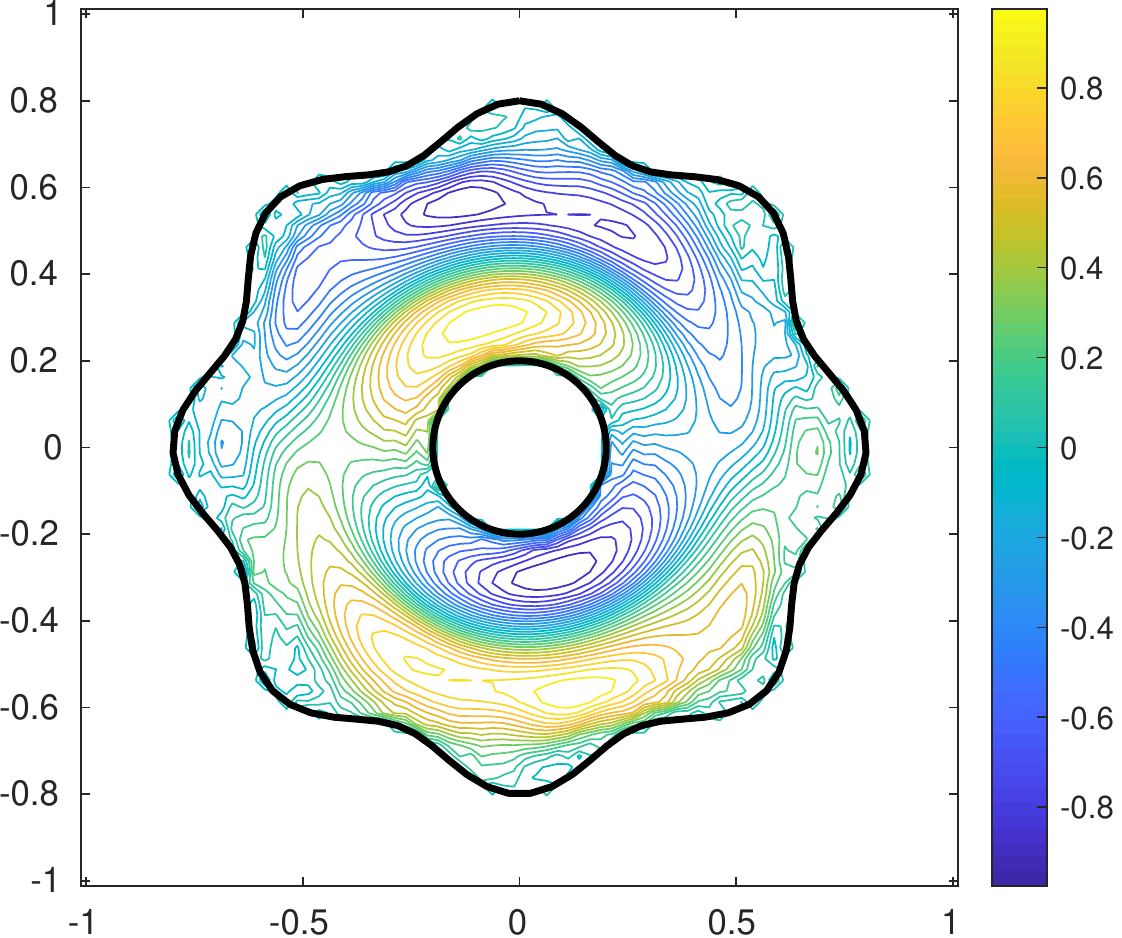}
		\caption{\bf $u_1, \varepsilon = 10^{-4}$}
	\end{subfigure}\\
	\begin{subfigure}{0.2\textwidth}
		\includegraphics[width=\textwidth]{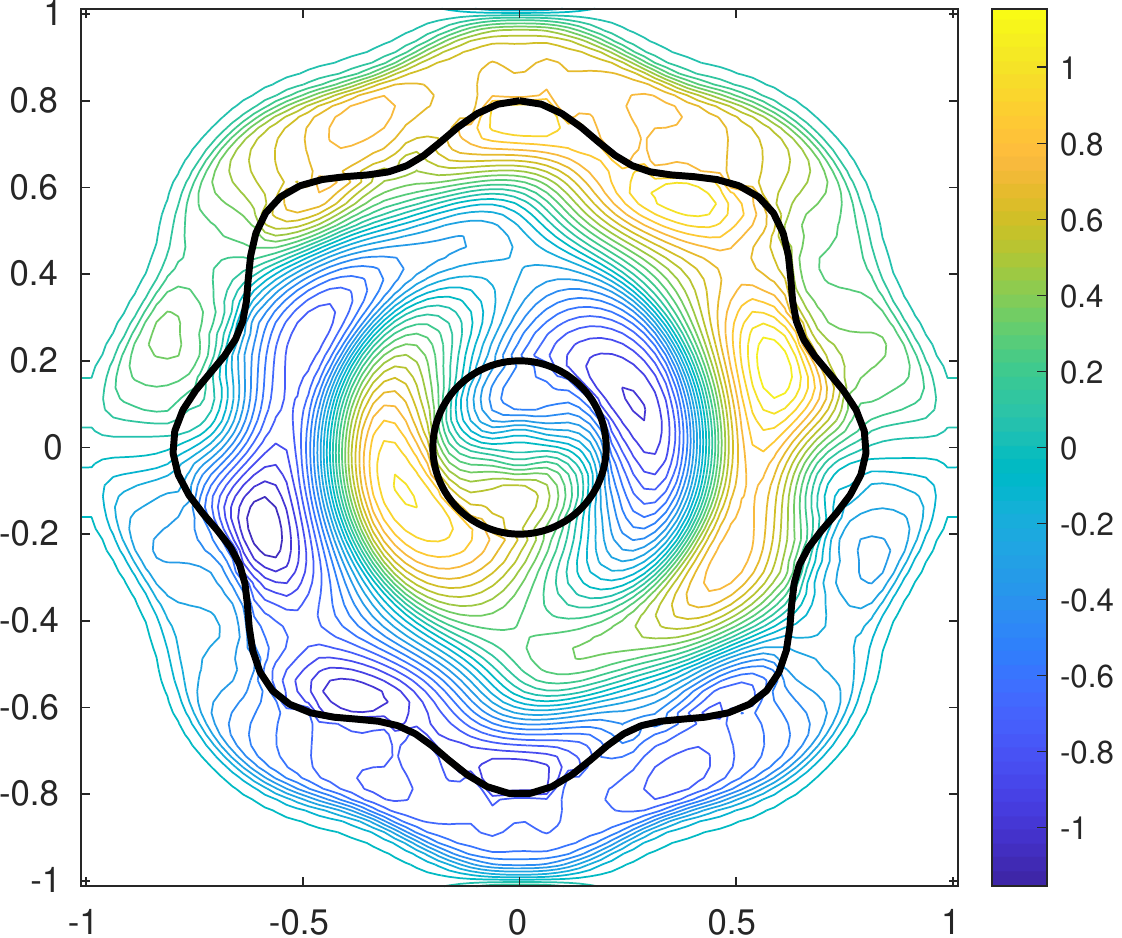}
		\caption{ \bf $u_2, \varepsilon = 10^{-1}$}
	\end{subfigure}	
	\begin{subfigure}{0.2\textwidth}
		\includegraphics[width=\textwidth]{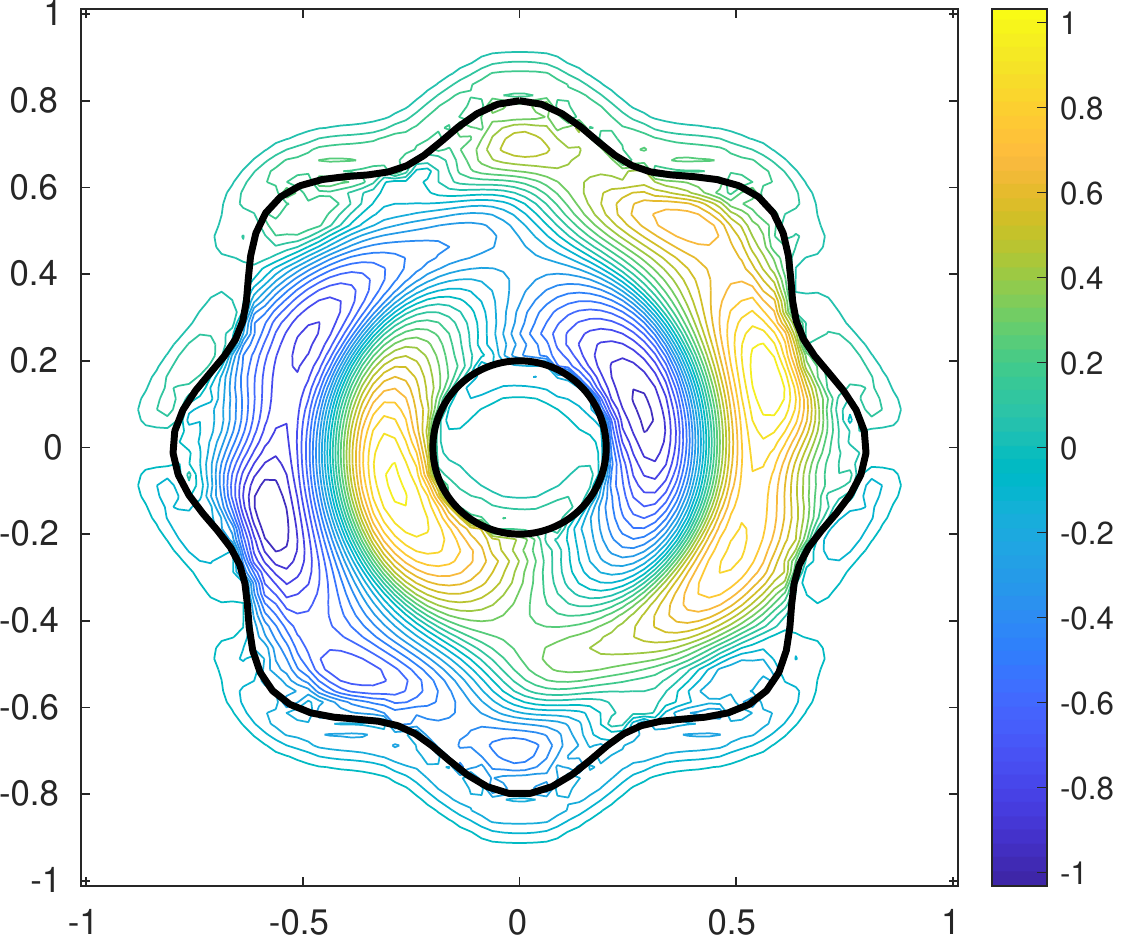}
		\caption{ \bf $u_2, \varepsilon = 10^{-2}$}
	\end{subfigure}
	\begin{subfigure}{0.2\textwidth}
		\includegraphics[width=\textwidth]{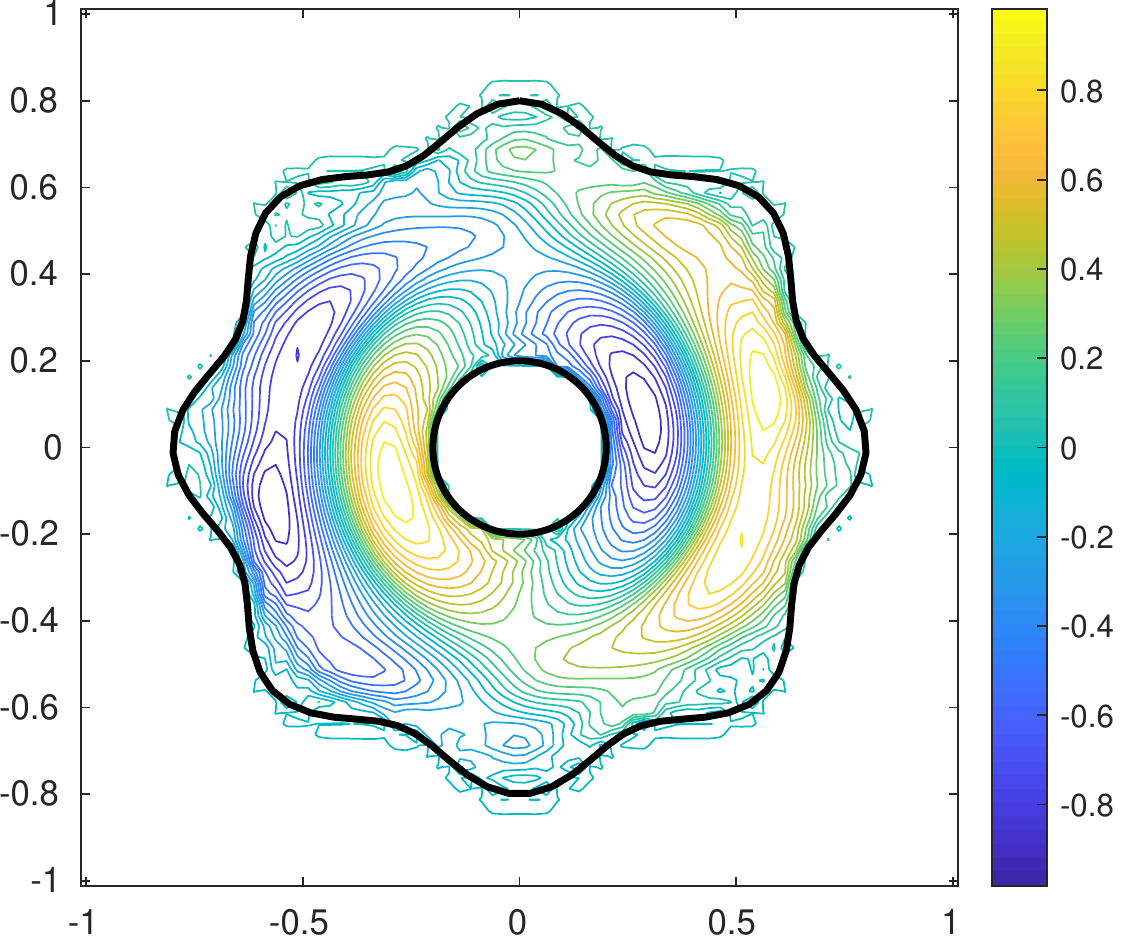}
		\caption{ \bf $u_2, \varepsilon = 10^{-3}$}
	\end{subfigure}
	\begin{subfigure}{0.2\textwidth}
		\includegraphics[width=\textwidth]{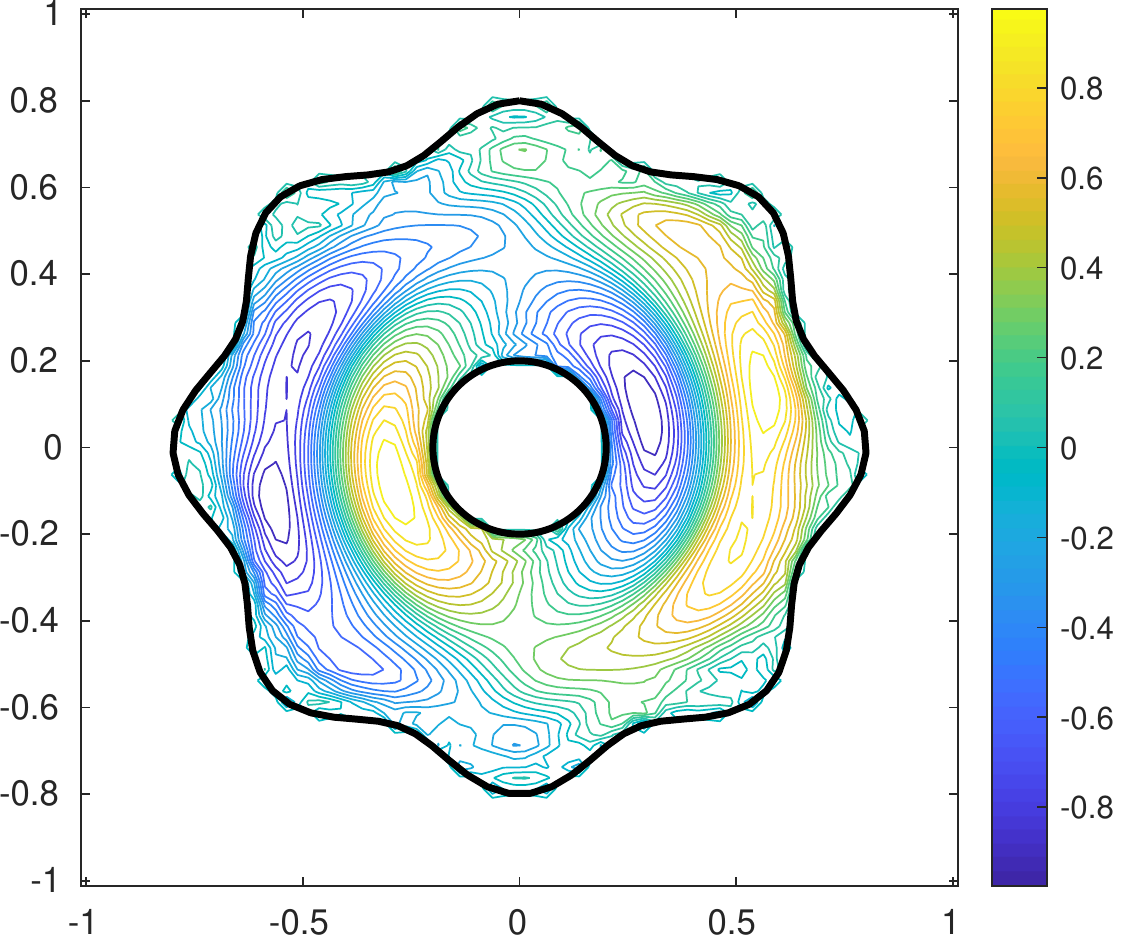}
		\caption{ \bf $u_2, \varepsilon = 10^{-4}$}
	\end{subfigure}\\
	\begin{subfigure}{0.2\textwidth}
		\includegraphics[width=\textwidth]{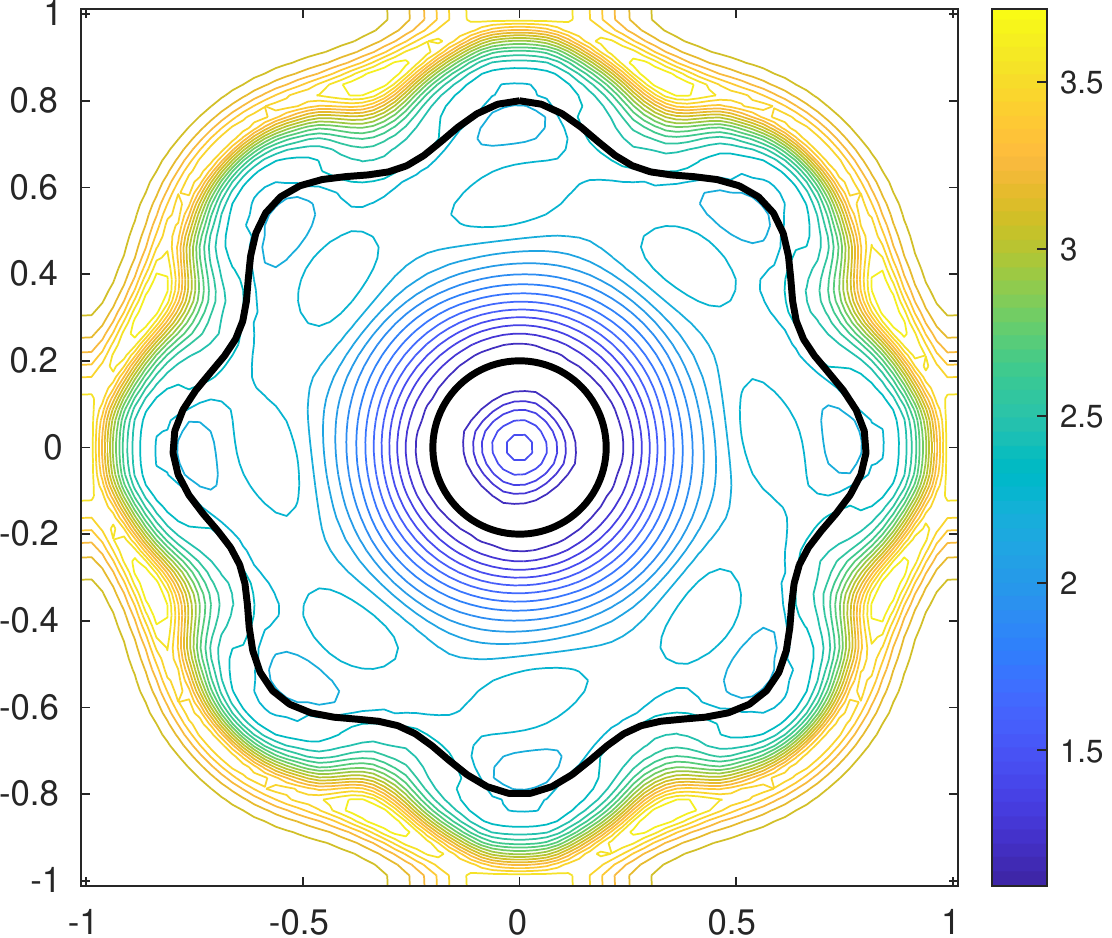}
		\caption{\bf $\vartheta, \varepsilon = 10^{-1}$}
	\end{subfigure}
	\begin{subfigure}{0.2\textwidth}
		\includegraphics[width=\textwidth]{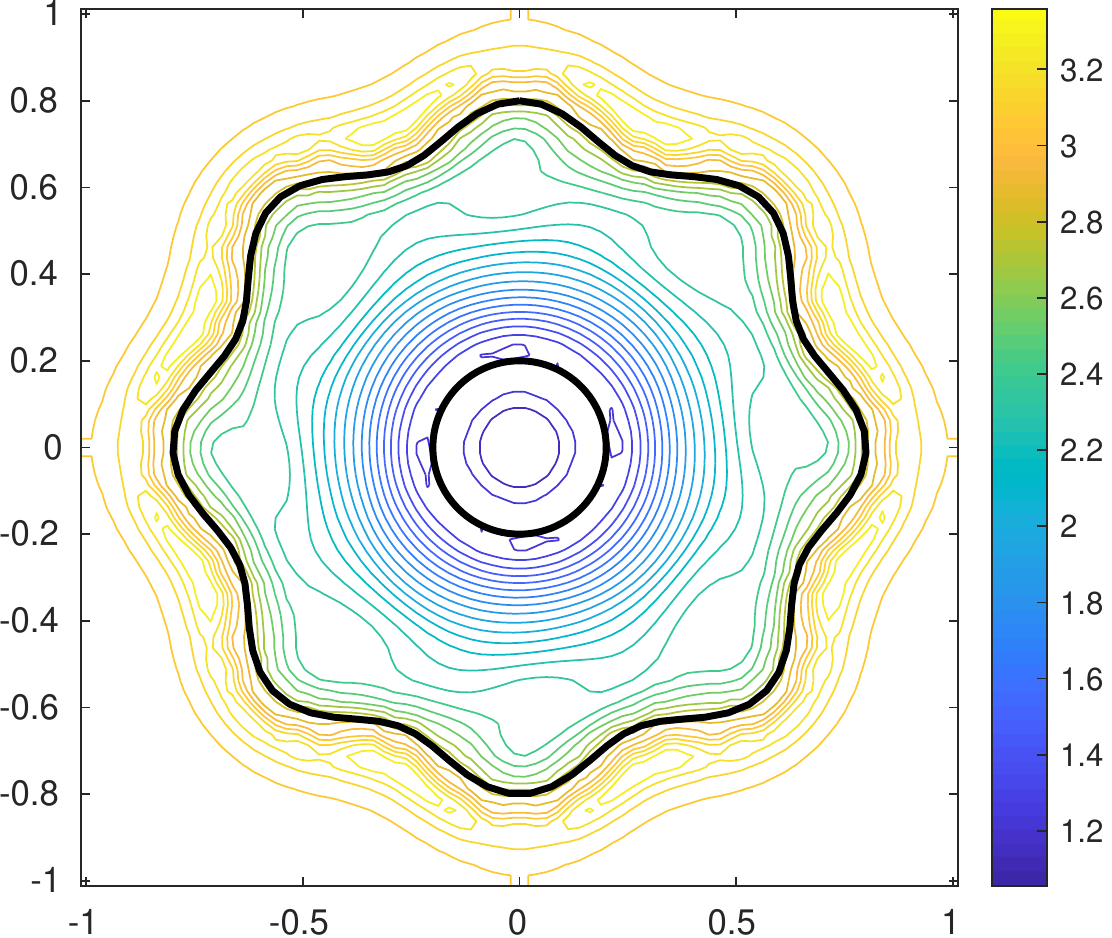}
		\caption{\bf $\vartheta, \varepsilon = 10^{-2}$}
	\end{subfigure}
	\begin{subfigure}{0.2\textwidth}
		\includegraphics[width=\textwidth]{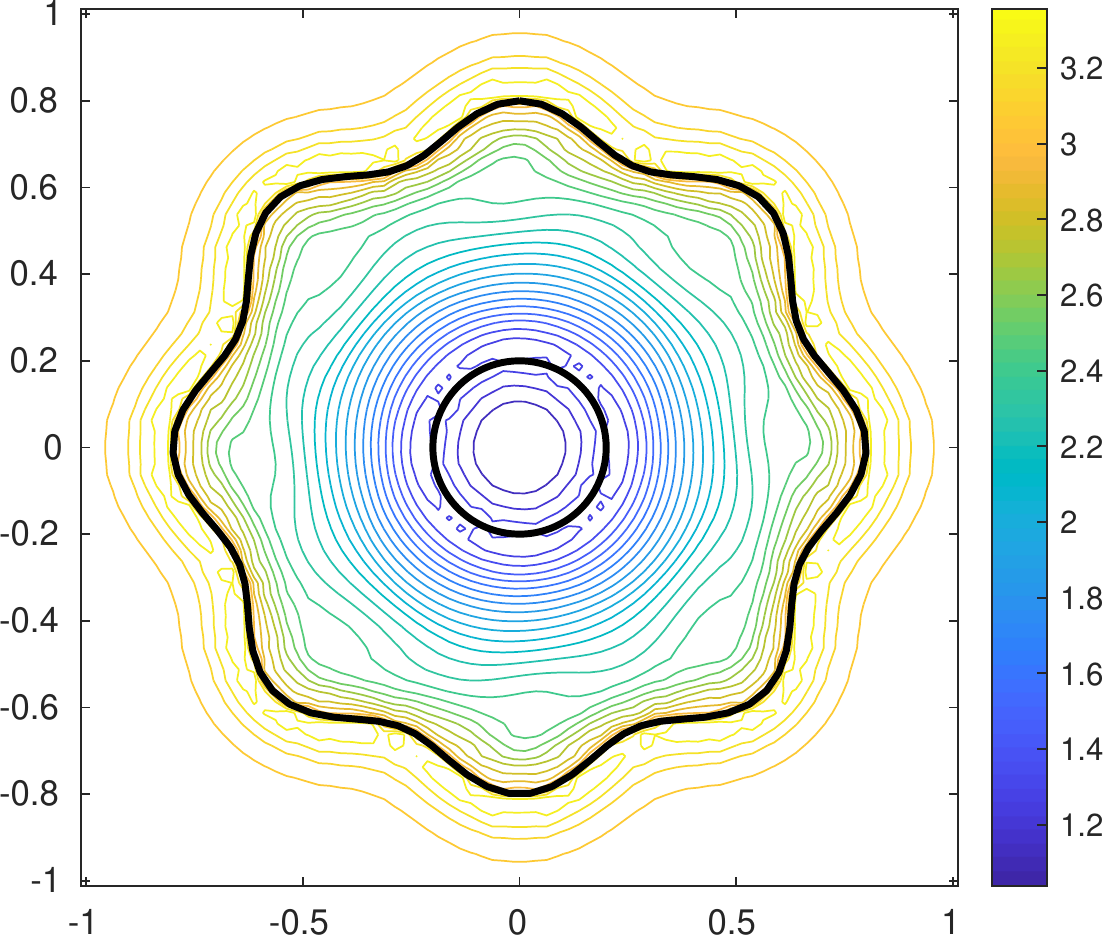}
		\caption{\bf $\vartheta, \varepsilon = 10^{-3}$}
	\end{subfigure}
	\begin{subfigure}{0.2\textwidth}
		\includegraphics[width=\textwidth]{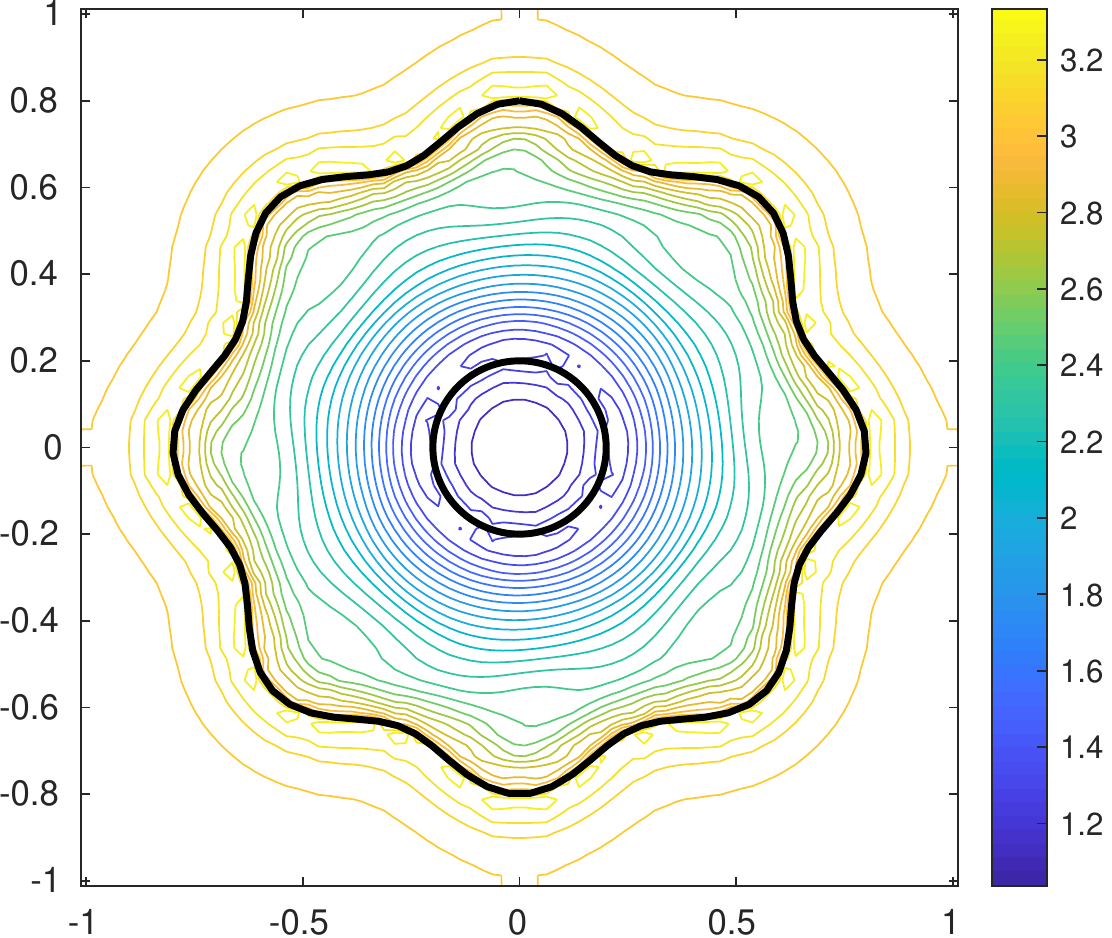}
		\caption{\bf $\vartheta, \varepsilon = 10^{-4}$}
	\end{subfigure}\\
	\begin{subfigure}{0.2\textwidth}
		\includegraphics[width=\textwidth]{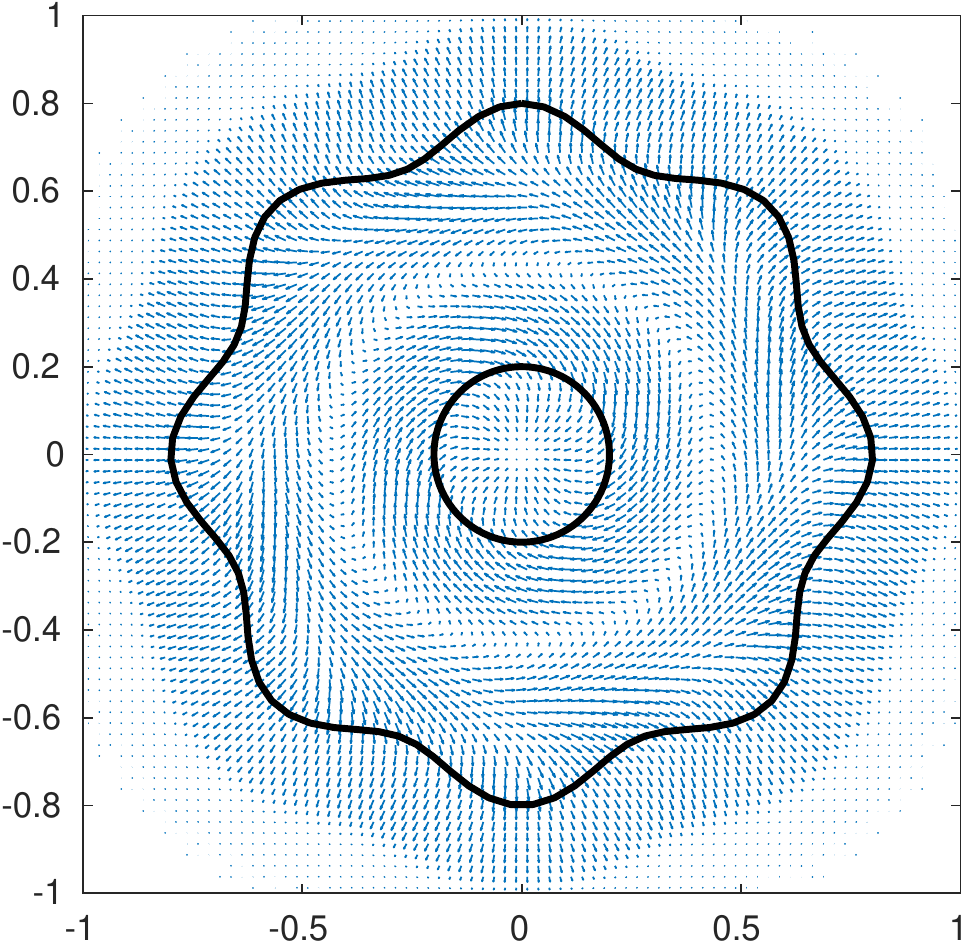}
		\caption{\bf $\vu, \varepsilon = 10^{-1}$}
	\end{subfigure}
	\begin{subfigure}{0.2\textwidth}
		\includegraphics[width=\textwidth]{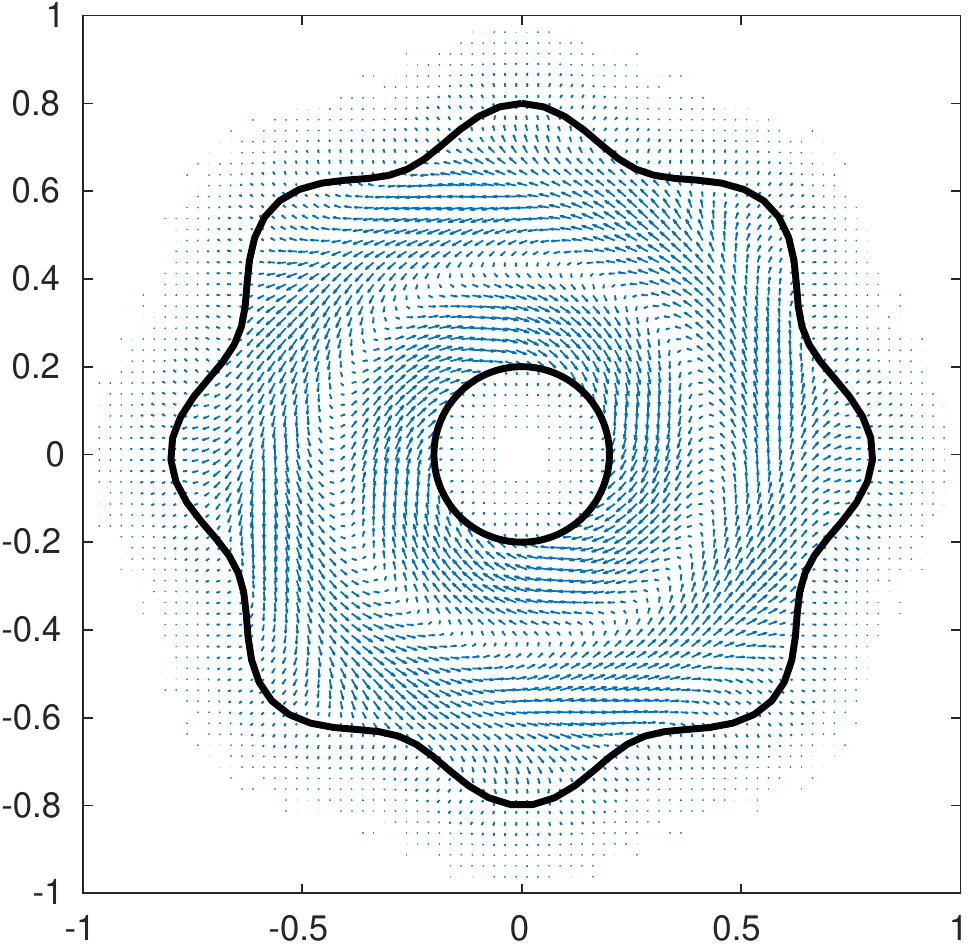}
		\caption{\bf $\vu, \varepsilon = 10^{-2}$}
	\end{subfigure}	
	\begin{subfigure}{0.2\textwidth}
		\includegraphics[width=\textwidth]{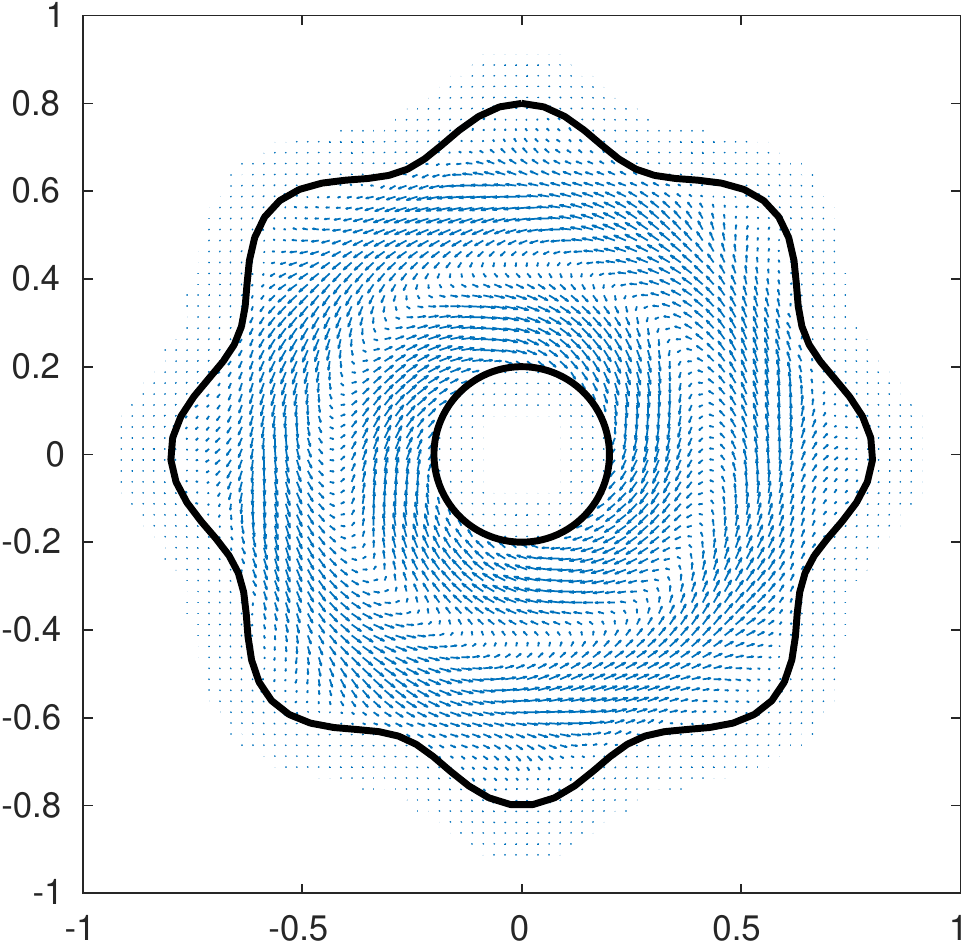}
		\caption{\bf $\vu, \varepsilon = 10^{-3}$}
	\end{subfigure}		
	\begin{subfigure}{0.2\textwidth}
		\includegraphics[width=\textwidth]{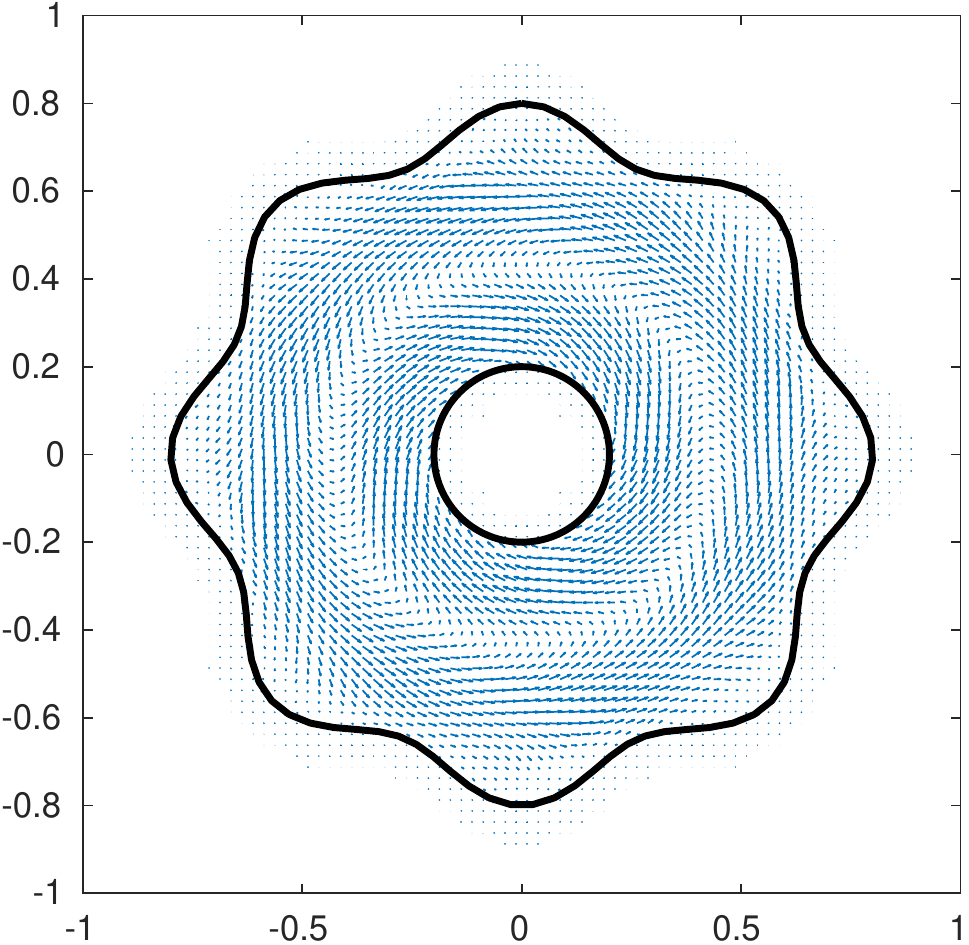}
		\caption{\bf $\vu, \varepsilon = 10^{-4}$}
	\end{subfigure}
	\caption{\small{Experiment~3: ${U}_h^{\varepsilon}$ with $h = 2/80$ and $\varepsilon = 10^{-1},\dots, 10^{-4}$.}}\label{fig:ex3}
\end{figure}

\subsection{Experiment~4: Ring domain - zero density outside $\Omega$, non-zero gravity force}

In the last experiment we extend the setting of Experiment~2 by adding an external force pointing to the center (0,0)
defined by
\begin{equation*}
\textbf{g} = \left(-100 \frac{x_1}{  |x|} , -100 \frac{x_2}{  |x|}\right).
\end{equation*}
In order to observe interesting phenomena the initial data are taken as
\begin{equation*}
	(\varrho, \vu, \vartheta)(0,x)
	\; = \; \begin{cases}
	(10^{-2},0, 0 , 30 ) , & x \in B_{0.2}, \\
	\left(1, \frac{5 \sin(4\pi (|x|-0.2)) x_2}{|x|} ,-\frac{ 5\sin(4\pi (|x|-0.2)) x_1}{|x|}, 41.6 - 58|x| \right) , & x \in  \Omega \equiv {B}_{0.7}\setminus B_{0.2}, \\
	(10^{-2},0 , 0,  1) , & x \in \mathbb{T}^2\setminus B_{0.7}.
	\end{cases}
\end{equation*}
The final time is set to $T = 0.2$.

The errors $E(U_h^{\varepsilon}), P(U_h^{\varepsilon})$ are plotted in Figure~\ref{fig:ex4-1} and Figure~\ref{fig:ex4-2}, respectively.
The results indicate that the numerical solutions converge with respect to $h$ and $\varepsilon$ with rate nearly $1$ and $1/2$, respectively.
The numerical solutions for different penalization parameters $\varepsilon = 10^{-1},\dots,10^{-4}$ on the mesh of $80^2$ cells are shown in
Figure~\ref{fig:ex4}.

Comparing these results with those of previous Experiments we can see oscillatory fluid behavior which is due to the development of the so-called Rayleigh-B\'enard convection rolls. These are  visible in the temperature plots in Figure~\ref{fig:ex4} and arise due to the temperate gradient acting against the outer force, see \cite{RB} for more details.

\begin{figure}[htbp]
	\setlength{\abovecaptionskip}{0.cm}
	\setlength{\belowcaptionskip}{-0.cm}
	\centering
	\begin{subfigure}{0.32\textwidth}
		\includegraphics[width=\textwidth]{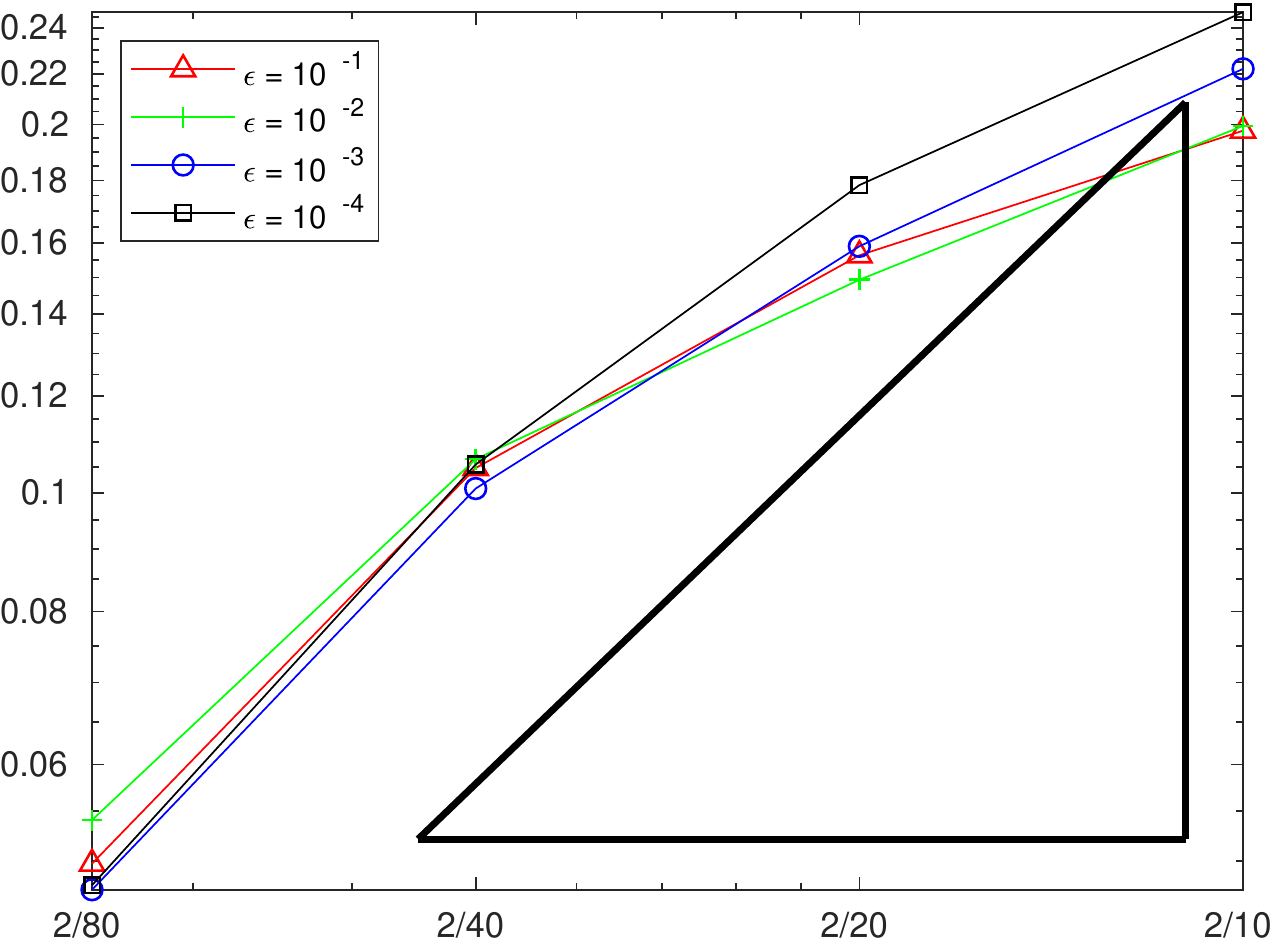}
		\caption{ \bf $\varrho$}
	\end{subfigure}
	\begin{subfigure}{0.32\textwidth}
	\includegraphics[width=\textwidth]{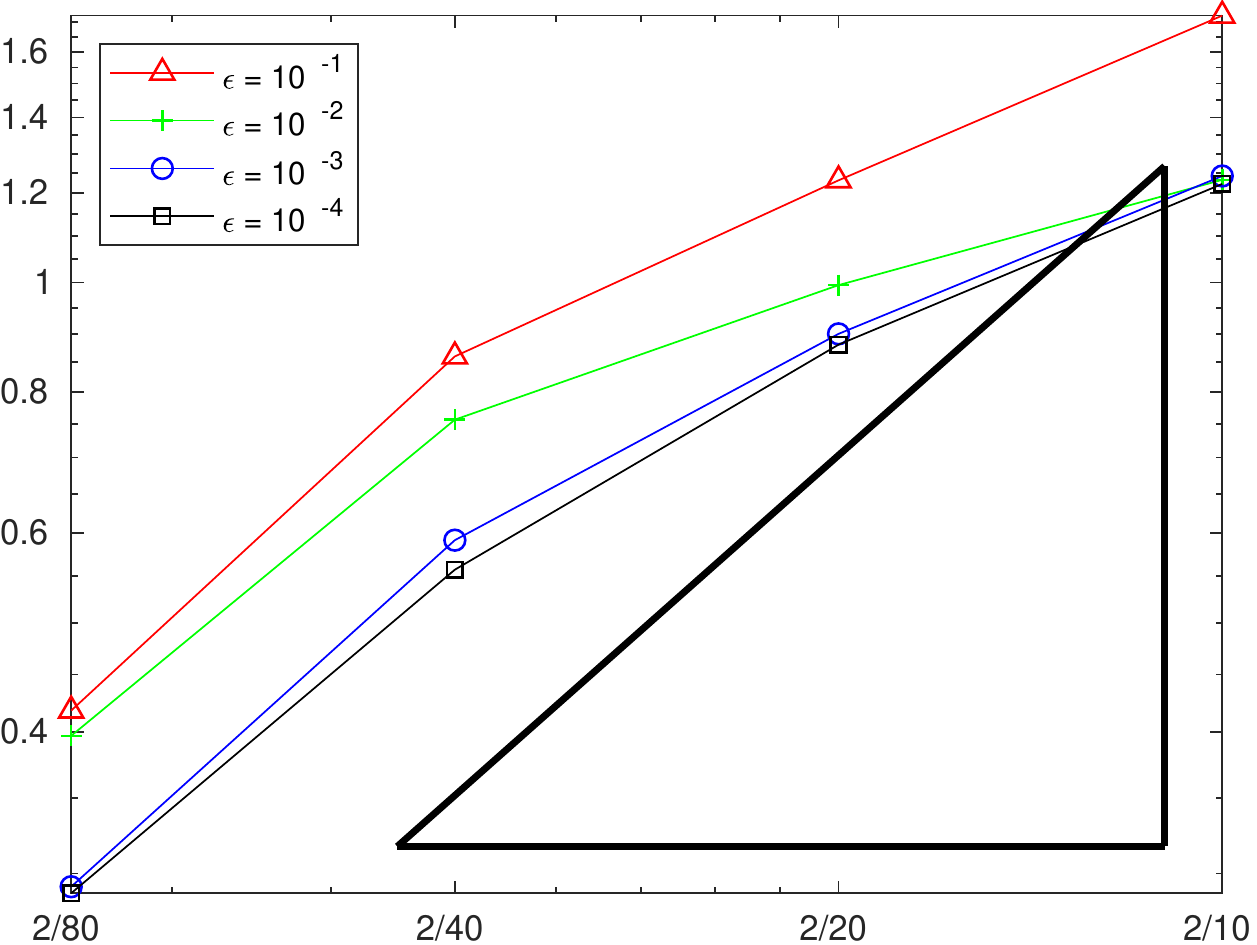}
	\caption{ \bf $\textbf{u}$}
\end{subfigure}
	\begin{subfigure}{0.32\textwidth}
	\includegraphics[width=\textwidth]{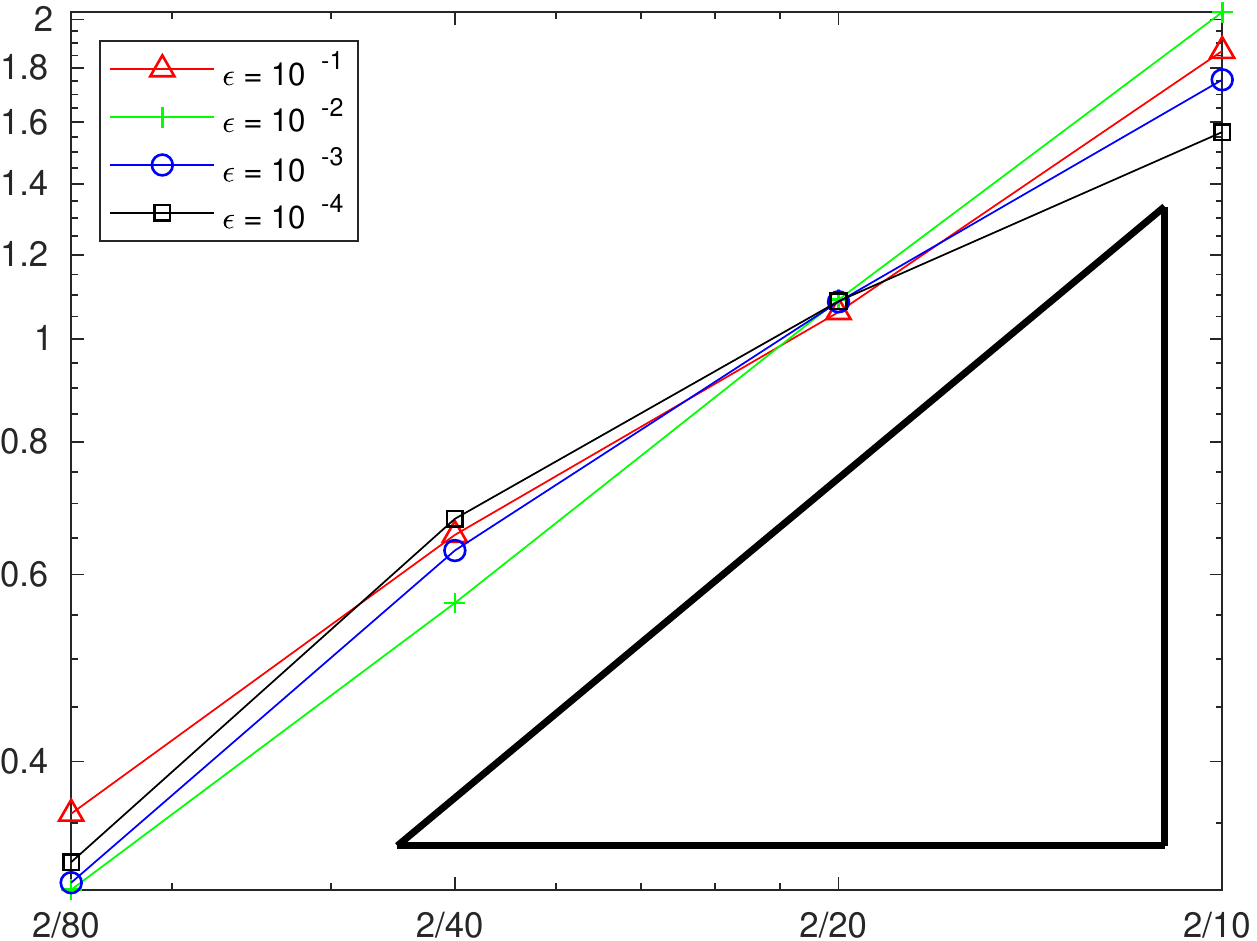}
	\caption{ \bf $\vartheta$}
\end{subfigure}
	\caption{\small{Experiment~4: $E(U_h^{\varepsilon})$ errors  with respect to $h$. }}\label{fig:ex4-1}
\end{figure}

\begin{figure}[htbp]
	\setlength{\abovecaptionskip}{0.cm}
	\setlength{\belowcaptionskip}{-0.cm}
	\centering
	\begin{subfigure}{0.32\textwidth}
		\includegraphics[width=\textwidth]{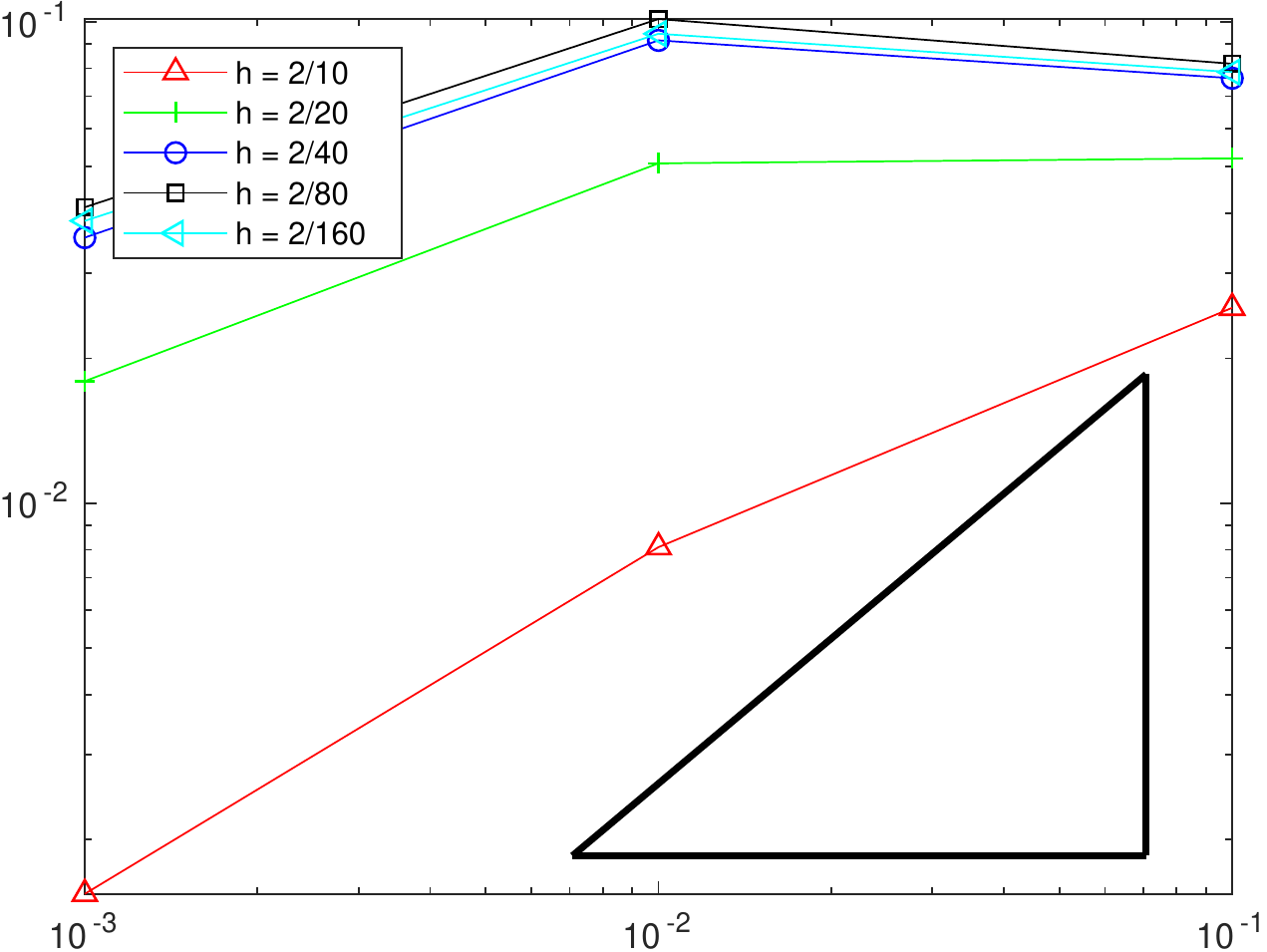}
		\caption{ \bf $\varrho$}
	\end{subfigure}
	\begin{subfigure}{0.32\textwidth}
		\includegraphics[width=\textwidth]{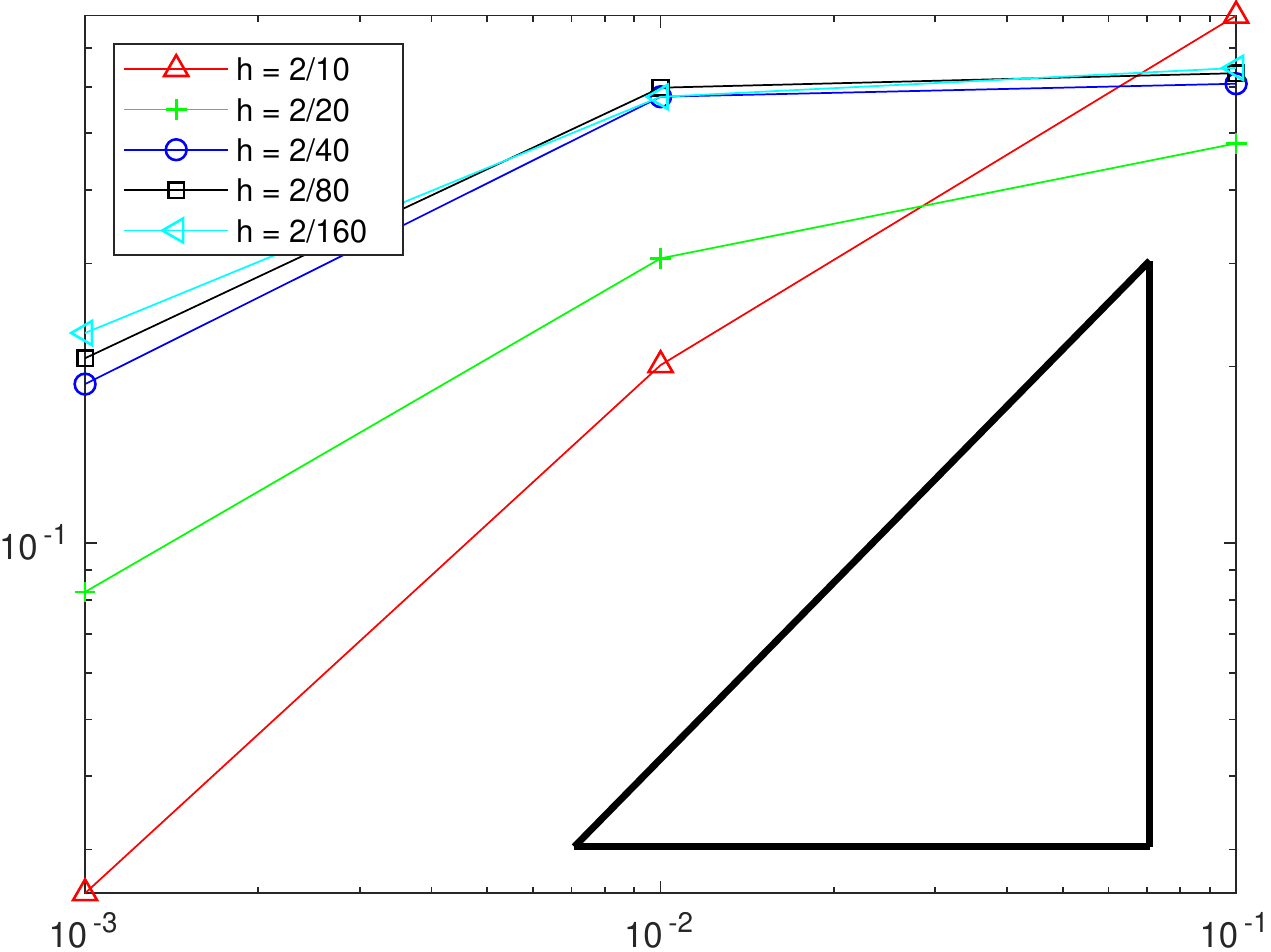}
		\caption{ \bf $\textbf{u}$}
	\end{subfigure}
	\begin{subfigure}{0.32\textwidth}
		\includegraphics[width=\textwidth]{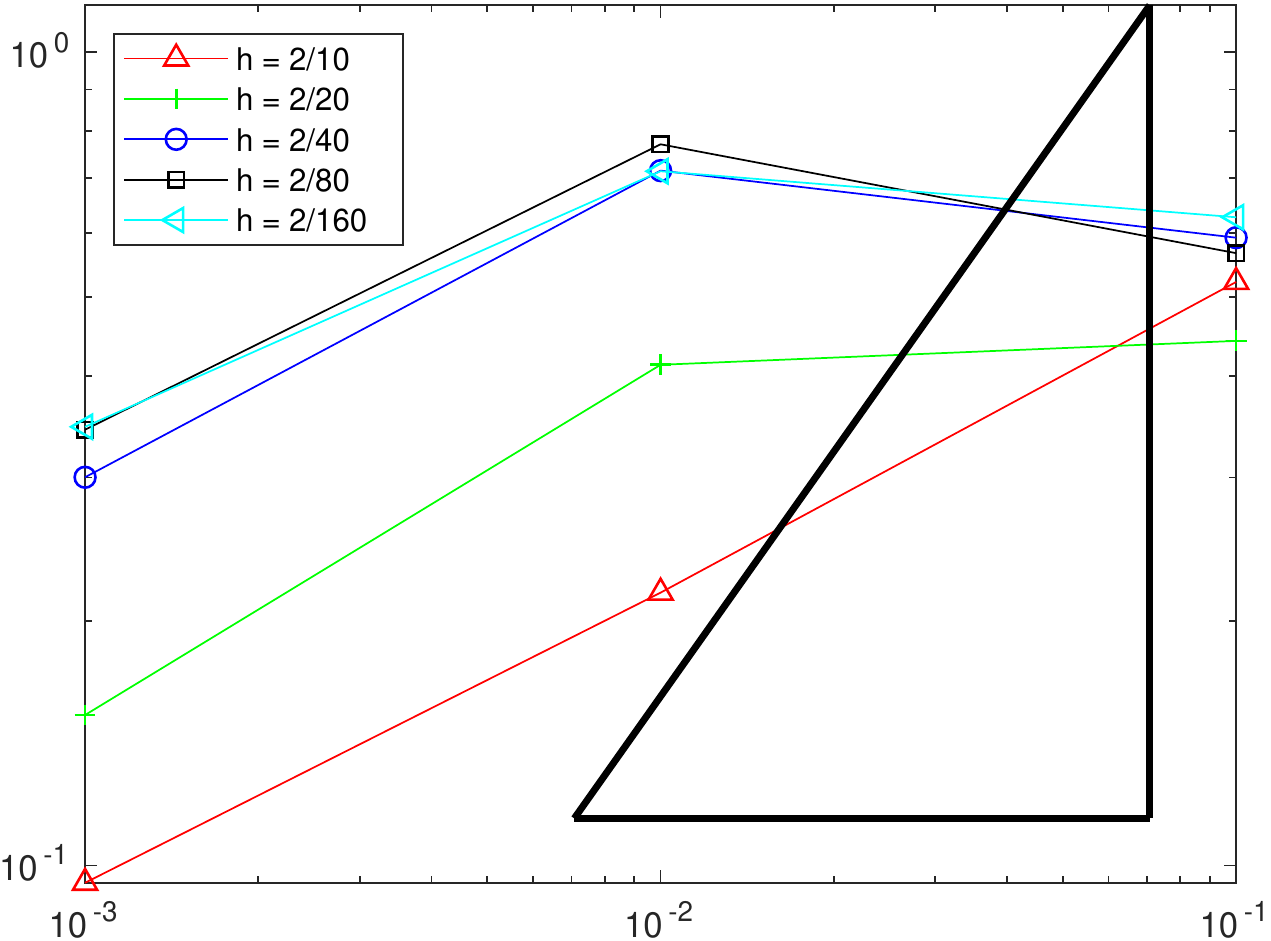}
		\caption{ \bf $\vartheta$}
	\end{subfigure}
	\caption{{\small Experiment~4: $P(U_h^{\varepsilon})$ errors with respect to $\varepsilon$. }}\label{fig:ex4-2}
\end{figure}

\begin{figure}[htbp]
	\setlength{\abovecaptionskip}{0.cm}
	\setlength{\belowcaptionskip}{-0.cm}
	\centering
	\begin{subfigure}{0.2\textwidth}
	\includegraphics[width=\textwidth]{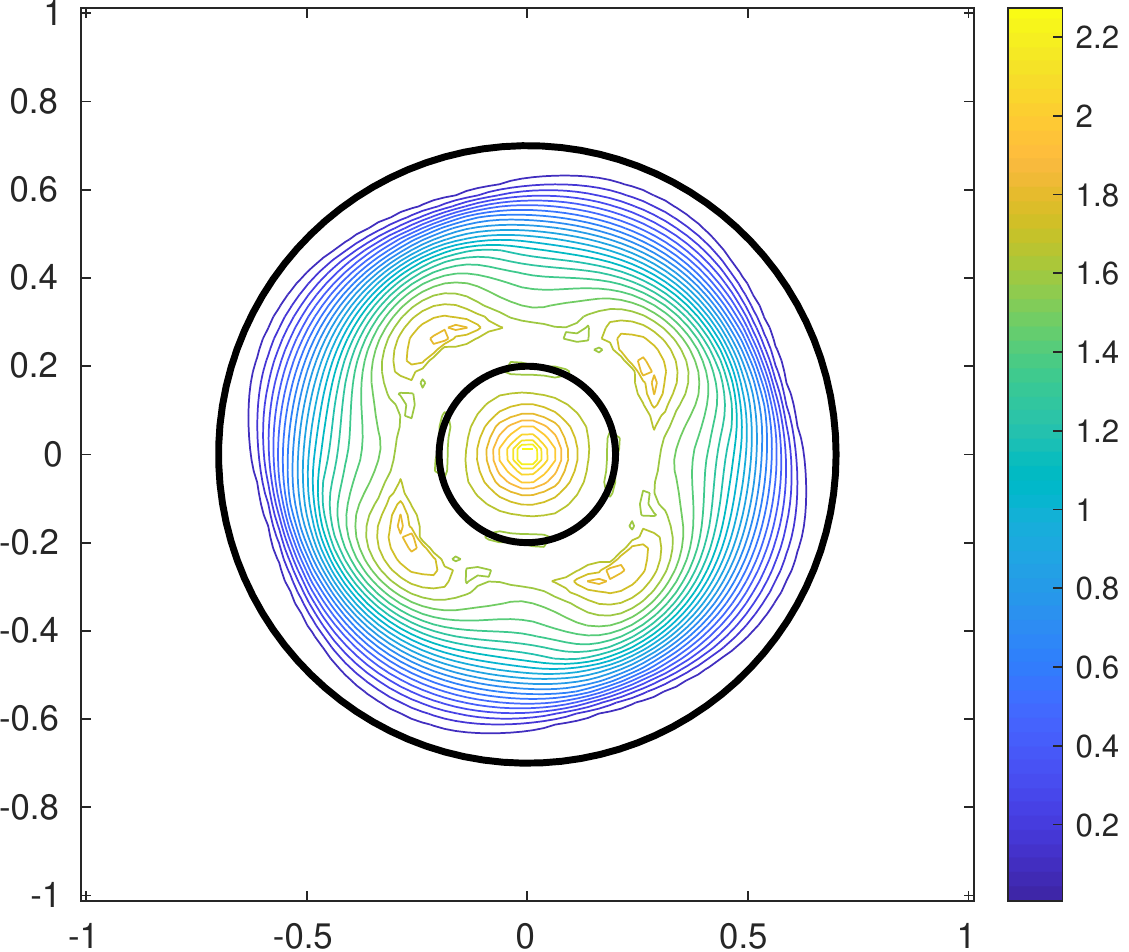}
	\caption{ \bf $\varrho, \varepsilon = 10^{-1}$}
	\end{subfigure}
	\begin{subfigure}{0.2\textwidth}
	\includegraphics[width=\textwidth]{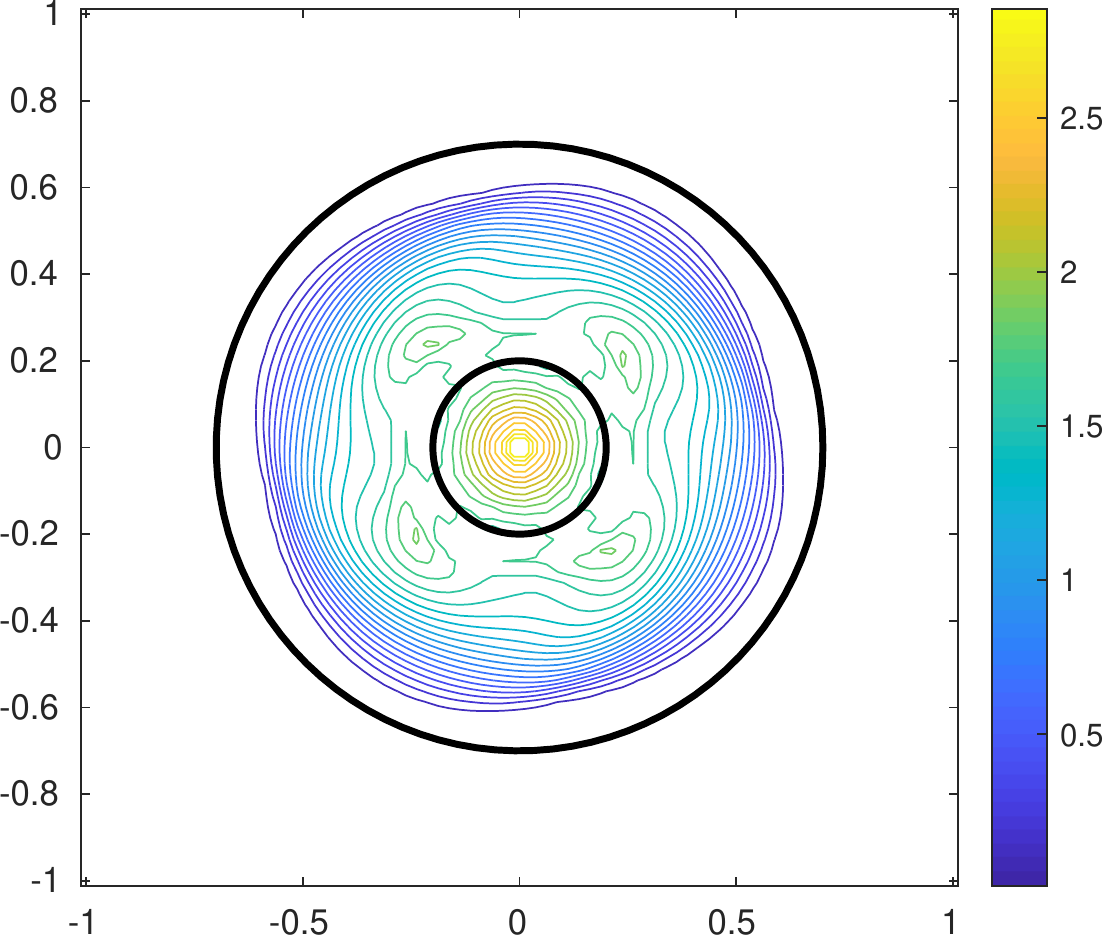}
	\caption{ \bf $\varrho, \varepsilon = 10^{-2}$}
	\end{subfigure}
	\begin{subfigure}{0.2\textwidth}
	\includegraphics[width=\textwidth]{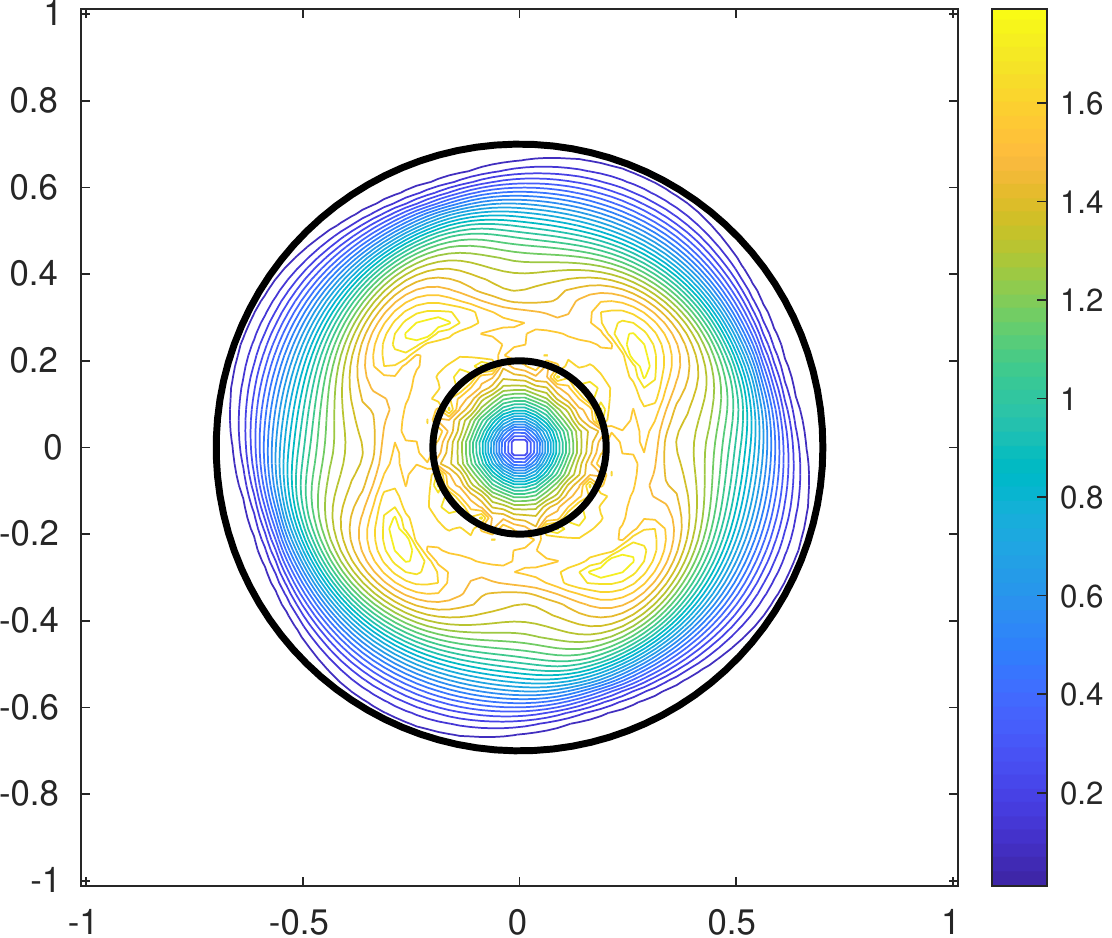}
	\caption{ \bf $\varrho, \varepsilon = 10^{-3}$}
	\end{subfigure}
	\begin{subfigure}{0.2\textwidth}
	\includegraphics[width=\textwidth]{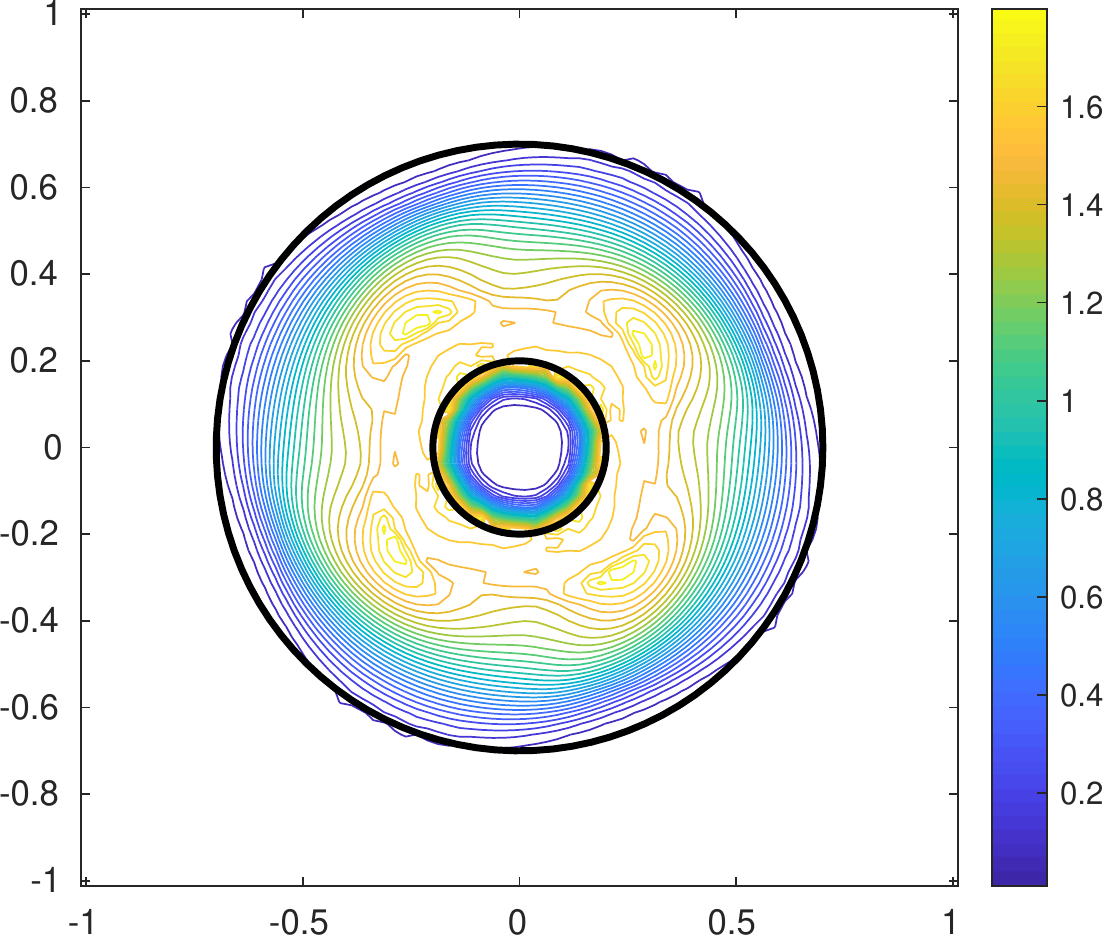}
	\caption{ \bf $\varrho, \varepsilon = 10^{-4}$}
	\end{subfigure}\\
	\begin{subfigure}{0.2\textwidth}
		\includegraphics[width=\textwidth]{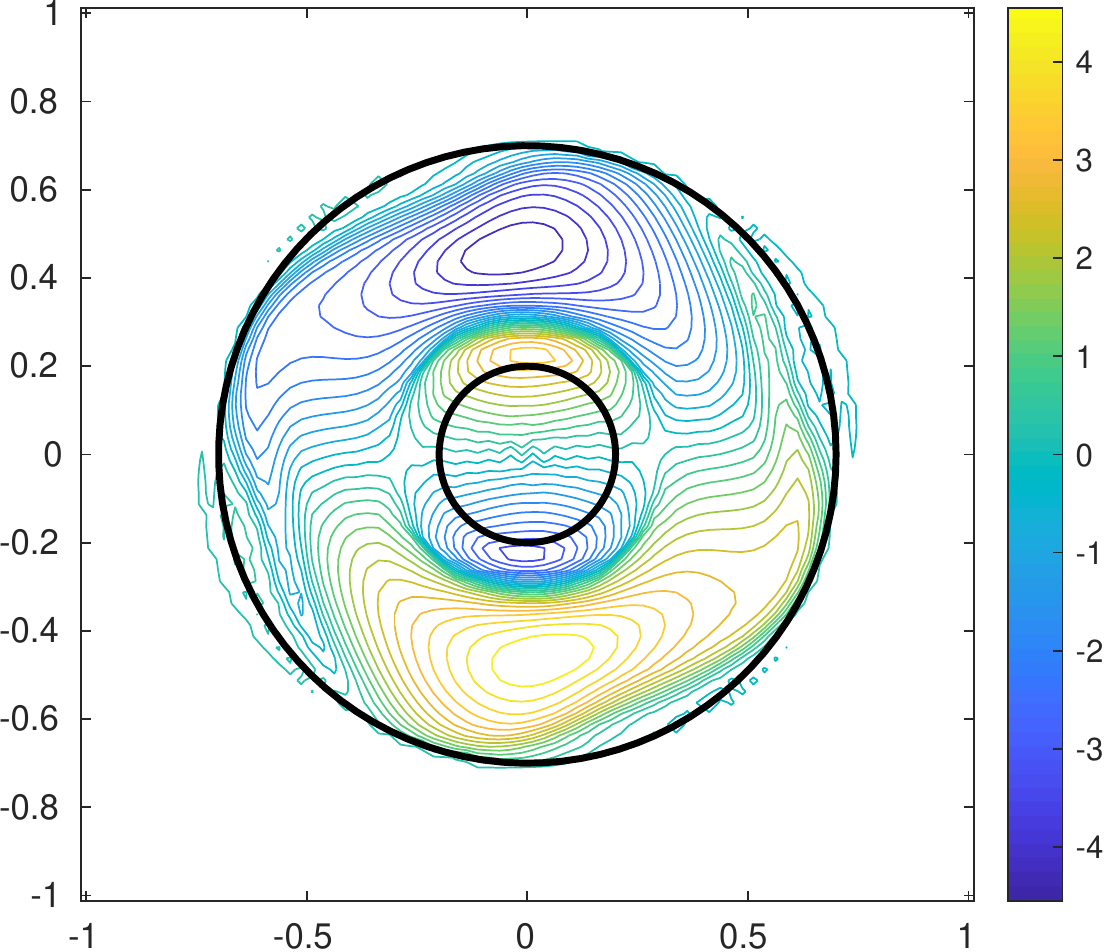}
		\caption{\bf $u_1, \varepsilon = 10^{-1}$}
	\end{subfigure}	
	\begin{subfigure}{0.2\textwidth}
	\includegraphics[width=\textwidth]{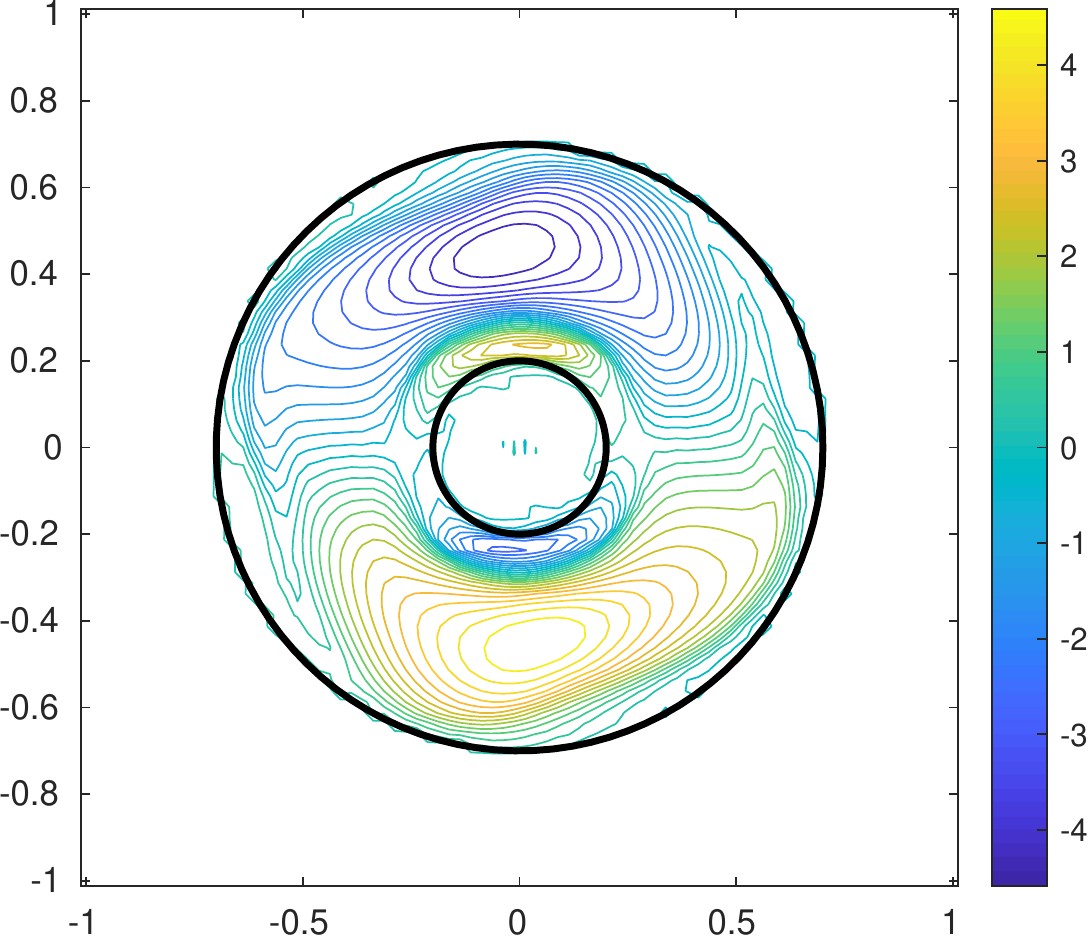}
	\caption{\bf $u_1, \varepsilon = 10^{-2}$}
	\end{subfigure}
	\begin{subfigure}{0.2\textwidth}
	\includegraphics[width=\textwidth]{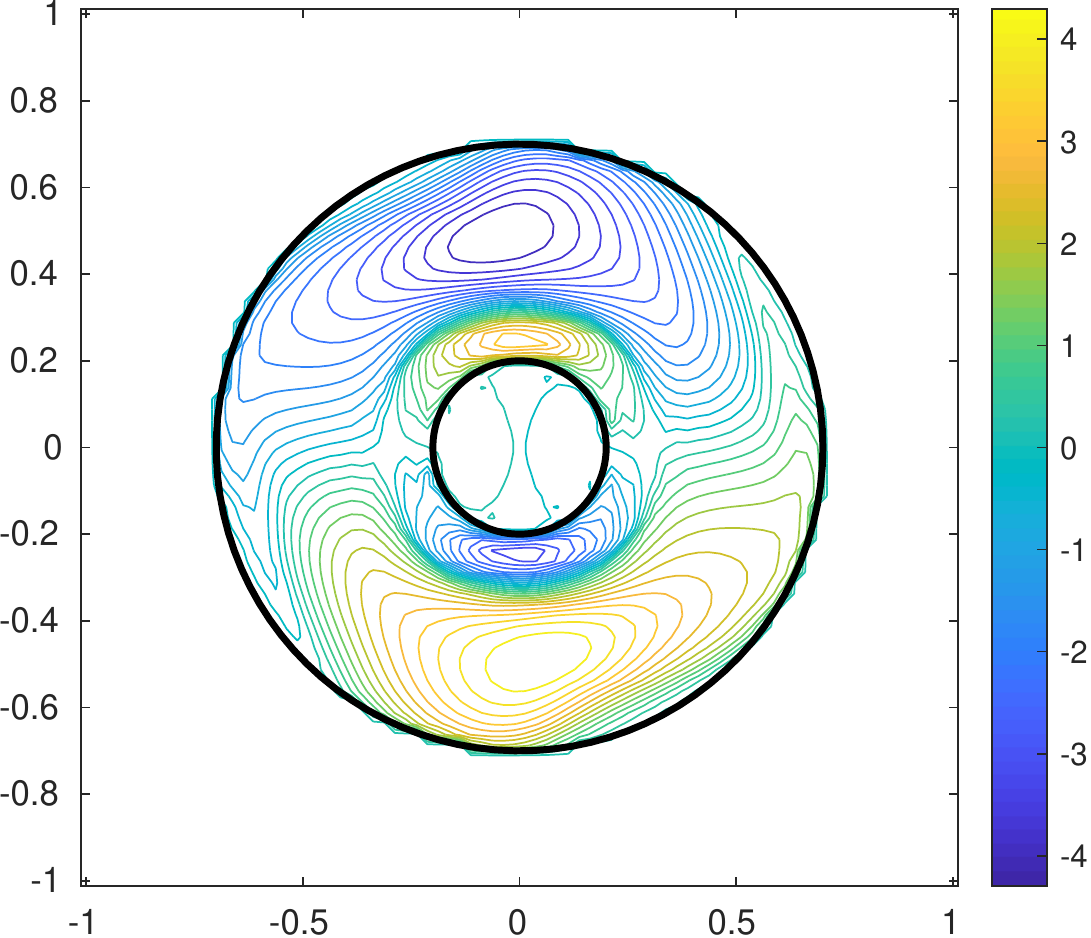}
	\caption{\bf $u_1, \varepsilon = 10^{-3}$}
	\end{subfigure}
	\begin{subfigure}{0.2\textwidth}
	\includegraphics[width=\textwidth]{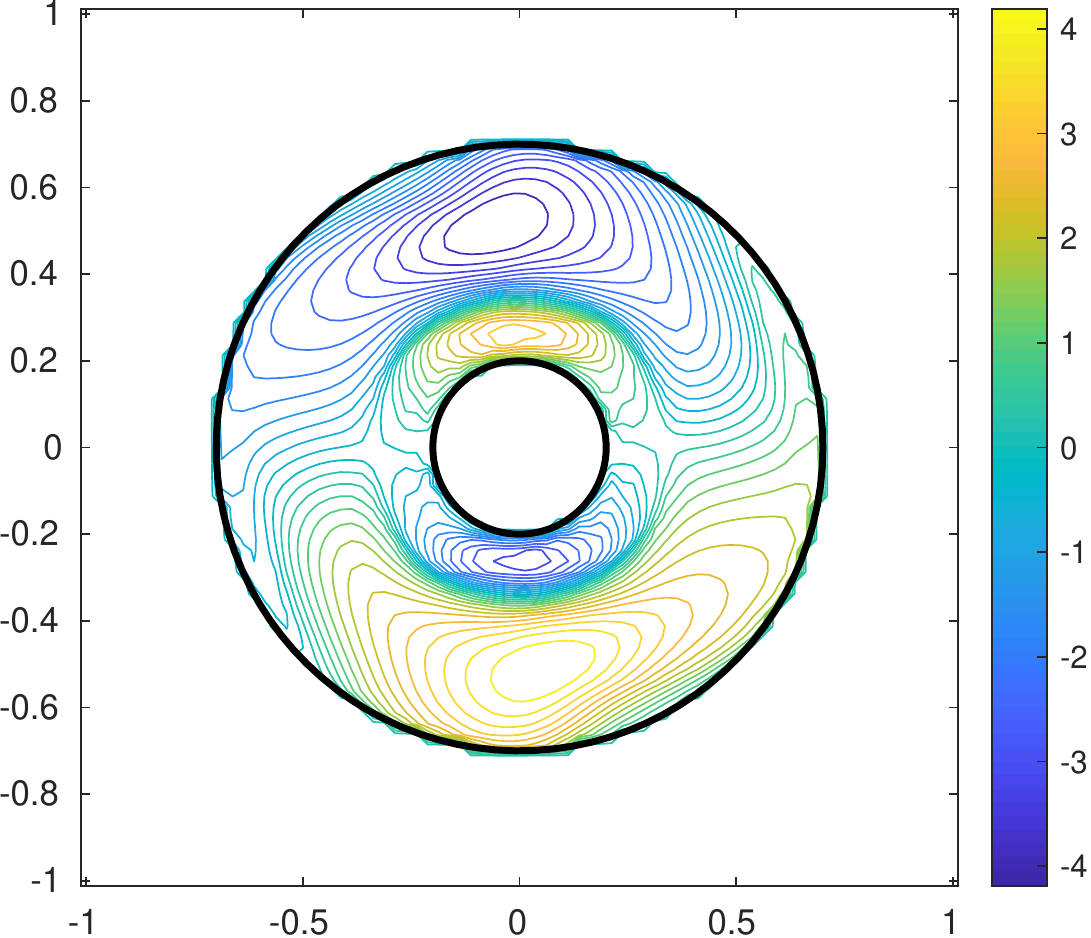}
	\caption{\bf $u_1, \varepsilon = 10^{-4}$}
	\end{subfigure}\\
	\begin{subfigure}{0.2\textwidth}
		\includegraphics[width=\textwidth]{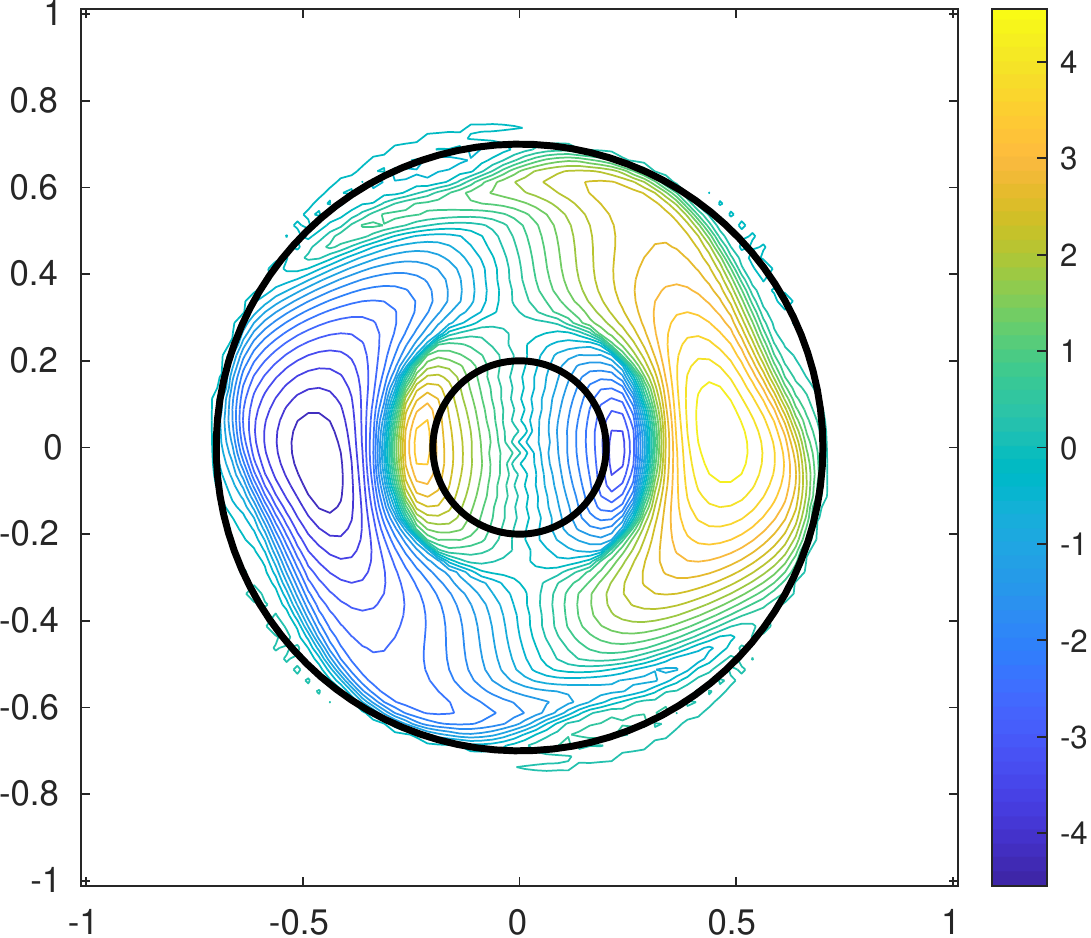}
		\caption{ \bf $u_2, \varepsilon = 10^{-1}$}
	\end{subfigure}	
	\begin{subfigure}{0.2\textwidth}
	\includegraphics[width=\textwidth]{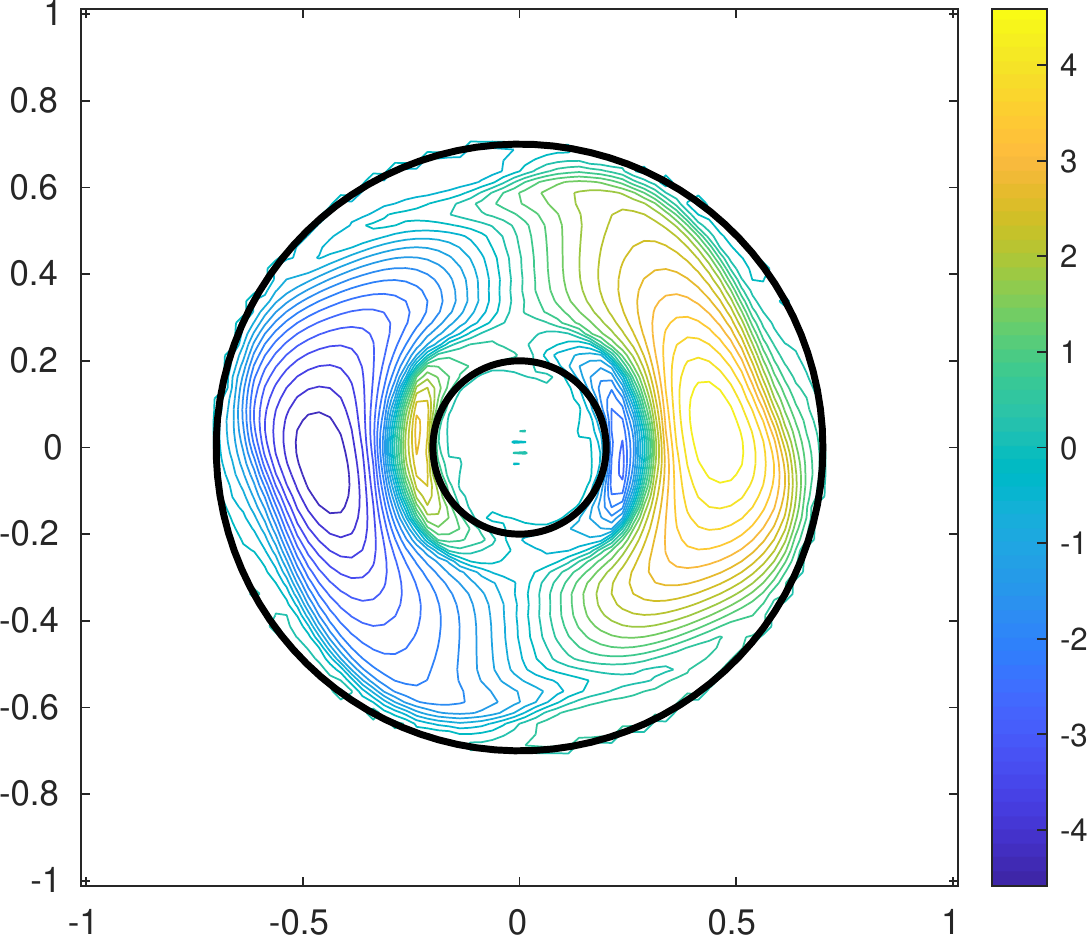}
	\caption{ \bf $u_2, \varepsilon = 10^{-2}$}
	\end{subfigure}
	\begin{subfigure}{0.2\textwidth}
	\includegraphics[width=\textwidth]{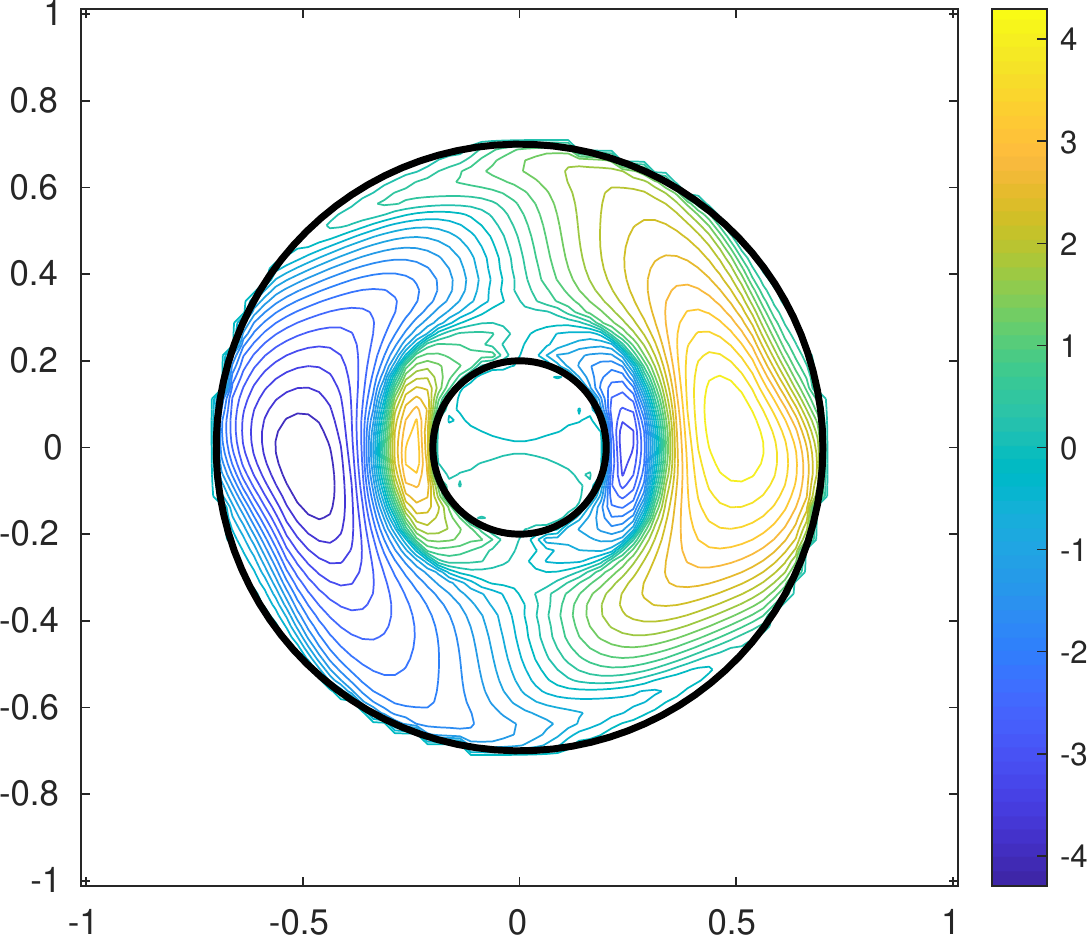}
	\caption{ \bf $u_2, \varepsilon = 10^{-3}$}
	\end{subfigure}
	\begin{subfigure}{0.2\textwidth}
	\includegraphics[width=\textwidth]{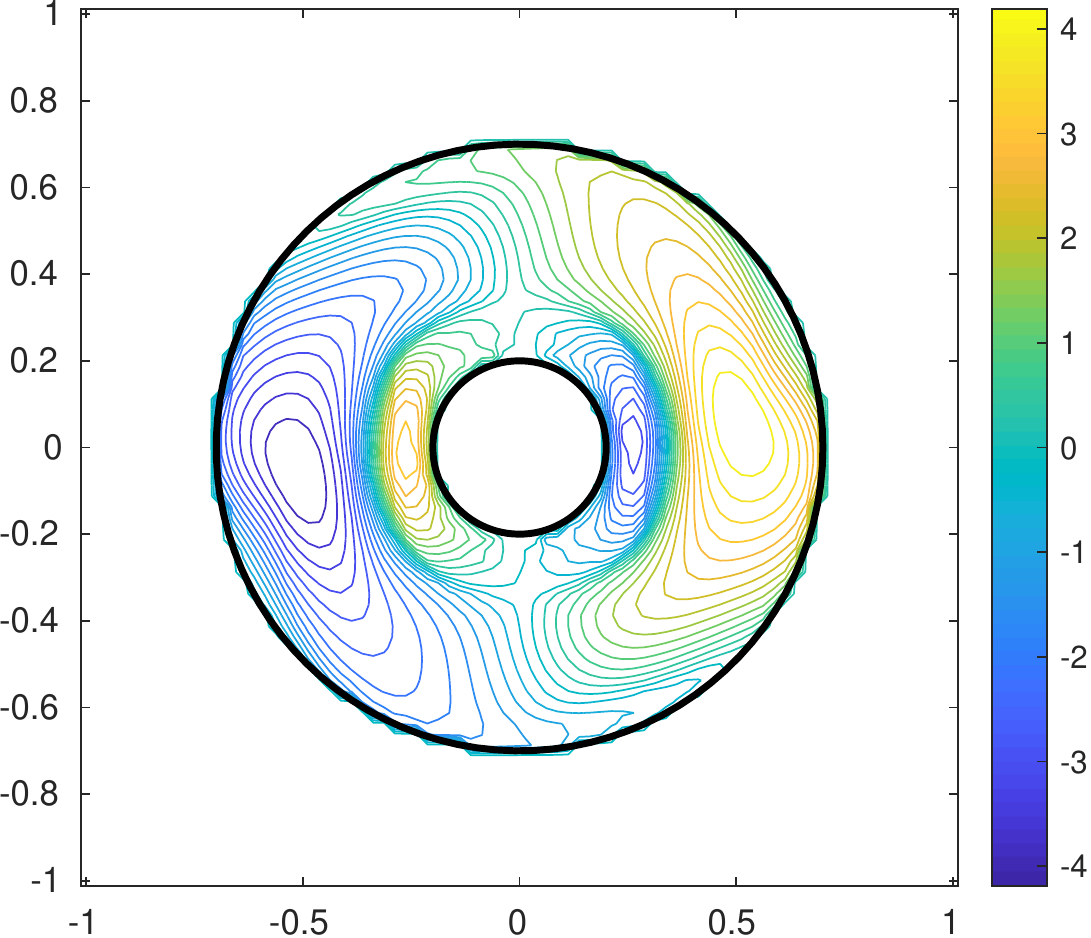}
	\caption{ \bf $u_2, \varepsilon = 10^{-4}$}
	\end{subfigure}\\
	\begin{subfigure}{0.2\textwidth}
	\includegraphics[width=\textwidth]{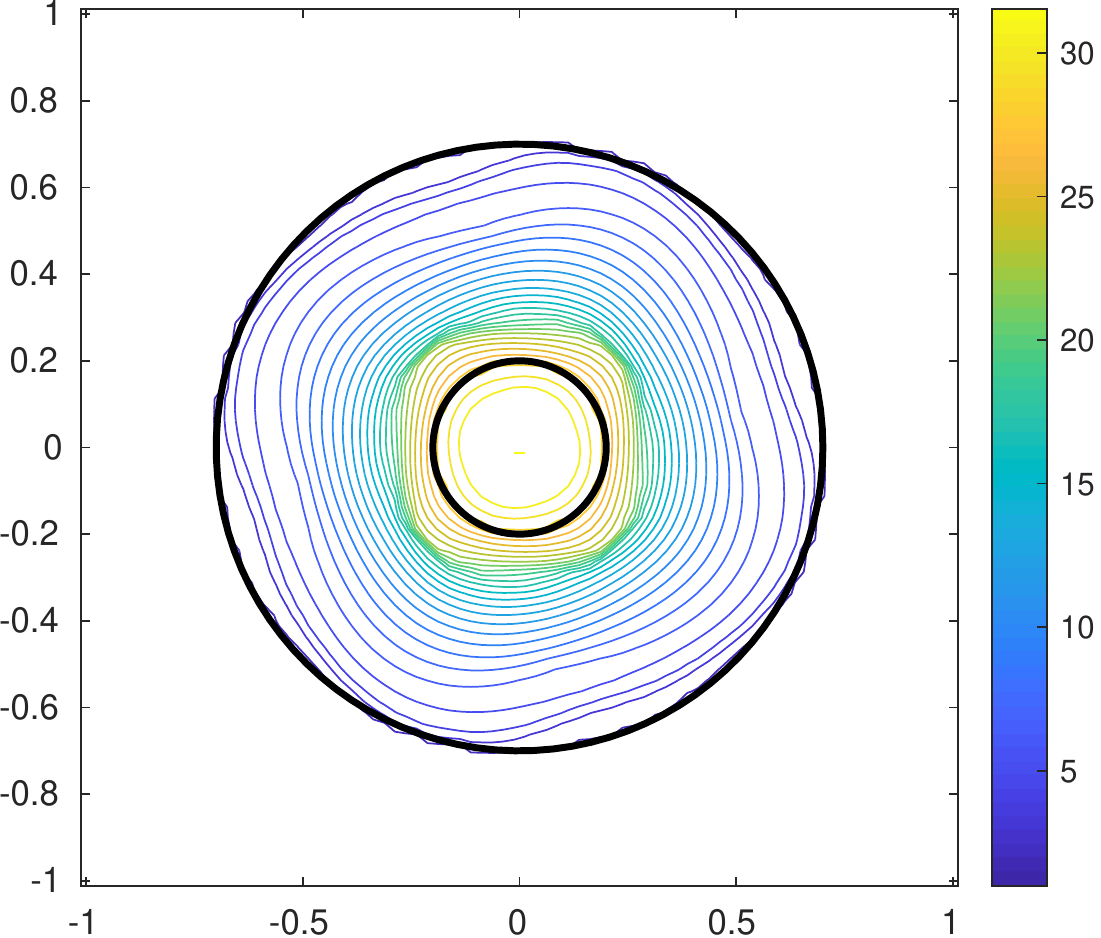}
	\caption{\bf $\vartheta, \varepsilon = 10^{-1}$}
	\end{subfigure}
	\begin{subfigure}{0.2\textwidth}
	\includegraphics[width=\textwidth]{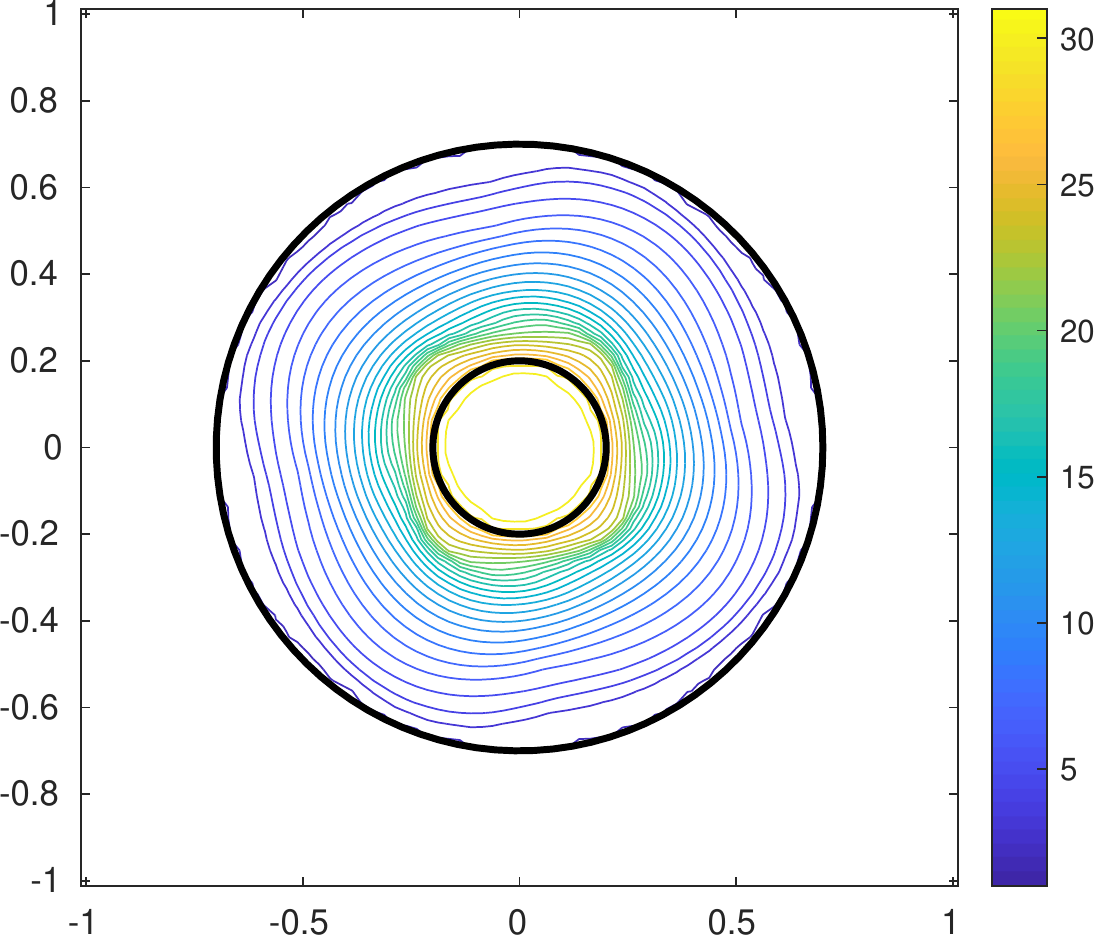}
	\caption{\bf $\vartheta, \varepsilon = 10^{-2}$}
	\end{subfigure}
	\begin{subfigure}{0.2\textwidth}
	\includegraphics[width=\textwidth]{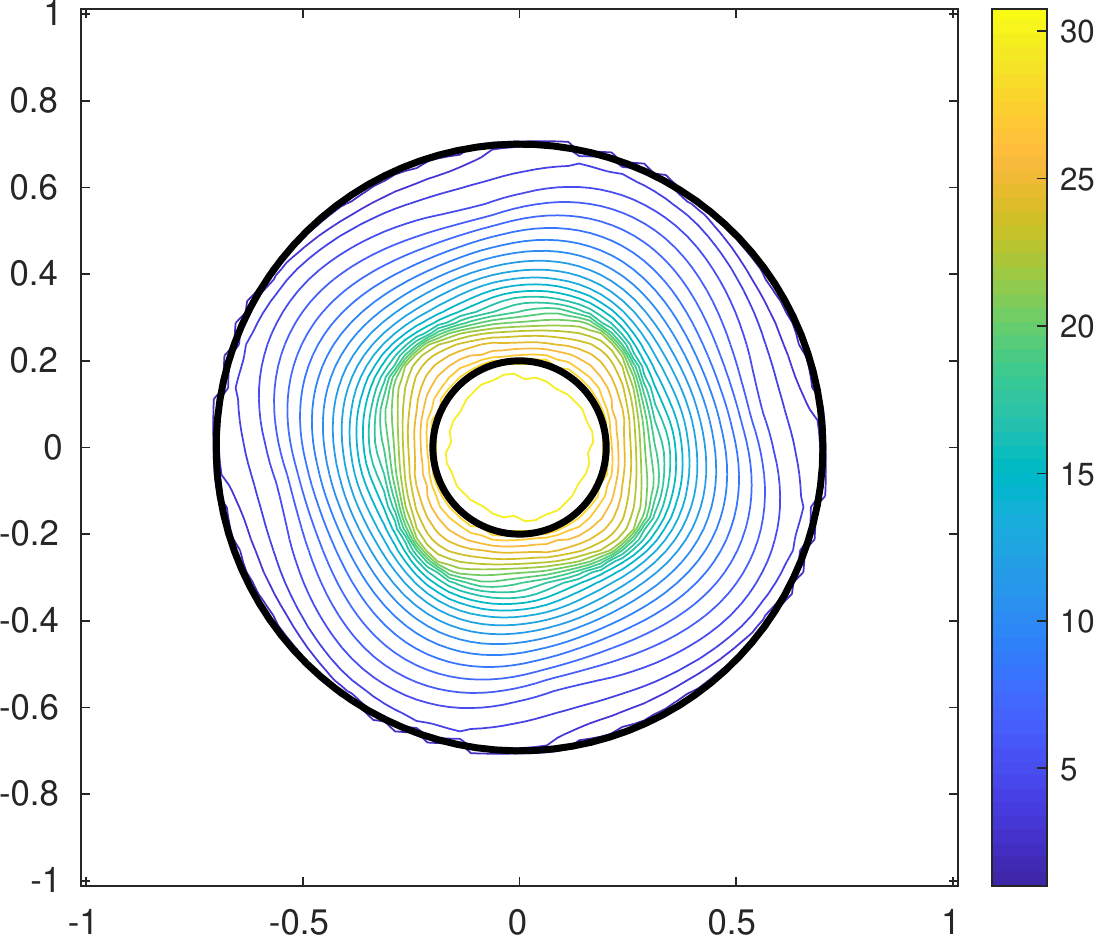}
	\caption{\bf $\vartheta, \varepsilon = 10^{-3}$}
	\end{subfigure}
	\begin{subfigure}{0.2\textwidth}
	\includegraphics[width=\textwidth]{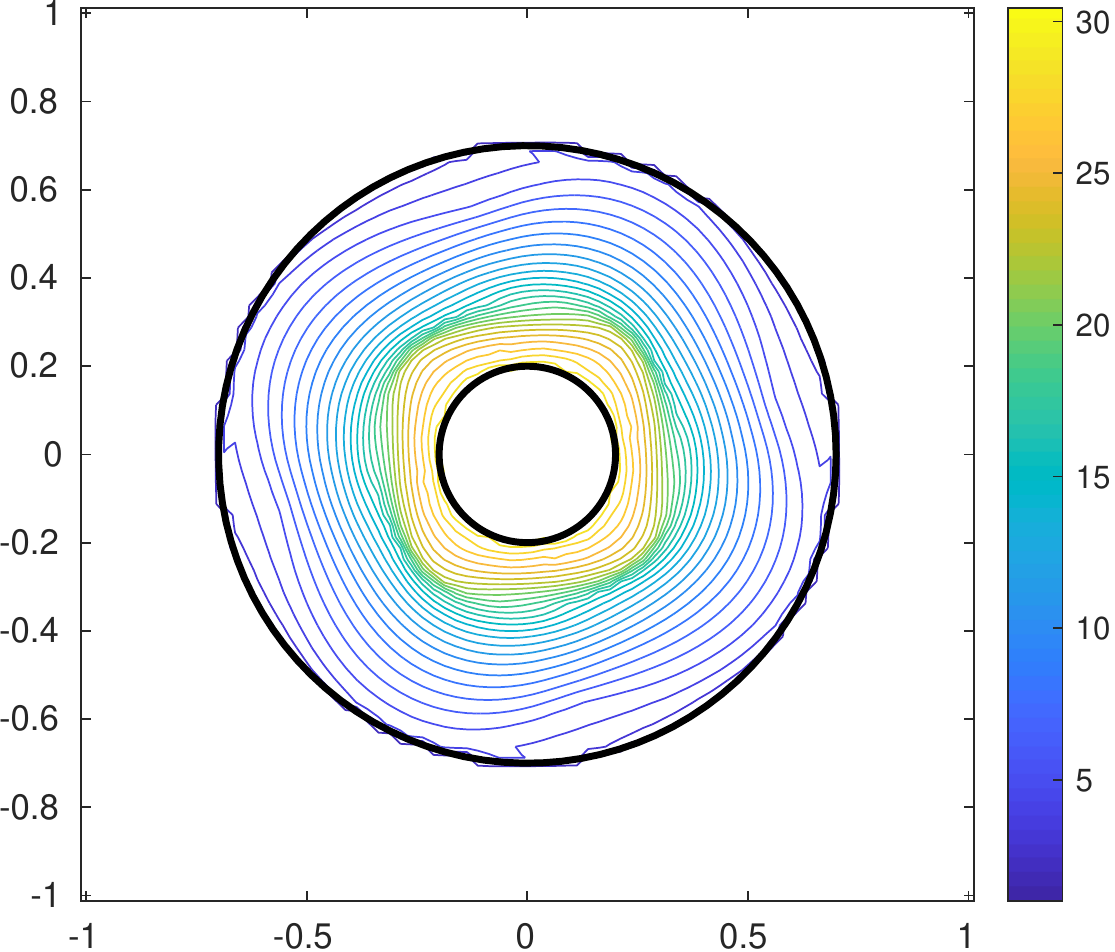}
	\caption{\bf $\vartheta, \varepsilon = 10^{-4}$}
	\end{subfigure}\\
	\begin{subfigure}{0.2\textwidth}
		\includegraphics[width=\textwidth]{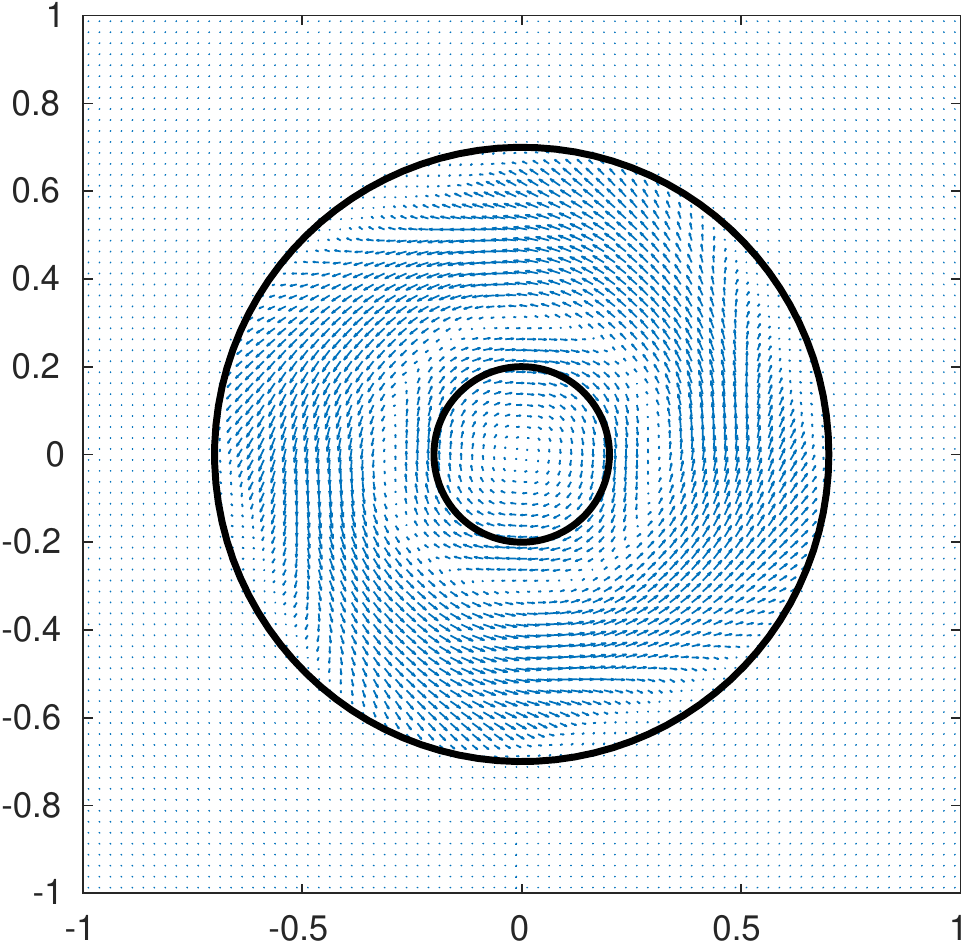}
		\caption{\bf $\textbf{u}, \varepsilon = 10^{-1}$}
	\end{subfigure}
	\begin{subfigure}{0.2\textwidth}
		\includegraphics[width=\textwidth]{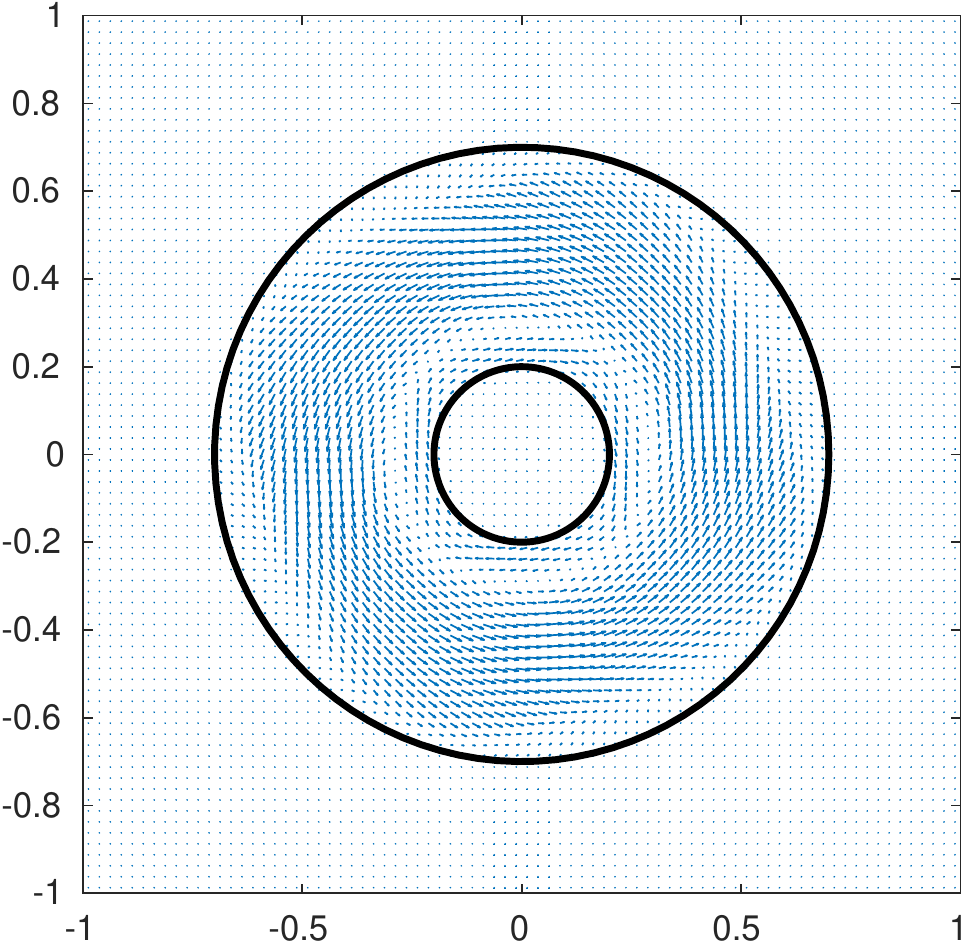}
		\caption{\bf $\textbf{u}, \varepsilon = 10^{-2}$}
	\end{subfigure}	
	\begin{subfigure}{0.2\textwidth}
		\includegraphics[width=\textwidth]{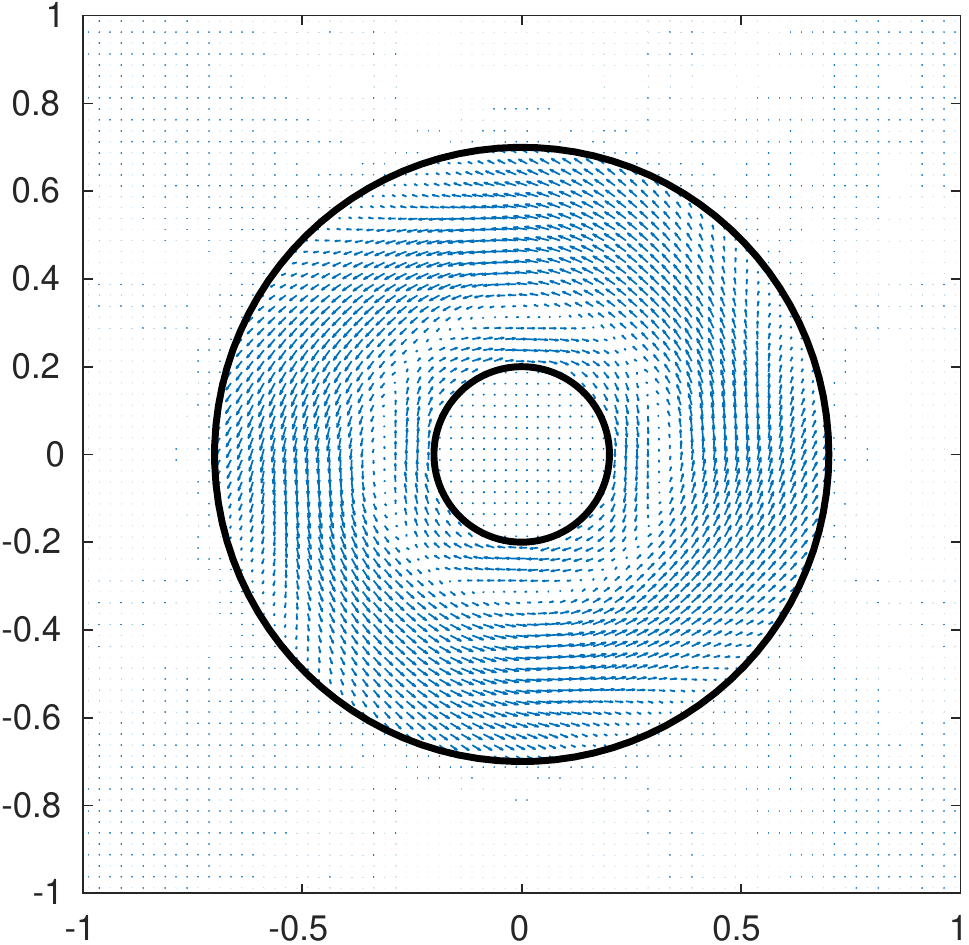}
		\caption{\bf $\textbf{u}, \varepsilon = 10^{-3}$}
	\end{subfigure}		
	\begin{subfigure}{0.2\textwidth}
		\includegraphics[width=\textwidth]{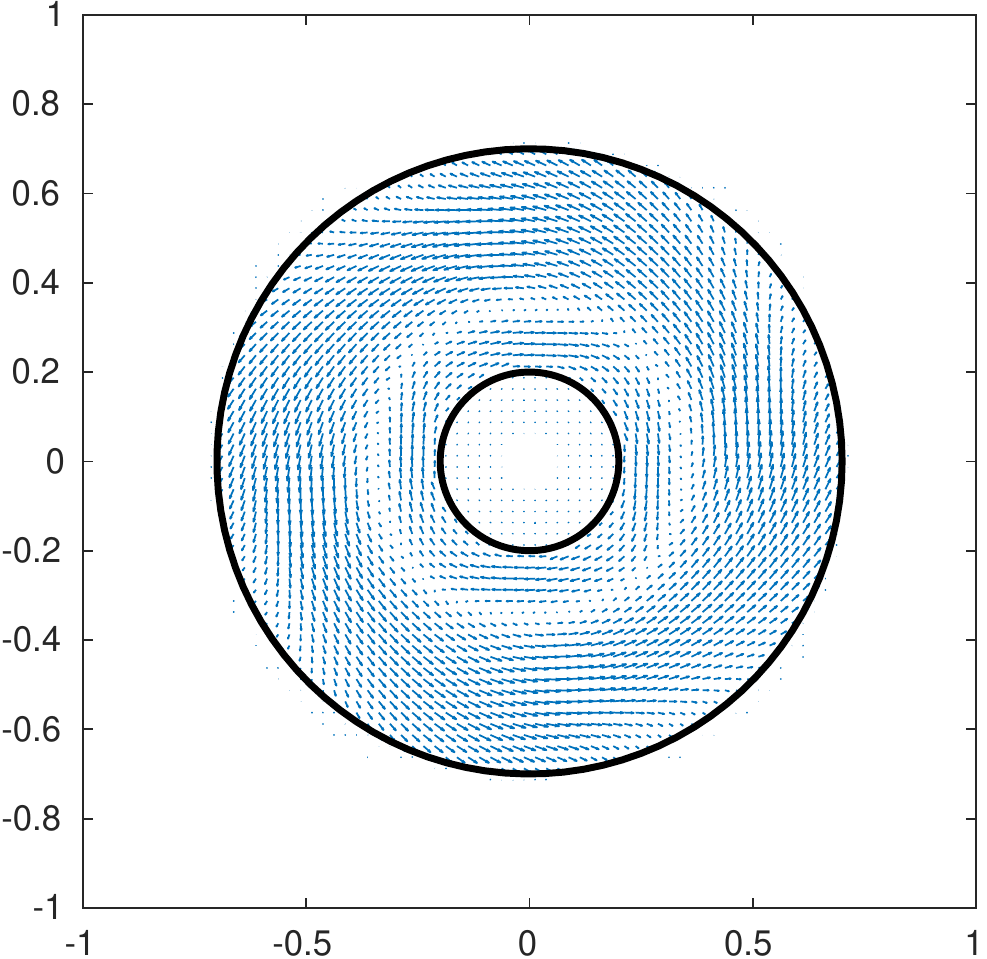}
		\caption{\bf $\textbf{u}, \varepsilon = 10^{-4}$}
	\end{subfigure}
	\caption{{\small Experiment~4: ${U}_h^{\varepsilon}$ with $h = 2/80$ and $\varepsilon = 10^{-1},\dots, 10^{-4}$.}}\label{fig:ex4}
\end{figure}

In summary, we have demonstrated experimentally that the penalization method \eqref{i5}-\eqref{i7} is robust and efficient. Penalized numerical solutions $(\rho_{h}^\varepsilon, \bm{u}_{h}^\varepsilon, \vt_{h}^\varepsilon)_{h\searrow 0, \varepsilon \searrow 0}$ converge to an exact  solution $(\vr, \bm{u}, \vt)$ of the Dirichlet boundary problem. We have tested experimentally the strong convergence with respect to $L^1(\mathbb{T}^2)$-norm. In future our goal will be to extend theoretical analysis presented in this paper to the FV method \eqref{eq:FVmethod} and prove  rigorously its convergence with respect to  both parameters,  the discretization parameter $h$ as well as the penalization parameter $\varepsilon$.

\section{Conclusion}

In the present paper we have studied convergence of a penalization method for the Navier--Stokes--Fourier system with the Dirichlet boundary conditions.  The physical fluid domain is embedded into a large cube on which
the periodic boundary conditions are imposed. The penalty terms act as the friction term in the momentum and the sink/source term in the internal energy balance, respectively. We have discussed two cases with zero and non-zero density outside of the physical fluid domain. In Theorem~\ref{MT1} we have shown that the penalized solutions converge to a weak solution of the Dirichlet boundary problem. A key ingredient of the convergence analysis is the use of the ballistic energy inequality \eqref{U3} as a source of uniform bounds, and the limiting process discussed in Section~\ref{ballistic}. Theoretical analysis is accompanied with  numerical simulations illustrating main ideas and efficiency of the proposed penalization method in order to approximate an exact solution of the Dirichlet problem for the Navier--Stokes--Fourier system on complex domains, cf.~Section~\ref{numerics}.

\section*{Acknowledgment}
We would like to thank Dr. Bangwei She (Prague/Beijing) for fruitful discussions on the numerical method and numerical results.



\end{document}